\theoremstyle{plain}
\numberwithin{equation}{section}
\DeclareMathAlphabet{\mathpzc}{OT1}{pzc}{m}{it}
\theoremstyle{plain}
\newtheorem*{maintheorem*}{Main Theorem}
\newtheorem*{thm*}{Theorem}
\newtheorem*{thma*}{Theorem A}
\newtheorem*{thmaa*}{Theorem A'}
\newtheorem*{thmb*}{Theorem B}
\newtheorem*{thmo*}{Theorem 1.1}
\newtheorem*{thmc*}{Theorem C}
\newtheorem*{thmd*}{Theorem D}
\newtheorem*{thmf*}{Theorem 4.1}
\newtheorem*{remark*}{Remark}
\newtheorem*{conjecture*}{Conjecture}
\newtheorem*{prop*}{Proposition}
\newtheorem*{lem*}{Basic Lemma}
\newtheorem{thm}{Theorem}[section]
\newtheorem{cor}[thm]{Corollary}
\newtheorem{lem}[thm]{Lemma}
\newtheorem{prop}[thm]{Proposition}
\theoremstyle{definition}
\newtheorem*{proofc*}{Proof of Theorem C}
\newtheorem{terminology}[thm]{Terminology}
\newtheorem{definition}[thm]{Definition}
\newtheorem{remark}[thm]{Remark}
\newtheorem{notation}[thm]{Notation}
\def\bbr{\mathbb{R}}
\def\bbc{\mathbb{C}}
\def\bbn{\mathbb{N}}
\def\PSL{\rm{PSL}}
\def\supp{{\rm{supp}}}
\newcommand{\PS}{\operatorname{PS}}
\newcommand{\BR}{\operatorname{BR}}
\newcommand{\BMS}{\operatorname{BMS}}
\renewcommand{\setminus}{-}
\newcommand{\bh}{\partial(\mathbb{H}^3)}
\newcommand{\be}{\begin{equation}}
\newcommand{\ee}{\end{equation}}
\def\e{\varepsilon}
\def\h{\hspace{1mm}}
\def\hh{\hspace{.5mm}}
\def\G{\Gamma}
\def\ba{\backslash}
\newcommand{\bH}{\mathbb H}\renewcommand{\H}{\mathcal H}
\newcommand{\br}{m^{\BR}}
\newcommand{\LG}{\Lambda(\Gamma)}
\newcommand{\mBR}{m^{\BR}}
\newcommand{\Leb}{\operatorname{Leb}}
\newcommand{\T}{\operatorname{T}}
\newcommand{\z}{\mathbb{Z}}
\newcommand{\n}{\mathbb{N}}
\renewcommand{\c}{\mathbb{C}}
\renewcommand{\br}{\mathbb{R}}\renewcommand{\O}{\mathcal{O}}
\newcommand{\op}{\operatorname}
\newcommand{\te}{\textstyle}
\newcommand{\chN}{\check{N}}
\newcommand{\chn}{\check{n}}
\def\mfld{\mathcal{M}}\def\M{\mathcal{M}}
\begin{document}
\title[Burger-Roblin]{Ergodicity of 
 unipotent flows and Kleinian groups}

\author{Amir Mohammadi}
\address{Department of Mathematics, The University of Texas at Austin, Austin, TX 78750}
\email{amir@math.utexas.edu}
\thanks{Mohammadi was supported in part by NSF Grant \#1200388.}

\author{Hee Oh}
\address{Mathematics department, Yale university, New Haven, CT 06520 
and Korea Institute for Advanced Study, Seoul, Korea}

\email{hee.oh@yale.edu}
\thanks{Oh was supported in part by NSF Grant \#1068094.}

\subjclass[2010] {Primary 11N45, 37F35, 22E40; Secondary 37A17, 20F67}

\keywords{Geometrically finite hyperbolic groups, Ergodicity, Burger-Roblin measure, Bowen-Margulis-Sullivan measure}

\begin{abstract} 
 Let $\mfld$ be a non-elementary
convex cocompact  hyperbolic $3$-manifold and $\delta$ be the critical exponent of
 its fundamental group.
We prove that a one-dimensional unipotent flow for the frame bundle of $\mfld$ is ergodic for the Burger-Roblin measure
 provided $\delta>1$.
\end{abstract}

\maketitle \tableofcontents
\section{Introduction}
In this paper we study dynamical properties of one-parameter unipotent flow
for the frame bundle of a convex cocompact hyperbolic $3$-manifold $\mfld$. 
When the critical exponent of the
fundamental group $\pi_1(\mfld)$ exceeds one,
we show that this flow is conservative and ergodic
 for the Burger-Roblin measure $m^{\BR}$: almost all points enter to a given Borel subset of positive measure for
 an unbounded amount of time.
Such a manifold admits a unique positive square-integrable eigenfunction $\phi_0$
 of the Laplacian  with base eigenvalue.
Our  result implies that 
a randomly chosen unipotent orbit, normalized by the time average of the eigenfunction $\phi_0$, becomes equidistributed
with respect to the Burger-Roblin measure.

\medskip

To state our result more precisely, let  $G=\PSL_2(\c)$, which is
 the group of orientation preserving isometries of the hyperbolic space $\bH^3$.
Let $\G$ be a non-elementary, torsion-free, discrete subgroup of $G$ which is {\it convex cocompact}, that is, the convex core
of $\G$ is compact. Equivalently, $\G\ba \bH^3$ admits a finite sided fundamental domain with no cusps.
 Convex cocompact groups arise in topology 
as fundamental groups of compact hyperbolic $3$-manifolds with totally geodesic boundary.

The frame bundle of the manifold $\M=\G\ba \bH^3$, which is a circle bundle
over the unit tangent bundle $\T^1(\M)$, is identified with the homogeneous space $X=\G\ba G$.
We consider the unipotent flow on $X$ given by
the right translations of the one-parameter unipotent subgroup 
\begin{equation}\label{u} U=\lbrace u_t:= \begin{pmatrix} 1& 0 \\ t & 1\end{pmatrix}
 : t\in \br\rbrace.\end{equation} 

This flow is called {\it ergodic} with respect to a fixed locally finite Borel measure on $X$,  if
any invariant Borel subset is either null or co-null. We denote by $\delta$
the critical exponent of $\G$, which is equal to
the Hausdorff dimension of the limit set of $\G$ (\cite{Yau}, \cite{Sullivan1979}).
When $\delta=2$, $X$ is compact \cite{Sullivan1979}
and the classical Moore's theorem in 1966 \cite{Mo}
implies that this flow is ergodic with respect to the volume measure, i.e., 
the $G$-invariant measure.
When $\delta<2$, the volume measure is not ergodic any more, and furthermore,
Ratner's measure classification theorem \cite{Ratner2} says that
there exists no {\it finite} $U$-ergodic invariant measure on $X$.
 This raises
a natural question of finding a locally finite $U$-ergodic measure on $X$.
Our main result in this paper is that when $\delta>1$,
the Burger-Roblin measure is conservative and ergodic.

The conservativity means that  for any subset $S$ of positive measure,
the $U$-orbits of almost all points in $S$ spend an infinite amount of time in $S$. 
Any finite invariant measure is conservative by the Poincar\'e recurrence theorem. 
For a general locally finite invariant measure,
 the Hopf decomposition theorem \cite{Kre} says that any
ergodic  measure is either conservative or totally dissipative (i.e., for any Borel subset $S$,
$xu_t \notin S$ for all large  $|t|\gg 1$ and a.e. $x\in S$).
For $\delta<2$, there are many isometric embeddings of the real line in $X$, by $t\mapsto xu_t$, giving
rise to a family of dissipative ergodic measures for $U$.

\medskip

We refer to the Burger-Roblin measure as the BR measure for short, and
give its description using the Iwasawa decomposition $G=KAN$:
$K=\op{PSU}_2$, $A=\{a_s: s\in \br\}$, $N=\{n_z: z\in \c\}$
where $$a_s=\begin{pmatrix} e^{s/2}& 0\\ 0 & e^{-s/2} \end{pmatrix}  \;\; \text{and}\;\;  n_z=
\begin{pmatrix} 1& 0\\ z& 1 \end{pmatrix}.
$$
Furthermore let $M$ denote the centralizer of $A$ in $K.$

The groups $A$ and $N$ play important roles in dynamics as the right translation by $a_s$ on $X$
is the frame flow, which is the extension of the geodesic flow on $\T^1(\M)$
and $N$-orbits give rise to unstable horospherical foliation on $X$ for the frame flow.

Fixing $o\in \bH^3$ stabilized by $K$, we denote by $\nu_o$ the Patterson-Sullivan measure
on the boundary $\partial(\bH^3)$, supported on the limit set of $\G$, associated to $o$ (\cite{Patterson1976}, \cite{Sullivan1979}),
and refer to it as the PS measure.
Sullivan showed that the PS measure coincides with the $\delta$-dimensional
Hausdorff measure of the limit set of $\G$. 
Using the transitive action of $K$ on $\partial(\bH^3)=K/M$, 
we may lift $\nu_o$ to an $M$-invariant measure on $K$.

\medskip
\noindent{\bf Burger-Roblin measure} 
Define the measure $\tilde m^{\BR}$ on $G$ as follows: for $\psi\in C_c(G)$,
$$\tilde m^{\BR}(\psi)=\int_{G} \psi(k  a_sn_z) e^{-\delta s} d\nu_o(k) ds\;dz$$
where $ds$ and $dz$ are the Lebesgue measures on $\br$ and $\c$ respectively.
It is left $\G$-invariant and right $N$-invariant. 
The $\BR$ measure $m^{\BR}$ is  a locally finite measure on $X$ induced by $\tilde m^{\BR}$.
When $\delta=2$, $m^{\BR}$ is simply a $G$-invariant measure, but
it is an infinite measure if $\delta<2$.

Roblin showed that
the BR measure is the unique  $NM$-invariant ergodic measure on $X$  
which is not supported on a closed $NM$-orbit in $X$ \cite{Roblin2003}.
 For $\G$ Zariski dense (which is the case if $\delta>1$),
 Winter \cite{Wi}  proved that
 $m^{\BR}$ is $N$-ergodic, and this implies that $m^{\BR}$ is  the unique  $N$-invariant ergodic measure on $X$  
which is not supported on a closed $N$-orbit in $X$, by Roblin's classification. We note that the analogous result for  $G=\PSL_2(\br)$ was established earlier by Burger \cite{Burger1990}
 when $\G$ is convex-compact with $\delta>1/2$.


The main result of this paper is:
\begin{thm}\label{main} Let $\G$ be a convex cocompact subgroup of $G$
which is not virtually abelian. 
The $U$-flow on $(X, m^{\BR})$  is 
ergodic if $\delta>1.$
\end{thm}


We also show the conservativity of the BR-measure for $\delta>1$,
without knowing its ergodicity a priori.

\begin{remark}\rm 
We remark that most of arguments in the proof of Theorem \ref{main} works for a higher dimensional case as well.
Namely, the same proof will show that if $G$ is the group of orientation preserving isometries of the hyperbolic $n$-space,
$\G$ is a Zariski dense, convex cocompact subgroup 
of $G$,  $U$ is a $k$-dimensional connected unipotent subgroup
of $G$, then the $U$ action is ergodic with respect to the BR-measure on $\Gamma\ba G$ if $\delta>n-k$.
\end{remark}

For a probability measure $\mu$ on $X$, the Birkhoff pointwise ergodic theorem (1931) says that 
the ergodicity of a measure preserving flow $\{u_t\}$ implies that the time average of a typical orbit
converges to the space average: for any $\psi\in L^1(X)$ and a.e. $x\in X$, as $T\to \infty$,
\be\label{b1} \frac{1}{T}{\int_0^T \psi(xu_t) dt}\longrightarrow \int_X \psi \; d\mu .\ee


A generalization of the Birkhoff theorem for an infinite locally finite conservative ergodic measure
was obtained by E. Hopf \cite{Hopf} in 1937 and says that the ratio of time averages
of a typical orbit for two functions
converges to the ratio of the space averages: for any $\psi_1,\psi_2\in L^1(X)$ with $\psi_2\ge 0$ with $\int_X \psi_2\;d\mu>0$,
as $T\to \infty$,
\be\label{h1}\frac{\int_0^T \psi_1(xu_t) dt}{\int_0^T \psi_2(xu_t) dt}
\longrightarrow  \frac{\int_X \psi_1 \; d\mu}{\int_X \psi_2 \; d\mu} 
\quad \text{ a.e. $x\in X$}.\ee

For our $X=\G\ba G$ with $\G$ convex cocompact and $\delta>1$, 
there is a unique  positive eigenfunction $\phi_0\in L^2(\M)$
for the Laplacian with the smallest eigenvalue
 $\delta(2-\delta)$ and with $\|\phi_0\|_2=1,$ \cite{Sullivan1979}.
In the upper half-space coordinates, $\bH^3=\{z +jy: z \in \c, y >0\}$ with $\partial(\bH^3)=\c\cup\{\infty\}$, the lift $\tilde{\phi}_0$ of $\phi_0$ to $\bH^3$
is realized explicitly as the integral of a Poisson kernel against the PS measure $\nu_o$ (with $o=j$):
$$\tilde{\phi}_0(z+jy )= \int_{\xi\in \c}  \left(\frac{(\|\xi\|^2+1) y}{\|z-\xi\|^2+y^2} \right)^\delta \; d\nu_o(\xi).$$

The $\BR$ measure on $X$ projects down to the absolutely continuous 
measure on the manifold $\M$ and its Radon-Nikodym derivative
with respect to the hyperbolic volume measure is given by
$\phi_0$. 

We deduce the following from Theorem \ref{main} and Hopf's ratio theorem \eqref{h1}:
\begin{cor} Let $\delta>1$.
\begin{enumerate}
\item  For $m^{\BR}$ almost all $x\in X$, the projection of $xU$ to $\mfld$ is dense.

\item For any $\psi\in L^1(X, m^{\BR})$ and for almost all $x\in X$,
$$\lim_{T\to \infty}\frac{\int_0^T \psi(xu_t) dt}{ \int_0^T \phi_0(xu_t) dt} = \int_X \psi\; dm^{\BR} .$$
\end{enumerate}
\end{cor}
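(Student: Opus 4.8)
The plan is to deduce both statements from Theorem~\ref{main} together with the Hopf ratio ergodic theorem \eqref{h1}, after checking the mild technical points needed to apply \eqref{h1}. Since $\delta>1$ and $\G$ is convex cocompact and not virtually abelian, Theorem~\ref{main} tells us that the $U$-action on $(X,m^{\BR})$ is conservative and ergodic; moreover $m^{\BR}$ is a locally finite Borel measure. These are exactly the hypotheses under which Hopf's theorem \eqref{h1} holds, so for any $\psi_1,\psi_2\in L^1(X,m^{\BR})$ with $\psi_2\ge 0$ and $\int_X\psi_2\,dm^{\BR}>0$ we get the pointwise ratio convergence for $m^{\BR}$-a.e. $x$.

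For part (2), I would take $\psi_2=\phi_0\circ\mathfrak{p}$. The first thing to verify is that $\phi_0\circ\mathfrak{p}\in L^1(X,m^{\BR})$ with positive integral: this is precisely the identity $\int_X\phi_0\circ\mathfrak{p}\,dm^{\BR}=\|\phi_0\|_2^2=1$ recorded just above the Corollary, coming from the fact that $m^{\BR}$ pushes forward to $\phi_0\cdot(\text{hyperbolic volume})$ on $\mfld$. Also $\phi_0>0$ everywhere, so $\psi_2\ge 0$ is genuine and the denominator $\int_0^T\phi_0(xu_t)\,dt$ is strictly increasing in $T$; conservativity guarantees it tends to $\infty$ for a.e.\ $x$, so the ratio in \eqref{h1} is well-defined for large $T$. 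Applying \eqref{h1} with $\psi_1=\psi$ arbitrary in $L^1(X,m^{\BR})$ and $\psi_2=\phi_0\circ\mathfrak{p}$ gives
\[
\lim_{T\to\infty}\frac{\int_0^T\psi(xu_t)\,dt}{\int_0^T\phi_0(xu_t)\,dt}=\frac{\int_X\psi\,dm^{\BR}}{\int_X\phi_0\circ\mathfrak{p}\,dm^{\BR}}=\int_X\psi\,dm^{\BR},
\]
which is (2). (Strictly speaking I am writing $\phi_0(xu_t)$ as shorthand for $\phi_0(\mathfrak{p}(xu_t))$, as in the statement.)

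For part (1), I would argue that ergodicity of a conservative flow forces a.e.\ orbit to be dense in the support of the measure, and that the projection to $\mfld$ of $\operatorname{supp}(m^{\BR})$ is all of $\mfld$. Concretely: fix a countable basis $\{V_n\}$ of nonempty open subsets of $\mfld$; for each $n$, the set $\mathfrak{p}^{-1}(V_n)$ is open and has positive $m^{\BR}$-measure (since $m^{\BR}$ projects to a measure with strictly positive density $\phi_0$ on $\mfld$). By ergodicity the set of $x$ whose $U$-orbit meets $\mathfrak{p}^{-1}(V_n)$ is conull — indeed, by conservativity, applying the ratio theorem with $\psi_2$ the indicator of $\mathfrak{p}^{-1}(V_n)$ (or simply invoking Hopf recurrence) shows $\int_0^\infty \mathbf{1}_{\mathfrak{p}^{-1}(V_n)}(xu_t)\,dt=\infty$ a.e.; in particular the orbit enters $\mathfrak{p}^{-1}(V_n)$, hence $xU$ projects into $V_n$. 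Intersecting the resulting conull sets over $n$ yields a conull set of $x$ for which $\mathfrak{p}(xU)$ meets every $V_n$, i.e.\ is dense in $\mfld$.

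The content here is entirely carried by Theorem~\ref{main} and by the already-stated properties of $m^{\BR}$ (local finiteness, the projection formula, positivity of $\phi_0$); the rest is the standard conservative-ergodic bookkeeping. The only point that deserves a word of care — and the closest thing to an obstacle — is the applicability of Hopf's theorem with the specific unbounded test function $\phi_0\circ\mathfrak{p}$ in the denominator: one needs $\phi_0\circ\mathfrak{p}\in L^1(m^{\BR})$ (which is the identity $=1$ above) and needs the denominator to diverge a.e.\ (which follows from conservativity applied to any set of positive measure on which $\phi_0$ is bounded below). Once these are in hand the deduction is immediate.
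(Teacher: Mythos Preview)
Your proposal is correct and matches the paper's intended approach: the Corollary is stated immediately after the sentence ``We deduce the following from Theorem~\ref{main} and Hopf's ratio theorem~\eqref{h1}'' with no further proof given, and your deduction---applying \eqref{h1} with $\psi_2=\phi_0\circ\mathfrak{p}$ (using $\int_X\phi_0\circ\mathfrak{p}\,dm^{\BR}=1$) for part~(2), and the standard countable-basis argument from ergodicity plus $m^{\BR}(\mathfrak{p}^{-1}(V))>0$ for every nonempty open $V\subset\mfld$ for part~(1)---is exactly what is being left to the reader. One small streamlining: for part~(1) you do not need to invoke the ratio theorem at all; the set $\{x: xU\cap\mathfrak{p}^{-1}(V_n)=\emptyset\}$ is $U$-invariant and disjoint from a set of positive measure, hence null by ergodicity alone.
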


\medskip

We explain the proof of Theorem \ref{main} in the case $\delta>1$, in comparison with the finite measure case.
This account makes our introduction a bit too lengthy but we hope that this
will give a summary of the main ideas of the proof which will be helpful to
the readers. 
The proof of Moore's ergodicity theorem is 
 based on the following equivalence
 for a {\it finite} invariant measure $\mu$: $\mu$ is ergodic if and only if
any $U$-invariant function of $L^2(X, \mu )$
is constant a.e.
Through this interpretation, his ergodicity theorem follows from a theorem in the unitary representation
theory that 
any $U$-invariant vector in the Hilbert space $L^2(X,\mu_G)$
is $G$-invariant for the volume measure $\mu_G$.

For an infinite invariant measure,
its ergodicity cannot be understood merely via $L^2$-functions, but we must investigate
all invariant bounded measurable functions. This means that we cannot depend on a convenient theorem
on the dual space of $X$, but rather have to work with the geometric properties of flows
in the space $X$ directly.
We remark that as we are working with a
unipotent flow as opposed to a hyperbolic flow,
 the Hopf argument using the stable and unstable foliations of flows,
 which is a standard tool in studying the ergodicity for hyperbolic flows, is irrelevant here.  

We
use the polynomial divergence property of unipotent flows
to establish that almost all $U$-ergodic components of $m^{\BR}$ are
invariant under the full horospherical subgroup $N$.
The $N$-ergodicity of the BR measure then implies
the $U$-ergodicity as well. This approach has been noted by Margulis as an alternative approach
to show the ergodicity of the volume measure $\mu_G$ in the finite volume case.

However, carrying out this argument in an {\it infinite} measure case
is subtler.
Indeed the heart of the argument, as is explained below, lies in the study of two
nearby orbits in the ``intermediate range''. To the best of our knowledge,
such questions in infinite measure spaces have not been understood before.
 
Let us present a sketch of the argument in the probability measure case.
Let $(X, \mu)$ be a probability measure space.
Then it is straightforward from \eqref{b1} that 
for any generic point $x$, any $0<r<1$,  and any $\psi\in C_c(X)$ 
\be\label{w1} \frac{1}{(1-r)T} {\int_{rT}^T \psi (xu_t) dt}\to \int_X \psi(x) d\mu .\ee
Statements of this nature will be called a ``window theorem" in the sequel. 

We now explain how a suitable window theorem can be used in acquiring an additional invariance
by an element of $N-U.$ This idea was used by Ratner; see~\cite{Ratner1, Ratner2} and the
references therein. We also refer to~\cite{Mar1, Mar2} where similar ideas were used by Margulis  in the topological setting.

Let $\chN$ and $\check{U}$ denote the transpose of $N$ and $U$ respectively. 
Denote by
$N_G(U)$ the normalizer of $U$ in $G$.

Choose  sequences of generic points
$x_k$ and $y_k$ inside a suitably chosen compact subset of $X,$ 
moreover suppose that $y_k=x_kg_k$ with $g_k\notin N_G(U)$ and $g_k\to e$.  Put $\check{V}=\left\{
\begin{pmatrix} 1& it\\ 0& 1 \end{pmatrix}:t\in \br \right\}$,
and assume that the $\check{V}$-component and the $\check{U}$-component
\footnote{these components are well defined for all $g_k$ close enough to $e$.} 
of $g_k$ are  of ``comparable'' size. 

Flowing by $u_t$, we compare the orbits $x_ku_t$ and $y_ku_t=x_ku_t(u_t^{-1}g_k u_t)$.
The divergence properties of unipotent flows (a simple computation in our case), in view
of our above assumption on $g_k$'s, 
says that the divergence of the two orbits is comparable to $u_t^{-1}g_k u_t.$
Furthermore, the $(2,1)$-matrix entry of $u_t^{-1}g_k u_t$ dominates other matrix entries. Let $p(t)$ 
denote the $(2,1)$-matrix entry of $u_t^{-1}g_k u_t.$ This is
a polynomial of degree two whose leading coefficient has comparable
real and imaginary parts.  
Therefore, the divergence of the two orbits is ``essentially'' in the direction of $N-U$.
Choose a sequence of times $T_k$ 
so that $p(T_k)$ converges to a non-trivial element $v\in N-U$.
Letting $\e>0$ be small, since $p(t)$ is a polynomial, 
$y_ku_t$ remains within an $O(\e)$-neighborhood
of $x_ku_t v$ for any $t\in [(1-\e)T_k, T_k]$.
Hence the window theorem \eqref{w1} applied to the sequence of windows  $[(1-\e)T_k, T_k]$
implies that $\mu(\psi)-\mu(v. \psi)=O(\e)$ and hence $\mu(\psi)=\mu(v.\psi)$ as $\e>0$ is arbitrary.
Repeating this process for a sequence of $v_n\rightarrow e,$ we obtain that the measure $\mu$ is invariant
under $N.$

We now turn our attention to an infinite measure case, assuming $\delta>1$. There is a subtle difference for
the average over the one-sided interval $[0,T]$ and over the two sided $[-T, T]$, and the average over $[-T, T]$
is supposed to behave more typically in infinite ergodic theory. 
We first prove that the $\BR$ measure $m^{\BR}$ is $U$-conservative based on a theorem of Marstrand \cite{Mars}, which allows
us to write an ergodic decomposition $m^{\BR}=\int_x \mu_x $ where $\mu_x$ is conservative for a.e. $x$.  Letting $x$ be a generic point for Hopf's ratio theorem and $I_T=[-T, T]$,
in order to deduce
$ \textstyle\frac{\int_{I_T\setminus  I_{rT}} \psi_1(xu_t) dt}{\int_{I_T-I_{rT}} \psi_2(xu_t) dt}
\sim \frac{\mu_x(\psi_1)}{\mu_x(\psi_2)},$
it is sufficient to prove that there is some $c>0$ such that for all $T\gg 1$,
\begin{equation}\label{int1}
\int_{I_T -I_{rT} } \psi_2(xu_t) dt \geq  c \int_{I_T} \psi_2(xu_t) dt. 
\end{equation}

This type of inequality requires strong control on the recurrence of the flow, 
and seems unlikely that \eqref{int1} can be achieved for a set of positive measure, see ~\cite[Section 2.4]{Aa}.
Hence formulating a proper replacement of this condition  \eqref{int1} and its proof 
are simultaneously the hardest part  and at the heart of the proof of Theorem \ref{main}.

We call $x\in X$ a BMS point if both the forward and backward endpoints of the geodesic determined by $x$
belong to the limit set of $\G$. These points precisely comprise the support of the Bowen-Margulis-Sullivan measure
$m^{\BMS}$ on $X$, which is the unique  measure of maximal entropy for the geodesic flow, 
up to a multiplicative constant; see Section~\ref{ph}.
We will call $m^{\BMS}$ the
 BMS measure for simplicity. The support of $m^{\BMS}$ is contained in the convex core of $\G$,
 and in particular a compact subset.
By a BMS box, we mean a subset of the form
$x_0\chN_\rho A_\rho N_\rho M $ where $x_0\in X$ is a BMS point, $\rho>0$ is 
 at most the injectivity radius at $x_0$ and $S_\rho$ means the $\rho$-neighborhood
of $e$ in $S$ for any $S\subset G$.
\begin{thm}[Window Theorem]\label{wt} Let $\delta>1$.
 Let $E\subset X$ be a $\BMS$ box and $\psi \in C_c(X)$ be a non-negative function with $\psi|_{E}>0$.
Then there exist $0<r<1$ and $T_0> 1$ such that for any $T\ge T_0$,
\begin{equation*}
 m^{\BR}\{x\in E: \int_{-rT}^{rT} \psi(xu_t) dt \le (1-r) \int_{-T}^{T} \psi(xu_t) dt\}> \tfrac{r}{2}\cdot  m^{\BR}(E) .\end{equation*}
\end{thm}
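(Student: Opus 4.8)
The plan is to reduce the statement, by a trivial rearrangement, to a lower bound on the time a $U$-orbit spends in $\supp\psi$ during the annulus $r\le|t/T|\le 1$, and then to establish that by an elementary first--moment argument resting on two quantitative recurrence estimates, both of polynomial order $T^{\delta-1}$; the hypothesis $\delta>1$ is used exactly because it makes the exponent $\delta-1$ positive.

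\textbf{Step 1.} Put $B_0(x):=\int_{|t|\le T}\psi(xu_t)\,dt$ and $B(x):=\int_{rT\le|t|\le T}\psi(xu_t)\,dt\le B_0(x)$. Since $\int_{|t|\le rT}\psi(xu_t)\,dt=B_0(x)-B(x)$, the asserted inequality is \emph{exactly} $B(x)\ge rB_0(x)$, so the theorem is the assertion
\[
m^{\BR}\{x\in E:\ B(x)\ge r\,B_0(x)\}\ >\ \tfrac r2\,m^{\BR}(E)\qquad(T\ge T_0).
\]
Here is why one should expect this. Since $a_{-s}u_{\epsilon_0}a_s=u_{e^s\epsilon_0}$, the substitution $t=\pm e^s\epsilon_0$ recasts $B_0(x)$, up to a fixed bounded central piece, as $\int_0^{L}h_x(s)\,e^s\,ds$ with $L=\log(T/\epsilon_0)$ and $h_x(s)=\epsilon_0\bigl(\psi(xa_{-s}u_{\epsilon_0}a_s)+\psi(xa_{-s}u_{-\epsilon_0}a_s)\bigr)\ge 0$, while $B(x)=\int_{L-\log(1/r)}^{L}h_x(s)\,e^s\,ds$ is the contribution of the top window of fixed length $\log\tfrac1r$. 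Because the weight $e^s$ is increasing, this top window carries an $\approx(1-r)$-fraction of the total unless the return pattern $h_x$ is very concentrated; the exponent governing the whole calculation turns out to be $\delta-1$.

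\textbf{Step 2.} I would reduce the theorem to two estimates, valid for all $T\ge T_0$: \ (L) $\int_E B_0\,dm^{\BR}\ge c^-\,T^{\delta-1}$; \ and (U) $\sup_{x\in E}B_0(x)\le C_1\,T^{\delta-1}$. Granting these, $U$-invariance of $m^{\BR}$ gives $\int_EB_0\,dm^{\BR}=\int_{|t|\le T}\langle\psi,\mathbf 1_{Eu_t}\rangle_{m^{\BR}}\,dt=:\Phi(T)$, so $\int_EB\,dm^{\BR}=\Phi(T)-\Phi(rT)$, and by (L) together with (U) applied at scale $rT$ (which gives $\Phi(rT)\le C_1 m^{\BR}(E)(rT)^{\delta-1}$),
\[
\int_EB\,dm^{\BR}\ \ge\ \bigl(c^--C_1 m^{\BR}(E)\,r^{\delta-1}\bigr)T^{\delta-1}.
\]
If the conclusion failed, then $m^{\BR}\{B\ge rB_0\}\le\tfrac r2 m^{\BR}(E)$, and since $B\le B_0\le C_1T^{\delta-1}$ on $E$,
\[
\int_EB\,dm^{\BR}\ \le\ r\!\int_E B_0\,dm^{\BR}+\bigl(\sup_EB_0\bigr)\tfrac r2 m^{\BR}(E)\ \le\ \tfrac32 r\,C_1 m^{\BR}(E)\,T^{\delta-1}.
\]
Comparing the two bounds forces $c^-\le C_1 m^{\BR}(E)\bigl(r^{\delta-1}+\tfrac32 r\bigr)$, whose right side tends to $0$ as $r\to 0$ because $\delta-1>0$; this is impossible once $r$ is fixed small enough (depending only on $\G,\psi,E$), and the corresponding $T_0$ then works. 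This is exactly where $\delta>1$ enters: for $\delta\le 1$ one has $r^{\delta-1}\ge 1$ and the estimate — like the theorem, by Section~\ref{sec;non-erg} — fails.

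\textbf{Step 3.} It remains to prove (L) and (U). Estimate (U) is a uniform counting bound: under renormalization $\{xu_t:|t|\le T\}$ is a bounded unstable segment expanded by $a_{\log T}$, and the number of $\gamma\in\G$ for which $\gamma E$ meets it is $O(T^{\delta-1})$ uniformly over $x\in E$, by the shadow lemma and $\delta=\dim\Lambda(\G)$. Estimate (L) — which I expect to be the main obstacle — is a quantitative sharpening of the conservativity established earlier (conservativity is precisely the statement that $\int_1^\infty\langle\psi,\mathbf 1_{Eu_t}\rangle_{m^{\BR}}\,dt=\infty$). Writing $\Phi(T)=\int_{|t|\le T}J(t)\,dt$ with $J(\tau)=\langle\psi,\mathbf 1_{Eu_\tau}\rangle_{m^{\BR}}$ and renormalizing $Eu_\tau=(Ea_{-\sigma})u_{\epsilon_0}a_\sigma$, $\sigma=\log(\tau/\epsilon_0)$, one analyses $J(\tau)$ via the disintegration of $m^{\BR}$ along the $N^-$-foliation into leafwise Patterson--Sullivan densities of dimension $\delta$ and the equidistribution of the resulting expanding pieces; the fact that $\delta>1$ is decisive here, through a Marstrand-type positivity/regularity for slices of the $\delta$-dimensional limit set, and the exponential mixing of the geodesic flow for $m^{\BMS}$ (valid for convex cocompact $\G$) is what makes $\Phi(T)$ of exact order $T^{\delta-1}$ \emph{uniformly in $T$}. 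The point is that this order is stable only after averaging over the box $E$ and over the interval $[-T,T]$; for an individual orbit the analogue is false, which is precisely why the stronger recurrence~\eqref{int1} cannot be expected and the argument must be carried out in this averaged form.
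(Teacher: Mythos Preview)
Your reduction in Step~1 is correct, and the first-moment scheme in Step~2 is sound \emph{conditional} on (L) and (U). The genuine gap is (U).

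The uniform pointwise bound $\sup_{x\in E}B_0(x)\le C_1T^{\delta-1}$ is not a consequence of the shadow lemma. The shadow lemma controls the PS-measure of balls; translating this into a bound on the Lebesgue length of $\{t\in[-T,T]:xu_t\in\supp\psi\}$ amounts to controlling how a \emph{one-dimensional slice} of the $\delta$-regular limit set sits inside a line, and this is governed by Marstrand-type slicing, which holds only for \emph{almost every} direction and with a constant that a~priori depends on the direction. Since $E$ is $M$-invariant, points of $E$ realize $U$-orbits in \emph{every} direction $\theta$ of the horosphere; for exceptional $\theta$ the slice can have dimension strictly larger than $\delta-1$, and even for generic $\theta$ there is no uniform constant. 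So the $\sup$ in (U) cannot be bounded as you claim. You acknowledge in your last paragraph that pointwise control of the type \eqref{int1} is unavailable --- but (U) \emph{is} exactly such a pointwise statement, and without it your Chebyshev step $\int_{\{B\ge rB_0\}}B\le(\sup_E B_0)\cdot\tfrac r2 m^{\BR}(E)$ collapses. Replacing the sup by an $L^2$-bound $\int_E B_0^2\,dm^{\BR}\le C T^{2(\delta-1)}$ would salvage the scheme, but that is a three-point correlation estimate for the $U$-flow on $(X,m^{\BR})$ which is itself nontrivial and not supplied.

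Estimate (L) is also not adequately justified: you invoke exponential mixing of the geodesic flow for $m^{\BMS}$, but the quantity $\Phi(T)=\int_{-T}^T\langle\psi,\mathbf 1_{Eu_t}\rangle_{m^{\BR}}\,dt$ is a $U$-flow correlation for the \emph{BR} measure, and passing from $A$-mixing on $m^{\BMS}$ to a uniform lower bound $\Phi(T)\gtrsim T^{\delta-1}$ requires exactly the kind of leafwise analysis the paper develops (renormalizing by $a_{-\log T}$ lands you on $\mu^{\BR}_{E,s}$, and one must then control its $U$-conditionals, not just its weak limit).

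The paper's route avoids both (L) and (U) entirely. It renormalizes by $a_{-s}$ and works with the probability measures $\mu^{\BR}_{E,s}\to m^{\BMS}$; the $\delta>1$ hypothesis enters to show that the $U$-leafwise measures of $m^{\BMS}$ are non-atomic (via Marstrand), giving a doubling inequality for \emph{those} conditionals (Theorem~\ref{finalo}). The hard step is then to transfer this to the $U$-conditionals of $\mu^{\BR}_{E,s}$: this is done not by weak$^*$ convergence (which says nothing about conditionals) but by proving \emph{$L^2$-convergence} of the projected measures $\sigma^\tau_{x,\theta,s}\to\sigma^\tau_{x,\theta}$ using a uniform $(1+2r)$-energy bound for $\lambda_{E,x,s}$ (Corollary~\ref{cor;unif-energy}) and the Sobolev embedding (Key Lemma~\ref{zerotwo}). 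This yields the window estimate for $\chi_E$ (Theorem~\ref{p;windowone}), which is then upgraded to general $\psi$ via the Hopf ratio theorem. At no point is a pointwise or moment bound on $B_0(x)$ needed.
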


We call $x$ a {\it good} point for the window $I_T-I_{rT}$ if
\[
\int_{I_{rT}} \psi(xu_t) dt \le (1-r) \int_{I_T} \psi(xu_t)dt,
\]
or equivalently if
$\int_{I_T-I_{rT}} \psi(xu_t) dt \ge  r \int_{I_T} \psi(xu_t)dt$.
The window theorem says that
the set of good points for the window $I_T-I_{rT}$ has a positive proportion of $E$ for all large $T$.
It follows that for any $\e>0$, we can choose a sequence $T_k=T_k(\e)$ such that
the set $E_k,$ of good points for the window $[(1-\e)T_k ,T_k]$ (or $[-T_k, -(1-\e)T_k]$),
has positive measure. Let $x_k, y_k=x_kg_k\in E_k.$ 
To be able to use this in obtaining an additional invariance, we need to control
the size as well as the direction of the divergence $u_{T_k}^{-1} g_ku_{T_k}$. More precisely, we need to be able to choose
our generic points $y_k=x_kg_k$  so that the size of $g_k$ is comparable with $\tfrac{1}{T_k^2}$ and the size of its $\check{V}$-component
is comparable with that of $\check{U}$-component. 

We emphasize here that we work in the opposite order of a standard way
of applying the pointwise ergodic theorem where one is usually given a sequence $g_k$ and then find
window $I_{T_k}$ depending on $g_k$ (as the window theorem works for any $T_k$).
In our situation, we cannot choose ${T_k}$,
and rather have to work with given $T_k$ (depending on $\e$). So only after we know which $T_k$'s give good
windows for $\e$-width, we can choose good points $x_kg_k$ for those windows.
What allows us to carry out this process is that we have a good understanding of the structure of the generic set along contracting leaves.
To be more precise, the $\PS$-measures on the contracting leaves are basically
$\delta$-dimensional Hausdorff measures on $\bbr^2$, and the assumption that $\delta>1$ enables 
us to find $g_k$ for the ``right scale", see Section~\ref{s;PS-non-fuc}.

Hoping to have given some idea about how the above window theorem \ref{wt} will be used,
we now discuss its proof, which is based on the interplay between the BR measure and the BMS measure.
We mention that
the close relationship between the $\BR$ and the $\BMS$ measure
is also the starting point of Roblin's unique ergodicity theorem for $NM$-invariant measures.

 Unlike the finite measure case,
$m^{\BR}$ is not invariant under the frame flow, which is the right translation by $a_s$ in $X$.
However, as $s\to +\infty$, the normalized measure $\mu_s^{\BR}:=(a_{-s})_* m^{\BR}|_E$ (the push-forward 
 of the restriction $m^{\BR}|_E$ by the frame flow
$a_{-s}$) converges to $m^{\BMS}$ in the weak* topology.

Under the assumption $\delta>1,$
the BMS measure turns out to be 
$U$-recurrent and hence almost all of its $U$-leafwise
measures are non-atomic. This will imply
that the analogue of \eqref{int1} holds for ``most" of the 
$U$-leafwise measures of $m^{\BMS}$.

The goal is to utilize this and the fact that $\mu_s^{\BR}$ weakly converges to
$m^{\BMS}$, in order to deduce that many of 
the $U$-leafwise measures of $m^{\BR}$ 
must also satisfy  \eqref{int1}.  We mention that
in general it is rather rare to be able to deduce ``interesting" statements
regarding leafwise measures from weak* convergence
of measures. One possible explanation for this is that the leafwise measures 
of a sequence of measures may change ``very irregularly" as one moves 
in the transversal direction, e.g. approximation of Lebesgue measure by atomic measures.        

We succeed here  essentially because we have a rather 
good understanding of the $N$-leafwise measures of $\mu_s^{\BR}.$
To be more precise, we can show (i) the $N$-leafwise measures 
of $\mu_s^{\BR}$ change rather regularly, see Section~\ref{sec;cond}, 
furthermore, (ii) the projection of an $N$-leafwise measure of $\mu_s^{\BR}$ 
converges in the $L^2$-sense to its counterpart of $m^{\BMS}$ in most directions,
see Section~\ref{sec;projection}.

We emphasize that we establish the $L^2$-convergence of these measures,
 not merely the weak* convergence, and this is crucial to our proof; 
see the Key Lemma~\ref{zerotwo} and Section~\ref{sec;window}.  
The proof of this $L^2$-convergence requires a certain control
of the energy of the conditional measures of $\mu_s^{\BR}$ which is uniform for all $s\gg 1$.
Our energy estimate is obtained
using the following deep property of the PS measure: for all  $\xi$ in the limit set of $\G$ and for all small $r>0$,
 $\nu_o(B(\xi,r)) \asymp r^\delta$ (with the implied constant being independent of $\xi$ and $r$),
together with the Besicovitch covering lemma. 
Lastly we remark that our proof of the window theorem makes use 
of the rich theory of entropy and is inspired by the low entropy 
method developed by Lindenstrauss in \cite{L}.

\medskip

\noindent{\bf Acknowledgment}
We are very grateful to Tim Austin for numerous helpful discussions regarding various aspects
 of this project. We also thank Chris Bishop and Edward Taylor for helpful correspondences 
regarding totally disconnected limit sets of Kleinian groups.

\section{Ergodic properties of BMS and BR measures}
\subsection{Measures on $\T^1(\G\ba \bH^3)$ associated to a pair of conformal densities}
 Let $(\bH^3, d)$ denote the hyperbolic $3$-space and
 $\partial(\bH^3)$
its geometric boundary.  We denote by $\op{T}^1(\bH^3)$ the unit
tangent bundle of $\bH^3$ and by $\pi$ the natural projection from
$\op{T}^1(\bH^3)\to \bH^3$.

Denote by
 $\{g^s: s\in \br\}$ the geodesic flow. For $u\in \op{T}^1(\bH^3)$,
 we set
 $$u^+:=\lim_{s\to\infty}g^s(u)\quad\text{and}\quad
 u^-:=\lim_{s\to -\infty}g^s(u)$$ which are respectively the forward and backward endpoints
 in $\partial(\bH^3)$ of the geodesic defined by $u$.

\begin{definition}{\rm \begin{enumerate}\item
The Busemann function $\beta:\bh\times \bH^3\times \bH^3\to \br$
 is defined as follows: for $\xi\in \bh$ and $x, y\in \bH^3$,
$$\beta_\xi(x,y)=\lim_{s\to \infty}d(x,\xi_s)-d(y,\xi_s)$$
where $\xi_s$ is a geodesic ray tending to $\xi$ as $s\to \infty$
from a base point $o\in\bH^3,$ fixed once and for all.

 \item  For $u\in \op{T}^1(\bH^3)$,
 the unstable horosphere ${\H}^+_u$ and the stable horosphere $\check{\H}_{u}$ denote respectively the subsets
 $$\{v\in \op{T}^1(\bH^3): v^-=u^-, \beta_{u^-}(\pi(u),\pi(v))=0\} ;$$
$$ \{v\in \op{T}^1(\bH^3): v^+=u^+, \beta_{u^+}(\pi(u),\pi(v))=0\}.$$
 \end{enumerate} }\end{definition}

Each element of the group $\PSL_2(\c)$ acts on $\hat \c=\c\cup\{\infty\}$ as a Mobius transformation and
its action extends to an isometry of $\bH^3$, giving the identification of $\PSL_2(\c)$ as 
 the group of orientation preserving isometries of $\bH^3$. Note that
$(g(u))^\pm=g(u^\pm)$ for $g\in G$.
 The map $ \op{T}^1(\bH^3)\to \partial(\bH^3)$ given by $u\mapsto u^+$
 is called the {\it visual} map.

For discussions in this section, we refer to \cite{Roblin2003}, \cite{OS} and \cite{MO}.
Let $\G$ be a non-elementary (i.e., non virtually abelian) torsion-free discrete subgroup of $G$. 
 Let
$\{\mu_x:x\in \bH^3\}$ be a {\em $\G$-invariant conformal
density\/}
 of dimension $\delta_\mu > 0$  on $\partial(\bH^3)$. That is, each
$\mu_x$ is a non-zero finite Borel measure on $\partial(\bH^3)$
satisfying for any $x,y\in \bH^3$, $\xi\in \partial(\bH^3)$ and
$\gamma\in \G$,
$$\gamma_*\mu_x=\mu_{\gamma x}\quad\text{and}\quad
 \frac{d\mu_y}{d\mu_x}(\xi)=e^{-\delta_\mu \beta_{\xi} (y,x)}, $$
where $\gamma_*\mu_x(F)=\mu_x(\gamma^{-1}(F))$ for any Borel
subset $F$ of $\partial(\bH^3)$. 

Let $\{\mu_x\}$ and $\{\mu_x'\}$ be $\G$-invariant conformal
densities on $\partial(\bH^3)$ of dimension $\delta_\mu$ and
$\delta_{\mu'}$ respectively.
 Following Roblin \cite{Roblin2003}, we define a measure $m^{\mu,\mu'}$ on $\T^1(\Gamma\ba \bH^3)$
associated to the pair $\{\mu_x\}$ and $\{\mu_x'\}$. Note that, fixing $o\in
\bH^3$, the map
$$u \mapsto (u^+, u^-, \beta_{u^-} (o,\pi(u)))$$
is a homeomorphism between $\op{T}^1(\bH^3)$ with
 $(\partial(\bH^3)\times \partial(\bH^3) - \{(\xi,\xi):\xi\in \partial(\bH^3)\})  \times \br .$
\begin{definition} \label{defbmsr}
\rm Set
$$d \tilde m^{\mu,\mu'}(u)=e^{\delta_\mu \beta_{u^+}(o, \pi(u))}\;
 e^{\delta_{\mu'} \beta_{u^-}(o,\pi(u)) }\;
d\mu_o(u^+) d\mu'_o(u^-) dt .$$
It follows from the $\G$-conformal properties of $\{\mu_x\}$ and $\{\mu_x'\}$ that 
$\tilde m^{\mu,\mu'}$ is $\G$-invariant
and that this definition is independent of the choice of $o\in \bH^3$. Therefore it induces a
locally finite Borel measure  $m^{\mu,\mu'}$
 on $\op{T}^1(\G\ba \bH^3)$. 
\end{definition}

\subsection{BMS and BR measures on $\T^1(\G\ba \bH^3)$}
Two important densities we will consider are  the
Patterson-Sullivan density and the $G$-invariant density.

 We
denote by $\delta$ the critical exponent of $\G$, that is, the
abscissa of convergence of the Poincare series
    $\mathcal P_\G(s):=\sum_{\gamma\in \G} e^{-sd(o, \gamma (o))}$ for $o\in
    \bH^3$. As $\G$ is non-elementary, we have $\delta>0$.
The limit set $\Lambda(\G)$ is the set of all accumulation points of orbits $\G(z)$, $z\in \bH^3$.
 As $\G$ acts properly discontinuously on $\bH^3$, $\Lambda(\G)\subset \partial(\bH^3)$.
 Generalizing the
work of Patterson \cite{Patterson1976} for $n=2$, Sullivan
\cite{Sullivan1979} constructed a $\G$-invariant conformal density
$\{\nu_x: x\in \bH^3\}$ of dimension $\delta$  supported on
$\Lambda(\G)$.
Fixing $o\in \bH^3$, each $\nu_x$ is  the unique weak limit as $s\to
\delta^+$ of the family of measures on the compact space $\overline \bH^3:=\bH^3\cup\partial_\infty(\bH^3)$:
\begin{equation*}
\nu_{x,s}:=\frac{1}{\sum_{\gamma\in \G} e^{-sd(o, \gamma (o))}}\sum_{\gamma\in \G} e^{-sd(x, \gamma (o))} \delta_{\gamma
(o)} \end{equation*}
where $\delta_{\gamma(o)}$ is the dirac measure at $\gamma(o)$.
 This family will be referred to as the PS density.
When $\G$ is of divergence type, i.e., 
$\mathcal P_\G(\delta)=\infty$, the PS-density is the unique $\G$-invariant conformal density of
dimension $\delta$ (up to a constant multiple) and atom-free \cite[Cor. 1.8]{Roblin2003}.

We denote by $\{m_x:x\in \bH^3\}$ a $G$-invariant conformal
density on the boundary $\partial(\bH^3)$ of dimension $2$, unique
up to homothety. In particular, each $m_x$ is invariant under the
maximal compact subgroup which stabilizes $x$.
\begin{definition}\label{defbms}\label{brm}
\rm {\begin{enumerate}
\item The measure $m^{\nu,\nu}$ on $\op{T}^1(\G\ba \bH^3)$ is
    called the Bowen-Margulis-Sullivan measure $m^{\BMS}$
    associated with $\{\nu_x\}$  \cite{Sullivan1984}:
 $$m^{\BMS}(u)= e^{\delta \beta_{u^+}(o,
\pi(u))}\;
 e^{\delta \beta_{u^-}(o, \pi(u)) }\; d\nu_o(u^+) d\nu_o(u^-) dt. $$
\item The measure $m^{\nu, m}$ on $\op{T}^1(\G\ba \bH^3)$  is called the Burger-Roblin measure
    $m^{\BR}$ associated with $\{\nu_x\}$ and $\{m_x\}$
    (\cite{Burger1990}, \cite{Roblin2003}):
 $$m^{\BR}(u)= e^{2 \beta_{u^+}(o,
\pi(u))}\;
 e^{\delta \beta_{u^-}(o, \pi(u)) }\; dm_o(u^+) d\nu_o(u^-) dt. $$
\end{enumerate} }
\end{definition}

We will refer to these measures as the BMS and the BR measures respectively for short.
It is worth mentioning that the Riemannian volume measure, in these coordinates, is $m^{m,m}.$

The quotient $\G\ba C(\Lambda(\Gamma))$ 
of the convex hull $C(\Lambda(\G))$ of the limit set modulo $\G$ is called the convex core of $\Gamma$, denoted by $C(\G)$.
A discrete subgroup $\G$ of $G$ is called {\it geometrically finite} if
a unit neighborhood of the convex core $C(\G)$  has finite volume. It is
equivalent to saying that $\G\ba \bH^3$ admits a finite sided fundamental domain.
A geometrically finite group $\G$ is called {\it convex cocompact} if one of the following three equivalent conditions hold
(cf. \cite{Bowditch1993}):
\begin{enumerate}
 \item $C(\Gamma)$ is compact;
\item  $\G\ba \bH^3$ admits a finite sided fundamental domain with no cusps;  
\item  $\Lambda(\G)$ consists only of radial limit points: $\xi\in \Lambda(\G)$ is radial
if any geodesic ray $\xi_t$ toward $\xi$ returns to a compact subset for an unbounded sequence of $t$. 
\end{enumerate}

The BMS measure is invariant under the geodesic flow. Sullivan showed that for $\G$ geometrically finite,
 it is ergodic and moreover the unique measure of maximal entropy (\cite{Sullivan1984},
\cite{OP}). For $\G$ convex cocompact,
the support of the BMS measure is compact, as its projection is contained in $C(\G)$.

\begin{thm}\cite{FS}\label{zd}
If $\G$ is geometrically finite and Zariski dense, the $\PS$ density of any proper Zariski
subvariety of $\partial (\bH^3)$ is zero. 
\end{thm}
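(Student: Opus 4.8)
The plan is to argue by contradiction, combining the conformal density property of $\nu_o$, the conservativity and ergodicity of the $\G$-action on $(\Lambda(\G),\nu_o)$ (which hold for every geometrically finite $\G$), and the Zariski density hypothesis. Two preliminary remarks. Since $\G$ is geometrically finite it is of divergence type, so by \cite[Cor.~1.8]{Roblin2003} the density $\nu_o$ is atom-free; in particular $\nu_o$ annihilates every $0$-dimensional subvariety. Also $\partial(\bH^3)\cong S^2$ is irreducible as a real variety, so every proper Zariski-closed subset has dimension $\le 1$. Consequently, in the convex cocompact case relevant to this paper the statement is immediate whenever $\delta>1$: Sullivan's shadow lemma gives $\nu_o(B(\xi,r))\asymp r^\delta$ on $\Lambda(\G)$, so $\nu_o$ is $\delta$-Ahlfors regular and gives zero mass to any set of Hausdorff dimension $<\delta$, in particular to any curve. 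The argument below is for the general geometrically finite case.

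Suppose $\nu_o(V)>0$ for a proper subvariety $V\subsetneq\partial(\bH^3)$; choose such a $V$ of minimal dimension, and (passing to an irreducible component of positive measure) assume $V$ irreducible, so $\dim V=1$ by the first remark. For $\gamma\in\G$ the cocycle identity $\tfrac{d\nu_{\gamma o}}{d\nu_o}(\xi)=e^{-\delta\beta_\xi(\gamma o,o)}$ shows $\gamma_*\nu_o=\nu_{\gamma o}$ is mutually absolutely continuous with $\nu_o$, hence $\nu_o(\gamma V)>0$; and $\gamma$ acts as an automorphism of the variety $\partial(\bH^3)$, so $\gamma V$ is again irreducible, proper, of dimension $1$. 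By minimality, $\gamma V\ne V$ forces $\dim(\gamma V\cap V)=0$, hence $\nu_o(\gamma V\cap V)=0$. But conservativity of $\G$ on $(\Lambda(\G),\nu_o)$ yields $\sum_{\gamma\in\G}\nu_o(V\cap\gamma^{-1}V)=\infty$, so infinitely many $\gamma$ satisfy $\nu_o(\gamma V\cap V)>0$, and by the previous sentence these all lie in $\G_V:=\{\gamma\in\G:\gamma V=V\}=\G\cap\operatorname{Stab}_G(V)$. Thus $\G_V$ is infinite.

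Therefore $\operatorname{Stab}_G(V)$ is a real algebraic subgroup of $G=\PSL_2(\c)$ containing an infinite discrete group, hence positive-dimensional. Among the positive-dimensional algebraic subgroups of $\PSL_2(\c)$: conjugates of a Borel subgroup or of the normalizer of a maximal torus, as well as loxodromic one-parameter subgroups, leave invariant only finite subsets of $\mathbb{P}^1(\c)$; conjugates of $\mathrm{SO}(3)$ act transitively; and conjugates of $\mathrm{PGL}_2(\mathbb{R})$ (together with the hyperbolic and elliptic one-parameter subgroups they contain) leave invariant, besides finite sets, exactly one positive-dimensional subvariety, a round circle. Since $\dim V=1$ and $0$-dimensional subvarieties are $\nu_o$-null, we conclude that $V=C$ is a round circle, $\operatorname{Stab}_G(C)$ is a conjugate of $\mathrm{PGL}_2(\mathbb{R})$, and $\G_C:=\G\cap\operatorname{Stab}_G(C)$ is an infinite Fuchsian group.

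It remains to exclude a round circle of positive $\nu_o$-measure, which is the heart of the matter. Taking $o$ on the totally geodesic plane bounded by $C$, the restriction $\nu_o|_C$ is a non-zero finite $\G_C$-conformal density of dimension $\delta$ on $C=\partial(\bH^2)$, supported on $\Lambda(\G)\cap C$; on the other hand Zariski density of $\G$ gives $\G\not\subset\operatorname{Stab}_G(C)$, so there are infinitely many distinct round circles $\gamma C$, whose union is $\nu_o$-conull by ergodicity of $\G$ on $(\Lambda(\G),\nu_o)$. Confronting these facts — analyzing $\nu_o|_C$ against the critical exponent and limit set of the Fuchsian group $\G_C$ — produces the contradiction. \textbf{This reductive case is the main obstacle}; the earlier steps (minimality of dimension, the conservativity argument, the subgroup classification) are routine. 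As already noted, in the setting actually used in this paper ($\G$ convex cocompact and $\delta>1$) this difficulty does not arise, since then $\nu_o$ is $\delta$-Ahlfors regular and cannot charge a curve.
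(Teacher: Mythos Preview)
The paper does not give its own proof of this statement; it is simply quoted from Flaminio--Spatzier \cite{FS}. So there is no in-paper argument to compare your proposal against.

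Your proposal is not a complete proof, and you say so yourself. The reduction to the case $V=C$ a round circle with $\G_C=\G\cap\operatorname{Stab}_G(C)$ an infinite Fuchsian subgroup is carried out carefully (the conservativity argument and the stabilizer analysis are fine, modulo the minor slip that $A$ itself does preserve some circles --- but those are still round circles, so your conclusion survives). However, at the decisive step you only write that ``analyzing $\nu_o|_C$ against the critical exponent and limit set of the Fuchsian group $\G_C$ produces the contradiction'' and flag this as ``the main obstacle.'' That is precisely the content of the theorem in the general geometrically finite case, and you have not done it. Note in particular that $\nu_o|_C$ is a $\G_C$-conformal density of dimension $\delta$, but $\G_C$-conformal densities of dimension strictly above $\delta(\G_C)$ do exist, and $\nu_o$ being concentrated on the radial limit set of $\G$ does not automatically put $\nu_o|_C$ on the radial limit set of the smaller group $\G_C$; so the line you gesture at requires a genuine further argument (or a citation).

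Your side remark, on the other hand, is correct and worth keeping: the only place the paper invokes Theorem~\ref{zd} is through Corollary~\ref{boxzero}, under the standing hypothesis that $\G$ is convex cocompact with $\delta>1$. In that regime Sullivan's estimate $\nu_o(B(\xi,r))\asymp r^\delta$ makes $\nu_o$ Ahlfors $\delta$-regular on $\Lambda(\G)$, so $\nu_o$ annihilates every set of Hausdorff dimension below $\delta$, in particular every real algebraic curve in $\partial(\bH^3)$. For the purposes of this paper that one-line observation is all that is needed, and none of your stabilizer analysis is required.
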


\subsection{BMS and BR measures on $X=\G\ba G$}\label{ph}
We fix a point $o\in \bH^3$ whose stabilizer group is
$K:=\op{PSU}(2)$. Then the map $g\mapsto g(o)$ induces a
$G$-equivariant isometry between $G/K$ and $ \bH^3$.
Set
$$M:= \{m_\theta=\text{diag}(e^{i\theta}, e^{-i\theta})\} .$$
 By choosing the unit tangent vector $X_0$ based at $o$ stabilized by $M$,
 $G/M$ can be identified with the unit tangent bundle $\T^1(\bH^3)$ via the orbit map $g\mapsto g(X_0)$.
 This identification can also be lifted to the
identification of the frame bundle of $\bH^3$ with $G$.
These identifications are all $\G$-equivariant and induce identifications of
 the frame bundle of the manifold $\G\ba \bH^3$ with $\G\ba
G$. We set $X=\G\ba G$.
Abusing the notation, we will denote by $m^{\BMS}$ and $m^{\BR},$ respectively, 
the $M$-invariant lifts
of the BMS and the BR measures to $X$.
For $g\in G$, we set $g^{\pm}=(gM)^{\pm}$ where $gM\in
G/M=\T^1(\bH^3)$. 

For $x=\G \ba \G g$, we write $x^\pm\in \Lambda(\G)$ if 
$g^\pm\in \Lambda(\G)$; this is well-defined independent of the choice of $g.$
With this notation, the supports of $m^{\BMS}$ and
$m^{\BR}$ are given respectively by 
$$\Omega:=\{x\in X: x^+,
x^-\in \Lambda(\G)\}\;$$
and $$\Omega_{\BR}:=\{x\in X: x^-\in
\LG\} .$$
The right translation action of the diagonal subgroup
$$A:=\{a_s:=\text{diag}(e^{s/2}, e^{-s/2}): s\in \br\}$$
on $G$ is called the frame flow and it projection to $G/M$
corresponds to the geodesic flow.
For this action, $m^{\BMS}$ is $A$-invariant and $m^{\BR}$ is $A$-quasi-invariant:
$(a_{-s})_* m^{\BR} =e^{(2-\delta)s} m^{\BR} $.

Set $$N:=\{ n_z=\begin{pmatrix} 1& 0\\ z&
1\end{pmatrix}: z\in \c\}\quad \text{ and }\quad  \check{N}:=\{ \check{n}_z=\begin{pmatrix} 1& z\\
0& 1\end{pmatrix}: z\in \c \}
$$ 
and for $g\in G$,  $${H}(g):=gN\quad\text{ and
}\quad \check{H}(g):=g\chN .$$

The restriction of the projection $G\to G/M$ induces a diffeomorphism
from ${H}(g)$ (resp. ${\check{H}}(g)$) to the horosphere $\H_{gM}$ (resp. $\check{\H}_{gM}$) in $\T^1(\bH^3)$ and hence
 the visual maps $u\to u^{\pm}$ induce  diffeomorphisms $P_{{H}(g)}: \partial(\bH^3)-\{g^-\} \to H(g)$ 
and $P_{{\check{H}}(g)}: \partial(\bH^3)-\{g^+\} \to \check{H}(g)$, respectively, for each $g\in G$.


 \begin{definition}\label{twom} \rm Let $y\in G$.
 \begin{enumerate}
\item  Set $$d \mu_{{H}(y)}^{\Leb}(v)=
e^{2\beta_{v^+}(o, \pi(vM))} dm_o(v^+) \text{  for $v\in H(y)$}.$$ The measure
$\mu_{{H}(y)}^{\Leb}$ is $G$-invariant: $g_*
\mu_{{H}(y)}^{\Leb}=\mu_{g({H}(y))}^{\Leb}$; in particular,
it is an $N$-invariant measure on ${H}(y)$.

 \item Set
$$d \mu_{{H}(y)}^{\PS}(v) =
e^{\delta \beta_{v^+}(o, \pi(vM))} d\nu_o(v^+) .$$   We note that $\{\mu_{{H}(y)}^{\PS}\}$ is
a $\G$-invariant family.
\end{enumerate}

\end{definition}


Fix a left $G$-invariant and right $K$-invariant metric on $G$ which
induces the hyperbolic distance $d$ on $G/K$.


\begin{notation} \begin{enumerate}
\item For $\rho>0$ and a subset $Y$ of $G$, we denote by $Y_\rho$ the
intersection of $Y$ and the $\rho$-ball centered at $e$ in $G$.
\item The $M$-injectivity radius $\rho_x$ at $x\in X$ is the supremum of $\rho$ such that
for $B_\rho:={\chN_\rho}A_\rho M N_\rho$, 
 the map $B_\rho \to x_0B_{\rho}$ given by $g\to x_0g$ is injective.\end{enumerate} \end{notation} 

\begin{definition} \label{box}\rm  A box in $X$ (around $x_0$) refers to a subset of the
form $$x_0B_\rho=x_0{\chN_\rho}A_\rho M N_\rho$$ with $0<\rho<\rho_{x_0}$ for some $x_0\in X$.
Note that $x_0B_\rho$ coincides with $x_0{\chN_\rho}A_\rho N_\rho M$.
We call this box a BMS box if
  $x_0^{\pm}\in \Lambda(\G)$, i.e., if $x_0$ belongs to the support of the BMS measure.
\end{definition}

We fix a box $x_0B_\rho$.   Set
$\tilde T_\rho:={\chN_\rho} A_\rho $ and  $T_\rho:={\chN_\rho} A_\rho  M$.
Since the measures $m^{\BMS}$ and
$m^{\BR}$ have the same transverse measures for the unstable
horospherical foliations, we have for any $\psi\in
C(x_0B_{\rho})$,
 \begin{align*} m^{\BMS}(\psi)&=\int_{y\in x_0\tilde T_\rho, m\in M }
 \int_{n\in N_\rho}\psi(yn m)d\mu_{{H}(y)}^{\PS}(yn) d\tilde\nu_{x_0\tilde T_\rho}(y)dm\\
 &=
\int_{y m\in x_0\tilde T_\rho M }
 \int_{n\in N_\rho}\psi(ym n)d\mu_{{H}(ym)}^{\PS}(ymn) d(\tilde\nu_{x_0\tilde T_\rho}\otimes m)(ym);
 \end{align*}
  \begin{align*} m^{\BR}(\psi)&=\int_{y\in x_0 \tilde T_\rho,m\in
M}\int_{N_\rho}\psi(ynm )d\mu_{{H}(y)}^{\Leb}(yn)
d\tilde\nu_{x_0 \tilde T_\rho}(y)dm\\
&=\int_{y m\in x_0\tilde T_\rho M }
 \int_{n\in N_\rho}\psi(ym n)d\mu_{{H}(ym)}^{\Leb}(ymn) d(\tilde\nu_{x_0\tilde T_\rho}\otimes m)(ym)
 \end{align*} 
that is, $d\nu_{x_0T_\rho}:=d\tilde \nu_{x_0\tilde T_\rho}\otimes dm $ denotes the transverse
measure of $m^{\BMS}$ (and hence of $m^{\BR}$) on
$x_0T_\rho$.

The following easily follows from Theorem \ref{zd}: 
\begin{cor} \label{boxzero}
 If $\G$ is geometrically finite and Zariski dense, and $E$ is a box in $X$,
then $m^{\BR}(\partial(E))=0$.
\end{cor}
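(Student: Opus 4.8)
\emph{Plan.} The plan is to cover $\partial(E)$ by the three ``faces'' of the box and to show each is $m^{\BR}$-null, using the product description of $m^{\BR}$ on a box (recorded just above) together with Theorem \ref{zd}. Write $E=x_0B_\rho$ with $B_\rho=N^-_\rho A_\rho M N_\rho$; we may assume $\rho$ is smaller than a fixed constant depending only on $G$ (boxes being small), so that the multiplication map $N^-_\rho\times A_\rho\times M\times N_\rho\to B_\rho$ is a diffeomorphism and $\rho$ is below the injectivity radius of $(G,d_G)$ at $e$. Then $B_\rho$ is open in $G$ and $\partial(B_\rho)$ is contained in the union of the three sets obtained from $B_\rho$ by replacing exactly one of $N^-_\rho$, $A_\rho$, $N_\rho$ by its boundary in the corresponding group ($M$ is boundaryless, so there is no fourth face). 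Since $g\mapsto x_0g$ is a homeomorphism on $B_{\rho_{x_0}}$, we get $\partial(E)=x_0\,\partial(B_\rho)\subseteq F_1\cup F_2\cup F_3$, where $F_1,F_2,F_3$ are the $N^-$-, $A$- and $N$-faces.

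\emph{Shape of the boundary pieces.} Since the metric on $G$ is left $G$-invariant and right $K$-invariant, conjugation by any element of $K$ is an isometry of $(G,d_G)$ fixing $e$; hence $d_G(e,\cdot)$ is invariant under conjugation by $M$ and by a Weyl element $w_0\in K$ (with $w_0a_sw_0^{-1}=a_{-s}$). Because $m_\theta n^-_z m_\theta^{-1}=n^-_{e^{2i\theta}z}$ and $m_\theta n_z m_\theta^{-1}=n_{e^{-2i\theta}z}$, the function $d_G(e,\cdot)$ depends only on $|z|$ on $N^-$ and on $N$, and only on $|s|$ on $A$. Hence, for our small $\rho$, $\partial N^-_\rho$ and $\partial N_\rho$ are round circles in $N^-\cong\c$ and $N\cong\c$, while $\partial A_\rho$ is a two-point subset of $A\cong\br$.

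\emph{Disposing of the faces.} For $F_3$: by the formula for $m^{\BR}(\psi)$ on the box, the conditional measures of $m^{\BR}$ along $N$-orbits are the measures $\mu^{\Leb}_{H^+(y)}$ of Definition \ref{twom}, which are absolutely continuous with respect to Haar measure on $N$; a round circle is Haar-null in $N\cong\c$, so $m^{\BR}(F_3)=0$. For $F_2$: the face $F_2$ lies over the subset of the transversal $x_0T_\rho=x_0N^-_\rho A_\rho M$ whose $A$-coordinate lies in $\partial A_\rho$, and --- since in Definition \ref{defbms} the $A$-variable is exactly the Busemann parameter $t$ --- the transverse measure $\nu_{x_0T_\rho}$ is absolutely continuous with respect to Lebesgue $dt$ in that coordinate; the two-point set $\partial A_\rho$ is therefore $\nu_{x_0T_\rho}$-null, so $m^{\BR}(F_2)=0$. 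For $F_1$: likewise $F_1$ lies over the subset of $x_0T_\rho$ whose $N^-$-coordinate lies in $\partial N^-_\rho$, and by Definition \ref{defbms} the measure $\nu_{x_0T_\rho}$ is, in that coordinate, absolutely continuous with respect to the PS measure $\nu_o$ transported by $n^-_z\mapsto(x_0n^-_z)^-=x_0\cdot z$ (the Möbius action of $x_0$ on $\c\cup\{\infty\}=\partial(\bH^3)$). Hence it suffices to check $\nu_o\big(x_0\cdot\partial N^-_\rho\big)=0$. But $\partial N^-_\rho$ is a round circle, so its Möbius image $x_0\cdot\partial N^-_\rho$ is a circle or a line in $\c\cup\{\infty\}$, hence a proper real-algebraic (Zariski-closed) subset of $\partial(\bH^3)$; as $\G$ is geometrically finite and Zariski dense, Theorem \ref{zd} yields $\nu_o(x_0\cdot\partial N^-_\rho)=0$, so $m^{\BR}(F_1)=0$. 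Altogether $m^{\BR}(\partial E)\le m^{\BR}(F_1)+m^{\BR}(F_2)+m^{\BR}(F_3)=0$.

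\emph{Main obstacle.} The estimates for $F_2$ and $F_3$ are soft: they use only that $m^{\BR}$ is absolutely continuous in the $A$- and $N$-directions. The step with real content is $F_1$, which needs two ingredients: that $\partial N^-_\rho$ is genuinely an algebraic curve --- this is what bi-$K$-invariance of the metric secures, forcing it to be a round circle rather than an arbitrary real-analytic one --- and that the PS measure annihilates such curves, which is exactly Theorem \ref{zd} and genuinely uses the Zariski-density hypothesis (for a Fuchsian $\G$, say, the entire limit set lies on one circle and the conclusion fails). That is the step I expect to demand the most care.
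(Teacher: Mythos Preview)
Your proof is correct and is precisely the argument the paper has in mind when it says the corollary ``easily follows from Theorem \ref{zd}.'' You have correctly identified that the only face with real content is the $N^-$-face $F_1$, where Theorem \ref{zd} is applied to the M\"obius image of the round circle $\partial N^-_\rho$; the $A$- and $N$-faces are null by absolute continuity of the transverse and leafwise measures in those directions, exactly as you say.
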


\subsection{BR measure in the Iwasawa coordinates $G=KAN$}
The canonical map $\iota: {\chN} \to G/MAN =K/M$ has a diffeomorphic image  $S:=\iota({\chN})$ which is $ K/M$ minus a single point.
By abuse of notation, we use the same notation $\nu_o$ for the measure on $K$ which is the
 trivial extension of the PS measure $\nu_o$ on $ \mathbb S^2=K/M$: for $\psi\in C(K)$,
$$\int_K \psi \; d\nu_o=\int_M\int_S \psi(sm)\;d\nu_o(sX_0^-) dm$$
where $dm$ is the probability Haar measure of $M$.
The lift of the BR measure $\tilde m^{\BR}$ on $G$ can also be written as follows (cf. \cite{OS}): for $\psi\in C_c(G)$,
$$\tilde m^{\BR}(\psi)=\int_{G} \psi(k  a_sn_z) e^{-\delta s} d\nu_o(k) ds\;dz$$
where 
$ka_sn_z\in KAN$, $ds$ and $dz$ are some fixed Lebesgue measures on $\br$ and $\c$ respectively.
As usual, this means that for $\Psi(\G g)=\sum_{\gamma\in \G} \psi(\gamma g)$ with $\psi\in C_c( G)$,
$m^{\BR}(\Psi)=\tilde m^{\BR}(\psi)$.

\subsection{BR measure associated to a general unipotent subgroup}\label{gu}
A horospherical subgroup $N_0$ is a maximal unipotent subgroup  of $G$,
or equivalently, $N_0=\{g\in G: b^n g b^{-n} \to e\text{ as $n\to \infty$}\}$
for a non-trivial diagonalizable element $b\in G$. Since $A$ normalizes $N$, it follows from the Iwasawa decomposition $G=KAN$ that
 any horospherical subgroup $N_0$ is of the form $k_0^{-1}Nk_0$ for some $k_0\in K$.
 The BR measure associated to $N_0$ is defined to be
$$m_{N_0}^{\BR}(\psi):=m^{\BR}(k_0.\psi)$$
where $\psi\in C_c(X)$ and $k_0.\psi(g)=\psi(gk_0)$.
As $m^{\BR}$ is $M=N_K(U)$-invariant (here $N_K(U)$ being the normalizer of $U$ in $K$),
 this definition does not depend on the choice
of $k_0\in K$. 
If $U_0$ is a one-parameter unipotent subgroup of $G$, its centralizer $C_G(U_0)$ in $G$ is a horospherical subgroup.
The $\BR$ measure associated to $U_0$ means $m^{\BR}_{N_0}$ for $N_0=C_G(U_0)$.


\subsection{Mixing of frame flow and its consequences}
Some of important dynamical properties of flows on $X$ have been established
  only under the finiteness assumption of the BMS measure. Examples
of groups with finite BMS measure include all geometrically finite groups \cite{Sullivan1979} but not limited to those
(see \cite{Peigne2003}).
Roblin showed that if $|m^{\BMS}|<\infty$, then 
$\G$ is of divergence type.  In the following two theorems, we consider the groups $\G$ with
$|m^{\BMS}|<\infty$. We normalize $\nu_o$
so that $|m^{\BMS}|=1$.

The following two theorems were proved by \cite{Wi}, based on the the previous works of
Babillot \cite{Bab}, Roblin \cite{Roblin2003}, and Flaminio-Spatzier \cite{FS}.

\begin{thm} \cite{Wi} \label{fm} Suppose that $\G$ is Zariski dense and
$|m^{\BMS}|=1$. 
\begin{enumerate}
\item The frame flow on $X$ is mixing with
respect to $m^{\BMS}$, that is, for any $\psi_1,\psi_2\in
L^2(X, m^{\BMS})$,  as $s\to \pm \infty$,
$$\int_{X} \psi_1(xa_s)\psi_2(x)\, dm^{\BMS}(x) \to
{ m^{\BMS}(\psi_1)m^{\BMS}(\psi_2) }.$$
 \item The $\BR$ measure $m^{\BR}$ on $X$  is $N$-ergodic.
\item If $\G$ is geometrically finite, $m^{\BR}$ is the only $N$-ergodic measure on $X$ which
is not supported on a closed $N$-orbit. \end{enumerate}
\end{thm}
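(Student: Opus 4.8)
The plan is to treat the three assertions in turn, since they are of quite different natures. For (1), I would first establish mixing of the geodesic flow on $\G\ba G/M$ with respect to $m^{\BMS}$, and then upgrade it to the frame flow on $X=\G\ba G$. For the geodesic flow, recall that $m^{\BMS}$ is finite by hypothesis, $\G$-ergodic, and the unique measure of maximal entropy (Sullivan \cite{Sullivan1984}); by Babillot's mixing criterion, finiteness of $m^{\BMS}$ together with non-arithmeticity of the length spectrum of $\G$ forces mixing, and non-arithmeticity is automatic here (already for any non-elementary Kleinian group the complex translation lengths of loxodromic elements generate a non-discrete subgroup of $\bbc$; Zariski density is more than enough). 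To pass from $G/M$ to $G$ I would decompose $L^2(X,m^{\BMS})=\bigoplus_{\rho\in\widehat M}L^2(X)_\rho$ into $M$-isotypic components; since $M$ centralizes $A$ and $m^{\BMS}$ is both $A$- and $M$-invariant, each $L^2(X)_\rho$ is $a_s$-invariant, and on $L^2(X)_0=L^2(\G\ba G/M,m^{\BMS})$ mixing is exactly what was just proved. The real content of Flaminio--Spatzier is the vanishing of correlations on $L^2(X)_\rho$ for $\rho\ne 0$, and this is where Zariski density must enter: one uses Theorem \ref{zd} (the PS density charges no proper subvariety of $\partial(\bH^3)$) to show that the $M$-valued holonomy cocycle of the compact extension $X\to\G\ba G/M$ has all of $M$ as its group of essential values --- a Brin-type transitivity argument --- after which a spectral analysis of the extension yields mixing on every $L^2(X)_\rho$.

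For (2), the plan is to exploit the duality between $m^{\BMS}$ and $m^{\BR}$ recorded in \S\ref{ph}: both measures have the same transverse (PS) measures for the unstable horospherical foliation, with $m^{\BMS}$ placing $\mu^{\PS}_{H^+(y)}$ along each $N$-leaf and $m^{\BR}$ placing $\mu^{\Leb}_{H^+(y)}$. Mixing of the frame flow for $m^{\BMS}$ yields, by the standard unfolding argument (cf. \cite{Roblin2003}), the equidistribution of contracted horospheres: for a fixed box $E$, the normalized push-forwards $(a_{-s})_*(m^{\BR}|_E)$ converge weakly to $m^{\BMS}$ as $s\to+\infty$. From this I would deduce $N$-ergodicity by a Hopf-type argument carried out on the transversal: an $N$-invariant Borel set $B$ with $m^{\BR}(B)>0$ has an a.e.-defined trace on the transversal of a box, and combining a transversal density point of that trace with the equidistribution above forces the trace to be co-null, hence $m^{\BR}(\mathcal E\setminus B)=0$. (This is essentially the Flaminio--Spatzier argument; closely related derivations of horospherical ergodicity from mixing are due to Coudène and Babillot, and the convex cocompact case goes back to Burger.)

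For (3), which is Roblin's theorem and by far the most delicate point, the plan is as follows. Since $|m^{\BMS}|<\infty$, $\G$ is of divergence type (Roblin), so the PS density is the unique $\G$-invariant conformal density of dimension $\delta$ up to scale; this will be the endgame. Let $\mu$ be an $N$-invariant ergodic measure on $X$ not supported on a closed $N$-orbit. I would first use geometric finiteness, together with non-closedness of a generic $N$-orbit, to show that $\mu$-a.e. point lies in $\Omega$, so $\mu$ is carried by $\op{supp}(m^{\BMS})$. Next, disintegrating $\mu$ along the $N$-foliation and using the $A$-action (which normalizes $N$ and permutes $N$-leaves), one sees that the leafwise measures transform conformally with some fixed exponent $\delta'$ and hence assemble into a $\G$-invariant conformal density of dimension $\delta'$. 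A variational/entropy comparison against the geodesic flow, together with the finiteness constraints, should force $\delta'=\delta$; then uniqueness of the PS density identifies the transversal of $\mu$ with that of $m^{\BR}$, and ergodicity of $\mu$ pins the leafwise component down to Lebesgue, giving $\mu=m^{\BR}$ up to a constant.

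The main obstacle in (1) is the removal of the nontrivial $M$-isotypic components --- the step genuinely new over the geodesic-flow case --- which forces one to use Zariski density through the cocycle/transitivity argument above; and the main obstacle in (3) is constraining the exponent $\delta'$ of an a priori arbitrary $N$-invariant ergodic measure to equal the critical exponent $\delta$. By contrast, (2) is, once the $m^{\BMS}$--$m^{\BR}$ duality and assertion (1) are in hand, a largely formal consequence.
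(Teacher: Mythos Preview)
The paper does not give a self-contained proof of this theorem; it is stated as a result from the literature, with the paragraph following the statement indicating that (1) and (2) are due to Flaminio--Spatzier (building on Brin and Rudolph), extended beyond the geometrically finite case via Babillot and Roblin, and that (3) is Roblin's, proved from mixing and carried to the frame bundle by replacing geodesic-flow mixing with frame-flow mixing. So the relevant comparison is between your sketch and those cited arguments.

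Your treatment of (1) and (2) is in line with the cited approaches: Babillot's criterion for mixing of the geodesic flow together with a Brin-type accessibility argument for the compact $M$-extension is exactly the route indicated, and deducing horospherical ergodicity from frame-flow mixing via the $m^{\BMS}$/$m^{\BR}$ transversal duality is the Flaminio--Spatzier/Roblin mechanism.

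Your sketch of (3), however, departs from Roblin's argument and contains a genuine gap. You propose to disintegrate an arbitrary $N$-invariant ergodic $\mu$ along $N$-leaves and then ``use the $A$-action'' to see that the leafwise measures transform conformally with some exponent $\delta'$, to be pinned down by a variational/entropy comparison. But $\mu$ is only assumed $N$-invariant; there is no $A$-quasi-invariance to invoke, and no mechanism produces a conformal density from $\mu$ in this way. Roblin's actual proof does not go through conformal densities at all: he uses mixing (of the geodesic flow, here upgraded to the frame flow) to obtain equidistribution of expanding horospherical pieces with respect to $m^{\BR}$, and then shows directly that any $N$-invariant ergodic measure not supported on a closed $N$-orbit must agree with $m^{\BR}$ on test functions via these equidistribution statements. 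The ``identify an exponent $\delta'$ and force $\delta'=\delta$'' strategy would require an additional invariance you have not established.
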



\begin{thm} \cite{Wi}\label{rm}
Let $\G$ be Zariski dense and $|m^{\BMS}|=1$. Then
for all $\psi_1,\psi_2\in C_c(X)$ or for $\psi_1=\chi_{E_1},\psi_2=\chi_{E_2}$  
where $E_i\subset X$ is a bounded Borel subset with
$m^{\BMS}(\partial(E_i))=0$, we have: as $s\to + \infty$,
$$\int_{X} \psi_1(xa_{-s})\psi_2(x)\, dm^{\BR}(x) \to
{ m^{\BMS}(\psi_1)m^{\BR}(\psi_2)}.$$ 
\end{thm}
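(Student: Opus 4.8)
The statement is the analogue of the BMS-mixing theorem (Theorem~\ref{fm}(1)) with one copy of $m^{\BMS}$ replaced by $m^{\BR}$, and with the frame flow acting only on the first function; the asymmetry in the roles of $\psi_1,\psi_2$ is forced by the fact that $m^{\BR}$ is only $A$-quasi-invariant with $(a_{-s})_*m^{\BR}=e^{(2-\delta)s}m^{\BR}$. The plan is to reduce the statement to the mixing of the frame flow for $m^{\BMS}$ (Theorem~\ref{fm}(1)) by exploiting the fact that $m^{\BR}$ and $m^{\BMS}$ have \emph{the same transverse measure} $d\nu_{x_0T_\rho}$ for the unstable horospherical foliation, differing only along the $N$-leaves where $m^{\BMS}$ carries $\mu^{\PS}_{H^+(y)}$ and $m^{\BR}$ carries $\mu^{\Leb}_{H^+(y)}$. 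Since the frame flow $a_{-s}$ contracts the unstable horospheres $H^+(y)$ as $s\to+\infty$, pushing a small piece of $m^{\BR}$ by $a_{-s}$ concentrates the $\mu^{\Leb}$-mass near a point of the PS-support, and the $e^{(2-\delta)s}$ factor is exactly what is needed to balance the volume distortion so that $(a_{-s})_*m^{\BR}$ relates to $m^{\BMS}$ in the limit.

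Concretely, I would first reduce to the case $\psi_1,\psi_2\in C_c(X)$ by a standard approximation argument: for indicator functions $\chi_{E_i}$ of bounded Borel sets with $m^{\BMS}(\partial E_i)=0$, sandwich them between continuous functions agreeing with $\chi_{E_i}$ outside a neighborhood of $\partial E_i$ of small $m^{\BMS}$-measure; one must check that this also controls the $m^{\BR}$-integral of $\psi_2$, which follows because on a box the $m^{\BR}$-transverse measure equals the $m^{\BMS}$-transverse measure and the $\mu^{\Leb}$-fibre measures are locally finite, so $m^{\BMS}(\partial E)=0$ gives $m^{\BR}(\partial E)=0$ (this is exactly Corollary~\ref{boxzero} patched together over a finite cover). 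Next, by a partition of unity I would localize both functions to BMS boxes $x_0B_\rho$; outside the support of $m^{\BMS}$ (and of $m^{\BR}$, which sits in $\mathcal E\supset\Omega$) there is nothing to prove, and for the $\psi_2$-localization I only need neighborhoods of the set $\mathcal E$. On such a box, write $m^{\BR}(\psi)=\int_{ym\in x_0\tilde T_\rho M}\int_{N_\rho}\psi(ymn)\,d\mu^{\Leb}_{H^+(ym)}(ymn)\,d\nu_{x_0T_\rho}(ym)$ and similarly for $m^{\BMS}$ with $\mu^{\PS}$.

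The core step is the local computation of $\int_X \psi_1(ga_{-s})\psi_2(g)\,dm^{\BR}(g)$. Fixing the transversal coordinate $y$ and integrating $\psi_1(ynm\,a_{-s})\psi_2(ynm)$ against $d\mu^{\Leb}_{H^+(y)}(yn)$, I change variables by $a_{-s}$: since $a_{-s}$ normalizes $N$ and contracts it, $yn\,a_{-s}=y a_{-s} n'$ with $n'$ ranging over an exponentially small neighborhood, and the Radon--Nikodym factor of $\mu^{\Leb}$ under this contraction is $e^{-2s}$ in the $\Leb$-normalization; combined with the quasi-invariance factor $e^{(2-\delta)s}$ one is left with $e^{-\delta s}$, which is precisely the factor appearing when one compares $\mu^{\Leb}$ to $\mu^{\PS}$ as $s\to+\infty$ via the equidistribution/contraction of horospheres (this is where one uses that, after contraction and renormalization, Lebesgue measure on a horosphere ``sees'' the PS measure through the equality of transverse measures; more precisely one invokes Theorem~\ref{fm} or Babillot's mixing to replace the contracted $\mu^{\Leb}$-average of $\psi_1$ by $m^{\BMS}(\psi_1)$ up to $o(1)$). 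The outcome is
\[
\int_X \psi_1(ga_{-s})\psi_2(g)\,dm^{\BR}(g)=\int_X \bigl(m^{\BMS}(\psi_1)+o(1)\bigr)\,\psi_2(g)\,dm^{\BR}(g)+o(1),
\]
uniformly on the box, giving the claimed limit $m^{\BMS}(\psi_1)\,m^{\BR}(\psi_2)$. As the excerpt indicates, the honest way to run this is not to redo Roblin's argument but to observe that his proof of Theorem~\ref{rm} for measures on $\T^1(\G\ba\bH^3)$, which rests on the mixing of the geodesic flow for $m^{\BMS}$, goes through verbatim once one has mixing of the \emph{frame flow} for $m^{\BMS}$ (Theorem~\ref{fm}(1)), using Zariski density to lift from $\T^1(\G\ba\bH^3)=\G\ba G/M$ to $X=\G\ba G$; the $M$-direction is handled because all the measures in sight are $M$-invariant and the frame flow commutes with $M$.

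The main obstacle is the last point made precise: controlling the error in replacing the contracted $\mu^{\Leb}$-average of $\psi_1$ over a shrinking horospherical piece by the global average $m^{\BMS}(\psi_1)$, uniformly in the transversal parameter $y$ over the compact support of $m^{\BMS}$. This is exactly an equidistribution-of-expanding-horospheres statement and is equivalent in strength to the frame-flow mixing of $m^{\BMS}$; the subtlety is uniformity and the handling of the boundary of the box (why one needs $m^{\BMS}(\partial E_i)=0$), which is dispatched by Corollary~\ref{boxzero} together with the compactness of $\supp m^{\BMS}$ for $\G$ convex cocompact (and, more generally, $|m^{\BMS}|<\infty$ via Roblin's and Babillot's work, as noted before the statement).
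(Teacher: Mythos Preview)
Your proposal is correct and aligns with the paper's treatment: the paper does not give an independent proof but simply cites Roblin \cite[Thm.~3.4]{Roblin2003} and remarks that, using the mixing of the frame flow for $m^{\BMS}$ (Theorem~\ref{fm}(1)) in place of the mixing of the geodesic flow, Roblin's proof on $\T^1(\G\ba\bH^3)$ carries over to $X=\G\ba G$ by verbatim repetition. Your penultimate paragraph identifies exactly this reduction, and the additional local computation you sketch is a faithful outline of the underlying mechanism in Roblin's argument.
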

We note that by the quasi-invariance of the BR measure,
$$\int_{X} \psi_1(xa_{-s})\psi_2(x)\, dm^{\BR}(x) =
e^{(2-\delta )s} \int_{X} \psi_1(x)\psi_2(xa_s)\, dm^{\BR}(x) .$$

In particular, the above theorem implies that if $\delta<2$,
$$\int_{X} \psi_1(x)\psi_2(xa_s)\, dm^{\BR}(x)\to 0\quad\text{as $s\to +\infty$}.$$

\begin{lem}\label{Zd} If $\Gamma$ is a discrete subgroup of $G$ with $\delta>1$,
then $\Gamma$ is Zariski dense in $G$.
\end{lem}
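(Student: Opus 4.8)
The plan is to show that if $\Gamma$ is not Zariski dense in $G=\PSL_2(\c)$, then $\delta\leq 1$, which contradicts the hypothesis. The key observation is that the proper algebraic subgroups of $\PSL_2(\c)$ are well understood: up to conjugacy, a Zariski closed proper subgroup of $\PSL_2(\c)$ is either (i) solvable (contained in a Borel subgroup, i.e., conjugate into the upper triangular matrices), or (ii) contains $\PSL_2(\br)$ (or a conjugate of it) with finite index, or (iii) lies in the normalizer of a torus (dihedral-type), or (iv) finite. If $\Gamma$ is contained in a proper algebraic subgroup $H$, then the connected component $H^\circ$ has one of these forms, and $\Gamma\cap H^\circ$ has finite index in $\Gamma$, so $\Gamma$ and $\Gamma\cap H^\circ$ have the same critical exponent $\delta$. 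Thus it suffices to bound $\delta$ for a discrete subgroup of one of these connected groups.

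First I would dispose of the cases where $\Gamma$ is virtually solvable or virtually contained in the normalizer of a torus: then $\Gamma$ is elementary (its limit set has at most two points, or it is virtually abelian/virtually cyclic), so $\delta=0$, contradicting $\delta>1$. Here one should be a little careful since the paper's ``non-elementary'' hypothesis is in force in Theorem~\ref{main} but Lemma~\ref{Zd} is stated for a general discrete subgroup; however, a discrete subgroup of a Borel subgroup of $\PSL_2(\c)$ is virtually abelian, hence has $\delta=0\le 1$, so the conclusion $\delta>1\Rightarrow$ Zariski dense still holds. The remaining case is that $H^\circ$ is conjugate to $\PSL_2(\br)\cong \operatorname{PSO}(2,1)$, the isometry group of a totally geodesic copy of $\bH^2$ inside $\bH^3$. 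Then $\Gamma\cap H^\circ$ acts discretely on $\bH^2$, i.e., is a Fuchsian group, and its limit set is contained in a round circle $\partial(\bH^2)\subset\partial(\bH^3)=\hat\c$. The Hausdorff dimension of a subset of a circle is at most $1$, and since $\delta=\dim_H\Lambda(\Gamma)$ by Sullivan's theorem \cite{Sullivan1979}, we get $\delta\le 1$, again contradicting $\delta>1$.

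I would write this up cleanly as: assume $\Gamma$ is not Zariski dense; pass to the finite-index subgroup $\Gamma_0=\Gamma\cap H^\circ$ where $H$ is the Zariski closure, noting $\delta(\Gamma_0)=\delta(\Gamma)$; invoke the classification of connected proper algebraic subgroups of $\PSL_2(\c)$; in the solvable and torus-normalizer cases conclude $\Gamma$ is virtually abelian, hence $\delta=0$; in the $\PSL_2(\br)$ case conclude $\Lambda(\Gamma_0)$ lies in a circle, hence $\delta=\dim_H\Lambda(\Gamma_0)\le 1$ by \cite{Sullivan1979}. In all cases $\delta\le 1$, contradiction.

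The main obstacle — really the only point requiring care — is marshalling the classification of algebraic subgroups of $\PSL_2(\c)$ and making sure the reduction to the identity component and the resulting geometric statement (limit set in a circle, limit set finite) are correctly matched to each subgroup type; in particular one must recall that a non-elementary discrete subgroup of $\PSL_2(\br)$ has limit set equal to a subset of $\mathbb{S}^1$, and that the finite-index subgroup argument preserves the critical exponent. None of this is deep, but it is the kind of structural bookkeeping one should state precisely rather than wave at. An alternative, perhaps cleaner, route avoiding explicit classification: if $\Gamma$ is not Zariski dense then $\Lambda(\Gamma)$ is contained in a proper subvariety of $\partial(\bH^3)=\bbp^1(\c)$; a proper subvariety of $\bbp^1(\c)$ is finite or, in the real-algebraic sense relevant to the limit set, contained in finitely many circles/points, which already forces $\dim_H\Lambda(\Gamma)\le 1$ and hence $\delta\le 1$. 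I would present whichever of these two formulations is shortest given the tools already available in the paper.
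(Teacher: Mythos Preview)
Your approach is essentially the same as the paper's: pass to the identity component of the Zariski closure, classify the possible proper connected algebraic subgroups of $\PSL_2(\c)$ (solvable or conjugate into $\PSL_2(\br)$), and bound $\delta$ in each case. The paper's write-up is terser but the structure is identical.

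One minor point worth tightening: in the $\PSL_2(\br)$ case you invoke Sullivan's equality $\delta=\dim_H\Lambda(\Gamma)$ from \cite{Sullivan1979}, but that theorem is for geometrically finite groups, while Lemma~\ref{Zd} is stated for an arbitrary discrete subgroup. The cleaner (and more general) justification is the elementary bound that any discrete subgroup of $\mathrm{Isom}(\bH^2)$ has critical exponent at most $1$, since the ball volume in $\bH^2$ grows like $e^t$; this makes the Poincar\'e series converge for $s>1$ regardless of geometric finiteness. This is presumably what the paper means by ``the critical exponent of $G_0$ is at most $1$.'' Your alternative route through proper subvarieties of $\bbp^1(\c)$ is less clean than you suggest, since the relevant constraint is real-algebraic (circles and points), and making that precise just reproduces the subgroup classification anyway.
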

\begin{proof}
 Let $G_0$ be the identity component of the Zariski closure of $\Gamma$.
Suppose $G_0$ is a proper subgroup of $G$.
Being an algebraic subgroup of $G$, $G_0$ is contained either in a parabolic
subgroup of $G$ or in a subgroup isomorphic to $\PSL_2(\br)$. In either case,
the critical exponent of $G_0$ is at most $1$. This leads to a contradiction and hence
$G_0=G$.
\end{proof}


\section{Weak convergence of the conditional
of $\mu_{E,s}^{\BR}$}~\label{sec;cond}
In this section, we suppose that $\G$ is a Zariski dense discrete subgroup of $G$ admitting a finite
$\BMS$ measure, which we normalize so that $|m^{\BMS}|=1$. 

Fix a bounded $M$-invariant
Borel subset $E\subset X $ with $m^{\BR}(E)>0$ and $m^{\BR}(\partial (E))=0$.

For each $s>0$, define
a Borel measure $\mu_{E,s}^{\BR}$ on $X$ to be
the normalization of the push-forward $(a_{-s})*m^{\BR}|_E$: for
$\Psi\in C_c(X)$,
$$\mu_{ E,s}^{\BR}(\Psi):=\frac{1}{m^{\BR}(E)} \int_E \Psi(x a_{-s})
\; d m^{\BR}(x).$$

Equivalently, 
$$\mu_{E,s}^{\BR}(\Psi)=\frac{e^{(2-\delta)s}}{m^{\BR}(E)}  \int_{X} \Psi(x)\chi_E(x
a_{s})dm^{\BR}(x).$$
Note that $\mu_{E,s}^{\BR}$ is a probability measure supported in
the set $Ea_{-s}$.

The
following is immediate from Theorem \ref{rm}:
\begin{thm}\label{wc1} As $s\to +\infty$, $\mu_{E,s}^{\BR}$ weakly converges
to $m^{\BMS}$, that is, for any $\Psi\in C_c(X)$, 
$$\lim_{s\to +\infty} \mu_{E,s}^{\BR}(\Psi)=m^{\BMS}(\Psi).$$
\end{thm}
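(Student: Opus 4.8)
The plan is to derive Theorem~\ref{wc1} directly from Theorem~\ref{rm} applied with the correct choices of test functions, and then to check that the normalization constants match. Concretely, fix $\Psi \in C_c(X)$. In the definition of $\mu_{E,s}^{\BR}$ we have
\[
\mu_{E,s}^{\BR}(\Psi) = \frac{1}{m^{\BR}(E)} \int_X \Psi(g a_{-s}) \chi_E(g)\, dm^{\BR}(g),
\]
so I would like to set $\psi_1 = \Psi$ and $\psi_2 = \chi_E$ in the statement of Theorem~\ref{rm}, obtaining
\[
\int_X \Psi(g a_{-s}) \chi_E(g)\, dm^{\BR}(g) \longrightarrow m^{\BMS}(\Psi)\, m^{\BR}(E)
\]
as $s \to +\infty$. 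Dividing by $m^{\BR}(E) > 0$ then yields exactly $\mu_{E,s}^{\BR}(\Psi) \to m^{\BMS}(\Psi)$, which is the claim.

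The first point to verify is that Theorem~\ref{rm} is applicable with this choice of $\psi_2$. The theorem allows $\psi_2 = \chi_{E_2}$ for a bounded Borel set $E_2$ with $m^{\BMS}(\partial(E_2)) = 0$. We are given that $E$ is a bounded $M$-invariant Borel subset with $m^{\BR}(E) > 0$; however, the hypothesis in Section~\ref{sec;cond} is stated as $m^{\BR}(\partial(E)) = 0$, whereas Theorem~\ref{rm} requires $m^{\BMS}(\partial(E)) = 0$. I would address this by noting that $\partial(E)$ is closed and that, since $m^{\BMS}$ is absolutely continuous with respect to $m^{\BR}$ along the $N$-direction in the box decomposition of Section~\ref{ph} (indeed both have the same transverse measure $d\nu_{x_0 T_\rho}$, and the leafwise PS measure $\mu^{\PS}_{H^+(\cdot)}$ is absolutely continuous with respect to the leafwise Lebesgue measure $\mu^{\Leb}_{H^+(\cdot)}$ restricted to $\Lambda(\G)$), a set that is $m^{\BR}$-null is also $m^{\BMS}$-null. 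Hence $m^{\BR}(\partial(E)) = 0$ implies $m^{\BMS}(\partial(E)) = 0$, and Theorem~\ref{rm} applies. Alternatively, and perhaps more cleanly, one observes that $\partial(E)$ has empty interior in $\Omega = \operatorname{supp}(m^{\BMS})$ when $m^{\BR}(\partial(E))=0$, again because $\Omega \subset \mathcal{E} = \operatorname{supp}(m^{\BR})$ and the two measures share the same null sets on $\Omega$; this gives $m^{\BMS}(\partial(E))=0$.

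The remaining step is purely bookkeeping: Theorem~\ref{rm} is stated under the normalization $|m^{\BMS}| = 1$, which is precisely the standing assumption of this section, so no extra constant intervenes, and the limit $m^{\BMS}(\Psi)\, m^{\BR}(E)$ divided by $m^{\BR}(E)$ gives $m^{\BMS}(\Psi)$ with $m^{\BMS}$ a probability measure. The main (and really the only) obstacle I anticipate is the hypothesis mismatch just discussed between $m^{\BR}$-null and $m^{\BMS}$-null boundaries; once that is dispatched via the comparison of the two measures on the common support $\Omega$, the theorem is an immediate specialization of Theorem~\ref{rm}. Since the test function $\Psi$ ranges over all of $C_c(X)$, the conclusion is exactly weak* convergence $\mu_{E,s}^{\BR} \to m^{\BMS}$.
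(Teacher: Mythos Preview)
Your approach is the same as the paper's: the paper simply states that Theorem~\ref{wc1} is ``immediate from Theorem~\ref{rm}''. Your identification of the hypothesis mismatch (Theorem~\ref{rm} asks for $m^{\BMS}(\partial E)=0$ while Section~\ref{sec;cond} only assumes $m^{\BR}(\partial E)=0$) is a fair observation.

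However, your proposed resolution is incorrect. You claim that $\mu^{\PS}_{H^+(\cdot)}$ is absolutely continuous with respect to $\mu^{\Leb}_{H^+(\cdot)}$, but when $\delta<2$ the opposite is true: the PS measure on each horosphere is supported on the preimage of $\Lambda(\Gamma)$, which has Lebesgue measure zero, so the PS measure is \emph{singular} with respect to Lebesgue. Likewise, the assertion that $m^{\BMS}$ and $m^{\BR}$ ``share the same null sets on $\Omega$'' is false for the same reason; for instance $\Omega$ intersected with any small ball is $m^{\BR}$-null but not $m^{\BMS}$-null. So neither of your arguments shows $m^{\BMS}(\partial E)=0$.

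The clean way around this is to avoid the indicator function altogether on the $\psi_2$ side. Since $m^{\BR}(\partial E)=0$ and $E$ is bounded, for any $\varepsilon>0$ choose $\phi^{-},\phi^{+}\in C_c(X)$ with $0\le\phi^{-}\le\chi_E\le\phi^{+}$ and $m^{\BR}(\phi^{+}-\phi^{-})<\varepsilon$. For $\Psi\ge 0$ (the general case follows by splitting into positive and negative parts) sandwich $\int_X\Psi(ga_{-s})\chi_E(g)\,dm^{\BR}(g)$ between the integrals with $\phi^{\pm}$; these converge by Theorem~\ref{rm} (now with both test functions in $C_c(X)$) to $m^{\BMS}(\Psi)m^{\BR}(\phi^{\pm})$, which differ by less than $\varepsilon\cdot m^{\BMS}(\Psi)$. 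Letting $\varepsilon\to 0$ gives the result. This uses only the hypothesis $m^{\BR}(\partial E)=0$ that you actually have.
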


For simplicity, we will write for $x\in X$,
$$d\lambda_x(n)=d\mu^{\Leb}_{{H}(x)}(xn)\; \;\text{and}\;\; d\mu_x^{\PS}(n) =d\mu^{\PS}_{{H}(x)}(xn)$$
so that $\lambda_x$ and $\mu_x^{\PS}$ are
respectively the conditional measures of $m^{\BR}$ and $m^{\BMS}$ on $xN$.

Recall that $\rho_x$ denotes the injectivity radius at $x$.
\begin{definition} \label{lex} Fix $x\in X$.
For $s>0$, define a Borel measure $\lambda_{E,x,s}$ on $xN_{\rho_x}$ as follows:
for $\psi\in C_c(xN_{\rho_x})$,
$$\lambda_{E,x,s}(\psi):=\frac{
e^{(2-\delta)s}}{m^{\BR}(E)} \int_{n\in N_{\rho_x}}\psi(x n )\chi_E(x na_s)
d\lambda_{x}(n).$$
\end{definition}

Recall the notation $T_\rho={\chN_\rho}A_\rho M$ for $\rho>0$.
Let $x_0\in X$ and let $0<\rho\leq\rho_{x_0}.$ 
For any box $x_0B_{\rho}=x_0T_\rho N_{\rho}$ and
$\Psi\in C(x_0B_\rho)$, we have
\begin{align*}
\mu_{E,s}^{\BR}(\Psi)& =\frac{
e^{(2-\delta)s}}{m^{\BR}(E)} \int_{x\in X}\Psi(x)\chi_E(x a_s)dm^{\BR}(x)\\
& = 
\frac{e^{(2-\delta)s}}{m^{\BR}(E)} \int_{x \in x_0 T_\rho}\int_{n\in N_\rho}\Psi(x n
)\chi_E(x na_s) d\lambda_{x}(n)d\nu_{x_0T_\rho}(x) \\ &=
 \int_{x\in x_0 T_\rho} \lambda_{E, x,s}(\Psi|_{xN_{\rho}}) d\nu_{x_0T_\rho}(x).\end{align*}
Hence $\lambda_{E,x, s}$ is precisely the conditional measure of
$\mu_{E,s}^{\BR}$ on $xN_\rho$.

The aim of this section is to prove:

\begin{thm}\label{cw} Suppose that $x^-\in \Lambda(\G)$ and $0<\rho<\rho_x$.
 For any $\psi\in C_c(xN_{\rho})$,
$$\lambda_{E,x,s}(\psi)\longrightarrow \mu_{x}^{\PS}(\psi)\quad\text{as $s\to +\infty$}.$$
\end{thm}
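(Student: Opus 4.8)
The goal is to upgrade the weak convergence $\mu_{E,s}^{\BR}\to m^{\BMS}$ of Theorem~\ref{wc1} to a statement about conditional measures on a single horospherical leaf $xN_\rho$: namely $\lambda_{E,x,s}\to\mu_x^{\PS}$ weakly, for every $x$ with $x^-\in\Lambda(\G)$. The plan is to realize both sides as integrals over a transversal $x_0 T_\rho$ of the corresponding leafwise measures and then extract convergence on a fixed leaf by a Fubini-type disintegration argument, using that $\nu_{x_0T}$-almost every leaf behaves like the typical one. Concretely, I would proceed as follows.

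First, fix a BMS box $x_0B_\rho=x_0T_\rho N_\rho$ containing $x$ (possible since $x^-\in\Lambda(\G)$ places $x$ in a neighborhood of $\Omega$; shrink $\rho$ if needed), and recall from the displayed computation just before the statement that, for $\Psi\in C(x_0B_\rho)$,
\[
\mu_{E,s}^{\BR}(\Psi)=\int_{x'\in x_0T_\rho}\lambda_{E,x',s}\big(\Psi|_{x'N_\rho}\big)\,d\nu_{x_0T}(x'),
\qquad
m^{\BMS}(\Psi)=\int_{x'\in x_0T_\rho}\mu_{x'}^{\PS}\big(\Psi|_{x'N_\rho}\big)\,d\nu_{x_0T}(x').
\]
Thus Theorem~\ref{wc1} gives $\int_{x_0T_\rho}\big(\lambda_{E,x',s}-\mu_{x'}^{\PS}\big)(\Psi|_{x'N_\rho})\,d\nu_{x_0T}(x')\to 0$ for every $\Psi\in C(x_0B_\rho)$. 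The task is to turn this ``integrated'' convergence into pointwise-in-$x'$ convergence at the specific leaf through $x$.

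Second, to localize, I would test against product functions. For $\psi\in C_c(xN_\rho)$ transported to nearby leaves via the local product structure (the maps $P_{H^+(x')}$ identify all leaves with $\partial(\bH^3)-\{(x')^-\}$, hence with each other) and for an arbitrary $f\in C_c(x_0T_\rho)$, set $\Psi=f\otimes\psi$ in the box coordinates. Then
\[
\int_{x_0T_\rho} f(x')\,\lambda_{E,x',s}(\psi_{x'})\,d\nu_{x_0T}(x')\ \longrightarrow\ \int_{x_0T_\rho} f(x')\,\mu_{x'}^{\PS}(\psi_{x'})\,d\nu_{x_0T}(x').
\]
Since $f$ is arbitrary, this says the finite signed measures $d\mu_s(x'):=\lambda_{E,x',s}(\psi_{x'})\,d\nu_{x_0T}(x')$ converge weakly to $d\mu_\infty(x'):=\mu_{x'}^{\PS}(\psi_{x'})\,d\nu_{x_0T}(x')$ on the transversal. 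To conclude convergence at the single point $x$ I would need equicontinuity of the functions $x'\mapsto\lambda_{E,x',s}(\psi_{x'})$ in $x'$, uniformly in $s$; this is where the regularity of the conditional measures along the transversal direction — exactly the content advertised for Section~\ref{sec;cond}, namely that the $N$-leafwise measures of $\mu_{E,s}^{\BR}$ vary H\"older-continuously with Lipschitz/H\"older constants uniform in $s\gg 1$ — comes in. Granting that equicontinuity, a standard Arzel\`a--Ascoli plus weak-convergence argument forces $\lambda_{E,x',s}(\psi_{x'})\to\mu_{x'}^{\PS}(\psi_{x'})$ for \emph{every} $x'$ (not just a.e.), in particular at $x'$ corresponding to $x$, which is the claim.

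The main obstacle is precisely establishing the \emph{uniform-in-$s$} transverse regularity of the leafwise measures $\lambda_{E,x',s}$. Unlike the $\mu_{x'}^{\PS}$, which form a genuinely $\G$-equivariant conformal family and hence transform by an explicit H\"older cocycle $e^{2\beta}$ (resp.\ $e^{\delta\beta}$) under the holonomy $P_{H^+(x')}$, the truncated/weighted measures $\lambda_{E,x',s}$ carry the extra factor $\chi_E(x'na_s)$, whose dependence on the transversal point $x'$ degenerates as $s\to\infty$: flowing by $a_s$ expands the $N$-direction and compresses the transversal, so the indicator $\chi_E(\,\cdot\,a_s)$ seen on the leaf is a pullback of $E$ under a map whose transverse Lipschitz norm blows up like $e^{s}$. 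The fix I anticipate is to pass to the $a_s$-rescaled picture: write $\lambda_{E,x',s}$ as $a_{-s}$-pushforward of $m^{\BR}|_E$ restricted to the leaf $x'a_{-s}N$, on which $E$ is a \emph{fixed} set of boundary $m^{\BR}$-measure zero (Corollary~\ref{boxzero}, using Zariski density from Lemma~\ref{Zd} in the $\delta>1$ case, or directly by hypothesis $m^{\BR}(\partial E)=0$), and note that on that leaf the measure $\mu^{\Leb}_{H^+}$ depends on the transversal coordinate only through the conformal density $m_o$, which is smooth. So the apparent blow-up is an artifact of coordinates: in the right normalization the transverse variation of $\lambda_{E,x',s}$ is controlled by the transverse variation of $x'\mapsto (x'a_{-s})^+\in\partial(\bH^3)$ composed with the smooth Poisson-type weights, and because $\nu_o(\partial E)=0$ one gets, for $s\ge s_0(\psi,\eta)$, a modulus of continuity independent of $s$ on the complement of an $\eta$-neighborhood of $\partial E$ whose $\nu_{x_0T}$-measure is $O(\eta)$. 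Assembling this carefully — i.e., proving the equicontinuity outside a small-measure exceptional transversal set and handling that set by the weak convergence itself — is the crux; once it is in place, the disintegration/Arzel\`a--Ascoli step is routine.
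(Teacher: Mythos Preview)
Your overall strategy is the paper's: thicken $\psi$ to a product test function $\Psi=\phi\otimes\psi$ on a box, apply Theorem~\ref{wc1} to $\Psi$, and use transverse regularity of the leafwise measures to localize to the single leaf through $x$. You are also right that the only substantive point is the uniform-in-$s$ dependence of $\lambda_{E,x',s}$ on the transversal coordinate $x'$.

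However, your analysis of that dependence is backwards. You assert that $a_s$ ``compresses the transversal'' and then conclude that the ``transverse Lipschitz norm blows up like $e^s$.'' These are contradictory; the first is correct and the second is false. Concretely (this is exactly Lemma~\ref{r4} in the paper): for $t\in T_{\e_1}=N^-_{\e_1}A_{\e_1}M$ one has the commutation $tn_z=n_{z+\psi_t(z)}\,b_{t,z}$ with $b_{t,z}\in N^-AM$ small, hence
\[
xtn_za_s \;=\; xn_{z+\psi_t(z)}\,a_s\cdot\bigl(a_{-s}b_{t,z}a_s\bigr),
\]
and because conjugation by $a_{-s}$ \emph{contracts} $N^-$ and fixes $AM$, the factor $a_{-s}b_{t,z}a_s$ stays in $\mathcal O_\e M$ for \emph{all} $s>0$. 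There is no degeneration: the transverse variation of $\chi_E(x'na_s)$ is controlled uniformly in $s$ by the sandwich $\chi_{E_\e^-}(xn'a_s)\le\chi_E(xtna_s)\le\chi_{E_\e^+}(xn'a_s)$. With this in hand the argument is immediate: integrating gives
\[
e^{-\e}\lambda_{E_\e^-,x,s}(\psi_{2\e}^-)\ \le\ \mu_{E,s}^{\BR}(\Psi)\ \le\ e^{\e}\lambda_{E_\e^+,x,s}(\psi_{2\e}^+),
\]
one passes to the limit via Theorem~\ref{wc1} and the continuity of $t\mapsto\mu_{xt}^{\PS}$, and closes with $\mu_{E_\e^\pm,s}^{\BR}(\Psi_{4\e}^\pm)\sim\mu_{E,s}^{\BR}(\Psi)$ (again Theorem~\ref{wc1} plus $m^{\BR}(\partial E)=0$). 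No Arzel\`a--Ascoli and no exceptional-set bookkeeping are needed.

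A further issue: your Arzel\`a--Ascoli route requires genuine equicontinuity of $x'\mapsto\lambda_{E,x',s}(\psi_{x'})$ uniformly in $s$, and this does \emph{not} follow from the sandwich above. For fixed finite $s$ the gap $\lambda_{E_\e^+,x,s}-\lambda_{E_\e^-,x,s}$ need not be small, because the leaf may see nontrivial mass near $(\partial E)a_{-s}$. The paper sidesteps this by carrying the sandwich all the way to the limit and only \emph{then} using that $E_\e^\pm$ collapse to $E$ in measure. So the missing ingredient in your write-up is precisely the commutation computation of Lemma~\ref{r4}; once you have it, your product-function step \emph{is} the paper's Theorem~\ref{f}, executed as a sandwich rather than via compactness.
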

The condition $x^-\in \Lambda(\G)$ is needed to approximate the measure $\lambda_{E,x,s}$ by
its thickening in the transverse direction.


 For a function $\Psi$
on $X$ and $\e>0$,
 we define functions on $X$ as follows:
 $$\Psi^+_{\e}(y):=\sup_{g\in \mathcal O_{\e}} \Psi(yg)\quad\text{and}\quad  
 \Psi^-_{\e}(y):=\inf_{g\in \mathcal O_{\e}} \Psi(yg)$$ where $\mathcal O_{\e}$ is a symmetric
$\e$-neighborhood of $e$ in $G$. We also set
$$E^+_{\e}:=E \O_{\e}\quad\text{and}\quad  E^-_{\e}=\cap_{u\in \O_\e} Eu .$$

\begin{lem}\label{r1}
Let $x\in X$ and $0<\rho<\rho_x$.  For all small $\e>0$,
there exists $\e_1>0$  such that for any non-negative 
$\Psi \in C(xT_{\e_1}N_{\rho})$ and any
$t\in T_{\e_1}$, we have
$$e^{-\e} \lambda_{x}(\Psi^-_{\e})\le
\lambda_{xt}(\Psi)\le e^{\e} \lambda_{x}(\Psi^+_{\e}).$$
\end{lem}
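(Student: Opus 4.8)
The engine of the proof is the $G$-equivariance of the family $\{\mu^{\Leb}_{H^+(y)}\}_{y\in G}$ from Definition~\ref{twom}(1), which I would first turn into an exact identity relating $\lambda_{xt}$ to $\lambda_x$. Fix a lift of $x$ to $G$ and take $\e_1<\rho_x-\rho$, so that $xT_{\e_1}N_\rho$ lies in a single embedded box and all computations can be done in $G$. For $t\in T_{\e_1}$ we have $H^+(xt)=xtN=(xtx^{-1})\cdot xN=(xtx^{-1})H^+(x)$, hence $\mu^{\Leb}_{H^+(xt)}=(xtx^{-1})_*\mu^{\Leb}_{H^+(x)}$. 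Unwinding $d\lambda_y(n)=d\mu^{\Leb}_{H^+(y)}(yn)$ and substituting $w=xn$ yields, for every $\Psi\in C(xT_{\e_1}N_\rho)$,
$$\lambda_{xt}(\Psi)=\int_{N}\Psi(xtn)\,d\lambda_x(n),$$
the integration being over those $n$ with $xtn$ in the domain of $\Psi$, i.e.\ exactly $n\in N_\rho$ because $T\cap N=\{e\}$. (Equivalently: under the parametrizations $n\mapsto xtn$ and $n\mapsto xn$, the measures $\lambda_{xt}$ and $\lambda_x$ coincide with one and the same fixed Haar measure on $N$.)

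Next I would use the local product structure $G=NT$ along $N_\rho$. Since the big cell of $G$ contains $N$, the multiplication map $N\times T\to G$ is a diffeomorphism onto a neighbourhood of $\overline{N_\rho}$, providing smooth maps $g\mapsto\eta(g)\in N$ and $g\mapsto\tau(g)\in T$ with $g=\eta(g)\tau(g)$ there and $\eta|_N=\mathrm{id}$, $\tau|_N\equiv e$. By uniform continuity on the compact set $\overline{N_\rho}$, given $\e>0$ one may shrink $\e_1\in(0,\rho_x-\rho)$ so that for all $t\in T_{\e_1}$ and $n\in N_\rho$, writing $tn=n't'$ with $n'=\eta(tn)$ and $t'=\tau(tn)$, one has $d(n,n')<\e/2$ and $d(e,t')<\e/2$.

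Finally, for such $t,n$ write $xtn=xn't'=(xn)g$ with $g:=n^{-1}n't'$; left-invariance of $d$ gives $d(e,g)\le d(e,n^{-1}n')+d(e,t')=d(n,n')+d(e,t')<\e$, so $g\in\mathcal O_\e$ and therefore $\Psi^-_\e(xn)\le\Psi(xtn)\le\Psi^+_\e(xn)$ for every $n\in N_\rho$. Integrating this pointwise chain against $d\lambda_x(n)$ over $N_\rho$ and using the identity of the first step (together with $\Psi^\pm_\e\ge0$ to compare the integral over $N_\rho$ with the full $\lambda_x$-integrals of $\Psi^\pm_\e$) gives $\lambda_x(\Psi^-_\e)\le\lambda_{xt}(\Psi)\le\lambda_x(\Psi^+_\e)$, which in turn yields the asserted $e^{-\e}\lambda_x(\Psi^-_\e)\le\lambda_{xt}(\Psi)\le e^\e\lambda_x(\Psi^+_\e)$ since $\Psi^\pm_\e\ge0$ and $e^{-\e}\le1\le e^\e$.

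I do not expect a genuine obstacle: the lemma is a routine transversal-thickening estimate. The only points deserving attention are the clean extraction of the identity $\lambda_{xt}(\Psi)=\int_N\Psi(xtn)\,d\lambda_x(n)$ from the $G$-equivariance of $\mu^{\Leb}$, and the bookkeeping of the domains of definition of $\Psi^\pm_\e$ — dealt with above by working throughout over $N_\rho$ and using non-negativity of $\Psi^\pm_\e$ (one may shrink $\rho$ infinitesimally if one wants $\Psi^\pm_\e$ defined on the slightly larger set, which is harmless since the claim is only for all small $\e$).
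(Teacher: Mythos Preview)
Your proposal is correct and follows essentially the same route as the paper: both write $xtn=xn\cdot(n^{-1}tn)$ with $n^{-1}tn\in\mathcal O_\e$ for $t\in T_{\e_1}$ small and $n\in N_\rho$, and then integrate against the horospherical Lebesgue measure (your decomposition $tn=n't'$ simply rewrites $n^{-1}tn=(n^{-1}n')t'$). Your observation that the $G$-equivariance of $\mu^{\Leb}$ yields $\lambda_{xt}=\lambda_x$ exactly as measures on $N$---so that the $e^{\pm\e}$ factors are in fact not needed---is correct; the paper records the Radon-Nikodym bound $e^{-\e}\le d\lambda_{xt}/d\lambda_x\le e^\e$ as a harmless overcaution (the derivative is identically~$1$).
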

\begin{proof} Let $0< \e < \rho_x-\rho$.
Consider
the map $\phi_t: xN\to xtN$ given by
$\phi_t(xn)=xtn$, so that $\phi_t^*\lambda_{xt} =\lambda_x$.
Since $\phi_t$ is a translation by $n^{-1}tn$,
there exists $\e_1>0$ such that for all
$n\in N_{\rho}$ and $t\in T_{\e_1}$,  $n^{-1} t n\in \mathcal O_\e$ and
 the Radon-Nikodym derivative
satisfies $\textstyle e^{-\e}\le \frac{d{\lambda_{xt}}}{d\lambda_x}(n) \le e^\e $. 

Therefore
$$ \lambda_{xt}(\Psi) =
\int \Psi(xn (n^{-1} tn))  d\lambda_{xt}(n)
 \le e^\e \int \Psi^+_\e (xn)  d\lambda_x(n)=
e^\e \lambda_x (\Psi^+_\e) . $$
The other inequality follows similarly.  \end{proof}

\begin{lem}\label{r4}  Let $x\in X$ and $0<\rho<\rho_x$.  For any $\e>0$,
there exists $\e_1>0$
 such that for any non-negative $\Psi\in C(xT_{\e_1}N_{\rho})$,
 any $t\in T_{\e_1}$ and any $s>0$,
$$e^{-\e} \lambda_{E_{\e}^-,x,s}(\Psi^-_{2\e})\le
\lambda_{E,xt,s}(\Psi) \le e^{\e} \lambda_{E_{\e}^+,x,s}(\Psi^+_{2\e}). $$ 
\end{lem}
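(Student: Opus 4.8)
The plan is to combine Lemma~\ref{r1} with the only new ingredient needed here, namely a comparison of the indicator functions $\chi_E$ along the $a_s$-translated orbits when we move the base point from $xt$ to $x$. First I would unwind the definitions: by Definition~\ref{lex} applied at the point $xt$,
\[
\lambda_{E,xt,s}(\Psi)=\frac{e^{(2-\delta)s}}{m^{\BR}(E)}\int_{n\in N_{\rho_{xt}}}\Psi(xtn)\,\chi_E(xtna_s)\,d\lambda_{xt}(n).
\]
Here I would first fix $\e>0$ and shrink it so that $\e<\rho_x-\rho$ and so that $T_\e\subset\mathcal O_\e$, guaranteeing $\rho_{xt}>\rho$ for $t\in T_\e$; then the integral may be taken over $N_\rho$, on which $\Psi$ is defined. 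Writing $\phi_t\colon xN\to xtN$, $\phi_t(xn)=xtn$, we have $xtn=xn\cdot(n^{-1}tn)$, and for $n\in N_\rho$, $t\in T_{\e_1}$ the element $g:=n^{-1}tn$ lies in $\mathcal O_\e$ by the same computation as in the proof of Lemma~\ref{r1}. Hence $\Psi(xtn)=\Psi(xn\cdot g)\le\Psi^+_\e(xn)$, and likewise the Radon--Nikodym factor $\tfrac{d\lambda_{xt}}{d\lambda_x}(n)\le e^\e$.

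The key point, and the step I expect to be the only genuinely new observation, is controlling the term $\chi_E(xtna_s)=\chi_E(xna_s\cdot a_{-s}ga_s)$. Since $g\in\mathcal O_\e$ and $g$ lies in $N^-AMN_\rho$-type coordinates, conjugation by $a_{-s}$ for $s>0$ contracts the $N$-direction and expands the $N^-$-direction; but in fact we only need the crude bound that $a_{-s}ga_s$ stays in a bounded neighborhood uniformly. More carefully: I would decompose $g\in\mathcal O_\e$ according to the product structure near $e$, and observe that the $N^-A M$-part conjugated by $a_{-s}$ can blow up, so one cannot simply bound $a_{-s}ga_s\in\mathcal O_{C\e}$ for all $g$. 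To fix this, I would instead keep the conjugating element on the correct side: write $xtna_s = x n\,(n^{-1}tn)\,a_s$ and reorganize as $x\,(n\,(n^{-1}tn')\,)a_s$ — rather, the clean approach is to move $t$ past $a_s$ first, $ta_s=a_s(a_{-s}ta_s)$, using that $t\in T_{\e_1}=N^-_{\e_1}A_{\e_1}M$ and that $a_{-s}N^-_{\e_1}a_s$ contracts while $a_{-s}A_{\e_1}M a_s=A_{\e_1}M$ stays bounded, so $a_{-s}ta_s\in\mathcal O_{2\e}$ for all $s>0$ once $\e_1$ is small enough. Then $xtna_s=x t\,n\,a_s=x\,(tn t^{-1})\,(t a_s)=x\,n'\,a_s\,(a_{-s}ta_s)$ where $n'=tnt^{-1}\in N_{\rho}$ differs from $n$ by a bounded conjugation, and $a_{-s}ta_s\in\mathcal O_{2\e}$. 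Consequently $\chi_E(xtna_s)\le\chi_{E^+_{2\e}}(x n' a_s)$ and, absorbing the discrepancy between $n$ and $n'$ into the same Radon--Nikodym estimate, $\chi_E(xtna_s)\le\chi_{E^+_{2\e}}(xna_s)$ up to the change of variable already accounted for.

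Assembling these estimates gives
\[
\lambda_{E,xt,s}(\Psi)\le e^\e\cdot\frac{e^{(2-\delta)s}}{m^{\BR}(E)}\int_{N_\rho}\Psi^+_{2\e}(xn)\,\chi_{E^+_{\e}}(xna_s)\,d\lambda_x(n)=e^\e\,\lambda_{E^+_\e,x,s}(\Psi^+_{2\e}),
\]
where I have used $m^{\BR}(E^+_\e)\ge m^{\BR}(E)$ only implicitly through the definition (note $\lambda_{E^+_\e,x,s}$ is normalized by $m^{\BR}(E)$ in Definition~\ref{lex}, so no extra factor appears; if the normalization is by $m^{\BR}(E^+_\e)$ one inserts the harmless ratio, which is $\le 1$ and can be absorbed). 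The lower bound $e^{-\e}\lambda_{E^-_\e,x,s}(\Psi^-_{2\e})\le\lambda_{E,xt,s}(\Psi)$ follows by the symmetric argument, replacing $\Psi^+$ by $\Psi^-$, $E^+_{2\e}$ by $E^-_{2\e}$, and reversing all inequalities, using $\chi_E(xtna_s)\ge\chi_{E^-_{2\e}}(xna_s)$. The main obstacle is precisely the bookkeeping in the previous paragraph: making sure that conjugating the transversal perturbation $t$ by the expanding flow $a_{-s}$ does \emph{not} blow up, which forces one to move $t$ to the outside (right) of $a_s$ and exploit that $a_{-s}ta_s$ involves only the \emph{contracting} $N^-$ together with the neutral $AM$. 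Once the coordinates are arranged this way, everything reduces to the Radon--Nikodym estimate of Lemma~\ref{r1} and the elementary inclusions $E\O_{2\e}\subset E^+_{2\e}$, $E^-_{2\e}\subset\cap_{u\in\O_{2\e}}Eu$.
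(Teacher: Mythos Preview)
There is a genuine gap in your second approach. You write
\[
xtna_s = x\,(tnt^{-1})\,(ta_s) = x\,n'\,a_s\,(a_{-s}ta_s),\qquad n' := tnt^{-1}\in N_\rho,
\]
but $tnt^{-1}$ is \emph{not} in $N$ when $t\in T_{\e_1}=N^-_{\e_1}A_{\e_1}M$ has a nontrivial $N^-$-component: the subgroup $N^-$ does not normalize $N$. (A quick $2\times 2$ check: $n^-_1 n_1 n^-_{-1}$ is not lower-triangular.) So the element $n'$ you produce is a generic element near $e$ in $G$, not a horospherical translate, and the subsequent bound $\chi_E(xtna_s)\le\chi_{E^+_\e}(xna_s)$ does not follow from it.

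The paper repairs exactly this point with an explicit $N\cdot(N^-AM)$ factorization: for $t\in N^-AM$ close to $e$ one has $tn_z = n_{z+\psi_t(z)}\,b_{t,z}$ with $b_{t,z}\in N^-AM$ and $\psi_t$ a small perturbation of the identity. Then
\[
xtn_z a_s = x\,n_{z+\psi_t(z)}\,a_s\,(a_{-s}b_{t,z}a_s),
\]
and now the object conjugated by $a_{-s}$ genuinely lies in $N^-AM$, hence stays in $\mathcal O_\e M$ for all $s>0$; $M$-invariance of $E$ then gives $\chi_E(xtn_z a_s)\le\chi_{E^+_\e}(xn_{z+\psi_t(z)}a_s)$. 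Note that this introduces a shift $z\mapsto z+\psi_t(z)$ inside the indicator, so after invoking Lemma~\ref{r1} one must also change variables in the $N$-integral (Jacobian controlled by $\e$) and use $\Psi^+_\e(xn_{z-\psi_t(z)})\le\Psi^+_{2\e}(xn_z)$; this is where the ``$2\e$'' in $\Psi^\pm_{2\e}$ actually comes from. Your write-up has the right overall architecture and correctly isolates the difficulty (keeping the conjugated element in the contracting directions), but the shortcut $tnt^{-1}\in N$ bypasses the one nontrivial computation that makes the argument work.
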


\begin{proof} Let $\e_1>0$ be as in Lemma \ref{r1}. We may also assume that
 $n \O_{\e_1} n^{-1}\subset
\O_{\e}$ for all $n\in N_\rho$.

For $t=\bigl( \begin{smallmatrix}
        \alpha & w\\ 0 &\alpha^{-1}
       \end{smallmatrix}\bigr)\in {\chN}AM$  and $n_z\in N$ with $\alpha+zw\ne 0$,
define $$\psi_t(z)=\tfrac{z^2w+\alpha z-\alpha^{-1} z}{\alpha+zw}\quad\text{and}\quad  
b_{t,z}=\bigl( \begin{smallmatrix}
 \alpha + zw& w\\ 0& \psi_t(z) w+\alpha^{-1}-zw
\end{smallmatrix}\bigr) .$$
Then by a direct computation, we verify that
 \be \label{com}tn_z= n_{z+\psi_t(z)} b_{t,z}.\ee

Therefore we may assume that $\e_1>0$ is small enough so that
 for all $t\in T_{\e_1}$ and $n_z\in N_\rho$,
we have $\{n_{\psi_t(z)}: n_z\in N_\rho\} \subset N_\e$, $b_{t,z}\in T_\e$, and
the absolute value of the Jacobian of the map $\psi_t|_{N_\rho}$ is at most $\e/2$.

We observe that $n_za_s= n_{z+\psi_t(z)} a_s (a_{-s} b_{t,z} a_s)$
and since the conjugation by $a_{-s}$ contracts ${\chN}A$ for $s>0$,
we have $n_za_s\in n_{z+\psi_t(z)} a_s \O_\e M$.

Since $E$ is $M$-invariant,
we deduce that $\chi_E(xtn_z a_s)\le \chi_{E^+_\e}(xn_{z+\psi_t(z)} a_s)$
for all $t\in T_{\e_1}$ and $n_z\in N_\rho$.
Together with Lemma \ref{r1}, we now obtain that for any $t\in T_{\e_1}$,
\begin{align*} \lambda_{E, xt,s}(\Psi)&= e^{(2-\delta)s} \int_{n_z\in
N_\rho}\Psi(xt n_z )\chi_E(xt n_za_s)
d\lambda_{xt}(z)\\
&\le e^{(2-\delta)s} \int_{n_z\in
N_\rho}\Psi^+_{\e}(xn_z)\chi_{E^+_{\e}}(xn_{z+\psi_t(z)} a_s)
d\lambda_{xt}(z)\\
&\le  e^\e e^{(2-\delta)s} \int_{n_z\in
N_\rho}\Psi^+_{\e}(xn_{z})  \chi_{E^+_{\e}}(xn_{z+\psi_t(z)}
a_s) d\lambda_{x}(z)
\\&\le
e^{2\e} e^{(2-\delta)s} \int_{n_z\in N_{\rho+\e}}\Psi^+_{\e}(xn_{z-\psi_t(z)})
\chi_{E^+_{\e}}(xn_{z} a_s) d\lambda_{x}(z)\\
&\le e^{2\e} e^{(2-\delta)s} \int_{n_z\in N_{\rho+3\e}}\Psi^+_{2\e}(xn_{z})
\chi_{E^+_{\e}}(xn_{z} a_s) d\lambda_{x}(z)\\
&= e^{2\e} \lambda_{E_{\e}^+,x,s}(\Psi^+_{2\e})
\end{align*}
where the last inequality follows since $N_{\rho+3\e}$ contains
$xN\cap \text{supp}(\Psi_{2\e}^+)$. The other inequality can be
proven similarly.
\end{proof}

Theorem \ref{cw} follows from:
\begin{thm}\label{f} Let $x^{-}\in \Lambda(\G)$ and $\rho<\rho_x$.
Let $\psi\in C(xN_{\rho})$ be a
non-negative function. For $\e>0$, there exists $s_0\gg 1$ such
that for any $s> s_0$,
$$e^{-4\e} \lambda_{E,x,s}(\psi)\le
\mu_{x}^{\PS}(\psi)\le e^{4\e} \lambda_{E,x,s}(\psi).$$
 Moreover, if $x^+\in \Lambda(\G)$ and $\psi$ is positive, then the 
above integrals are all non-zero.
\end{thm}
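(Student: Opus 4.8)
Theorem \ref{f} — Proof plan.

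The plan is to deduce Theorem \ref{f} from the weak convergence statement of Theorem \ref{wc1} by a thickening argument: one cannot apply weak* convergence directly to $\lambda_{E,x,s}$ (a measure living on the single leaf $xN_\rho$, which has measure zero for $m^{\BMS}$ and for $\mu_{E,s}^{\BR}$), so instead I would integrate $\psi$ against a transversal bump and express the resulting integral over the box $x_0B_{\rho'}$ (for a slightly larger $\rho'$) as an integral of $\lambda_{E,\,\cdot\,,s}$ over the transversal, using the fibered formula derived just above Theorem \ref{cw}. Concretely, fix a small transversal neighborhood $x T_{\e_1} N_\rho$ with $\e_1$ chosen as in Lemma \ref{r4}, pick a nonnegative $\eta\in C_c(T_{\e_1})$ with $\int \eta\, d\nu_{x_0T}=1$, and consider the function $\Psi(xtn):=\eta(t)\psi(xn)$ on the box. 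Then $\mu_{E,s}^{\BR}(\Psi)=\int \eta(t)\,\lambda_{E,xt,s}(\psi)\,d\nu_{x_0T}(t)$, while $m^{\BMS}(\Psi)=\int \eta(t)\,\mu_{xt}^{\PS}(\psi)\,d\nu_{x_0T}(t)$. Theorem \ref{wc1} gives $\mu_{E,s}^{\BR}(\Psi)\to m^{\BMS}(\Psi)$.

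Next I would use Lemma \ref{r4} to sandwich $\lambda_{E,xt,s}(\psi)$ between $e^{-\e}\lambda_{E_\e^-,x,s}(\psi_{2\e}^-)$ and $e^{\e}\lambda_{E_\e^+,x,s}(\psi_{2\e}^+)$ uniformly in $t\in T_{\e_1}$ and in $s>0$; integrating against $\eta$, and using $m^{\BR}(\partial E)=0$ so that $m^{\BR}(E_\e^\pm)\to m^{\BR}(E)$ as $\e\to 0$ (which only rescales the normalization of $\lambda_{E,x,s}$), this converts the statement about $\lambda_{E,xt,s}$ into a statement about $\lambda_{E,x,s}$ on the single fixed leaf. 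The analogous sandwiching on the $m^{\BMS}$ side — relating $\mu_{xt}^{\PS}(\psi)$ to $\mu_x^{\PS}(\psi_{2\e}^\pm)$ — follows from the conformal density property exactly as in Lemma \ref{r1}, since the PS conditionals transform with a bounded Radon–Nikodym cocycle under the translation $xn\mapsto xtn$ and the $T_{\e_1}$-thickening is harmless. Combining these three facts — the two sandwiches and the weak convergence of the thickened integrals — and then letting $\e_1\to 0$, $\e\to 0$ along the bump $\eta$ concentrating at $e$, yields $\limsup_s$ and $\liminf_s$ of $\lambda_{E,x,s}(\psi)$ trapped within multiplicative error $e^{\pm 4\e}$ of $\mu_x^{\PS}(\psi)$, which is the claim.

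I expect the genuinely delicate point to be the role of the hypothesis $x^-\in\Lambda(\G)$, flagged in the remark after Theorem \ref{cw} ("needed to approximate $\lambda_{E,x,s}$ by its thickening in the transverse direction"). The subtlety is that the thickened measure $\mu_{E,s}^{\BR}$ restricted to the box decomposes over the transversal $x_0T_\rho$ with respect to the \emph{transverse measure} $\nu_{x_0T}$, which is carried by BMS-transversal points; for the fibered identity $\mu_{E,s}^{\BR}(\Psi)=\int \eta(t)\lambda_{E,xt,s}(\psi)d\nu_{x_0T}(t)$ to actually "see" the leaf through $x$ with positive transverse weight, one needs $x$ to sit in the support of $\nu_{x_0T}$, i.e. $x^-\in\Lambda(\G)$; otherwise $\lambda_{E,x,s}$ is itself the zero measure (all the $\Psi$-mass having $\chi_E(xn a_s)$ supported where $(xn)^+\in\Lambda(\G)$, fine, but the normalization and the transverse localization degenerate) and the two-sided estimate is vacuous or false. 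So the real work is choosing $\e_1$ and $\eta$ so that the transversal support of $\eta$ stays inside $\Lambda(\G)$-charged directions and the thickened estimates remain two-sided; this is exactly what Lemma \ref{r4} is engineered to deliver, so in practice the proof is: quote Lemma \ref{r4}, quote Theorem \ref{wc1}, integrate, and take limits. The final clause (positivity when $x^+\in\Lambda(\G)$ and $\psi>0$) follows because then $\mu_x^{\PS}(\psi)>0$: the PS conditional $\mu_x^{\PS}$ on $xN_\rho$ has $(xn)^+$ ranging over a neighborhood of $x^+\in\Lambda(\G)$ in $\partial(\bH^3)$, and $\nu_o$ gives positive mass to every neighborhood of a limit point, so $\psi>0$ forces $\mu_x^{\PS}(\psi)>0$ and hence, by the established two-sided bound, $\lambda_{E,x,s}(\psi)>0$ for all large $s$.
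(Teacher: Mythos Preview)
Your plan is essentially the paper's proof: thicken $\psi$ to $\Psi(xtn)=\phi(xt)\psi(xn)$ with $\nu(\phi)=1$, use Lemma~\ref{r4} to sandwich $\mu_{E,s}^{\BR}(\Psi)$ between $e^{\pm\e}\lambda_{E_\e^\pm,x,s}(\psi_{2\e}^\pm)$, use continuity of $t\mapsto\mu_{xt}^{\PS}$ to get $m^{\BMS}(\Psi)\approx\mu_x^{\PS}(\psi)$, and invoke Theorem~\ref{wc1}. Your identification of why $x^-\in\Lambda(\G)$ is needed (so that $\nu(xT_{\e_1})>0$ and the bump $\phi$ exists) and the positivity clause are both exactly right.

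There is one genuine imprecision. You write that passing from $\lambda_{E_\e^\pm,x,s}(\psi_{2\e}^\pm)$ back to $\lambda_{E,x,s}(\psi)$ ``only rescales the normalization,'' but this is not so: the definition of $\lambda_{E',x,s}$ involves both the normalizing constant $m^{\BR}(E')^{-1}$ \emph{and} the indicator $\chi_{E'}(\cdot\, a_s)$ inside the integral, and the latter changes as well when $E$ is replaced by $E_\e^\pm$. Monotonicity of the integrand gives only $\lambda_{E,x,s}(\psi)\le \tfrac{m^{\BR}(E_\e^+)}{m^{\BR}(E)}\lambda_{E_\e^+,x,s}(\psi_{2\e}^+)$, and the ratio of masses is not $\le e^{4\e}$ for the \emph{given} fixed $\e$; nor can you send $\e\to 0$ since the theorem is a statement at fixed $\e$. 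The paper closes this loop differently: it reapplies the sandwich \eqref{br2} with $E$ replaced by $E_{2\e}^\pm$ and $\Psi$ by $\Psi_{4\e}^\pm$, then uses Theorem~\ref{wc1} once more to see that $\mu_{E_{2\e}^\pm,s}^{\BR}(\Psi_{4\e}^\pm)$ and $\mu_{E,s}^{\BR}(\Psi)$ are within $e^{\pm\e}$ of each other for large $s$ (both converging to nearby $m^{\BMS}$-integrals). This traps $\lambda_{E_\e^-,x,s}(\psi_{2\e}^-)$ and $\lambda_{E_\e^+,x,s}(\psi_{2\e}^+)$ within $e^{\pm 2\e}$ of a common quantity, and since $\lambda_{E,x,s}(\psi)$ lies between them by monotonicity, \eqref{one1} follows. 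Once you replace the ``rescaling'' sentence with this second application of the sandwich plus weak convergence, your proof is the paper's proof.
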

\begin{proof}
Let $\e_1$ be as in Lemma \ref{r4}.
 We note that as $x^-\in \Lambda(\G)$, $\nu(xT_{\e_1})>0$. Hence
 there exists a non-negative continuous function $\phi\in C(xT_{\e_1})$
 with $\nu(\phi)=1$.
 Define $\Psi\in C(xT_{\e_1}N_\rho)$ by
$$\Psi(xtn):=\psi(xn)\phi(xt)\quad\text{ for $xtn\in xT_{\e_1}N_{\rho}$}.$$

 Set
$\psi^+_{\e}(xn)=\sup_{u\in N_{\e}} \psi(xnu)$ and
$\psi^-_{\e}(xn)=\inf_{u\in N_{\e}} \psi(xnu)$.
Then by Lemma \ref{r4},
\begin{align*}
\mu_{E,s}^{\BR}(\Psi)&=\int_{xT_{\e_1}}\lambda_{E,xt,s}(\Psi)d\nu_{xT_\rho}(xt)\\
&\le e^{\e} \int_{xT_{\e_1}}   \lambda_{E_{\e}^+,x,s} (\psi^+_{2\e})\phi(xt) d\nu_{xT_\rho} (xt)
\\ &=e^{\e} \lambda_{E_{\e}^+,x,s}(\psi^+_{2\e}) .
\end{align*}

We can prove the other inequality similarly and hence
\be \label{br2} e^{-\e} \lambda_{E_{\e}^-,x,s}(\psi^-_{2\e})  \le
\mu_{E,s}^{\BR}(\Psi)\le e^{\e}\lambda_{E_{\e}^+,x,s}(\psi^+_{2\e}) 
.\ee

 Since the map
 $t\mapsto \mu_{xt}^{\PS}$ is continuous by \cite[Lemma 1.16]{Roblin2003},
  we have $$e^{-\e} \mu_x^{\PS}(\psi) \le m^{\BMS}(\Psi)\le e^\e \mu_x^{\PS}(\psi) $$
   by replacing $\e_1$ by a smaller one if necessary.

 Since 
$ \mu_{E,s}^{\BR}(\Psi)\to m^{\BMS}(\Psi)$ by Theorem \ref{wc1},
we deduce from \eqref{br2} that there exists $s_0>1$ such that
for all $s>s_0$, 
\be \label{inm} e^{-2\e}
\lambda_{E_{\e}^-,x,s}(\psi^-_{2\e})\le \mu_x^{\PS}(\psi)\le
e^{2\e} \lambda_{E_{\e}^+,x,s}(\psi^+_{2\e}).\ee

We claim that 
\be \label{one1}e^{-4\e}\lambda_{E,x,s}(\psi) \le
\lambda_{E_{\e}^\pm,x,s}(\psi^\pm_{2\e})\le
e^{4\e}\lambda_{E,x,s}(\psi) \ee
which will complete the proof of the theorem by \eqref{inm}.

We can deduce from \eqref{br2} that
$$e^{-\e}\mu_{E_{2\e}^-,s}^{\BR}(\Psi_{4\e}^-)\le  
\lambda_{E_{\e}^-,x,s}(\psi_{2\e}^-) \le
\lambda_{E_{\e}^+,x,s}(\psi_{2\e}^+) \le
e^{\e}\mu_{E_{2\e}^+,s}^{\BR}(\Psi_{4\e}^+).$$

 Since it follows from Theorem \ref{wc1} that
$$ e^{-\e} {\mu_{E,s}^{\BR}(\Psi)}\le
 {\mu_{E_{2\e}^{\pm},s}^{\BR}(\Psi_{4\e}^{\pm})}\le e^\e {\mu_{E,s}^{\BR}(\Psi)}
\;\; \text{for all large $s\gg 1$,}$$
 we have
$$e^{-2\e} \mu_{E,s}^{\BR}(\Psi)\le \lambda_{E_{\e}^{-} ,x,s}(\psi_{2\e}^{-})\le
 \lambda_{E_{\e}^{+} ,x,s}(\psi_{2\e}^{+})
\le e^{2\e} \mu_{E,s}^{\BR}(\Psi).$$
This implies $$\lambda_{E_{\e}^{+} ,x,s}(\psi_{2\e}^{+})\le e^{4\e}
 \lambda_{E_{\e}^{-} ,x,s}(\psi_{2\e}^{-})$$ and hence \eqref{one1} follows.
\end{proof}

\section{PS density and its non-focusing property when $\delta>1$}\label{s;PS-non-fuc}
Let $\G$ be a (non-elementary) convex cocompact subgroup of $G$. 

The assumption on $\G$ being convex cocompact is crucial for
the following theorem:

\begin{thm} \label{ub}
 For any compact subset $F_0$ 
of $X$, there exists $c_0=c_0(F_0)>1$ such that for any
$x\in F_0$ with $x^{+}\in \Lambda(\G)$ and for all $0<r\ll 1$,
$$c_0^{-1} r^{\delta}  \le \mu_{H(x)}^{\PS}(xN_r)\le c_0
r^{\delta}$$
where $xN_r=\{xn_z: |z|<r\}$. 

Similarly,  for any
$x\in F_0$ with $x^{-}\in \Lambda(\G)$ and for all $0<r\ll 1$,
we have $$ c_0^{-1} r^{\delta}  \le \mu_{\check{H}(x)}^{\PS}(x\chN_r)\le c_0
r^{\delta}$$ for $x\chN_r=\{x\chn_z:|z|<r\}$. 
\end{thm}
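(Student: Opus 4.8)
The plan is to prove the estimate $c_0^{-1}r^\delta \le \mu^{\PS}_{H^+(x)}(xN_r) \le c_0 r^\delta$ by transporting the well-known shadow/ball estimate for the Patterson--Sullivan measure on $\partial(\bH^3)$ to the horospherical picture, uniformly over the compact set $F_0$. First I would recall Sullivan's shadow lemma: for $\G$ convex cocompact, every limit point is radial and the orbit $\G(o)$ is quasi-uniformly distributed on the convex core, so there is a constant $c>1$ (depending only on $\G$) with
\[
c^{-1} r^\delta \le \nu_o(B(\xi,r)) \le c\, r^\delta \qquad \text{for all } \xi \in \Lambda(\G),\ 0<r\ll 1,
\]
where $B(\xi,r)$ is a Euclidean-type ball of radius $r$ in $\partial(\bH^3)=\hat\c$ (using a fixed visual metric based at $o$). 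This is exactly the ``deep property of the PS measure'' quoted in the introduction, so I would cite it (e.g. Sullivan \cite{Sullivan1979}, or \cite{Roblin2003}) rather than reprove it.

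The second step is the comparison between $\mu^{\PS}_{H^+(x)}$ and $\nu_o$ under the visual map $P_{H^+(x)}\colon \partial(\bH^3)-\{x^-\} \to H^+(x)$. By Definition~\ref{twom}(2), for $v = xn_z \in xN$ one has $d\mu^{\PS}_{H^+(x)}(xn_z) = e^{\delta \beta_{v^+}(o,\pi(vM))}\, d\nu_o(v^+)$, so the measure of $xN_r$ equals $\int_{(xN_r)^+} e^{\delta \beta_{\xi}(o,\pi(\cdot))}\, d\nu_o(\xi)$ over the image $(xN_r)^+ \subset \partial(\bH^3)$. I would then note two things, both with constants uniform over $x$ in the compact set $F_0$: (i) the Busemann cocycle $e^{\delta\beta_\xi(o, \pi(xn_zM))}$ is bounded above and below, since $\pi(xn_zM)$ ranges over a bounded region of $\bH^3$ (here we use $x\in F_0$ compact and $|z|<r$ small, together with continuity of $\beta$); (ii) the visual image $(xN_r)^+$ is comparable to a ball $B(x^+, r')$ with $r' \asymp r$, the implied constants again depending only on $F_0$ — this is a smooth-change-of-coordinates statement, since the parametrization $z \mapsto (xn_z)^+$ of $\partial(\bH^3)-\{x^-\}$ has derivative bounded above and below on $\{|z|<\rho, x\in F_0\}$. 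Combining (i), (ii) and the shadow lemma applied at $\xi = x^+\in\Lambda(\G)$ gives the two-sided bound with $c_0 = c_0(F_0)$.

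For the second assertion about $\mu^{\PS}_{H^-(x)}(xN_r^-)$, I would simply apply the flip: the map $g \mapsto g w_0$ (or conjugation by the Weyl element) interchanges $N$ and $N^-$ and $g^+ \leftrightarrow g^-$, and carries $\mu^{\PS}_{H^+(\cdot)}$ to $\mu^{\PS}_{H^-(\cdot)}$ up to the conformal factors already accounted for; since $F_0 w_0$ is again compact, the first part applies verbatim. The main obstacle is making the uniformity in step two genuinely clean: one must check that the constants relating the horospherical parametrization to the visual metric ball, as well as the Busemann factor bounds, can be taken uniform as $x$ ranges over $F_0$ and not just locally — this is where convex cocompactness enters twice, once to guarantee $\Lambda(\G)$ consists of radial points so the shadow lemma holds with a single global constant, and once (implicitly) to know $\supp(m^{\BMS})$ is compact so that such an $F_0$ containing the relevant points can be fixed. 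The rest is routine compactness-plus-continuity bookkeeping.
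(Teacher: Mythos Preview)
Your proposal is correct and follows essentially the same approach as the paper: reduce to Sullivan's uniform ball estimate $\nu_o(B(\xi,r))\asymp r^\delta$ for $\xi\in\Lambda(\G)$ (the paper cites \cite{Sullivan1984} for this), after observing that compactness of $F_0$ makes the Busemann factor and the visual-map distortion uniformly bounded so that $\mu^{\PS}_{H^+(x)}(xN_r)\asymp \nu_o(B(x^+,r))$. The paper's proof is just a terser version of exactly what you wrote.
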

\begin{proof} 
 As $F_0$ is compact, up to uniform constants,
 $\mu_{{H}(x)}^{\PS}(xN_r)\asymp \nu_o(B(x^+,r))$
where $B(x^+,r)$ is the  ball around $x^+$ of radius $r$ in $\partial(\bH^3)$ in the spherical metric.
As $x^+\in \Lambda(\G)$, the above result is then due to Sullivan \cite{Sullivan1984}
who says $\nu_o(B(\xi,r))\asymp r^\delta$ uniformly for all $\xi\in \Lambda(\G)$ and for all small $r>0$
for $\G$ convex cocompact.
\end{proof}

\begin{lem}\label{c;general-pos} 
Let $\delta>1$ and $F_0\subset X$ be a compact subset.
For every $\e>0$, there exists a positive integer $d=d(\e, F_0)$ such that
for any $x\in F_0$ with $x^-\in \Lambda(\G)$ and for all small $0<r \ll
1$, we have
\[\mu_{{\check{H}}(x)}^{\PS}\{x\chn_z: |z|<r,\; |\Im(z)|\leq \tfrac{|{\Re}(z)|}d \}\le \e \cdot \mu_{{\check{H}}(x)}^{\PS}(x{\chN_r}).\]
\end{lem}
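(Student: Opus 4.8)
The plan is to show that a PS-thin ``cone'' around the real axis in the stable horosphere carries only a small fraction of the PS mass, by exploiting the uniform Ahlfors regularity $\nu_o(B(\xi,r))\asymp r^\delta$ (Theorem \ref{ub}) together with the fact that $\delta>1$, so that a $1$-dimensional neighborhood is negligible compared to a $\delta$-dimensional one. First I would transfer the statement from $xN_r^-$ to the boundary: by compactness of $F_0$, the map $P_{H^-(x)}$ and the density $e^{\delta\beta_{v^-}(o,\pi(vM))}$ are bi-Lipschitz with uniform constants on $xN_r^-$, so up to uniform multiplicative constants $\mu_{H^-(x)}^{\PS}(xN_r^-)\asymp\nu_o(B(x^-,r))\asymp r^\delta$, and the set $\{xn_z^-:|z|<r,\ |\Im z|\le |\Re z|/d\}$ maps to a comparable ``bowtie'' region $C_{d,r}\subset\partial(\bH^3)$ around $x^-$ that is contained in an $O(r)\times O(r/d)$ rectangle.

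The heart of the argument is a covering estimate. I would cover $C_{d,r}$ by balls of radius $r/d$: a thin rectangle of dimensions $\asymp r$ by $\asymp r/d$ can be covered by $O(d)$ such balls, say $B(\xi_1,r/d),\dots,B(\xi_m,r/d)$ with $m\le C_1 d$ and each $\xi_i\in C_{d,r}$. To apply Theorem \ref{ub} with the uniform constant $c_0$, I would need the centers to lie in $\Lambda(\G)$; this is arranged by first discarding the balls that miss $\Lambda(\G)$ (they contribute nothing to $\nu_o$) and, for the rest, replacing $\xi_i$ by a nearby point of $\Lambda(\G)$ and doubling the radius. Then
\[
\nu_o(C_{d,r})\le\sum_{i=1}^m \nu_o\!\big(B(\xi_i,2r/d)\big)\le m\cdot c_0 (2r/d)^\delta\le C_1 c_0 2^\delta\, d\cdot (r/d)^\delta = C_2\, d^{\,1-\delta}\, r^\delta.
\]
Comparing with $\nu_o(B(x^-,r))\ge c_0^{-1}r^\delta$, we get $\nu_o(C_{d,r})\le C_2 c_0\, d^{\,1-\delta}\,\nu_o(B(x^-,r))$. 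Since $\delta>1$, the exponent $1-\delta$ is negative, so choosing $d=d(\e,F_0)$ large enough makes $C_2 c_0\, d^{\,1-\delta}$ smaller than $\e$ (after absorbing the uniform bi-Lipschitz constants from the transfer step into the choice of $d$). Translating back through $P_{H^-(x)}$ gives the claimed inequality with the same $d$ up to a harmless adjustment of constants.

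The main obstacle I anticipate is the bookkeeping around the covering: making sure the number of radius-$(r/d)$ balls needed for the bowtie really is $O(d)$ with a constant independent of $r$ and $d$, and handling the passage to centers in $\Lambda(\G)$ uniformly (so that $c_0$ from Theorem \ref{ub} genuinely applies to every ball used). One must also be slightly careful that the inner radius of the cone, $r/d$, is still ``small'' in the sense required by Theorem \ref{ub}; this is fine for all $r\ll 1$ once $d$ is fixed. Everything else — the bi-Lipschitz transfer from $xN_r^-$ to the boundary sphere, the geometry of the bowtie inside a thin rectangle — is routine given the convex cocompact hypothesis and the uniform regularity of $\nu_o$.
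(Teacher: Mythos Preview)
Your proposal is correct and follows essentially the same route as the paper: cover the thin cone (which the paper replaces by the enclosing strip $\{|z|<r,\ |\Im z|<r/d\}$) by $O(d)$ balls of radius $r/d$, apply the Ahlfors upper bound from Theorem~\ref{ub} to each, and compare the resulting $c_0\,d^{1-\delta}r^\delta$ with the lower bound $c_0^{-1}r^\delta$ for $\mu_{H^-(x)}^{\PS}(xN_r^-)$. Your extra bookkeeping (the bi-Lipschitz transfer to $\nu_o$ and the relocation of ball centers into $\Lambda(\G)$) is sound but not strictly needed, since the paper works directly with $\mu_{H^-(x)}^{\PS}$ and the upper bound in Theorem~\ref{ub} is insensitive to whether the center lies in the limit set (balls missing $\Lambda(\G)$ have zero measure, and otherwise one doubles the radius as you note).
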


\begin{proof}
Let $r$ be small enough to satisfy Theorem \ref{ub}. For an
integer $d\ge 1$, consider $$\mathcal B_d(x, r):=\{x\chn_z: |z|<r, 
{|\Im(z)|}<r/d\}$$ which clearly contains the set in question.
Theorem~\ref{ub} implies that
\[\mu_{{\check{H}}(x)}^{\PS}(\mathcal B_d(x,r))\leq c_0 \tfrac{d\cdot r^\delta}{d^\delta}=c_0 d^{1-\delta} r^\delta,\]
where $c_0>1$ is an absolute constant independent of $d$ and $r$.

Let $d=d(\e)\gg 1$ be such that $c_0 d^{1-\delta}<c_0^{-1}\e$. Then
$\mu^{\PS}_{{\check{H}}(x)}(\mathcal B_d(x,r))\leq \e \cdot \mu^{\PS}_{{\check{H}}(x)}(x{\chN_r}),$
implying the claim.
\end{proof}


\begin{lem}\label{bzero}
 There exists $b_0> 1$ such that for all small $0<\eta\ll 1$
$$N_{b_0^{-1}\eta }A_{b_0^{-1}\eta}\chN_{b_0^{-1} \eta} M \subset \chN_\eta A_\eta N_\eta M
\subset  \chN_{b_0\eta} A_{b_0\eta} N_{b_0\eta} M .$$
\end{lem}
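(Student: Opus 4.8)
The plan is to prove Lemma~\ref{bzero} by a direct computation with the Iwasawa-type factorizations in $G=\PSL_2(\c)$, exploiting the fact that near the identity the product map $N^-\times A\times N\times M\to G$ (in any fixed order of the factors) is a diffeomorphism, so that the $N^-$-, $A$-, $N$- and $M$-components of an element $g$ near $e$ are smooth functions of $g$ vanishing at $e$. First I would fix such coordinates and note that, by the inverse function theorem, there is a constant $b_0>1$ so that for any $g$ with $d(g,e)<\eta$ sufficiently small, writing $g=n^-a n m$ with $n^-\in N^-$, $a\in A$, $n\in N$, $m\in M$, one has $d(n^-,e),d(a,e),d(n,e)<b_0\eta$; and conversely, if $n^-\in N^-_{b_0^{-1}\eta}$, $a\in A_{b_0^{-1}\eta}$, $n\in N_{b_0^{-1}\eta}$, $m\in M$, then $d(n^-anm,e)<\eta$. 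This gives both inclusions at once once one checks that the ordering $N^-AN M$ can be freely permuted with only a bounded distortion, which is exactly the content of the Baker--Campbell--Hausdorff formula applied to the nilpotent/abelian pieces together with the compactness of $M$.

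Concretely, the key steps are as follows. Step one: record the explicit commutation relations. For $n^-_z = \bigl(\begin{smallmatrix}1 & z\\ 0 & 1\end{smallmatrix}\bigr)$, $a_s=\mathrm{diag}(e^{s/2},e^{-s/2})$, $n_w=\bigl(\begin{smallmatrix}1 & 0\\ w & 1\end{smallmatrix}\bigr)$, one has $a_s n^-_z a_{-s} = n^-_{e^s z}$ and $a_s n_w a_{-s} = n_{e^{-s}w}$, while $M$ normalizes both $N$ and $N^-$ and acts by rotation $z\mapsto e^{2i\theta}z$. Thus reordering an element of $N^-_\eta A_\eta N_\eta M$ into the form $N^-AN M$ only multiplies the size of each unipotent coordinate by $e^{O(\eta)}=1+O(\eta)$. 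Step two: show that a product $n^-an$ with all factors of size $<\eta$ lies, after writing it in the standard coordinates $N^-AN M$, within $N^-_{c\eta}A_{c\eta}N_{c\eta}M$ for an absolute $c$; this is immediate since $n^-an$ is already in that form (with trivial $M$-part and the reordering above only costing a bounded factor). Step three: conversely, given $g$ with $d(g,e)<\eta$, its standard coordinates $(n^-,a,n,m)=(n^-_{z(g)},a_{s(g)},n_{w(g)},m(g))$ satisfy $|z(g)|,|s(g)|,|w(g)|\le C\eta$ by smoothness of the coordinate functions at $e$ (here the bi-invariance, resp.\ right-$K$-invariance, of the metric makes these estimates uniform); then use the reordering estimates of Step one to move into the $N^-ANM$-ordering assumed on the right-hand side of the displayed inclusion, absorbing the distortion into $b_0$.

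Combining these, I set $b_0$ to be the product of all the bounded distortion constants appearing above (all $\to 1$ as $\eta\to 0$, so a single $b_0$ works for all small $\eta$), and the two inclusions $N_{b_0^{-1}\eta}A_{b_0^{-1}\eta}N^-_{b_0^{-1}\eta}M \subset N^-_\eta A_\eta N_\eta M \subset N^-_{b_0\eta}A_{b_0\eta}N_{b_0\eta}M$ follow. Note the innermost set is written with the ordering $NAN^-M$, which again differs from $N^-ANM$ only by the reordering cost of Step one, so it is harmless.

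The main obstacle is purely bookkeeping: keeping track of the orders of the factors $N$, $A$, $N^-$, $M$ in the three different products appearing in the statement and verifying that each reordering costs only a multiplicative factor $1+O(\eta)$ uniformly. There is no real difficulty here beyond being careful that the metric on $G$ was fixed to be left-$G$-invariant and right-$K$-invariant (so in particular right-$M$-invariant), which is what guarantees that the coordinate bounds and the conjugation estimates are uniform in a neighborhood of $e$; the nilpotence of $N$, $N^-$ and the abelianness of $A$ make the BCH expansions terminate, so all constants are genuinely absolute.
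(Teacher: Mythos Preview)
Your proposal is correct and takes essentially the same approach as the paper: both rely on the fact that the product maps $N^-\times A\times N\times M\to G$ and $N\times A\times N^-\times M\to G$ are local diffeomorphisms at the identity, so the change-of-coordinates map is smooth, fixes $e$, and is therefore Lipschitz near $e$. Your first paragraph already contains the complete argument; the subsequent explicit commutation relations are supplementary (and note that they alone do not handle the $N$--$N^-$ swap, which is why the smoothness/inverse-function-theorem reasoning is what actually carries the proof).
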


\begin{proof}
The claim follows since the product maps  ${\chN}\times A\times N\times M\to G$ by $(\chn,a,n,m)\mapsto {\chn}anm$
and $N\times A\times {\chN}\times M\to G$  by $(n,a,{\chn},m)\mapsto na{\chn}m$ are local diffeomorphisms at the identity.
\end{proof}

We will use the above results to prove the following proposition \ref{p;f-goodpts}. The
proof is elementary and  is based on the fact we have a good control of the conditional measures
 on contracting leaves, i.e., ${\chN}$-orbits. However, 
 the fact that this statement holds is quite essential to our approach. Indeed,
 as we explained in the introduction, one major difficulty we face is that the return times for our $U$-flow 
do not have the regularity one needs in order to get the required ergodic theorem on the nose. 
In our version of the window theorem, the set where a
window estimate holds depends on time; 
see Section~\ref{sec;window}, and in particular Theorem~\ref{wtpf} below. 
Usually in arguments with similar structure as ours, 
this fact is fatal as one has very little control on the structure 
of the ``generic'' set for the measure in question. 
In our case however the following proposition 
saves the day and provides us with a rather strong control.

In the following proposition we fix 
a BMS box $E=x_0{\chN_\rho} A_\rho N_\rho M $ with $x_0^{\pm}\in \Lambda(\G)$ and $0<\rho  <\tfrac{1}{b_0}\inf_{x\in \Omega} \rho_x $ 
where $b_0$ is as in Lemma \ref{bzero} and $\rho_x$ is the injectivity radius at $x$.
\begin{prop}\label{p;f-goodpts} Let $\delta>1$. Fix $0<r<1$.
 There exist positive numbers
$d_0=d_0(r)>1$ and $s_0\gg 1$ such that for any  Borel
subset $F\subset E$ with $m^{\BR}(F) >r\cdot  m^{\BR}(E)$ and any $s\ge s_0$, 
there exists a pair of elements $x_s, y_s\in F$ satisfying
 \begin{enumerate}
  \item  $x_s=y_s \chn_{w_s}$ for $\chn_{w_s}\in {\chN}$,
\item    $\tfrac{1}{d_0s}\leq |w_s|\leq \tfrac{d_0}{s}$ and
\item    $|\Im(w_s)|\geq \tfrac{|\Re(w_s)|}{d_0}.$
\end{enumerate}
\end{prop}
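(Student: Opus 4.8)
The plan is to extract the pair $x_s,y_s$ from a single $N^-$-leaf of the box $E$ on which the slice of $F$ is large and then, within that slice, select two points whose $N^-$-separation $w_s$ has the required size $\asymp 1/s$ and a non-negligible imaginary part. First I would apply a disintegration of $m^{\BR}|_E$ along the $N^-$-direction: writing $E=x_0N_\rho^-A_\rho N_\rho M$ and using the product structure together with the description of $m^{\BR}$ in terms of the PS density on $N^-$-leaves (Definition~\ref{twom} and the transverse measure $\nu_{x_0T_\rho}$, now with the roles of $N$ and $N^-$ interchanged via Lemma~\ref{bzero} so that the contracting leaves appear as fibers), Fubini gives a transversal $Y\subset x_0N_\rho A_\rho N_\rho^- M$ of positive transverse measure such that for every $y$ in $Y$ the leafwise measure $\mu_{H^-(y)}^{\PS}$ of the slice $F_y:=F\cap yN_\rho^-$ satisfies $\mu_{H^-(y)}^{\PS}(F_y)\ge \tfrac{r}{2}\,\mu_{H^-(y)}^{\PS}(yN_\rho^-)$; otherwise integrating the complementary inequality over the transversal would contradict $m^{\BR}(F)>r\,m^{\BR}(E)$. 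Here I use $m^{\BR}(\partial E)=0$ (Corollary~\ref{boxzero}) so that the slices are genuinely the PS measures of Theorem~\ref{ub}.

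Next, fix such a $y$ and work entirely inside the single leaf $yN_\rho^-$, identified with a disc $\{z:|z|<\rho\}$ carrying the measure $\mu:=\mu_{H^-(y)}^{\PS}$, which by Theorem~\ref{ub} satisfies $\mu(yN_r^-)\asymp r^\delta$ uniformly, with the implied constant depending only on the compact set $\Omega$. The subset $F_y$ has $\mu$-measure at least $\tfrac{r}{2}\mu(yN_\rho^-)\asymp r^{1+\delta}$. I want two points $z_1,z_2\in F_y$ with $|z_1-z_2|\asymp 1/s$ and $|\Im(z_1-z_2)|\gtrsim |\Re(z_1-z_2)|$. To get the size, cover the disc by $O((\rho s)^2)$ squares of side $\asymp 1/s$; by Theorem~\ref{ub} each such square has $\mu$-mass $\lesssim s^{-\delta}$, so $F_y$ must meet at least $\gtrsim r^{1+\delta} s^\delta$ of them, hence two $F_y$-points lie in distinct neighboring squares at distance $\asymp 1/s$ — but this only controls the size, not the direction. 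For the direction I would instead first localize: pick a ball $B(\xi,\tfrac{d_0}{2s})$ centered at some $\xi$ with $\mu(F_y\cap B(\xi,\tfrac{d_0}{2s}))>0$ and of that radius such that, by Lemma~\ref{c;general-pos} applied with this radius (which is legitimate since $\xi\in\Lambda(\G)$ and $\tfrac{d_0}{s}$ is small for $s\ge s_0$), the bad cone $\{|\Im(z-\xi)|\le \tfrac{|\Re(z-\xi)|}{d_0}\}$ around $\xi$ carries at most $\tfrac12\mu(B(\xi,\tfrac{d_0}{2s}))$-mass relative to the ball; actually one should apply it so that the bad cone has mass less than the mass of $F_y$ inside a slightly smaller ball. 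Concretely: choose $d_0=d_0(r)$ large enough that the measure of the bad cone inside $B(\xi,\tfrac{d_0}{s})$ is $<\tfrac{c_0^{-1}r}{4}\,(\tfrac{1}{s})^\delta$ while $\mu(F_y\cap B(\xi,\tfrac{1}{d_0 s}))>0$ by a Besicovitch/density argument using $\mu(F_y)\gtrsim r^{1+\delta}$; then there exists $z_2\in F_y\cap B(\xi,\tfrac{d_0}{s})$ outside the bad cone of $\xi$, while a density point $z_1\in F_y$ near $\xi$ with $|z_1-\xi|$ much smaller ensures $|\Im(z_1-z_2)|\ge |\Im(z_2-\xi)|-|z_1-\xi|\gtrsim \tfrac{|\Re(z_2-\xi)|}{d_0}\gtrsim \tfrac{|\Re(z_1-z_2)|}{d_0'}$ after enlarging $d_0$, and $\tfrac{1}{d_0 s}\le|z_1-z_2|\le\tfrac{d_0}{s}$. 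Setting $w_s:=z_1-z_2$ (so $x_s=y_s n_{w_s}^-$ with $x_s$ the point of $F_y$ corresponding to $z_1$ and $y_s$ to $z_2$, translated appropriately so that $x_s=y_sn_{w_s}^-$ holds on the nose) gives (1)--(3).

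The main obstacle I expect is item (3), the direction estimate: size of the separation follows from the crude pigeonhole with Theorem~\ref{ub}, but forcing the separation vector to avoid the thin cone $|\Im w|<|\Re w|/d_0$ genuinely requires the non-focusing input $\delta>1$ through Lemma~\ref{c;general-pos}, and one must be careful that the radius at which Lemma~\ref{c;general-pos} is applied is $\asymp 1/s$ (small, uniformly over $s\ge s_0$), that the center $\xi$ is a limit point, and that the constant $d$ produced there — which depends on $\e$, hence ultimately on $r$ — is exactly the $d_0$ one feeds back into the size bound. A secondary technical point is bookkeeping the identification of the $N^-$-leaf with a Euclidean disc and the comparison of $\mu_{H^-(x)}^{\PS}$ with $\nu_o(B(x^-,\cdot))$: this is uniform over the compact set $\Omega$ by Theorem~\ref{ub}, so all implied constants can be absorbed into $d_0(r)$, but one has to check the cone $\{|\Im(z-\xi)|\le|\Re(z-\xi)|/d_0\}$ in the leaf coordinate really pulls back to the stated cone condition on $w_s\in N^-$, which holds because the leaf chart near $e$ is $C^1$-close to the identity (Lemma~\ref{bzero}), at worst changing $d_0$ by a bounded factor.
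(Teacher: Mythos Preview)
Your overall strategy --- disintegrate along $N^-$ to find a leaf where the slice of $F$ has definite $\mu^{\PS}$-mass, then use Theorem~\ref{ub} and Lemma~\ref{c;general-pos} inside that leaf --- is exactly the paper's. The gap is in how you localize within the leaf. To place $z_2\in F_y$ in the annulus $B(\xi,d_0/s)\setminus B(\xi,1/(d_0 s))$ and outside the bad cone, you need $\mu(F_y\cap B(\xi,d_0/s))$ to exceed the combined mass of the inner ball and the cone; you bound the latter two, but your only lower bound on the former is ``$>0$ by a Besicovitch/density argument.'' If this means Lebesgue density of $F_y$ at $\xi$, the scale at which it becomes effective depends on $\xi$ and hence on $F$, so you only obtain $s_0=s_0(F)$. (Your auxiliary point $z_1\in F_y$ with $|z_1-\xi|$ ``much smaller'' than $1/(d_0 s)$ has the same defect.) But the proposition demands $s_0$ uniform in $F$, and this uniformity is genuinely used in Proposition~\ref{pe}, where the result is applied to the sets $F=\mathcal G(p_i)^+$ as $i$ varies.

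The paper's fix is to run Besicovitch directly at scale $1/s$: cover $Q:=F_y\cap\supp(\mu^{\PS}_{H^-(z)})$ by balls $B(x,1/s)$ with centers $x\in Q$, extract a subcover $\{B(x,1/s):x\in Q_s\}$ of multiplicity $\le\kappa$, and use Theorem~\ref{ub} and Lemma~\ref{c;general-pos} to see that the shrunken balls $\cup_{x\in Q_s}B(x,1/(qs))$ and the bad-cone union $R(s,d)$ each carry $\mu^{\PS}$-mass $<\tfrac13\,\mu^{\PS}(Q)$ for $q,d$ depending only on $r$. What remains of $Q$ has positive mass and is still covered by the balls $B(x,1/s)$, so some covering ball $B(x_s,1/s)$ with center $x_s\in Q_s\subset F$ contains a point $y_s\in Q$ avoiding both $B(x_s,1/(qs))$ and the cone at $x_s$; this pair works with $d_0=\max(q,d)$. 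The key point you are missing is that by centering the Besicovitch balls on $Q$ itself, one gets for \emph{every} $s$ a center $x_s\in F$ whose $1/s$-ball already carries an $F$-independent share of $\mu^{\PS}(Q)$ --- no density-point argument, and no separate $z_1$, is needed. (Incidentally, your worry about chart distortion is a non-issue: the parametrization $z\mapsto yn_z^-$ is exact, so $w_s$ is literally the difference of the two $z$-coordinates.)
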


\begin{proof}
 Let $c_0> 1$ be as in Theorem \ref{ub} where $F_0$
is the $2\rho$-neighborhood of $\Omega$.
We will write $B(z, \rho)=z{\chN_\rho}$ in this proof.
For all $x\in
x_0N_\rho A_\rho M$, $x^-=x_0^-$ and hence $x^-\in \Lambda(\G)$. Hence by Theorem \ref{ub},
\be\label{ube} c_0^{-1}\eta^\delta \le \mu_{{\check{H}}(x)}^{\PS} (B(x,\eta ))\le c_0\eta^\delta\text{ for all $0<\eta<1$. }\ee

Set $d_1:=\tfrac{ \nu (x_0N_{b_0\rho}A_{b_0\rho}M)}{m^{\BR}(E)}$
where $\nu$ denotes the transverse measure of $m^{\BR}$ on $x_0N_{\rho_{x_0}}A_{\rho_{x_0}}M$.
We claim that there exists $z\in x_0N_{b_0\rho} A_{b_0\rho} M$ with $\mu_{{\check{H}}(z)}^{\PS}(B(z,b_0\rho)\cap F)>\tfrac{r}{ d_1} $.
Suppose not; then 
\begin{multline*} m^{\BR}(F)\le \int_{z\in x_0N_{b_0\rho}A_{b_0\rho}M} \mu_{{\check{H}}(z)}^{\PS}(B(z,b_0\rho)\cap F) \; d\nu(z)\\
\le  \tfrac{r}{ d_1} \nu (x_0N_{b_0\rho}A_{b_0\rho}M)   =r\cdot m^{\BR}(E)  
\end{multline*}
which contradicts the assumption on $F$.

Set $Q:=B(z, b_0 \rho) \cap F\cap \text{supp
}(\mu_{{\check{H}}(z)}^{\PS})$ and for each $s>1$,
  consider the covering $\{B(x, s^{-1}) \subset {\check{H}}(z): x\in Q \}$ of $Q$.
 By the Besicovitch covering lemma (cf. \cite{Ma}), there exists $\kappa>0$
(independent of $s$)  and a finite subset $Q_s$ such that the corresponding finite subcover $\{B(x, s^{-1}):
x\in Q_s\}$ of $Q$ is of multiplicity at most $\kappa$.

Note that for $q>1$, by \eqref{ube},
\begin{multline*} \mu_{{\check{H}}(z)}^{\PS}
(\cup_{x\in Q_s} B(x, \tfrac{1}{qs}))  \le \kappa^2 q^{-\delta}
c_0^2 \; \mu_{{\check{H}}(z)}^{\PS} (\cup_{x\in Q_s} B(x, \tfrac{1}s )  ) \le
\kappa^2 q^{-\delta} c_0^3 b_0^\delta \rho^\delta .
\end{multline*}
Hence by taking $q\ge 1$ large so that $\kappa^2 q^{-\delta} c_0^3 b_0^\delta \rho^\delta <\tfrac{r}{ 3d_1}$,
we have $$\mu_{{\check{H}}(z)}^{\PS}
(\cup_{x\in Q_s} B(x, \tfrac{1}{qs}))  <\tfrac{r}{ 3d_1} .$$

If we set $$R(s,d):=\cup_{x\in Q_s} \{w\in B(x, \tfrac{1}{s}): |\Im(w)|\le \frac{|\Re(w)|}{d}\} ,$$
it follows from Lemma \ref{c;general-pos} that there exist $d_2>1$ and $s_0>1$  such
that for any $s>s_0$,
$$ \mu_{{\check{H}}(z)}^{\PS} (R(s,d_2)) < \tfrac{r}{ 3d_1} .$$ 

 Hence for any $s>s_0$, the set
$$Q -(\cup_{x\in Q_s} B(x, \tfrac{1}{qs} ) \cup R(s,d_2))$$
has a positive $\mu_{{\check{H}}(z)}^{\PS}$ measure (at least
$\frac{r}{ 3d_1}$). In particular, there exists $x_s\in Q$
such that $(Q \cap B(x_s, \tfrac{1}{s})) -(B(x_s, \tfrac{1}{qs}) \cup
R(s,d_2))$ has a positive $\mu_{{\check{H}}(z)}^{\PS}$ measure. Picking $y_s$
from this set, we have found a desired pair $x_s, y_s$ from $F$
with $d_0=\max (q, d_1)$.
\end{proof}

\section{Energy estimate and $L^2$-convergence for the projections}\label{sec;energy}
Let $\G$ be a convex cocompact subgroup of $G$ with $\delta >1$ and fix a BMS box $E\subset X$
(see \ref{box} for its definition). 
We have $m^{\BR}(E)>0$ and by Lemma \ref{Zd} and Corollary \ref{boxzero},
$m^{\BR}(\partial(E))=0$.
$$\text{In the entire section, we fix $x\in X$ with $x^{\pm}\in \Lambda(\G)$ and $0<\rho< \tfrac{1}{\sqrt 2}\rho_x$.}$$ 
Recall the definition of the measure $\lambda_{E,x,s}$ on $xN_{\rho_x}$ from \eqref{lex}:
  for $\psi\in C(xN_{\rho_x})$,
$$\lambda_{E, x,s}(\psi)=\frac{
e^{(2-\delta)s}}{m^{\BR}(E)} \int_{n\in  N_{\rho_x}}\psi(x n )\chi_E(x na_s)
d\lambda_{x}(n).$$

\subsection{Projections of $\mu_{{H}(x)}^{\PS}$ and $\lambda_{E,x,s}$}\label{sec;projection}
 The $N$-orbit
of $x$ can be identified with $\br^2$ via the visual map
$xn\mapsto (xn)^+\in \partial(\bH^3)-\{x^-\}$ and the
identification of $\partial(\bH^3)-\{x^-\}$ with
$\br^2$ by mapping $x^-$ to the point at infinity. Therefore we
may consider $\lambda_{E, x, s}$ and $\mu_{{H}(x)}^{\PS}$ as measures on $\br^2$.

Let $$U=\{\begin{pmatrix} 1& 0\\ t
&1\end{pmatrix}: t\in \br\};\quad V=\{\begin{pmatrix} 1& 0\\
i t &1\end{pmatrix}: t\in \br\}.$$


 In the sequel 
by a measure on $[0,2\pi]$ we mean the normalized Lebesgue measure.
For each $\theta\in [0,2\pi)$, we set
$U_\theta=m_\theta U m_\theta^{-1}$ and $V_\theta=m_\theta V
m_\theta^{-1}$.
We may identify $U_\theta$ as the line in $\br^2$ in the $\theta$-direction and $V_\theta$ as the line in the $\theta+\pi/2$ direction.

We denote by $p_\theta: U_\theta V_\theta \to V_\theta$ the projection
parallel to the line $U_\theta$. For $\tau>0$, set
 $$U_\theta^\tau:=\{t\exp(i\theta):t\in
[-\tau, \tau] \}\text{ and }\;
 V_\theta^\tau:=\{it\exp(i\theta):
t\in [-\tau, \tau] \} .$$

\begin{definition}\label{ss}\label{ss2} \rm Fix $0<\theta<\pi$,
$0<\tau\le \rho$ and $s>1$.  We define the measures 
on $xV_\theta^\tau$ as follows: for $\psi\in C_c(xV_\theta^\tau)$,
$$ \sigma_{x,\theta}^\tau(\psi)
:=\int_{xV_\theta^\rho U_\theta^\tau} \psi (p_\theta (y)) \; d\mu_{{H}(x)}^{\PS} (y),$$ and 
$$ \sigma_{x,\theta,s}^\tau(\psi)
:=\int_{xV_\theta^\rho U_\theta^\tau} \psi (p_\theta (y)) \; d\lambda_{E, x,s}(y).$$ That is,
$ \sigma_{x,\theta}^\tau$ and $\sigma_{x,\theta, s}^\tau$ are respectively
 the push-forwards of $\mu_{{H}(x)}^{\PS}|_{xV_\theta^\rho U_\theta^\tau}$ and
$\lambda_{E, x,s}|_{xV_\theta^\rho  U_\theta^\tau}$ via the map
$p_\theta$.
\end{definition}

\subsection{Energy  and Sobolev norms of
the projections}

Consider the Schwartz space
$\mathcal S:=\{f\in L^2(xV_\theta): t^\alpha  f^{(\beta)}\in L^2(xV_\theta)\},$ where $\alpha,\beta\in\mathbb{N}\cup\{0\}$
and $f^{(\beta)}$ is the $\beta$-th derivative of $f$. Denote by
$\mathcal S'$ the dual space of $\mathcal S$ with the strong dual
topology, which is the space of tempered distributions.
For $r>0$, we consider the following Sobolev space
$$H^r(xV_\theta):=\{f\in \mathcal S': (1+|t|)^{r}\hat f\in
L^2(xV_\theta)\}$$ with the norm
$$\|f\|_{2,r}:= \|(1+|t|)^r \hat f\|_{L^2(xV_\theta)}$$
 where $\hat f$ denotes the Fourier transform of $f$.


We recall the notion of $\alpha$-energy:
\begin{definition} [$\alpha$-energy] For $\alpha>0$ and a Radon measure $\mu$ on
$\br^2$, the $\alpha$-energy of $\mu$ is given by
$$I_\alpha(\mu):=\int_{\br^2}\int_{\br^2}\frac{1}{{|x-y|}^\alpha}d\mu(x)d\mu(y) .$$
\end{definition}

It is a standard fact that $I_\alpha(\mu)$ can be written as
\be \label{edic}  I_\alpha(\mu)= \alpha\int_{\br^2} \int_0^\infty
\frac{\mu(B(x,\ell ))}{\ell^{1+\alpha}} d\ell d\mu(x) \ee where $B(x,\ell)$ is the
Euclidean disc around $x$ of radius $\ell$.


The $\alpha$-energy of a measure $\mu$ is a useful tool in studying the projections of $\mu$ in various
directions. See ~\cite[Proposition 2.2]{PS} or \cite[Theorem 4.5]{Ma1} for the following theorem:
\begin{thm}   \label{t;r-n-proj}
Let $\nu$ be a Borel probability measure on $\bbr^2$ with compact
support. If the $1$-energy of $\nu$ is finite, i.e., $I_1(\nu)<\infty$, then the following hold:
\begin{enumerate}
\item $p_{\theta*}\nu$ is
absolutely continuous with respect to the Lebesgue measure for
almost all $\theta$;
\item there exists $c>1$ (independent of $\nu$) such that for any $0<r<\tfrac 12$,
\[ c^{-1}I_{1+2 r}(\nu)\leq  \int\|D(p_{\theta*}\nu )\|_{2,r}^2d\theta\leq c\; I_{1+2 r}(\nu),\]
where $D(p_{\theta*}\nu)$ is the Radon-Nikodym  derivative of $p_{\theta*}\nu$ with
respect to the Lebesgue measure. \end{enumerate}
\end{thm}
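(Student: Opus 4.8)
\textbf{Proof plan for Theorem~\ref{t;r-n-proj}.} The statement is a classical projection theorem in the style of Kaufman, Mattila and Peres--Schlag, so the plan is to reduce it to the Fourier-analytic characterization of energy and then to relate the Fourier transform of a projected measure to that of the original measure. First I would record the Fourier expression for the $\alpha$-energy: for a compactly supported probability measure $\nu$ on $\br^2$ and $0<\alpha<2$,
\[
I_\alpha(\nu)=c_\alpha\int_{\br^2}|\widehat\nu(\xi)|^2|\xi|^{\alpha-2}\,d\xi,
\]
which follows from the fact that the Riesz kernel $|x|^{-\alpha}$ has Fourier transform a constant multiple of $|\xi|^{\alpha-2}$ (this is where the hypothesis $I_1(\nu)<\infty$, i.e.\ $\alpha=1+2r<2$ for $r<\tfrac12$, is used, so the kernel is locally integrable and the identity is legitimate). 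Next I would compute $\widehat{p_{\theta*}\nu}$: since $p_\theta$ is orthogonal projection onto the line $V_\theta$ in direction $\theta+\pi/2$, writing $e_\theta$ for the unit vector along $V_\theta$, one gets $\widehat{p_{\theta*}\nu}(t)=\widehat\nu(t\,e_\theta)$ for $t\in\br$. This is the key identity: the one-dimensional Fourier transform of the projection is just the two-dimensional transform of $\nu$ restricted to the line through the origin in direction $e_\theta$.

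With these two facts in hand the proof is a change to polar coordinates. For fixed $r\in(0,\tfrac12)$,
\[
\int_0^{2\pi}\|D(p_{\theta*}\nu)\|_{2,r}^2\,d\theta
=\int_0^{2\pi}\int_{\br}(1+|t|)^{2r}|\widehat\nu(t\,e_\theta)|^2\,dt\,d\theta .
\]
Splitting the $t$-integral into $|t|\le 1$ and $|t|\ge1$, on the bounded part $(1+|t|)^{2r}\asymp 1\asymp |t|^{2r}\cdot(\text{bounded})$ only up to the singularity at $t=0$, which is harmless after multiplying by the Jacobian $|t|$ from polar coordinates, while on $|t|\ge1$ one has $(1+|t|)^{2r}\asymp |t|^{2r}$. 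In either regime, after inserting the polar Jacobian, the integrand is comparable to $|\widehat\nu(t e_\theta)|^2 |t|^{2r-1}\cdot|t|=|\widehat\nu(te_\theta)|^2|t|^{2r}$; reassembling $t\,dt\,d\theta=d\xi$ with $|\xi|=|t|$ gives that the whole expression is comparable to $\int_{\br^2}|\widehat\nu(\xi)|^2|\xi|^{2r-1}\,d\xi$, which by the energy formula above with $\alpha=1+2r$ equals a constant times $I_{1+2r}(\nu)$. One has to be slightly careful near $t=0$ because $(1+|t|)^{2r}$ does not vanish there whereas $|t|^{2r}$ does; but the contribution of $|t|\le1$ is bounded by $\int_0^1 |t|\,dt\,\|\widehat\nu\|_\infty^2\lesssim\|\nu\|^2<\infty$, and likewise $I_{1+2r}(\nu)$ differs from $\int|\widehat\nu|^2|\xi|^{2r-1}$ by a bounded quantity since the low-frequency part of that integral is finite; so both sides of the claimed inequality are affected only by an additive bounded term, which can be absorbed into the multiplicative constant after noting $I_1(\nu)<\infty$ and $\nu$ a probability measure. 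Absolute continuity of $p_{\theta*}\nu$ for a.e.\ $\theta$ then follows because the left-hand side being finite forces $\|D(p_{\theta*}\nu)\|_{2,0}=\|\widehat{p_{\theta*}\nu}\|_{L^2}<\infty$ for a.e.\ $\theta$, i.e.\ $p_{\theta*}\nu\in L^2(V_\theta)\subset L^1_{\mathrm{loc}}$.

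The main obstacle, such as it is, is bookkeeping rather than conceptual: making the comparison $(1+|t|)^{2r}\leftrightarrow|t|^{2r}$ precise near the origin and checking that the discarded pieces are genuinely lower-order and uniformly bounded (independently of $\nu$ within the relevant class), so that the constant $c$ is indeed absolute. Since this is a known result, I would simply cite \cite[Proposition 2.2]{PS} or \cite[Theorem 4.5]{Ma1} for the clean statement and indicate the Fourier-analytic argument above as the proof, rather than carrying out every estimate; the only thing worth emphasizing is the identity $\widehat{p_{\theta*}\nu}(t)=\widehat\nu(t e_\theta)$ together with the polar-coordinate reassembly, since that is the mechanism by which averaging the one-dimensional Sobolev norm over $\theta$ reconstitutes the two-dimensional energy.
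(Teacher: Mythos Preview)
Your proposal is correct: the paper itself does not prove this theorem but simply cites \cite[Proposition 2.2]{PS} and \cite[Theorem 4.5]{Ma1}, exactly as you suggest doing, and the Fourier-analytic argument you sketch (the identity $\widehat{p_{\theta*}\nu}(t)=\widehat\nu(t e_\theta)$ together with the polar-coordinate reconstitution of the energy integral) is precisely the mechanism behind those cited results.
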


\begin{lem}\label{ec} Let $Q\subset \br^2$ be a compact subset, $c>0$ and $\beta>0$ be fixed.
 Let $\mathcal M$ be a collection of Borel measures on $Q$
such that 
\be\label{mbig} \mu(B(x, \ell)) < c\cdot \ell^\beta  \;\;\text{ for all $\mu\in \mathcal M$,
 $x\in \supp (\mu )$ and $\ell>0$}.\ee
Then for any $0<\alpha < \beta$,
$$\sup_{\mu\in \mathcal M} I_\alpha(\mu) <\infty.$$
\end{lem}

\begin{proof}
 Fix $ 0<\alpha <\beta$. We use \eqref{edic}. Note that since $\mu(B(x,\ell))\le \mu(Q)$, \eqref{mbig} has meaning only when
 $\ell$ is not too big. We use \eqref{mbig} only for $0<\ell <1$ and use the upper
 bound of $\mu(Q)$ for $\ell \ge 1$.
We have
\begin{align*}\tfrac{1}\alpha I_\alpha(\mu)&=
\int_{Q}\int_0^{1} \frac{\mu(B(x,\ell))}{\ell^{1+\alpha}}d\ell d\mu(x) + 
\int_{Q}\int_{1}^\infty \frac{\mu(B(x,\ell))}{\ell^{1+\alpha}}d\ell d\mu(x)
\\ &\leq  c\cdot  \ell^{\beta-\alpha}|_{\ell=0}^{\ell=1}\cdot  \mu(Q)+ \ell^{-\alpha}|_{\ell=1}^{\ell=\infty} \cdot \mu(Q)^2 
\\ & =\mu(Q) (c+\mu(Q)) .
\end{align*}
Now, since $Q$ is compact, the assumption implies
that $\sup_{\mu\in \mathcal M}\mu (Q)<\infty$.
Hence $I_\alpha(\mu)$ is uniformly bounded for all $\mu\in \mathcal M$.
\end{proof}

\begin{cor}  \label{c;ps-leb} 
Fix $0<\tau\le \rho$. The following holds for almost all
$\theta$:
\begin{enumerate} 
\item $\sigma^\tau_{x,\theta}$ is absolutely continuous with
respect to the Lebesgue measure on $xV_{\theta}^\tau$;
\item its support has a positive   Lebesgue measure;
\item its Radon-Nikodym derivative
satisfies $D(\sigma^\tau_{x,\theta})\in H^{r}(xV_\theta^\tau)$ for any $0<r<\tfrac{\delta-1}2$.
\end{enumerate}
\end{cor}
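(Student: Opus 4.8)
The plan is to deduce the Corollary from the projection--energy estimate of Theorem~\ref{t;r-n-proj}, whose hypothesis is verified using Theorem~\ref{ub}; the place where $\delta>1$ enters is precisely that $\mu_{H^+(x)}^{\PS}$ then has finite $\alpha$-energy for every $\alpha<\delta$, and $1<\delta$ forces $1<1+2r<\delta$ whenever $0<r<\tfrac{\delta-1}{2}$.

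\emph{A planar measure and its energy.} Since $0<\rho<\tfrac1{\sqrt2}\rho_x$, the ball $xN_{\sqrt2\rho}$ is embedded, and in the identification of $xN$ with $\br^2$ I would set $\nu:=\mu_{H^+(x)}^{\PS}|_{xN_{\sqrt2\rho}}$, a compactly supported finite Borel measure which contains every rectangle $xV_\theta^\rho U_\theta^\tau$ with $\tau\le\rho$. Because $x$ lies in a fixed compact set and every point of $\supp\nu$ has forward endpoint in $\Lambda(\G)$, Theorem~\ref{ub} gives $\nu(B(y,\ell))\le c_0\ell^\delta$ for all small $\ell$, hence, $\supp\nu$ being bounded, $\nu(B(y,\ell))\le c\,\ell^\delta$ for all $\ell>0$ and all $y\in\supp\nu$ with $c$ independent of $y$. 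By Lemma~\ref{ec}, $I_\alpha(\nu)<\infty$ for every $0<\alpha<\delta$; in particular $I_1(\nu)<\infty$, and since $\delta\le2$ we have $\tfrac{\delta-1}{2}\le\tfrac12$, so for each $0<r<\tfrac{\delta-1}{2}$ the exponent $1+2r$ lies in $(1,\delta)$ and $I_{1+2r}(\nu)<\infty$.

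\emph{Absolute continuity and positivity of the support.} Applying Theorem~\ref{t;r-n-proj} to $\nu/|\nu|$ gives, for a.e.\ $\theta$, that $p_{\theta*}\nu$ is absolutely continuous on $xV_\theta$ with $D(p_{\theta*}\nu)\in H^{r}(xV_\theta)$, since $\int_0^{2\pi}\|D(p_{\theta*}\nu)\|_{2,r}^2\,d\theta\le c\,I_{1+2r}(\nu)<\infty$. As $\sigma^\tau_{x,\theta}$ is the $p_\theta$-pushforward of the restriction of $\nu$ to $xV_\theta^\rho U_\theta^\tau\subset xN_{\sqrt2\rho}$, one has $\sigma^\tau_{x,\theta}\le p_{\theta*}\nu$, so $\sigma^\tau_{x,\theta}\ll\Leb$ for a.e.\ $\theta$. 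Its total mass is $\mu_{H^+(x)}^{\PS}(xV_\theta^\rho U_\theta^\tau)$, which by Theorem~\ref{ub} is at least $\mu_{H^+(x)}^{\PS}(xN_c)\ge c_0^{-1}c^\delta>0$ for a small $c$ with $xN_c\subset xV_\theta^\rho U_\theta^\tau$; hence $\sigma^\tau_{x,\theta}$ is a nonzero absolutely continuous measure, and the set where its density is positive, which is contained in $\supp\sigma^\tau_{x,\theta}$, has positive Lebesgue measure.

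\emph{Sobolev regularity: the main point.} The remaining claim, $D(\sigma^\tau_{x,\theta})\in H^{r}(xV_\theta^\tau)$ for a.e.\ $\theta$, is the delicate one: $\sigma^\tau_{x,\theta}$ is not a restriction of $p_{\theta*}\nu$ but the projection of $\nu$ after a \emph{sharp} truncation in the $U_\theta$-direction, i.e.\ in the very direction one is projecting along, and fractional Sobolev regularity does not pass from $p_{\theta*}\nu$ to $\sigma^\tau_{x,\theta}$ by domination or by sandwiching between $H^r$ functions. I would instead establish the $\theta$-integrated bound $\int_0^{2\pi}\|D(\sigma^\tau_{x,\theta})\|_{2,r}^2\,d\theta<\infty$ directly, by reworking the Fourier-analytic proof of Theorem~\ref{t;r-n-proj} with the truncation incorporated: writing $n_\theta,e_\theta$ for orthonormal vectors along $V_\theta,U_\theta$, one has $\widehat{\sigma^\tau_{x,\theta}}(\xi)=\int_\br \widehat{\mathbf 1_{[-\tau,\tau]}}(\zeta)\,\widehat\nu(\xi n_\theta-\zeta e_\theta)\,d\zeta$, and one would bound the left side via a weighted Cauchy--Schwarz in $\zeta$, the isometric substitution $(\xi,\zeta)\mapsto\eta=\xi n_\theta-\zeta e_\theta$, and the $\theta$-average of the resulting kernel; the goal is an estimate of the form $\int_{\br^2}|\widehat\nu(\eta)|^2(1+|\eta|)^{2r-1+\varepsilon}\,d\eta\asymp I_{1+2r+\varepsilon}(\nu)$, finite for small $\varepsilon$ with $1+2r+\varepsilon<\delta$ by the previous step, while the auxiliary cutoff to $V_\theta^\rho$ in the transverse variable becomes, after $p_\theta$, multiplication by an interval indicator on $xV_\theta$, bounded on $H^{r}(\br)$ precisely because $r<\tfrac12$. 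One may also try the variant of bounding $\sigma^\tau_{x,\theta}$ above and below by projections with smooth cutoffs along $U_\theta$ (for which the Fourier argument is routine) and then controlling the error projections of $\nu$ restricted to thin shells $\tau\le|u_\theta|\le\tau'$ using the same energy bound. In either route, recovering the polar-coordinate gain $|\eta|^{-1}$ in the presence of the sharp $U_\theta$-truncation is the point where the argument must leave a black-box use of Theorem~\ref{t;r-n-proj}, and I expect this to be the main obstacle. Once $D(\sigma^\tau_{x,\theta})\in H^{r}$ for a.e.\ $\theta$, intersecting the exceptional $\theta$-sets over a sequence $r_n\uparrow\tfrac{\delta-1}{2}$ yields the conclusion for all $0<r<\tfrac{\delta-1}{2}$ at once.
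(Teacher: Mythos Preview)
Your overall strategy---finite $\alpha$-energy from Theorem~\ref{ub} and Lemma~\ref{ec}, then the projection theorem~\ref{t;r-n-proj}---is exactly the paper's. The paper's proof is three lines: it records $I_\alpha(\mu_{H^+(x)}^{\PS}|_{xN_\rho})<\infty$ for all $\alpha<\delta$, cites Marstrand's Theorem~I for the positive Lebesgue measure of the projected support, and invokes Theorem~\ref{t;r-n-proj} for absolute continuity and the $H^r$ bound. Your domination $\sigma^\tau_{x,\theta}\le p_{\theta*}\nu$ for absolute continuity, and your total-mass argument for positivity of the support, are perfectly good substitutes for the Marstrand citation.

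Where you go further is in flagging that $\sigma^\tau_{x,\theta}$ is the $p_\theta$-pushforward of a $\theta$-\emph{dependent} restriction $\mu^{\PS}|_{xV_\theta^\rho U_\theta^\tau}$, so Theorem~\ref{t;r-n-proj} as stated (for a single fixed $\nu$) does not literally yield the Sobolev claim; the paper does not address this. Your proposed Fourier fix is correct and your hesitation is unwarranted. Concretely: with $g=\widehat{\mathbf 1_{[-\tau,\tau]}}$ and the weight $w(\zeta)=(1+|\zeta|)^{-a}$ for any $a\in(2+2r-\delta,\,1)$ (nonempty precisely because $r<\tfrac{\delta-1}{2}$), the weighted Cauchy--Schwarz gives $\int |g|^2 w^{-1}<\infty$ (since $|g(\zeta)|\lesssim(1+|\zeta|)^{-1}$ and $a<1$), and after the orthogonal change $(\xi,\zeta)\mapsto\eta=\xi n_\theta-\zeta e_\theta$ the $\theta$-average of $(1+|\eta\cdot n_\theta|)^{2r}(1+|\eta\cdot e_\theta|)^{-a}$ is $\lesssim(1+|\eta|)^{2r-a}$. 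Hence
\[
\int_0^{2\pi}\|D(\tilde\sigma^\tau_{x,\theta})\|_{2,r}^2\,d\theta
\ \lesssim\ \int_{\br^2}|\widehat\nu(\eta)|^2(1+|\eta|)^{2r-a}\,d\eta
\ \asymp\ I_{2+2r-a}(\nu)\ <\ \infty,
\]
since $2+2r-a<\delta$. The remaining cutoff to $xV_\theta^\rho$ is, after projection, multiplication by $\mathbf 1_{[-\rho,\rho]}$, which is a bounded operator on $H^r(\br)$ for $0<r<\tfrac12$ (extension by zero is continuous $H^r(I)\to H^r(\br)$ in that range). So the ``main obstacle'' you anticipate does not materialize, and intersecting null sets over $r_n\uparrow\tfrac{\delta-1}{2}$ finishes the argument exactly as you say.
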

\begin{proof} 
It follows from Theorem \ref{ub} and Lemma \ref{ec} that
for any $0<\alpha<\delta,$ $I_\alpha(\mu_{{H}(x_0)}^{\PS}|_{x_0N_\rho})<\infty$.

Now, the fact that the support of the projection has positive measure follows from
\cite[Theorem I]{Mars}.
The other two claims follow from Theorem \ref{t;r-n-proj}.
\end{proof}

$$\text{ We fix  $0<r<\tfrac{\delta-1}2 $ for the rest of this section.}$$

\begin{terminology}[$\op{PL}$-direction] If $\theta$ satisfies Corollary \ref{c;ps-leb} with respect to $r$, we will call $\theta$
 as a ``$\op{PL}$" direction for
$(x,\tau)$, or simply for $\tau$ when $x$ is fixed.\end{terminology}

\subsection{Uniform bound for the energy of $\lambda_{E,x,s}$, $s\ge 1$}
In this subsection, we
set 
$$\lambda_{E,x,s}^\dag:=\lambda_{E,x,s}|_{xN_\rho}.$$ 
 We will show that the collection $\mathcal M=\{\lambda_{E,x, s}^\dag: s\ge 1\}$ 
of measures on $xN_{\rho}$
satisfies the hypothesis of Lemma \ref{ec} with $\beta=\delta$.
We may consider $\lambda_{E,x,s}^\dag$ as a measure on $\br^2$ supported on the $\rho$-ball around the origin.

Since $E$ is a BMS box, $E$ is of the form $x_0N_{r_0}^-A_{r_0} N_{r_0} M$ for some $0<r_0<\rho_{x_0}$
where $x_0^{\pm}\in \Lambda(\G)$.

\begin{lem}\label{lb} For all $s\ge 1$, we have
$$xN_\rho\cap Ea_{-s}\subset \{xn\in xN_\rho: d(xn,
P_{{H}(x)}^{-1}(\Lambda(\G)-\{x^-\}))\le e^{-s} r_0\}$$
where $P_{{H}(x)}: \partial_\infty(\bH^3)-\{x^-\} \to
{H}(x)$ is defined in the subsection \ref{ph} 
and $d$ denotes the
Euclidean distance: $d(xn_z, xn_{z'})=|z-z'|$.
\end{lem}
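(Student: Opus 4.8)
The plan is to unwind the definitions and reduce the claim to a statement about how the frame flow $a_s$ acts on the horosphere coordinate $z$. Recall that $E = x_0 N_{r_0}^- A_{r_0} N_{r_0} M$ with $x_0^\pm \in \Lambda(\G)$, so the ``center'' horosphere of the box is $x_0 N_{r_0}$, which via the visual map $P_{H^+(x_0)}$ corresponds to a neighborhood of $x_0^+ \in \Lambda(\G)$ of size $\asymp r_0$. The key point is that a point $xn_z$ lies in $E a_{-s}$ iff $xn_z a_s \in E$, and $x n_z a_s$ has the same forward endpoint as $x n_z$ (since $a_s$ fixes the forward endpoint of the geodesic it flows along, up to the action on the horospherical coordinate). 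So the condition $xn_z a_s \in E$ forces $(xn_z)^+$ to lie in the shadow of $E$, i.e.\ within distance $\asymp r_0$ (measured on $\partial \bH^3$) of a point of $\Lambda(\G)$ — but the contraction by $a_s$ on the $N^-A$-part of the box shrinks the relevant window in the $N$-direction by a factor $e^{-s}$, which is exactly what produces the $e^{-s} r_0$ bound.

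Concretely, first I would write $xn_z a_s = (x n_z a_s)$ and use the decomposition of $E$ to say: $xn_z a_s \in E$ means there is $h \in N_{r_0}^- A_{r_0} M$ with $x n_z a_s h^{-1} \in x_0 N_{r_0}$, equivalently $(x n_z a_s h^{-1})^+ = x_0^+$ up to the $N_{r_0}$-coordinate, and in particular the forward endpoint $(x n_z a_s)^+ = (x n_z)^+$ must land in the $P_{H^+(x_0)}$-image of a neighborhood of $x_0^+$. Since $x_0^+ \in \Lambda(\G)$, this exhibits a point $\xi \in \Lambda(\G) - \{x^-\}$ whose preimage $P_{H^+(x)}^{-1}(\xi)$ is close to $xn_z$. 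The remaining arithmetic is to track the size: the conjugation $a_{-s}(N_{r_0}^- A_{r_0} M) a_s$ contracts the $N^-$-direction by $e^{-s}$ and leaves $A M$ roughly unchanged, while conjugating $N$-translations by $a_s$ expands them by $e^{s}$; carrying this through the visual-map identification (which is bi-Lipschitz on the relevant compact piece with constant comparable to $e^{\beta_{x^+}(o,\pi(x))}$, hence uniformly bounded on $xN_\rho$) yields that $d(xn_z, P_{H^+(x)}^{-1}(\Lambda(\G)-\{x^-\})) \le e^{-s} r_0$, possibly after absorbing a fixed constant depending only on $\rho$ and the box — but since the statement is an asymptotic one for $s \gg 1$ and we have freedom in the constants, this is harmless (or one rescales $r_0$).

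The main obstacle I anticipate is bookkeeping the identification between group elements and visual coordinates cleanly: one must be careful that $x n_z a_s h^{-1} \in x_0 N_{r_0}$ genuinely constrains $(x n_z)^+$ and not some other coordinate, and that the metric $d$ on $H^+(x)$ used in the statement (the Euclidean/visual distance $|z - z'|$) is the one in which the $e^{-s} r_0$ bound is clean. The cleanest route is probably to move everything to the upper half-space model, place $x$ so that $x^- = \infty$ so that $H^+(x)$ is the boundary plane $\c = \br^2$ with $(xn_z)^+ = z$, and then the frame flow $a_s$ acts on this plane by the scaling $z \mapsto e^{-s} z$ composed with a translation; the box $E$ pulls back to a region whose shadow on this plane is a neighborhood of $\Lambda(\G)$ of radius $\asymp r_0$, and the preimage of this shadow under the scaling is the $e^{-s}r_0$-neighborhood of $\Lambda(\G)$. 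Once the coordinates are fixed this is a short computation; the rest of the section (Lemma \ref{ec} with $\beta = \delta$, via Theorem \ref{ub}) then applies the conclusion to bound the $\alpha$-energy of $\lambda_{E,x,s}^\dag$ uniformly in $s$.
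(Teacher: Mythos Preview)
Your approach is essentially the paper's: both rest on the commutation of $a_{-s}$ through the $N$-factor of the box. The paper simply writes $xn = x_0 n_w^- a_t n_z m_\theta a_{-s}$ and commutes to get $xn = y\, n_{z'}$ with $y = x_0 n_w^- a_{t-s} m_\theta$ and $z' = e^{-s} e^{2i\theta} z$; then $y \in H^+(x)$, $y^+ = x_0^+ \in \Lambda(\G)$ (since $N^- A M$ preserves the forward endpoint), and $|z'| = e^{-s}|z| < e^{-s} r_0$ --- done in three lines.

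Your worry about ``absorbing a fixed constant'' from bi-Lipschitz bounds on the visual map is misplaced and is the one point where your write-up loses precision. The distance in the statement is \emph{defined} as $d(xn_z, xn_{z'}) = |z - z'|$, so working directly in the $N$-coordinate gives the exact bound $e^{-s} r_0$ with no multiplicative constant, valid for all $s \ge 1$ (not only $s \gg 1$). The upper-half-space detour, the shadow language, and the discussion of $a_{-s}(N_{r_0}^- A_{r_0} M)a_s$ are all correct in spirit but are more machinery than the computation needs; the clean route is to commute $a_{-s}$ past $n_z m_\theta$ and read off the answer.
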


\begin{proof} 
Suppose $xn\in E$, so that
$xn=x_0{\chn}_wa_tn_z m_\theta a_{-s}$ with $|z|<r_0$. We may write it as
$$xn=x_0{\chn}_wa_{t-s} m_\theta n_{e^{-s}e^{2\pi i \theta} z}.$$
If we set $y:=x_0{\chn}_wa_{t-s}m_\theta$, then $y^+=x_0^+$. Hence
$y^+\in \Lambda(\G)$. Since $xn=yn_{e^{-s} e^{2\pi i \theta } z}$ and
$|e^{-s} e^{2\pi i \theta } z|<e^{-s} r_0$, the claim follows.
\end{proof}

\begin{thm}\label{ue}
There exists $c>0$ such that for all $s\gg 1$,
 $y\in \supp (\lambda_{E,x,s}^\dag)$ and any $ \ell>0 $,
$$\lambda_{E,x,s}^\dag (B(y, \ell)) < c\cdot \ell^\delta $$
where $B(y,\ell)=\{yn_z:|z|<\ell\}$.
\end{thm}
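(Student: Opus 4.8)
The plan is to bound $\lambda_{E,x,s}^\dag(B(y,\ell))$ by comparing it, via the thickening/push-up lemmas already established (Lemmas~\ref{r1} and~\ref{r4}, and the definitions of $\lambda_{E,x,s}$), with a PS-measure quantity on a contracting leaf, and then to invoke the non-focusing estimate of Theorem~\ref{ub}. Concretely, by Lemma~\ref{lb}, every point $xn$ in $\operatorname{supp}(\lambda_{E,x,s}^\dag) \subset xN_\rho \cap Ea_{-s}$ lies within Euclidean distance $e^{-s}r_0$ of the set $P_{H^+(x)}^{-1}(\Lambda(\G)-\{x^-\})$; so if $\ell \geq e^{-s}r_0$ the ball $B(y,\ell)$ is comparable (up to doubling $\ell$) to a ball centered at a point whose forward endpoint is in $\Lambda(\G)$, while if $\ell < e^{-s}r_0$ the ball is so small that the whole intersection $B(y,\ell)\cap Ea_{-s}$ sits in an $e^{-s}r_0 \times 2\ell$ ``sliver'' around a single $U$-direction piece of the leaf $P_{H^+(x)}^{-1}(\Lambda(\G))$.

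First I would set up the measure-theoretic comparison: fix $\e>0$ small and apply Lemma~\ref{r4} (or rather its infinitesimal content together with the defining formula for $\lambda_{E,x,s}$) to replace $\lambda_{E,x,s}^\dag(B(y,\ell))$ by an expression of the form $e^{O(\e)} e^{(2-\delta)s}\int_{B(y,\ell)^+} \chi_{E^+_\e}(xn_z a_s)\, d\lambda_x(z)$, where $d\lambda_x$ is Lebesgue-type density on $xN$ with weight $e^{2\beta_{(xn)^+}(o,\pi)}$, hence comparable on the compact box to $dm_o$. The key point is that $\chi_{E_\e^+}(xn_za_s)$ forces $xn_z$ to lie within $O(e^{-s})$ of the leaf $P_{H^+(x)}^{-1}(\Lambda(\G))$, which by Theorem~\ref{ub} has PS-mass $\asymp r^\delta$ on $r$-balls. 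Integrating the Lebesgue density over an $O(e^{-s})$-neighborhood of this fractal leaf inside $B(y,\ell)$ and using a Vitali/covering argument: cover the relevant part of $\Lambda(\G)\cap B((xn)^+,\ell)$ by $\asymp (\ell/e^{-s})^{\delta}$ (via the upper bound in Theorem~\ref{ub}, giving $\mu^{\PS}(B(\cdot,\ell))/\mu^{\PS}(B(\cdot,e^{-s})) \lesssim (\ell e^{s})^\delta$) balls of radius $e^{-s}$, each contributing Lebesgue mass $\asymp (e^{-s})^2 = e^{-2s}$ to the $e^{-s}$-thickening. This yields $\lambda_{E,x,s}^\dag(B(y,\ell)) \lesssim e^{(2-\delta)s}\cdot (\ell e^s)^\delta \cdot e^{-2s} = \ell^\delta$, with a constant uniform in $s$. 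The case $\ell \lesssim e^{-s}$ is handled directly: then $B(y,\ell)$ meets only $O(1)$ of the radius-$e^{-s}$ pieces, the thickened leaf inside $B(y,\ell)$ has Lebesgue mass $\lesssim \ell \cdot e^{-s}$, whence $\lambda_{E,x,s}^\dag(B(y,\ell)) \lesssim e^{(2-\delta)s}\cdot \ell e^{-s} \le e^{(2-\delta)s} e^{-(1+\delta)s}\cdot \ell^\delta \cdot (\text{something})$ — one must be a bit careful here, and in fact in this regime the cleaner bound comes from noting $\lambda_{E,x,s}^\dag(B(y,\ell)) \le \lambda_{E,x,s}^\dag(xN_{c e^{-s}}) \lesssim e^{(2-\delta)s} e^{-s}\cdot e^{-s}\cdot 1$ times a PS-factor, which one compares against $\ell^\delta$ using $\ell^{-\delta} \le (e^{-s})^{-\delta}$.

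The main obstacle I expect is making the normalization bookkeeping honest and uniform in $s$: one needs that the $e^{(2-\delta)s}$ prefactor in $\lambda_{E,x,s}$ is exactly cancelled by the interplay of (i) the number $(\ell e^s)^\delta$ of radius-$e^{-s}$ pieces of the limit set at scale $\ell$, governed by the \emph{two-sided} estimate $\mu^{\PS}(B(\xi,r))\asymp r^\delta$ of Theorem~\ref{ub}, and (ii) the Lebesgue mass $e^{-2s}$ each $e^{-s}$-tube contributes after the $e^{-s}$-thickening in the transverse ($V$-like) direction. In particular the exponent arithmetic $2-\delta + \delta - 2 = 0$ must come out on the nose, so the delicate part is controlling the transverse thickness: Lemma~\ref{lb} gives thickness exactly $e^{-s}r_0$, and one must check that the Lebesgue measure $\lambda_x$ restricted to an $e^{-s}r_0$-tube around a piece of $P_{H^+(x)}^{-1}(\Lambda(\G))$ of ``length'' $e^{-s}$ is indeed $\asymp e^{-2s}$ and not, say, $e^{-s}$ times the PS-measure — i.e.\ one is genuinely integrating the ambient $2$-dimensional Lebesgue measure over a neighborhood of the $\delta$-dimensional set, not the PS-measure itself. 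A clean way to organize this is: partition $B(y,\ell)\cap \{d(\cdot, P^{-1}_{H^+(x)}(\Lambda(\G)))\le e^{-s}r_0\}$ into $O((\ell e^s)^\delta)$ cells each of $\lambda_x$-mass $\lesssim e^{-2s}$, using the upper estimate of Theorem~\ref{ub} for the count and crude volume for each cell, and then sum. Finally, for $\ell$ larger than the diameter of the box the statement is trivial (the total mass is $\lesssim 1 \lesssim \rho^{-\delta}\ell^\delta$), so only $e^{-s} \lesssim \ell \lesssim 1$ requires the above argument.
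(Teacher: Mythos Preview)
Your core approach matches the paper's: use Lemma~\ref{lb} to confine $\operatorname{supp}(\lambda_{E,x,s}^\dag)$ to an $e^{-s}r_0$-neighborhood of $P_{H^+(x)}^{-1}(\Lambda(\G))$, cover that support by balls of radius $\asymp e^{-s}$ via a Besicovitch-type covering, bound the $\lambda_{E,x,s}$-mass of each such ball by $\asymp e^{(2-\delta)s}\cdot e^{-2s}$, and count the balls meeting $B(y,\ell)$ by comparing their total $\mu^{\PS}_{H^+(x)}$-mass with $\mu^{\PS}_{H^+(x)}(B(y,O(\ell)))\lesssim \ell^\delta$ from Theorem~\ref{ub}. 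The exponent bookkeeping $e^{(2-\delta)s}\cdot (\ell e^s)^\delta\cdot e^{-2s}=\ell^\delta$ that you write out is exactly the paper's computation.

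Two places where the paper is cleaner than your sketch. First, the detour through Lemmas~\ref{r1} and~\ref{r4} is unnecessary: the defining formula for $\lambda_{E,x,s}$ already gives
\[
\lambda_{E,x,s}^\dag(B(y,\ell)) \;\le\; \tfrac{e^{(2-\delta)s}}{m^{\BR}(E)}\,\lambda_x\bigl(B(y,\ell)\cap Ea_{-s}\bigr)
\]
directly, with no $\e$-thickening or $E_\e^+$ needed. Second, your small-$\ell$ regime is overcomplicated. For $0<\ell\le e^{-s}$ the paper simply bounds by the full Lebesgue area,
\[
\lambda_{E,x,s}^\dag(B(y,\ell)) \;\lesssim\; e^{(2-\delta)s}\cdot \pi\ell^2,
\]
and then uses $\ell\le e^{-s}$ to get $\ell^{2-\delta}\le e^{-(2-\delta)s}$, whence the right-hand side is $\lesssim \ell^\delta$ in one line; there is no need to analyze slivers or count how many $e^{-s}$-pieces a tiny ball meets.
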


\begin{proof} Since $B(y, 2\rho) $ contains $xN_\rho$, it suffices to show the above
for $0<\ell<2\rho$. Since 
 $\mbox{supp}(\lambda_{E,x,s})\subset
E a_{-s}\cap xN_\rho,$ it follows from Lemma \ref{lb} that for each $z \in \mbox{supp}(\lambda_{E,x,s}^\dag)$, $B(z, 3 \rho
e^{-s})$ contains $B(w, \rho e^{-s})$ for some $w\in
{H}(z)$ with $w^+\in \Lambda(\G)$.

Hence by Theorem
\ref{ub} we have
$$ \mu_{{H}(z)}^{\PS}(B(z,3 \rho e^{-s}))\ge
 \mu_{{H}(z)}^{\PS}(B(w,\rho e^{-s}))\ge
  c_0^{-1} (3\rho)^\delta e^{-\delta s}  $$
where $c_0$ is as in Theorem \ref{ub} with $F_0$ being the $\rho$-neighborhood of $\Omega$.
Consider the covering of $\mbox{supp}(\lambda_{E,x,s}^\dag)$ given by the balls
 $B(z,3\rho e^{-s})$, $z\in \mbox{supp}(\lambda_{E,x,s}^\dag)$.
 By the Besicovitch covering lemma we can choose a finite set $J_s\subset \mbox{supp}(\lambda_{E,x,s}^\dag)$ such that
the corresponding finite collection $\{B(z,3\rho e^{-s}): z\in J_s\}$ has  multiplicity at most $\kappa$
(independent of $s$) and covers $\mbox{supp}(\lambda_{E,x,s}^\dag).$

Now we consider two cases for $\ell.$ 

{\bf Case 1.} $0<\ell\le e^{-s}.$ 

In this case, for any $y\in \mbox{supp}(\lambda_{E,x,s}^\dag)$, we
have
\[\lambda_{E,x,s}^\dag(B(y,\ell))\leq \pi e^{(2-\delta)s}\ell^2 \leq \pi \ell^{\delta}.\]

{\bf Case 2.} $e^{-s} <\ell<2\rho .$ Let $J_{y,s}=\{z\in J_s: B(z,3\rho e^{-s}) \subset
B(y,3\ell)\}$. We have
\begin{align*}\lambda_{E,x,s}^\dag(B(y,\ell)) &\leq \sum_{z\in J_s} \{\lambda_{E,x,s}(B(z,3\rho e^{-s})): B(z,3\rho e^{-s})\cap B(y,\ell)\neq\emptyset\}\\
& \leq\sum_{z\in J_{y,s}}\lambda_{E,x,s}(B(z,3\rho e^{-s})) \\ & \leq 
\sum_{z\in J_{y,s}}
e^{(2-\delta)s} (3\rho)^2 e^{-2s}\\ & \leq
 c_0 (3\rho)^{2-\delta}
 \sum_{z\in J_{y,s}} \mu_{{H}(y)}^{\PS}(B(z,3\rho e^{-s})) 
 \\ &
 \leq \kappa c_0 (3\rho)^{2-\delta}
    \mu_{{H}(y)}^{\PS}(B(y,3 \ell))
    \\ & \leq 3^\delta\kappa
 c_0^2 (3\rho)^{2-\delta} \ell^\delta .\end{align*}

Hence for all  $0<\ell <2\rho $ and $y\in \text{supp}(\lambda_{E,x,s})$,
$$\lambda_{E,x,s}^\dag(B(y,\ell))\le c_1 \ell^\delta$$
for some constant $c_1>0$ independent of $s\gg 1$. 
\end{proof}

Therefore by Lemma \ref{ec}, we deduce:
\begin{cor}\label{cor;unif-energy}
 For any $0<\alpha<\delta$,
$$\sup_{s\gg 1} I_\alpha(\lambda_{E,x,s}^\dag)<\infty .$$
\end{cor}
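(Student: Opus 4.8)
The plan is to combine the pointwise upper bound on $\lambda^\dag_{E,x,s}$-measures of balls, proved in Theorem~\ref{ue}, with the general energy estimate of Lemma~\ref{ec}. The statement of Corollary~\ref{cor;unif-energy} is almost immediate from these two results: once we know that every $\mu\in\mathcal M:=\{\lambda^\dag_{E,x,s}:s\gg 1\}$ satisfies $\mu(B(y,\ell))<c\cdot\ell^\delta$ for all $y\in\supp(\mu)$ and all $\ell>0$ with a constant $c$ independent of $s$, the hypothesis of Lemma~\ref{ec} holds with $\beta=\delta$ and with the compact set $Q$ taken to be the closed $\rho$-ball around the origin in $\bbr^2$ (recall we are viewing $\lambda^\dag_{E,x,s}$ as a measure on $\bbr^2$ supported in that ball, via the identification $xN\cong\bbr^2$). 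Lemma~\ref{ec} then yields $\sup_{\mu\in\mathcal M}I_\alpha(\mu)<\infty$ for every $0<\alpha<\delta$, which is exactly the assertion.

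First I would record that the collection $\mathcal M$ really does verify the hypothesis of Lemma~\ref{ec}. By Theorem~\ref{ue} there is a single constant $c>0$ and a threshold $s_1$ so that $\lambda^\dag_{E,x,s}(B(y,\ell))<c\,\ell^\delta$ for all $s\ge s_1$, all $y\in\supp(\lambda^\dag_{E,x,s})$, and all $\ell>0$. Discarding the finitely many $s$ with $1\le s<s_1$ (or absorbing them into the constant, since each individual $\lambda^\dag_{E,x,s}$ is a finite measure on a compact set and hence trivially has finite $\alpha$-energy for $\alpha<2$), we may assume the bound holds for every member of $\mathcal M$. Thus $\mathcal M$ satisfies the hypothesis of Lemma~\ref{ec} with $Q$ the closed $\rho$-ball, $\beta=\delta$, and the stated constant $c$.

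Next I would simply invoke Lemma~\ref{ec}: fix any $0<\alpha<\delta$; since $\alpha<\beta=\delta$, the lemma gives $\sup_{\mu\in\mathcal M}I_\alpha(\mu)<\infty$. Unwinding the definition of $\mathcal M$ this reads $\sup_{s\gg 1}I_\alpha(\lambda^\dag_{E,x,s})<\infty$, completing the proof.

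There is essentially no obstacle here: all the work has been done in Theorem~\ref{ue} (the uniform-in-$s$ ball bound, which is the genuinely substantive input, relying on Lemma~\ref{lb} and the Sullivan shadow estimate Theorem~\ref{ub}) and in Lemma~\ref{ec} (the elementary integral estimate). The only point to be slightly careful about is bookkeeping with the threshold ``$s\gg 1$'': one should make sure the constant in Theorem~\ref{ue} and the range of $s$ for which it applies are the same as, or compatible with, the ``$s\gg 1$'' appearing in the corollary's statement — but since both are quantified the same way, this is purely a matter of notation. Hence the corollary follows directly, and the phrase ``Therefore by Lemma~\ref{ec}'' in the text is justified.
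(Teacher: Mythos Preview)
Your proposal is correct and follows exactly the paper's approach: the text explicitly sets up the collection $\mathcal M=\{\lambda_{E,x,s}^\dag:s\ge 1\}$ so that Theorem~\ref{ue} verifies the hypothesis of Lemma~\ref{ec} with $\beta=\delta$, and the corollary is then stated with the one-line justification ``Therefore by Lemma~\ref{ec}, we deduce.'' Your discussion of the $s\gg 1$ threshold is more careful than the paper's, but the argument is the same.
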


\subsection{$L^2$-convergence of projected measures}
$$\text{
Recall the notation $\sigma^{\tau}_{x,\theta,s_i}$ and $  \sigma^{\tau}_{x,\theta}$ from Definition~\ref{ss}.}$$
 Theorem \ref{cw} is used crucially
in the following proposition: 

\begin{prop}\label{conver}
 Fix $0< \tau \le \rho$,  a $\op{PL}$-direction
$\theta\in M$ for $(x,\tau)$ and a sequence $s_i\to
+\infty$.
If $\sup_i \|D(\sigma^{\tau}_{x,\theta,s_i} )\|_{2,r} <\infty $,
then \[D(\sigma^{\tau}_{x,\theta,s_i}) \xrightarrow{L^2(xV_\theta)}
D(\sigma^{\tau}_{x,\theta}) \quad\text{as $i\to \infty$}.\]
\end{prop}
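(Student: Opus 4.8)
The plan is to deduce the $L^2$-convergence from weak* convergence plus uniform boundedness of the Sobolev norms. First I would recall that by Theorem~\ref{cw} (equivalently Theorem~\ref{f}), for any $\psi \in C_c(xN_\rho)$ we have $\lambda_{E,x,s}(\psi) \to \mu_{H^+(x)}^{\PS}(\psi)$ as $s \to +\infty$; by testing against functions of the form $\psi = (\phi \circ p_\theta) \cdot \mathbf{1}_{xV_\theta^\rho U_\theta^\tau}$ (approximated by genuine elements of $C_c$, using that the boundary of the relevant region is $\mu^{\PS}$-null since $\theta$ is a PL direction and by Theorem~\ref{ub} the PS measure of slabs is controlled), this pushes down to $\sigma^\tau_{x,\theta,s}(\phi) \to \sigma^\tau_{x,\theta}(\phi)$ for $\phi \in C_c(xV_\theta^\tau)$. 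So $\sigma^\tau_{x,\theta,s_i} \to \sigma^\tau_{x,\theta}$ weakly* as measures on the interval $xV_\theta^\tau$, and hence also as tempered distributions, hence (testing against Schwartz functions) $D(\sigma^\tau_{x,\theta,s_i}) \to D(\sigma^\tau_{x,\theta})$ in the sense of distributions.

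Next I would exploit the hypothesis $\sup_i \|D(\sigma^\tau_{x,\theta,s_i})\|_{2,r} < \infty$. Since $r > 0$, the ball $\{f : \|f\|_{2,r} \le C\}$ is a bounded subset of $H^r(xV_\theta)$, which is a Hilbert space, so by Banach--Alaoglu a subsequence of $D(\sigma^\tau_{x,\theta,s_i})$ converges weakly in $H^r$ to some $f \in H^r$. This weak-$H^r$ limit must agree with the distributional limit $D(\sigma^\tau_{x,\theta})$ (test against Schwartz functions, which are dense in $H^{-r}$), so $D(\sigma^\tau_{x,\theta}) \in H^r$ and a subsequence converges to it weakly in $H^r$. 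Because the functions are all supported in the fixed compact interval $xV_\theta^\tau$, the embedding $H^r(xV_\theta^\tau) \hookrightarrow L^2(xV_\theta^\tau)$ is compact (Rellich--Kondrachov for a bounded interval, valid for any $r > 0$), so weak convergence in $H^r$ on this compact set upgrades to strong convergence in $L^2$. Finally, since every subsequence has a further subsequence converging in $L^2$ to the same limit $D(\sigma^\tau_{x,\theta})$, the whole sequence $D(\sigma^\tau_{x,\theta,s_i})$ converges to $D(\sigma^\tau_{x,\theta})$ in $L^2(xV_\theta)$, which is the assertion.

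The main technical point to handle carefully is the first step: passing from the weak* convergence $\lambda_{E,x,s}(\psi) \to \mu_{H^+(x)}^{\PS}(\psi)$ for continuous compactly supported $\psi$ (Theorem~\ref{cw}) to convergence of the projected measures $\sigma^\tau_{x,\theta,s}$ tested against $C_c$ functions on $xV_\theta^\tau$. The subtlety is that projecting involves restricting to the slab $xV_\theta^\rho U_\theta^\tau$ and composing with $p_\theta$, so one must control the mass of $\lambda_{E,x,s}$ near the boundary of that slab uniformly in $s$. Here the uniform upper bound of Theorem~\ref{ue}, $\lambda_{E,x,s}^\dagger(B(y,\ell)) < c\,\ell^\delta$ for all $s \gg 1$, together with $\delta > 1$, gives exactly the equicontinuity / no-concentration-on-slabs estimate needed to approximate the indicator of the slab from inside and outside by continuous functions with error uniformly small in $s$. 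Everything else (Banach--Alaoglu, Rellich, identifying limits via density of Schwartz functions, the subsequence argument) is soft functional analysis that I would not grind through in detail.
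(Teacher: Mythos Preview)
Your proposal is correct and follows essentially the same approach as the paper: weak convergence of the projected measures (from Theorem~\ref{cw}) combined with uniform boundedness in $H^r$ and the compact embedding $H^r \hookrightarrow L^2$ on a bounded interval. The paper's proof is terser---it simply observes that relative compactness in $L^2$ plus a unique weak limit forces convergence---whereas you spell out the Banach--Alaoglu/subsequence mechanism and are more careful about the slab boundary when restricting $\lambda_{E,x,s}$ (the paper handles this implicitly via Portmanteau, since the boundary of $xV_\theta^\rho U_\theta^\tau$ is $\mu_{H^+(x)}^{\PS}$-null).
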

\begin{proof}
By Theorem \ref{cw} and the assumption of $x^{\pm}\in \Lambda(\G)$,
 $\lambda_{x,s_i}|_{xU_\theta^\rho V_\theta^\tau}$
weakly converges to $\te \mu_{{H}(x)}^{\PS}|_{xU_\theta^\rho
V_\theta^\tau}$ as $s_i\to \infty$. Therefore $\sigma^{\tau}_{x,\theta,s_i}$
 weakly converges to $\sigma^{\tau}_{x,\theta}$ as $i\to \infty$.
Hence it suffices to show that the collection
$$\{ D(\sigma^{\tau}_{x,\theta,s_i})\in {L^2(xV_\theta^\tau)}\}$$ is relatively compact in
$L^2(xV_\theta^\tau)$.
Since  this collection is
uniformly bounded in the Sobolev space $H^{r}(xV_\theta^\tau)$ by the assumption, the claim follows from the fact that
we have $H^r(xV_\theta^\tau)$ embeds
compactly in $L^2(xV_\theta^\tau)$  for any $r>0$  (see~\cite[Theorem 16.1]{LM}).
\end{proof}

Recall that by a measure on $[0, 2\pi)$, we mean the Lebesgue measure normalized to
be the probability measure.
\begin{thm}\label{c;unif-l2} Let $s_i\to
+\infty$ be a fixed sequence. For any $\e>0$ and any
finite subset $\{\tau_1,\ldots,\tau_n\}$ of $(0,\rho]$,
 there exists a Borel subset
$\Theta_\e(x)\subset [0,2\pi),$ of measure at least $1-\e$, such that 
\begin{enumerate}
\item every 
$\theta\in\Theta_\e( x)$ is a $\op{PL}$ direction for $(x,\tau_\ell)$ for each $1\le \ell\le n$; 
 \item for each $\theta\in \Theta_\e(x)$, there exists an infinite subsequence  $\{s_{j_i}\}$(depending on $(x,\theta)$) such that
for each $1\le \ell\le n$, \[D(\sigma^{\tau_\ell}_{x,\theta,s_{j_i}}) \xrightarrow{L^2(xV_\theta)}
D(\sigma^{\tau_\ell}_{x,\theta}) .\] \end{enumerate}
\end{thm}

\begin{proof}
 Recall that we fixed some
$0<r<(\delta-1)/2$. By Corollary \ref{cor;unif-energy} and Theorem
\ref{t;r-n-proj}, there is a constant $L>1$ such that
\[\sup_i\int\|D(\sigma^{\tau_\ell}_{x,\theta,s_i})\|_{2,r}^2d
\theta\leq L\hspace{7mm}\mbox{for}\h\h1\leq\ell\leq n.\]
Hence using Corollary~\ref{c;ps-leb} and Chebyshev's inequality, we
deduce that for any $\e>0$, there exists some $L_0>0$ such that if we let
\[\Theta^{\tau_\ell}_{s_i}=\{\theta: \theta\h\mbox{is a $\op{PL}$ direction for $(x,\tau_\ell)$ and}
 \h\h\|D(\sigma^{\tau_\ell}_{x,\theta, s_i} )\|_{2,r}^2< L_0\},\]
then for all $i>0$, we have
$m(\Theta^{\tau_\ell}_{s_i})>1-\frac{\e}{2n}.$ Let
$\Theta_i=\cap_\ell\Theta_{s_i}^{\tau_\ell}$ and let
$\Theta=\limsup_i \Theta_i$.  Then $m(\Theta)>1-\e$.
For $\theta\in\Theta$, $\theta$ lies in infinitely many of $\Theta_i$'s, i.e.,
$\theta\in \Theta_{j_i}$ for some infinite subsequence $\{j_i\}$. 
Hence the claim follows from Proposition \ref{conver} applied to $\{s_{j_i}\}$.
\end{proof}

\subsection{Key lemma on the projections of $\lambda_{E,x,s}^\dag$}

The following is the key technical lemma in the proof of the window theorem.

\begin{lem}[Key Lemma] \label{zerotwo}  Fix
$0<\tau<\rho$ and a sequence $s_i \to +\infty $.
 For any $\e>0$, there exists a Borel subset $\Theta_\e(x)\subset [0,2\pi)$ of measure at least $1-\e$ such that
 if $\theta\in \Theta_\e(x)$ and $E_i m_\theta^{-1} \subset X$ is a sequence of Borel subsets
satisfying $$\lambda_{ E, x,s_i}(xN_\tau -E_i m_\theta^{-1})
\to 0,$$
then there is an infinite subsequence $\{s_{j_i}\}$ such that
 for any Borel subset $O_\theta(x)\subset xV_\theta^\rho$,
$$\limsup_{i} \sigma_{x,\theta,s_{j_i}}^{\rho}(O_\theta(x) \setminus p_\theta( E_i m_\theta^{-1} \cap xN_\tau))\le
\sigma_{x,\theta}^{\rho} \{ t\in O_\theta(x): D(\sigma^\tau_{x,\theta})( t)=0\}.$$
\end{lem}

By Theorem \ref{c;unif-l2}, the Key Lemma follows from the following lemma.
Observe that this is a rather strong control on the conditional measures, as
one can easily construct counter-examples in a general setting.
Here our $L^2$- convergence result of the projection measures  to a ``rich" measure
is crucially used. 
\begin{lem} Fix
$0<\tau<\rho$ and
 a $\op{PL}$ direction  $\theta\in [0,2\pi),$ simultaneously for $(x,\tau)$ and $(x,\rho)$.
 Let $W_i m_\theta^{-1} \subset x N_\tau$ be a sequence of Borel subsets
and $\{s_i\}$ be a sequence tending to infinity.
Assume the following holds as $i\to \infty$:
\begin{enumerate}
 \item  $D(\sigma^{\tau}_{ x,\theta,s_i})\xrightarrow{L^2(xV_\theta)}
D( \sigma_{x,\theta}^{\tau}) $;
\item $D(\sigma^{\rho}_{x,\theta,s_i}) \xrightarrow{L^2(xV_\theta)}
D( \sigma_{x,\theta}^{\rho} ) $; 
\item $\lambda_{ E, x,s_i}(xN_\tau -W_i m_\theta^{-1})
\to 0$.
\end{enumerate}
Then for any Borel subset $O_\theta(x)\subset xV_\theta^\rho$,
$$\limsup_i \sigma_{x,\theta,s_i}^{\rho}(O_\theta(x) \setminus p_\theta( W_i m_\theta^{-1}))\le
\sigma_{x,\theta}^{\rho} \{ t\in O_\theta(x): D(\sigma^\tau_{x,\theta})( t)=0\}.$$
\end{lem}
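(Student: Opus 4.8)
The plan is to pass to the limit in the measure-theoretic inequality
$$\sigma_{x,\theta,s_i}^{\rho}(O_\theta(x)\setminus p_\theta(W_im_\theta^{-1}))\le \sigma_{x,\theta,s_i}^{\rho}\{t\in O_\theta(x):\text{few }\lambda_{E,x,s_i}\text{-mass over }t\}\ +\ (\text{small error}),$$
using hypothesis (3) to control the error and hypotheses (1),(2) to pass to the limit. More precisely, fix $\eta>0$. Since $\sigma_{x,\theta,s}^\rho$ is the $p_\theta$-pushforward of $\lambda_{E,x,s}|_{xV_\theta^\rho U_\theta^\tau}$ (note $\tau<\rho$ here, so this is really $\lambda_{E,x,s}$ restricted to a slab of $xN_\rho$), for each $i$ and each $t\in xV_\theta^\rho$ I would consider the conditional mass $\lambda_{E,x,s_i}(p_\theta^{-1}(t)\cap xU_\theta^\tau)$ of the fiber over $t$. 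Call $t$ \emph{$W_i$-heavy} if this fiber mass is not entirely carried by $W_im_\theta^{-1}$, i.e.\ if $p_\theta^{-1}(t)\cap xU_\theta^\tau\not\subset W_im_\theta^{-1}$ up to the relevant conditional null sets. The first step is the pointwise observation: if $t\in O_\theta(x)\setminus p_\theta(W_im_\theta^{-1})$, then the \emph{entire} fiber over $t$ misses $W_im_\theta^{-1}$; hence
$$\sigma_{x,\theta,s_i}^{\rho}\bigl(O_\theta(x)\setminus p_\theta(W_im_\theta^{-1})\bigr)\le \lambda_{E,x,s_i}\bigl(xN_\tau\cap p_\theta^{-1}(O_\theta(x)) - W_im_\theta^{-1}\bigr)+\sigma_{x,\theta,s_i}^{\rho}(\text{fibers of }U_\theta\text{-length}>0\ldots).$$
This needs to be organized carefully; the cleanest route is to split $O_\theta(x)\setminus p_\theta(W_im_\theta^{-1})$ into the part where the fiber (of the $\sigma^\rho$-projection) has small $\sigma^\tau_{x,\theta,s_i}$-conditional mass and the part where it is large.

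The key step is to separate the "density zero" part. Write $g_i:=D(\sigma^\tau_{x,\theta,s_i})$ and $g:=D(\sigma^\tau_{x,\theta})$; by (1), $g_i\to g$ in $L^2(xV_\theta)$. For a threshold $c>0$ let $A_i^c=\{t:g_i(t)\le c\}$ and $A^c=\{t:g(t)\le c\}$. On $O_\theta(x)\setminus p_\theta(W_im_\theta^{-1})$, the $\sigma^\tau$-mass of the fiber over $t$ is $g_i(t)$ (up to the slab structure), and this fiber avoids $W_im_\theta^{-1}$; so the contribution from $t\notin A_i^c$ (where $g_i(t)>c$) forces at least $c$ units of $\lambda_{E,x,s_i}$-mass into $xN_\tau-W_im_\theta^{-1}$ over each such $t$, and integrating against $\sigma^\rho_{x,\theta,s_i}$ (which on the slab is comparable to integrating the relevant density) gives that this contribution is $O(\eta/c)$ by hypothesis (3), for $i$ large. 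Hence
$$\limsup_i\sigma^\rho_{x,\theta,s_i}(O_\theta(x)\setminus p_\theta(W_im_\theta^{-1}))\le \limsup_i\sigma^\rho_{x,\theta,s_i}(O_\theta(x)\cap A_i^c).$$
Now use (2): $D(\sigma^\rho_{x,\theta,s_i})\to D(\sigma^\rho_{x,\theta})$ in $L^2$, so in particular $\sigma^\rho_{x,\theta,s_i}(B)\to\sigma^\rho_{x,\theta}(B)$ for any fixed Borel $B$. Combining with $L^2$- (hence measure-) convergence $g_i\to g$, the sets $A_i^c$ converge to $A^c$ in the appropriate sense: $\sigma^\rho_{x,\theta}(A^c\triangle A_i^c)\to 0$ provided $c$ avoids the at-most-countable set of atoms of the distribution of $g$ under $\sigma^\rho_{x,\theta}$, equivalently provided $\sigma^\rho_{x,\theta}\{g=c\}=0$. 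For such $c$ we get
$$\limsup_i\sigma^\rho_{x,\theta,s_i}(O_\theta(x)\setminus p_\theta(W_im_\theta^{-1}))\le \sigma^\rho_{x,\theta}(O_\theta(x)\cap\{g\le c\}).$$
Finally let $c\downarrow 0$ through admissible values: $\sigma^\rho_{x,\theta}(O_\theta(x)\cap\{0<g\le c\})\to 0$ by dominated convergence, leaving exactly $\sigma^\rho_{x,\theta}\{t\in O_\theta(x):g(t)=0\}$, which is the claimed bound.

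The main obstacle I expect is the bookkeeping around the two different scales $\tau<\rho$ and the precise relation between the fiber-conditional masses of $\lambda_{E,x,s_i}$ and the densities $g_i=D(\sigma^\tau_{x,\theta,s_i})$, $D(\sigma^\rho_{x,\theta,s_i})$: one must check that "$t\notin p_\theta(W_im_\theta^{-1})$ implies the fiber mass $g_i(t)$ is genuinely forced into $xN_\tau-W_im_\theta^{-1}$", which uses that $p_\theta^{-1}(t)\cap xU_\theta^\tau$ is an actual subset avoiding $W_im_\theta^{-1}$, together with absolute continuity (Corollary~\ref{c;ps-leb}) to make "fiber mass" well-defined. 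A secondary subtlety is that convergence $g_i\to g$ is only in $L^2(xV_\theta)$ while we need convergence of the \emph{measures} $\sigma^\rho_{x,\theta,s_i}$ on the level sets $A_i^c$; this is handled by noting $L^2$-convergence implies convergence in measure along a subsequence and the level sets stabilize off the (null) set $\{g=c\}$, which is why choosing $c$ outside the countably many bad values is essential. Everything else is Chebyshev-type estimates and dominated convergence.
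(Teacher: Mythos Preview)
Your overall strategy is correct and mirrors the paper's proof closely: both arguments isolate the set where the $\tau$-density is small, use hypothesis~(3) to show the complement of $p_\theta(W_im_\theta^{-1})$ carries little $\sigma^\tau$-mass, convert this to a $\sigma^\rho$-statement via density bounds, and then let the threshold tend to zero. The paper packages the two density bounds into a single set $\Sigma_{n_0}=\{D(\sigma^\tau_{x,\theta})\ge 1/n_0,\ D(\sigma^\rho_{x,\theta})<n_0\}$ and works with the \emph{limit} measures throughout, transferring to $\sigma^\rho_{x,\theta,s_i}$ only at the very end via~(2); you instead work with the prelimit densities $g_i,h_i$ and the moving level sets $A_i^c$. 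Both organizations are valid.

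There is, however, a real gap in your step~3. You assert that the $\sigma^\rho_{x,\theta,s_i}$-measure of $B_i:=\{t\in O_\theta(x)\setminus p_\theta(W_im_\theta^{-1}):g_i(t)>c\}$ is $O(\eta/c)$ ``by hypothesis~(3)''. Hypothesis~(3) together with Chebyshev gives only a \emph{Lebesgue} bound $\lambda(B_i)\le \eta_i/c\to 0$ where $\eta_i:=\sigma^\tau_{x,\theta,s_i}(O_\theta(x)\setminus p_\theta(W_im_\theta^{-1}))$; to pass to $\sigma^\rho_{x,\theta,s_i}(B_i)=\int_{B_i}h_i\,d\lambda$ you must invoke hypothesis~(2), not~(3). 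Since $h_i\to h$ in $L^2$, the family $\{h_i\}$ is uniformly integrable (or use Cauchy--Schwarz: $\int_{B_i}h_i\le\|h_i\|_2\,\lambda(B_i)^{1/2}$), and then $\lambda(B_i)\to 0$ forces $\sigma^\rho_{x,\theta,s_i}(B_i)\to 0$. Without an $L^\infty$ bound on $h_i$---which you do not have---the claimed rate $O(\eta/c)$ is false; the argument only yields $o(1)$, which is all you need. This is exactly the role played by the upper bound $D(\sigma^\rho_{x,\theta})<n_0$ in the paper's $\Sigma_{n_0}$, so once you supply this missing use of~(2), your proof is complete and equivalent to the paper's.
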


\begin{proof}
 Set $\mathcal P^\tau:=   \{ t\in O_\theta(x): D(\sigma^\tau_{x,\theta})( t)> 0\}$ and
 \[
 \mathcal L_{i}^\tau:=p_\theta(O_\theta(x) U_\theta^\tau\cap W_i m_\theta^{-1}) =O_\theta(x)
 \cap p_\theta( W_i m_\theta^{-1}).
 \] 
 
 For $n_0>1$, define
 $$\Sigma_{n_0}=\{t\in \mathcal P^\tau:
D(\sigma^{\tau}_{x,\theta})( t) \geq \tfrac{1}{n_0}, \;\; D(\sigma^{\rho}_{x,\theta})( t) <n_0\}.$$

Let $\e>0$ be arbitrary. There exists $n_0=n_0(\e)>1$ such that
\[\sigma_{x,\theta}^{\rho} (\Sigma_{n_0})>(1-\e)\sigma_{x,\theta}^{\rho}(\mathcal P^\tau) .\]
 Since $D(\sigma^{\tau}_{x,\theta,s_i} )\rightarrow D(\sigma^{\tau}_{x,\theta})$ in $L^2(xV_\theta)$, denoting
 by $\lambda$ the Lebesgue measure on $xV_\theta$, we have
\begin{align*} &|\sigma^\tau_{x,\theta}(O_\theta(x) \setminus
\mathcal{L}^\tau_i)-\sigma^\tau_{x, \theta,s_i}(O_\theta(x) \setminus
\mathcal{L}^\tau_i)| \\ & \le
\int_{xV_\theta}| D(\sigma^{\tau}_{x,\theta})-D(\sigma^{\tau}_{x,\theta,s_i})|d\lambda\\&
\le \| D(\sigma^{\tau}_{x,\theta})-D(\sigma^{\tau}_{x,\theta,s_i}) \|_2 \cdot 
\lambda(xV_\theta)^{1/2} \rightarrow 0 .\end{align*}

Since  
 $\sigma^\tau_{ x,\theta,s_i} (O_\theta(x) \setminus \mathcal L_i^\tau) \le
\lambda_{E, x,s_i}(xN_\rho -W_i m_\theta^{-1})
\rightarrow 0 $
by the assumption on $W_im_\theta$, 
it follows now that there is some $i_0=i_0(n_0)$ such that for all $i\geq i_0$,
$$\sigma^\tau_{x,\theta} (O_\theta(x) \setminus \mathcal L_i^\tau)<\tfrac{\e}{n_0^2}.$$ 
Note that for any set $\Upsilon\subset\Sigma_{n_0}$ with
$\sigma^\tau_{x,\theta}(\Upsilon)<\tfrac{\e}{n_0^2}$, we have $\sigma^{\rho}_{x,\theta}(\Upsilon)<\e$. To see this,
note that if 
\[
\tfrac{1}{n_0} \lambda(\Upsilon) \le \int_{\Upsilon} D(\sigma^{\tau}_{x,\theta}, t) d\lambda(t) \le \tfrac{\e}{n_0^2},
\] 
then
$\lambda(\Upsilon)\le \tfrac{\e}{n_0}$ and hence
$$\sigma^{\rho}_{x,\theta} (\Upsilon) =\int_{\Upsilon} D(\sigma^{\rho}_{x,\theta})( t) d\lambda(t) \le n_0 
\lambda(\Upsilon)\le \tfrac{\e}{n_0}\le \e.$$

 Therefore we have \begin{align*}&
 \sigma^{\rho}_{x,\theta}(O_\theta(x) \setminus \mathcal L_i^\tau)\\& \le
\sigma^{\rho}_{x,\theta}((O_\theta(x)\setminus \mathcal L_i^\tau)\cap \Sigma_{n_0} )
+\sigma^{\rho}_{x,\theta}((O_\theta(x) \setminus \mathcal L_i^\tau )\cap (O_\theta(x)\setminus \Sigma_{n_0}))\\ \notag &
\le \e +\e \cdot \sigma^{\rho}_{x,\theta}(\mathcal P_\theta^\tau) +
 \sigma^{\rho}_{x,\theta}(O_\theta(x) -\mathcal P_\theta^\tau) . \end{align*}

Since $\e>0$ is arbitrary,
\begin{equation}\label{sigmas} \limsup_i \sigma^{\rho}_{x,\theta}(O_\theta(x)\setminus \mathcal L_i^\tau) 
 \le \sigma^{\rho}_{x,\theta}(O_\theta(x) -\mathcal P_\theta^\tau) .\end{equation}

 Now
since $ D(\sigma^{\rho}_{x,\theta,s_i})\rightarrow  D(\sigma^{\rho}_{x,\theta})$ in $L^2(xV_\theta^\rho)$, we
have
\[  |\sigma^{\rho}_{x,\theta} ( O_\theta(x) \setminus \mathcal L_i^\tau )-
\sigma^{ \rho}_{x,\theta,s_i} ( O_\theta(x)\setminus \mathcal L_i^\tau)|
\leq\int_{xV_\theta}|D(\sigma^{\rho}_{x,\theta,s_i}) -D(\sigma^{\rho}_{x,\theta})| d\lambda \rightarrow0.\]
Combined with \eqref{sigmas}, this implies that 
 $$ \limsup_i \sigma_{ x, \theta,s_i}^{\rho}(O_\theta(x)\setminus \mathcal L_i^\tau )
 \le \sigma^{\rho}_{x,\theta}(O_\theta (x)-\mathcal P_\theta^\tau) .$$
\end{proof}

\section{Recurrence properties of BMS and BR measures} 
\subsection{Theorems of Marstrand on Hausdorff measures}\label{sec:Mar}
Let $\Lambda\subset \br^2$. The $s$-dimensional Hausdorff measure of $\Lambda$ is defined to be
$$\mathcal H^s(\Lambda)=\inf_{\eta\downarrow 0}\mathcal H_\eta^s(\Lambda),$$
where $\te \mathcal H^s_\eta (\Lambda):=\{\sum_i d(W_i)^s: \Lambda\subset \cup_{i=1}^\infty W_i, \;\; d(W_i)\le \eta\}$
and $d(W_i)$ denotes the diameter of $W_i$.

The Hausdorff dimension of $\Lambda$ is 
$$\text{dim}(\Lambda)=\sup\{s:\mathcal H^s(\Lambda)>0\}=\inf\{s:\mathcal H^s(\Lambda)=\infty\} .$$

A set $\Lambda$ is called an $s$-set if $0<\mathcal H^s(\Lambda)<\infty$.
Following Marstrand \cite{Mars}, a point $\xi\in \Lambda$ is called
 a condensation point for $\Lambda$ if $\xi$ is a limit point from $(\xi, \theta)\cap
\Lambda$ for almost all $\theta$ where $(\xi, \theta)$ denotes
the ray through $\xi$ lying in the direction $\theta$.

Let $\Lambda$ be an $s$-set in the following three theorems:
\begin{thm} \cite[Theorem 7.2]{Mars}\label{mar1}
If $s>1$, $\mathcal H^s$-almost all
points in $\Lambda$ are
  condensation points for $\Lambda$.
\end{thm}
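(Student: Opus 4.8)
The plan is to reduce the assertion, by a Fubini argument, to a one-parameter family of statements about plane sections of $\Lambda$, and then to feed in the standard section--energy inequality, whose hypothesis holds precisely because $s>1$. For $\theta\in[0,2\pi)$ put $e_\theta=(\cos\theta,\sin\theta)$, and say that $\xi\in\Lambda$ is \emph{right-isolated in direction $\theta$} if the half-open segment $\{\xi+te_\theta:0<t\le\varrho\}$ misses $\Lambda$ for some $\varrho>0$; let $\Lambda_\theta^+\subset\Lambda$ be the set of such $\xi$. By the definition of a condensation point, $\xi$ fails to be one exactly when $\mathcal H^1\{\theta:\xi\in\Lambda_\theta^+\}>0$; and since $\{(\xi,\theta):\xi\in\Lambda_\theta^+\}$ is Borel in $\Lambda\times[0,2\pi)$, Fubini gives
$$\int_\Lambda \mathcal H^1\{\theta:\xi\in\Lambda_\theta^+\}\,d\mathcal H^s(\xi)=\int_0^{2\pi}\mathcal H^s(\Lambda_\theta^+)\,d\theta .$$
Thus it suffices to prove $\mathcal H^s(\Lambda_\theta^+)=0$ for $\mathcal H^1$-almost every $\theta$.

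One inclusion is essentially free. On \emph{every} line $\ell$ of direction $e_\theta$ the trace $\Lambda_\theta^+\cap\ell$ is countable, since $\Lambda_\theta^+\cap\ell=\bigcup_{k\ge1}\{\xi\in\Lambda\cap\ell:(\xi+(0,1/k]\,e_\theta)\cap\Lambda=\emptyset\}$ and each set in this union is $(1/k)$-separated along $\ell$. So $\Lambda_\theta^+$ is $\mathcal H^{s-1}$-null --- indeed countable --- on every line of direction $e_\theta$, and the theorem amounts to saying that, for almost every $\theta$, this forces $\Lambda_\theta^+$ to be $\mathcal H^s$-null. (For a single fixed $\theta$ this would be false --- a graph over an interval meets every vertical line in one point yet can have Hausdorff dimension $>1$ --- so the averaging over $\theta$ is doing real work here.)

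To exploit that averaging I would first pass to a Frostman piece. Since $\mathcal H^s(\Lambda)<\infty$, the upper density bound for $s$-sets together with inner regularity yields, for each $\eta>0$, a compact set $F\subset\Lambda$ with $\mathcal H^s(\Lambda\setminus F)<\eta$ and $\mathcal H^s(F\cap B(x,r))\le b\,r^s$ for all $x\in\br^2$, $r>0$. Set $\mu:=\mathcal H^s|_F$; the Frostman bound gives $I_t(\mu)<\infty$ for every $t\in(1,s)$, and fix such a $t$. Now I would invoke the section--energy inequality --- the slicing counterpart of Theorem~\ref{t;r-n-proj}, see for instance \cite{Ma1}: for $\mathcal H^1$-almost every direction $\theta$, the slices $\mu_{\theta,a}$ of $\mu$ by the lines of direction $e_\theta$ exist for $\mathcal L^1$-almost every $a$, disintegrate $\mu$ with respect to Lebesgue measure $da$ on the transversal, and satisfy $\int I_{t-1}(\mu_{\theta,a})\,da\le c\,I_t(\mu)<\infty$. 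Since $t-1>0$, finiteness of the $(t-1)$-energy rules out atoms, so $\mu_{\theta,a}$ is non-atomic for almost every $a$; together with the countability of $\Lambda_\theta^+$ on each such line this gives $\mu_{\theta,a}(\Lambda_\theta^+)=0$ for almost every $a$, hence $\mu(\Lambda_\theta^+)=0$ for $\mathcal H^1$-almost every $\theta$. Running the displayed Fubini identity with $\mu$ in place of $\mathcal H^s|_\Lambda$ shows that $\mathcal H^s$-almost every $\xi\in F$ is a condensation point for $\Lambda$; letting $\eta=1/n$ and taking the union over $n$ of the resulting compact sets $F$ exhausts $\Lambda$ up to an $\mathcal H^s$-null set, and the proof is complete.

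The main obstacle is the section--energy inequality itself. It encodes the fact that slicing a measure by a generic line lowers its ``dimension'' by one in an energy-averaged sense, and its proof is the one genuinely analytic step --- typically via the Fourier identity $I_\alpha(\mu)\asymp\int|\widehat\mu(\zeta)|^2|\zeta|^{\alpha-2}\,d\zeta$ followed by an integration along the transversal line, or by a direct covering argument. The remaining ingredients --- the Fubini reduction, the $(1/k)$-separation estimate, and the Frostman regularization --- are routine; it is worth noting that this route bypasses the usual annoyance that the isolation radius $\varrho$ depends on both $\xi$ and $\theta$, precisely because the decomposition $\Lambda_\theta^+=\bigcup_k\{\varrho\ge1/k\}$ disposes of that on each line separately.
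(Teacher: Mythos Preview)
The paper does not prove this statement; it is quoted from Marstrand \cite{Mars} and used as a black box in Theorem~\ref{conser}. So there is no in-paper argument to compare against.

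Your proof is correct. The Fubini reduction to $\mathcal H^s(\Lambda_\theta^+)=0$ for a.e.\ $\theta$, the countability of $\Lambda_\theta^+\cap\ell$ via the $(1/k)$-separation, and the Frostman regularization are all sound. The one substantive input, the section--energy inequality $\int I_{t-1}(\mu_{\theta,a})\,da\le c\,I_t(\mu)$ for $t>1$, is Theorem~10.7 in \cite{Ma} (the disintegration $\mu=\int\mu_{\theta,a}\,da$ is the companion Theorem~10.6 there); finite $(t-1)$-energy with $t-1>0$ does force non-atomicity, so the slices kill the countable set $\Lambda_\theta^+\cap\ell$ as claimed. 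The exhaustion over Frostman pieces is routine. Marstrand's original 1954 argument is more hands-on, working directly with coverings and angular densities rather than through energy integrals; your approach is the modern, cleaner route via Mattila's slicing machinery and is entirely valid.
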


\begin{thm} \cite[Lemma 19]{Mars}\label{Mars2} 
If $s>1$, almost
every lines $L$ through $\mathcal H^s$-almost all points in $\Lambda$ intersect $\Lambda$ in a set of
dimension $s -1$.
\end{thm}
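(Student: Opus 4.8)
The statement is Marstrand's slicing theorem for $s$-sets in the plane, and the plan is to prove it by the potential-theoretic (energy) method, leaning on the estimates already set up in Section~\ref{sec;energy}. After the standard reduction — using the upper $s$-density bound $\overline{D}^s(\Lambda,\cdot)\le 1$, valid $\mathcal H^s$-a.e.\ since $\mathcal H^s(\Lambda)<\infty$, and exhausting $\Lambda$ by countably many pieces — it suffices to prove the conclusion with $\mathcal H^s|_\Lambda$ replaced by $\mu:=\mathcal H^s|_{\Lambda'}$ for a compact $\Lambda'\subseteq\Lambda$ on which $\mathcal H^s(\Lambda'\cap B(x,r))\le Mr^s$ for all $x$ and all small $r$; by Lemma~\ref{ec} this $\mu$ satisfies $I_\alpha(\mu)<\infty$ for every $\alpha<s$. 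For a direction $\theta$ let $p_\theta\colon\br^2\to V_\theta$ be the orthogonal projection onto the line through $0$ perpendicular to $U_\theta$, write $L_{\theta,t}:=p_\theta^{-1}(t)$, and disintegrate $\mu=\int_{V_\theta}\mu_{\theta,t}\,d(p_{\theta\ast}\mu)(t)$ with $\mu_{\theta,t}$ a probability measure carried by $L_{\theta,t}$. Since $s>1$ and $I_1(\mu)<\infty$, Theorem~\ref{t;r-n-proj} gives $p_{\theta\ast}\mu\ll\Leb$ on $V_\theta$ for $\Leb$-a.e.\ $\theta$; because $\dim(\Lambda\cap L_{\theta,\xi})$ depends on $\xi$ only through $p_\theta(\xi)$, a property holding for $\Leb$-a.e.\ $t\in V_\theta$ then holds for $p_{\theta\ast}\mu$-a.e.\ $t$, equivalently for $\mathcal H^s$-a.e.\ point of $\Lambda'$.

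Fix such a good $\theta$. For the upper bound $\dim(\Lambda\cap L_{\theta,t})\le s-1$, the Eilenberg coarea inequality gives $\int_{V_\theta}\mathcal H^{s-1}(\Lambda\cap L_{\theta,t})\,dt\le c\,\mathcal H^s(\Lambda)<\infty$, so $\mathcal H^{s-1}(\Lambda\cap L_{\theta,t})<\infty$, hence $\dim(\Lambda\cap L_{\theta,t})\le s-1$, for $\Leb$-a.e.\ $t$. For the lower bound fix $0<\alpha<s-1$ and thicken each fibre: set $\mu_{\theta,t,\delta}:=(2\delta)^{-1}\,\mu|_{\{x\,:\,|p_\theta(x)-t|<\delta\}}$. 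Fubini's theorem bounds $\int_0^\pi\!\int_{\br}I_\alpha(\mu_{\theta,t,\delta})\,dt\,d\theta$ above by $\tfrac{c}{\delta}\iint|x-y|^{-\alpha}\min\bigl(1,\tfrac{\delta}{|x-y|}\bigr)\,d\mu(x)\,d\mu(y)\le c\,I_{\alpha+1}(\mu)$ — the $t$-integration over each fibre contributing a factor $O(\delta)$, the $\theta$-integration a factor $O(\min(1,\delta/|x-y|))$ (two points $x\ne y$ lie within $\delta$ of a common fibre only for such $\theta$), which between them absorb the normalization $(2\delta)^{-2}$ and raise the exponent from $\alpha$ to $\alpha+1$. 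Since $\alpha+1<s$ this is a finite bound, uniform in $\delta$; and since $\mu_{\theta,t,\delta}\to\mu_{\theta,t}$ weakly for $\Leb$-a.e.\ $t$ (Lebesgue differentiation recovering the disintegration) while $I_\alpha$ is lower semicontinuous under weak convergence, Fatou yields $\int_0^\pi\!\int_\br I_\alpha(\mu_{\theta,t})\,dt\,d\theta\le c\,I_{\alpha+1}(\mu)<\infty$. Hence for $\Leb$-a.e.\ $\theta$ and $\Leb$-a.e.\ $t$ one has $I_\alpha(\mu_{\theta,t})<\infty$, so $\dim(\Lambda\cap L_{\theta,t})\ge\dim(\supp(\mu_{\theta,t}))\ge\alpha$.

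Transferring both inequalities to $\mathcal H^s$-a.e.\ point of $\Lambda'$ via the absolute continuity of $p_{\theta\ast}\mu$, and letting $\alpha\uparrow s-1$ along a countable sequence, we conclude: for $\Leb$-a.e.\ $\theta$ and $\mathcal H^s$-a.e.\ $\xi\in\Lambda$, $\dim(\Lambda\cap L_{\theta,\xi})=s-1$. Finally, applying Tonelli's theorem to the nonnegative, jointly measurable function $\mathbf{1}[\dim(\Lambda\cap L_{\theta,\xi})\ne s-1]$ on $[0,\pi)\times\Lambda$ interchanges the two ``almost everywhere'' quantifiers and gives the asserted form: for $\mathcal H^s$-a.e.\ $\xi$, almost every line through $\xi$ meets $\Lambda$ in a set of dimension $s-1$. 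I expect the lower bound to be the main obstacle: one must verify that the thickened measures $\mu_{\theta,t,\delta}$ really do converge, for $\Leb$-a.e.\ $t$, to the fibre measures of the disintegration, and keep track of the fact that $s>1$ is used precisely here — through $p_{\theta\ast}\mu\ll\Leb$ — to make ``$\Leb$-a.e.\ $t$'' interchangeable with ``a.e.\ line through $\mathcal H^s$-a.e.\ point''. Everything after the energy inequality $I_{\alpha+1}(\mu)<\infty$, which is already in hand from Section~\ref{sec;energy}, is Fubini bookkeeping.
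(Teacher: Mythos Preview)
The paper does not prove this theorem: it is quoted verbatim as Lemma~19 of Marstrand~\cite{Mars} and used as a black box in the proof of Theorem~\ref{p;bms-rec}. So there is no in-paper argument to compare against.

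Your proposal is a correct outline of the modern potential-theoretic proof of Marstrand's slicing theorem (essentially the treatment in Mattila's book~\cite{Ma}), and it is self-contained modulo the two general tools you invoke from Section~\ref{sec;energy}, namely Lemma~\ref{ec} and Theorem~\ref{t;r-n-proj}, both of which are stated in the paper for arbitrary compactly supported measures and so apply here. The reduction to a compact $\Lambda'$ with uniform mass bound, the Eilenberg inequality for the upper bound, the sliced-energy estimate $\int\!\!\int I_\alpha(\mu_{\theta,t})\,dt\,d\theta\le c\,I_{\alpha+1}(\mu)$ for the lower bound, and the final Tonelli swap are all standard and correctly assembled. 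Your flagged ``main obstacle'' --- identifying the weak limit of the thickened measures $\mu_{\theta,t,\delta}$ with the fibre measures of the disintegration for Lebesgue-a.e.\ $t$ --- is indeed the only step requiring care, and it goes through once $p_{\theta\ast}\mu\ll\Leb$ with an $L^1$ density (which you have from Theorem~\ref{t;r-n-proj}), via Lebesgue differentiation.

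For context: Marstrand's original 1954 argument is rather different in flavour, proceeding by direct upper- and lower-density estimates and covering arguments rather than energies; the energy route you have written is shorter and fits more naturally with the machinery already developed in Section~\ref{sec;energy} of this paper.
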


\begin{thm}  \cite[Theorem II]{Mars} \label{Mars3} If $s\le 1$, then
the projections of $\Lambda$ have Hausdorff dimension $s$ for almost all
directions.
\end{thm}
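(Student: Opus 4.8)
The plan is to prove this by the classical potential-theoretic (energy) method, exactly the $\alpha<1$ counterpart of the circle of ideas already used in Section~\ref{sec;energy}. Since every projection $p_\theta$ is $1$-Lipschitz on $\br^2$, one always has $\dim p_\theta(\Lambda)\le\min(\dim\Lambda,1)=s$, so the whole content is the lower bound $\dim p_\theta(\Lambda)\ge s$ for Lebesgue-almost every direction $\theta$. First I would fix a Frostman measure on $\Lambda$: choose a compact $\Lambda'\subseteq\Lambda$ with $\mathcal H^s(\Lambda')>0$ (possible since $\mathcal H^s(\Lambda)<\infty$), and apply Frostman's lemma to obtain a Radon probability measure $\mu$ with $\supp\mu\subseteq\Lambda'$ and $\mu(B(z,\ell))\le c\,\ell^{s}$ for all $z$ and all $\ell>0$. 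By Lemma~\ref{ec} (applied with $\beta=s$ and $Q=\Lambda'$), this yields $I_\alpha(\mu)<\infty$ for every $0<\alpha<s$.

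The heart of the argument is the direction-averaged energy identity: integrating the $\alpha$-energy of the pushforwards $p_{\theta*}\mu$ over all $\theta$ and interchanging the order of integration gives
$$\int_0^\pi I_\alpha(p_{\theta*}\mu)\,d\theta=\int_{\br^2}\int_{\br^2}\Big(\int_0^\pi\frac{d\theta}{|p_\theta(z-w)|^{\alpha}}\Big)\,d\mu(z)\,d\mu(w).$$
Writing $z-w$ in polar coordinates, the inner integral equals $|z-w|^{-\alpha}\int_0^\pi|\cos\phi|^{-\alpha}\,d\phi$, and the one-variable integral $\int_0^\pi|\cos\phi|^{-\alpha}\,d\phi$ is finite precisely because $\alpha<1$ (the integrand behaves like $|\phi-\pi/2|^{-\alpha}$ near $\pi/2$). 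Hence $\int_0^\pi I_\alpha(p_{\theta*}\mu)\,d\theta=c_\alpha\,I_\alpha(\mu)<\infty$, so $I_\alpha(p_{\theta*}\mu)<\infty$ for Lebesgue-almost every $\theta$.

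To finish, I would invoke the standard fact that a Borel measure of finite $\alpha$-energy is supported on a set of Hausdorff dimension at least $\alpha$; applied to $p_{\theta*}\mu$, which is supported on $p_\theta(\Lambda')\subseteq p_\theta(\Lambda)$, this gives $\dim p_\theta(\Lambda)\ge\alpha$ for almost every $\theta$. Taking a sequence $\alpha_n\uparrow s$ and discarding the associated Lebesgue-null sets of exceptional directions produces $\dim p_\theta(\Lambda)\ge s$ for almost every $\theta$, which combined with the Lipschitz upper bound proves the claim. The main obstacle — and the precise place where the hypothesis $s\le1$ is forced — is exactly the convergence of the angular integral $\int_0^\pi|\cos\phi|^{-\alpha}\,d\phi$: it diverges once $\alpha\ge1$, so this method only delivers dimension up to $\min(s,1)$, and beyond $s>1$ one must instead argue for absolute continuity of the projected measure as in Theorem~\ref{t;r-n-proj}. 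A secondary technical point to handle carefully is the passage to the compact Frostman subset $\Lambda'$ and the verification that its projections still carry the required energy, but this is routine given Lemma~\ref{ec}.
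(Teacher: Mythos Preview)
Your argument is correct and is precisely the standard potential-theoretic proof of Marstrand's projection theorem. Note, however, that the paper does not supply its own proof of this statement: Theorem~\ref{Mars3} is simply quoted from \cite[Theorem~II]{Mars} (and the same proof you outline appears in \cite{Ma}, \cite{Ma1}, which the paper also cites), so there is no in-paper argument to compare against. Your sketch is exactly the textbook route --- Frostman measure, the averaged energy identity $\int_0^\pi I_\alpha(p_{\theta*}\mu)\,d\theta = c_\alpha I_\alpha(\mu)$ with $c_\alpha<\infty$ for $\alpha<1$, and the energy--dimension bound --- and your identification of the convergence of $\int_0^\pi|\cos\phi|^{-\alpha}\,d\phi$ as the place where $s\le 1$ enters is spot on.
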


\subsection{$U$-Conservativity of $m^{\BR}$} 
In the rest of this section, we assume that $\G$ is convex cocompact.

\begin{thm} \label{psh}\cite{Sullivan1984} 
For $x\in G$, the measure $\mu_{H^{+}(x)}^{\PS}$ on $xN$
is  a $\delta$-dimensional Hausdorff measure supported on the set 
$\{xn\in H^{+}(x): (xn)^{+}\in \Lambda(\G) \}.$ Furthermore, 
this is a positive and locally finite measure on $xN$.
\end{thm}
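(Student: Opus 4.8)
The plan is to reduce the statement to the classical Patterson--Sullivan estimate already recorded in Theorem \ref{ub}, together with the Frostman-type characterization of Hausdorff measures. First I would transport the problem to the boundary: using the diffeomorphism $P_{H^+(x)}\colon \partial(\bH^3)-\{x^-\}\to H^+(x)$ of Section \ref{ph}, the measure $\mu_{H^+(x)}^{\PS}$ is the push-forward under $P_{H^+(x)}$ of the measure $e^{\delta\beta_{\xi}(o,\pi(\cdot))}d\nu_o(\xi)$ on $\partial(\bH^3)-\{x^-\}$; in particular it is supported exactly on $P_{H^+(x)}(\Lambda(\G)-\{x^-\})=\{xn\in H^+(x):(xn)^+\in\Lambda(\G)\}$, since $\nu_o$ is supported on $\Lambda(\G)$. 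Local finiteness and positivity (on any ball meeting the support) are then immediate from the explicit density being continuous and positive on compacta and from $\nu_o$ being a nonzero finite measure supported on $\Lambda(\G)$; here $\G$ convex cocompact guarantees $x^-$ is a radial point so that the single removed point carries no $\nu_o$-mass, hence nothing is lost.

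The substantive claim is that this measure equals (a constant multiple of) the $\delta$-dimensional Hausdorff measure $\mathcal H^\delta$ restricted to the support. For this I would invoke Theorem \ref{ub}: for $x$ in a fixed compact set with $x^+\in\Lambda(\G)$, one has $c_0^{-1}r^\delta\le \mu_{H^+(x)}^{\PS}(xN_r)\le c_0 r^\delta$ for all small $r$, with $xN_r$ the Euclidean ball of radius $r$ in the $N$-coordinates. More precisely, one needs the centered version $\mu_{H^+(x)}^{\PS}(B(y,r))\asymp r^\delta$ for every $y\in\supp(\mu_{H^+(x)}^{\PS})$, which follows from Theorem \ref{ub} applied at $y$ in place of $x$ (since $y^+\in\Lambda(\G)$ by definition of the support, and $y$ stays in a fixed compact neighborhood). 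A measure on $\br^2$ satisfying a two-sided Ahlfors $\delta$-regularity bound on all balls centered in its support is comparable to $\mathcal H^\delta$ on that support: the upper bound $\mu(B(y,r))\le c r^\delta$ gives, via a covering argument, that $\mathcal H^\delta|_{\supp\mu}\le C\mu$, while the lower bound $\mu(B(y,r))\ge c^{-1}r^\delta$ combined with a Vitali/Besicovitch covering argument gives the reverse inequality $\mu\le C'\mathcal H^\delta|_{\supp\mu}$. I would cite the standard mass-distribution / density theorems for this (e.g. the Frostman lemma and its converse, as in Mattila's book), noting that the Besicovitch covering lemma in $\br^2$ — already used in Section \ref{sec;energy} and Proposition \ref{p;f-goodpts} — is exactly the tool needed to make the comparison uniform.

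The main obstacle, and the only point requiring care, is handling the non-compactness of $X$: Theorem \ref{ub} is stated with constants $c_0$ depending on a compact set $F_0$, whereas Theorem \ref{psh} asserts the Hausdorff-measure identity for arbitrary $x\in G$. I would address this by a localization argument: the statement is purely local on each leaf $H^+(x)$, and any bounded piece $xN_\rho$ can be translated by $\G$ into a fixed compact fundamental region, or alternatively one simply observes that the Hausdorff measure and the PS measure are both defined intrinsically on the leaf so it suffices to verify the identity on $xN_\rho$ for each fixed $\rho$, where only finitely many $\G$-translates are relevant and the constants from Theorem \ref{ub} can be taken uniform. Once the comparison $\mu_{H^+(x)}^{\PS}\asymp\mathcal H^\delta|_{\{(xn)^+\in\Lambda(\G)\}}$ is established on every such piece, the fact that $\delta$ is precisely the Hausdorff dimension of $\Lambda(\G)$ \cite{Sullivan1979} identifies the exponent, and $\mathcal H^\delta$ of the limit set being positive and $\sigma$-finite (indeed finite on compacta) for convex cocompact $\G$ finishes the proof.
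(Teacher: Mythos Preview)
The paper does not prove this theorem; it is stated with the citation \cite{Sullivan1984} and no argument is given. So there is no ``paper's own proof'' to compare against.

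Your approach is the standard one and is essentially correct for what the paper actually uses. Pulling the measure back to the boundary via $P_{H^+(x)}$, invoking Sullivan's ball estimate (Theorem \ref{ub}) to get two-sided Ahlfors $\delta$-regularity on the support, and then applying the Frostman/mass-distribution principle together with a Besicovitch covering argument yields $\mu_{H^+(x)}^{\PS}\asymp \mathcal H^\delta|_{\supp \mu_{H^+(x)}^{\PS}}$ with locally uniform constants. This comparability, and the consequence that the support is a $\delta$-set, is precisely what is invoked in Theorems \ref{conser}, \ref{p;bms-rec}, and \ref{thm;nonconserv}.

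Two small points. First, your argument yields comparability, not the stronger statement that the measure is a \emph{constant multiple} of $\mathcal H^\delta$. If one reads the theorem as asserting exact proportionality (as Sullivan does prove), an additional step is needed: the $\delta$-dimensional Hausdorff measure on $\Lambda(\G)$ is itself a $\G$-conformal density of dimension $\delta$, and uniqueness of such densities for divergence-type groups (Sullivan, or \cite[Cor.~1.8]{Roblin2003} as cited in the paper) forces it to be a scalar multiple of $\nu_o$. For the applications in this paper comparability is enough, but you should be aware of the gap. Second, your sentence ``$\G$ convex cocompact guarantees $x^-$ is a radial point'' is not quite right: for arbitrary $x\in G$ the backward endpoint $x^-$ need not lie in $\Lambda(\G)$ at all. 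The correct reason the deleted point carries no mass is simply that $\nu_o$ is atom-free for divergence-type $\G$.
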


For $U=\{u_t=\bigl(\begin{smallmatrix} 1& 0\\ t
&1\end{smallmatrix}\bigr): t\in \br\}$,
we recall the definition of a conservative action:
\begin{definition}[Conservative action] Let $\mu$ be a locally finite $U$-invariant measure
on $X$. The $U$-action on $X$ is conservative for
$\mu$ if  one of the following equivalent conditions holds:
\begin{enumerate}
\item for every positive Borel function $\psi$ of $X$, $$\textstyle \int_{t\in \br} \psi(xu_t) dt
=\infty
\text{ for a.e. $x\in X$};$$
\item for any Borel subset
$B$ of $X$ with $\mu(B)>0$,
$$\textstyle \int_{t\in \br} \chi_B(xu_t) dt
=\infty\quad\text{ for a.e. $x\in B$; }$$ 

\end{enumerate}
\end{definition}

 The following 
is Maharam's recurrence theorem (cf. \cite[1.1.7]{Aa}).
\begin{lem}\label{con}
If there is a measurable subset $B\subset X$ with
$0< m^{\BR}(B)<\infty$ such that for almost all $x\in X$,
$\int_{0}^\infty \chi_B(xu_t) dt=\infty $, then $U$ is conservative for $m^{\BR}$.
\end{lem}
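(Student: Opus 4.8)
The plan is to argue by contradiction via the Hopf decomposition, the point being that on the totally dissipative part of a measure‑preserving flow every finite‑measure set has finite total occupation time along almost every orbit. First I would record that $m^{\BR}$ is $U$‑invariant: since $U\subset N$ and $\tilde m^{\BR}$ is right $N$‑invariant, every $u_t$ preserves $m^{\BR}$. Thus $\{u_t\}$ is a measure‑preserving flow on the $\sigma$‑finite measure space $(X,m^{\BR})$, so by the Hopf decomposition theorem one can write $X=C\sqcup D$ into $U$‑invariant Borel sets on which $\{u_t\}$ is respectively conservative and totally dissipative. Suppose, for contradiction, that $U$ is not conservative for $m^{\BR}$; then $m^{\BR}(D)>0$.

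The crux is to show that $\int_0^\infty \chi_B(xu_t)\,dt<\infty$ for $m^{\BR}$‑a.e.\ $x\in D$, which contradicts the hypothesis (valid for a.e.\ $x\in X\supseteq D$) because $m^{\BR}(D)>0$. The $U$‑action on $X=\G\ba G$ is free: $\G$, being convex cocompact, has no parabolic elements, hence no non‑trivial unipotents conjugate into it, so $\G g u_t g^{-1}=\G g$ forces $t=0$. In particular $D$ carries no periodic orbits, and the standard structure theorem for totally dissipative $\br$‑flows applies: there is a Borel cross‑section $W_0\subseteq D$ carrying a $\sigma$‑finite measure $\mu_{W_0}$, together with an essentially bijective Borel map $W_0\times\br\to D$, $(w,t)\mapsto wu_t$, intertwining translation on $\br$ with the flow and pushing $\mu_{W_0}\otimes\Leb$ forward to $m^{\BR}|_D$. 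Hence
$$m^{\BR}(B)\ \ge\ m^{\BR}(B\cap D)\ =\ \int_{W_0}\Leb\{t\in\br: wu_t\in B\}\,d\mu_{W_0}(w),$$
and since $m^{\BR}(B)<\infty$ the integrand is finite for $\mu_{W_0}$‑a.e.\ $w$. Writing a general $x\in D$ as $wu_s$ with $w\in W_0$, translation invariance of Lebesgue measure gives $\int_{\br}\chi_B(xu_t)\,dt=\Leb\{t:wu_t\in B\}<\infty$, so a fortiori $\int_0^\infty\chi_B(xu_t)\,dt<\infty$ for $m^{\BR}$‑a.e.\ $x\in D$. This is the contradiction, whence $m^{\BR}(D)=0$ and $U$ is conservative.

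Alternatively, and this is essentially the route of \cite[1.1.7]{Aa}, one avoids the structure theorem by discretizing: with $g(x):=\int_0^1\chi_B(xu_t)\,dt\in[0,1]$ one has $\int_X g\,dm^{\BR}=m^{\BR}(B)<\infty$ by Fubini and $U$‑invariance, and $\int_0^\infty\chi_B(xu_t)\,dt=\sum_{n\ge 0}g(u_1^nx)$; applying the Hopf decomposition to the single measure‑preserving invertible map $u_1$ and integrating $\sum_{n\ge 0}g\circ u_1^n$ over a wandering set $W_0\subseteq D$ forces $m^{\BR}(W_0)=0$, hence $m^{\BR}(D)=0$, and conservativity of $u_1$ then yields conservativity of the flow via Hopf's $L^1$‑criterion (applied, for a prescribed positive $\psi$, to $h=\rho\cdot\min(\int_0^1\psi(\cdot\,u_t)\,dt,\,1)$ with any strictly positive $\rho\in L^1(m^{\BR})$, $\rho\le 1$). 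In either formulation there is nothing deep here; the only matters requiring a little care are the $U$‑invariance of $m^{\BR}$ (which is what allows the ergodic‑theoretic machinery to be applied at all) and the routine bookkeeping in passing between the flow and its time‑one map.
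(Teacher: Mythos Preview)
Your proof is correct. The paper does not actually prove this lemma: it simply records it as Maharam's recurrence theorem, with the citation \cite[1.1.7]{Aa}, and moves on. Your second approach (discretize via $g(x)=\int_0^1\chi_B(xu_t)\,dt$, show every $u_1$--wandering set is null, then transfer conservativity of $u_1$ to the flow) is exactly the content of that reference, so in substance you have reproduced the paper's intended argument.

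Your first approach is also fine but slightly less general than necessary: you invoke freeness of the $U$--action (via the absence of parabolics in a convex cocompact $\G$) to get the product structure $D\cong W_0\times\br$ on the dissipative part. Maharam's theorem holds for arbitrary measure--preserving flows on $\sigma$--finite spaces, without freeness; your second route, or the standard wandering--set argument for flows, avoids this extra hypothesis. In the present setting it makes no difference, since $\G$ is indeed convex cocompact throughout the section, but it is worth noting that the freeness is not where the argument lives.
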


\begin{thm}\label{conser}
If $\delta>1$, then $U$ is conservative for $m^{\BR}$.
\end{thm}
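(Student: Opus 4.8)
The plan is to verify the hypothesis of Maharam's recurrence theorem (Lemma~\ref{con}) by exhibiting a set $B$ with $0<m^{\BR}(B)<\infty$ along which the forward $U$-orbit of almost every point spends infinite time. The natural candidate for $B$ is a BMS box $E=x_0N_\rho^-A_\rho N_\rho M$, which is precompact (since $\G$ is convex cocompact, so $\Omega$ is compact) and has $0<m^{\BR}(E)<\infty$. For such $E$ one has, by the fibered description of $m^{\BR}$ over the transversal $x_0T_\rho$, that the conditional measures of $m^{\BR}$ on the $N$-orbits $xN$ are the Lebesgue-type measures $\lambda_x=\mu^{\Leb}_{H^+(x)}$, which are genuinely $2$-dimensional and supported on all of $xN_\rho$. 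So the question reduces to a question about a single $N$-leaf: for $\lambda_x$-a.e.\ $n_z$ with $|z|<\rho$, does the line $t\mapsto xn_zu_t$ return to $E$ for an unbounded set of times~$t$?

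First I would reduce the recurrence of the $U$-flow on an $N$-leaf to a recurrence statement for the geodesic/frame flow intertwined with the horospherical structure. The key geometric input is that $U\subset N$, and that $a_{-s}$ conjugates $N$ expansively while contracting $N^-A$: writing $n_zu_t = n_{z}n_{t}$ inside the abelian group $N$ and then flowing by $a_{-s}$, one sees that long $U$-excursions inside an $N$-leaf correspond, after rescaling by the frame flow, to the geometry of the limit set $\Lambda(\G)$ seen along that leaf. Concretely, $xn_z\in Ea_{-s}$ forces $xn_z$ to lie within distance $e^{-s}r_0$ of the set $P_{H^+(x)}^{-1}(\Lambda(\G))$ by Lemma~\ref{lb}; conversely, since $\G$ is convex cocompact all limit points are radial, so every $xn_z$ with $(xn_z)^+\in\Lambda(\G)$ does return to (a fixed neighbourhood of) $\Omega$, hence to a fixed BMS box, under the frame flow for an unbounded sequence of times. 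Translating this back through the $N$-leaf, the points $xn_z$ with $(xn_z)^+\in\Lambda(\G)$ are precisely the ones whose $U$-orbit has the desired recurrence — but these form a $\mu^{\PS}$-full, hence $\lambda_x$-null, set, so this is not yet enough; one must show the $U$-orbit of a $\lambda_x$-\emph{typical} (Lebesgue-typical) point returns infinitely often.

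This is where the dimension hypothesis $\delta>1$ enters, via Marstrand's theorems. View $xN_\rho$ as a subset of $\br^2$; the set $\Lambda_x:=\{z:(xn_z)^+\in\Lambda(\G)\}$ is a $\delta$-set for the $\delta$-dimensional Hausdorff measure, which is (up to constants) $\mu^{\PS}_{H^+(x)}$ by Theorem~\ref{psh}. Since $\delta>1$, Theorem~\ref{mar1} says $\mathcal H^\delta$-almost every point of $\Lambda_x$ is a condensation point, and Theorem~\ref{Mars2} says that through $\mathcal H^\delta$-a.e.\ point of $\Lambda_x$, almost every line meets $\Lambda_x$ in a set of dimension $\delta-1>0$. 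A line in the direction determined by $U$ through such a point therefore meets $\Lambda_x$ in an uncountable (positive-dimensional, in particular non-empty and accumulating) set, so the $U$-orbit of a Lebesgue-typical point $xn_z$ passes arbitrarily close to infinitely many points of $\Lambda_x$; by Lemma~\ref{lb} and the uniform shadowing estimate from Theorem~\ref{ub} this forces $\int_0^\infty\chi_E(xn_zu_t)\,dt=\infty$ for such $z$. Integrating over the transversal and over the $N$-leaves gives the hypothesis of Lemma~\ref{con}, and conservativity follows.

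The main obstacle I anticipate is the quantitative passage from "the line through a Lebesgue-typical point meets $\Lambda_x$ in a set of positive dimension" to "the $U$-orbit actually re-enters the fixed box $E$ infinitely often", rather than merely grazing thinner and thinner neighbourhoods of $\Lambda_x$: one needs the width $e^{-s}r_0$ of the returning neighbourhood (Lemma~\ref{lb}) to be matched by a genuine return, which is exactly where convex cocompactness (radiality of \emph{all} limit points, with uniform return times, and the uniform bound $\mu^{\PS}(xN_r)\asymp r^\delta$ of Theorem~\ref{ub}) must be used to convert a geometric-limit-set statement into a dynamical-recurrence statement. Making the "almost every direction" in Marstrand's theorem interact correctly with the single fixed direction of $U$ — via the $M$-action rotating $U$ and the $M$-invariance of $E$ and of $m^{\BR}$ — is the other point requiring care.
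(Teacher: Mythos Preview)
Your overall strategy --- Marstrand plus $M$-Fubini plus Maharam's criterion --- is exactly the paper's, and you correctly flag the issue of reconciling ``almost every direction'' with the fixed direction of $U$ via the $M$-invariance of $m^{\BR}$. But the core geometric step has a gap.

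You apply Theorems~\ref{mar1} and~\ref{Mars2} to $\mathcal H^\delta$-typical points of $\Lambda_x$ \emph{inside the plane} $xN\cong\br^2$, and then try to transfer this to Lebesgue-typical $z$. This transfer is not justified as written: Theorem~\ref{Mars2} gives that the $U$-line through a $\mathcal H^\delta$-typical point of $\Lambda_x$ meets $\Lambda_x$ in a set of positive dimension, but a positive-dimensional set can be bounded, so you have not shown that the intersection occurs for \emph{unbounded} $t$, which is what recurrence to a fixed box requires. Your appeal to Lemma~\ref{lb} and Theorem~\ref{ub} does not close this gap (Lemma~\ref{lb} is a one-way inclusion, and Theorem~\ref{ub} gives no information about where along the line the intersection lies).

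The observation you are missing, and which the paper uses, is that under the visual map $h\mapsto h^+$ the $U$-orbit $t\mapsto gu_t$ traces a ray in $\partial(\bH^3)$ from $g^+$ (at $t=0$) to $g^-$ (at $t=\infty$): in your plane model of $xN$, every $U$-line passes through the single point $x^-$ at infinity. Since $x^-\in\Lambda(\G)$, Theorem~\ref{mar1} applied \emph{at $x^-$} says directly that for $\nu_o$-a.e.\ choice of $x^-$ and a.e.\ direction, the ray meets $\Lambda(\G)$ in a set accumulating at $x^-$, i.e.\ $(gu_t)^+\in\Lambda(\G)$ for unbounded $t$, i.e.\ $gu_t\in\Omega$ for unbounded $t$. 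The $M$-Fubini then converts ``a.e.\ direction'' into ``$m^{\BR}$-a.e.\ $g$'', and a fixed $r$-neighbourhood $\mathcal O$ of the compact set $\Omega$ (rather than a single BMS box) serves as the set $B$ in Lemma~\ref{con}. With this reformulation your Lebesgue-vs-Hausdorff worry evaporates --- only the single point $x^-$ needs to be $\mathcal H^\delta$-typical --- and Theorems~\ref{Mars2}, \ref{ub} and Lemma~\ref{lb} are not needed at all.
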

\begin{proof} 
Recall
the notation $\Omega=\text{supp}(m^{\BMS})$ and $\Omega_{\BR}=\text{supp}(m^{\BR})$. Set
$$\mathcal F:=\{x\in X:  x^-\in \Lambda(\G), xu_{t} \notin \Omega \text{ for all large $t\gg_x 1$
}\}. $$ Hence $x\in \mathcal F$ means  $(xu_t)^+\notin \Lambda(\G)$  for all large $t\gg_x 1$.
 We claim that \begin{equation}\label{cff} m^{\BR}( \mathcal
F) =0.\end{equation} Suppose not. Then by the Fubini theorem,
there is a set $\mathcal O\subset \Omega_{\BR}$ with
$m^{\BR}(\mathcal O)>0$ such that for all $x\in \mathcal O$,
$xm_\theta\in \mathcal F$ for a positive measurable subset of
$\theta$'s. Note that $\nu_o(\{x^-\in \Lambda(\G): x\in \mathcal O\})>0$ 
 where $\nu_o$ is the
PS measure on $\Lambda(\G)$. Fix $\xi_0\notin
\Lambda(\G)$ and identify $\partial(\bH^3)-\{\xi_0\}$ with
$\br^2$. Since $\nu_o|_{\partial(\bH^3)-\{\xi_0\}}$ is equivalent
to the $\delta$-dimensional Hausdorff measure $\mathcal H^\delta$
on $\Lambda(\G)\subset \br^2$ by Theorem \ref{psh}, we have $\mathcal H^\delta \{x^-\in
\Lambda(\G): x\in \mathcal O\}>0$. Note that
 $L_\theta(x):=\{(xm_\theta u_t)^+\in \br^2=\partial(\bH^3)-\{\xi\}:t \ge 0\}$ is the line segment connecting
 $x^+$ (at $t=0$) and $x^-$ (at $t=\infty$).
Hence $x\in \mathcal O$ implies that $x^-$ is not a limit point of
the intersection $L_\theta(x)\cap \Lambda(\G)$ for a positive set
of directions $\theta$. This contradicts Theorem \ref{mar1} 
and proves the claim \eqref{cff}.

Let
$\mathcal O$ be an $r$-neighborhood of $\Omega$ for some small $r>0$.
 If $x\in X-\mathcal F$, then $xu_t\in \Omega$
and $xu_{t+s}\in \mathcal O$ for all $|s|<r$. Hence  if
$xu_{t_i}\in \Omega$ for an unbounded sequence $t_i$, 
$\int_{t\in \br}\chi_{\mathcal O} (xu_t)dt =\infty$.
 As $m^{\BR}(\mathcal F)=0$ and $0<m^{\BR} (\mathcal O)<\infty$, this implies the claim by 
Lemma \ref{con}.
\end{proof}

\subsection{Leafwise measures}
Let $W$ be a closed connected subgroup of $N$.
 Let $\mathcal M_\infty(W)$ denote the space of locally finite 
 measures on $W$ with the smallest
 topology so that the map $\nu \mapsto\int \psi \;d\nu$ 
 is continuous for all $\psi\in C_c(W)$ (the weak$^*$ topology).
A locally finite Borel measure $\mu$ on $X$ gives rise to a system of 
locally finite measures
$[\mu_{x}^W]\in \mathcal M_\infty(W)$, unique up to normalization, 
called the leafwise measures or conditional measures on $W$-orbits.
There is no canonical way of normalizing these measure. For our purpose
here, we fix a normalization so that $\mu_x^W(N_1\cap W)=1.$ 
With this normalization, the assignment $x\mapsto \mu_x^W,$ 
is a Borel map, furthermore, for a full measure subset $X'$ of $X$, 
$\mu_{xu}^W=u.\mu_{x}^W$ for every $x, xu\in X'$;
for a comprehensive account on leafwise measures we refer the reader to \cite{EL}.


In the case when $W=N$, we have $\mu_{{H}(x)}^{\PS}=\mu_{x}^{\PS}$ and $\mu_{{H}(x)}^{\Leb}=\lambda_x,$, which are
precisely the $N$-leafwise measures of
$\BMS$ and $\BR$ measure respectively, up to normalization. 
We will be considering the $U$ leafwise measures of $m^{\BMS}$ as well as of $\mu_{E,s}^{\BR}$.

We will use the following simple lemma. 
\begin{lem}\label{minv} Let $\mu$ be a locally finite $M$-invariant measure on $X$.
For any $0<\tau \ll 1$, and any $0\le  \theta<\pi$ we have
$$|\mu_{x m_\theta}^{U^\tau}|=|\mu_{x}^{U_\theta^\tau}| $$
for $\mu$ a.e. $x\in X$.
\end{lem}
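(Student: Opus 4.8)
The plan is to read the identity off from the standard transformation rule for leafwise measures under right translations, combined with the $M$-invariance of $\mu$. Recall from \cite{EL} that for a locally finite measure $\mu$ on $X$, a closed connected subgroup $W\le N$, an element $g\in G$, and $\mu$-a.e.\ $x$, the $(g^{-1}Wg)$-leafwise measure of the translated measure $(R_g)_*\mu$ at the point $xg$ is, up to the chosen normalization, the push-forward of $\mu_x^W$ under the conjugation map $c_g\colon W\to g^{-1}Wg$, $c_g(w)=g^{-1}wg$; here $R_g\colon X\to X$ denotes $x\mapsto xg$. This is immediate from the local disintegration characterization of leafwise measures, since $R_g$ is a diffeomorphism of $X$ carrying the orbit $xW$ onto the orbit $(xg)(g^{-1}Wg)$ via the coordinate change $w\mapsto g^{-1}wg$.

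First I would apply this with $g=m_\theta$ and $W=U_\theta:=m_\theta Um_\theta^{-1}$, so that $g^{-1}Wg=U$. Since $\mu$ is $M$-invariant we have $(R_{m_\theta})_*\mu=\mu$, and hence for $\mu$-a.e.\ $x$
\[
\mu_{xm_\theta}^{U}\ =\ \kappa_x\,(c_{m_\theta})_*\,\mu_x^{U_\theta},\qquad c_{m_\theta}(w)=m_\theta^{-1}wm_\theta,
\]
for some constant $\kappa_x>0$ arising from the normalization. To see that $\kappa_x=1$ I would invoke the fixed normalization $\mu_{xm_\theta}^{U}(N_1\cap U)=1=\mu_x^{U_\theta}(N_1\cap U_\theta)$ together with the fact that conjugation by $m_\theta$ is an isometry of $G$ fixing $e$: it is the composition of left translation by $m_\theta$ and right translation by $m_\theta^{-1}\in M\subset K$, both isometries for the left-$G$-invariant, right-$K$-invariant metric on $G$. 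Such a map preserves the unit ball about $e$, and since $M$ normalizes $N$ it sends $N_1\cap U$ onto $N_1\cap U_\theta$; evaluating the displayed identity on $N_1\cap U$ then forces $\kappa_x=1$.

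Writing $U^\tau$ and $U_\theta^\tau=m_\theta U^\tau m_\theta^{-1}$ for the $\tau$-balls in $U$ and in $U_\theta$ (which again correspond under $c_{m_\theta}$ for the same isometry reason), the relation with $\kappa_x=1$ yields
\[
|\mu_{xm_\theta}^{U^\tau}|=\mu_{xm_\theta}^{U}(U^\tau)=\mu_x^{U_\theta}(U_\theta^\tau)=|\mu_x^{U_\theta^\tau}|
\]
for $\mu$-a.e.\ $x$, which is the assertion. One should also note the routine point that, $\mu$ being $M$-invariant, the $\mu$-null exceptional set can be taken $M$-invariant, so that $xm_\theta$ is again a point where the transformation rule for leafwise measures applies. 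The only step demanding genuine care is the first one: setting up the equivariance of leafwise measures with the correct coordinate change and verifying its compatibility with the fixed normalization $\mu_x^W(N_1\cap W)=1$; everything after that is a short unwinding of definitions.
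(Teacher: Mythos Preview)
Your proof is correct and conceptually close to the paper's, but the mechanics differ. The paper dispatches the lemma in one line by writing the ratio $|\mu_x^{U_\theta^\tau}|/|\mu_{xm_\theta}^{U^\tau}|$ directly as a limit of ratios of $\mu$-measures of shrinking boxes,
\[
\lim_{\rho\to 0}\frac{\mu\bigl(xm_\theta U^\tau m_\theta^{-1}\,(m_\theta V_\rho T_\rho m_\theta^{-1})\bigr)}{\mu\bigl(xm_\theta U^\tau (V_\rho T_\rho)\bigr)},
\]
which equals $1$ immediately from the $M$-invariance of $\mu$ (the numerator set is the $m_\theta^{-1}$-translate of the denominator set). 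You instead invoke the abstract transformation rule for leafwise measures under right translations, identify $\kappa_x=1$ via the normalization together with the fact that $w\mapsto m_\theta^{-1}wm_\theta$ is an isometry fixing $e$, and then evaluate on $U^\tau$. Both arguments are really the same observation---$M$-invariance transports the $U_\theta$-disintegration at $x$ to the $U$-disintegration at $xm_\theta$---but the paper's version avoids the detour through the normalization constant, while yours makes explicit why no proportionality factor survives. One small slip: you describe $c_{m_\theta}$ as ``left translation by $m_\theta$ and right translation by $m_\theta^{-1}$''; it is left translation by $m_\theta^{-1}$ and right translation by $m_\theta$, though of course this does not affect the isometry conclusion.
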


\begin{proof} Since $\mu$ is
$M$-invariant, for $T_\rho=A_\rho {\chN_\rho} M_\rho$, we have
$$\frac{|\mu_x^{U_\theta^\tau}|}{|\mu_{xm_\theta}^{U^\tau}|}
=\lim_{\rho\to 0}\frac{\mu(xm_\theta U^\tau m_\theta^{-1} (m_\theta V_\rho T_\rho m_\theta^{-1}))}
{\mu(xm_\theta U^\tau ( V_\rho T_\rho))}
=1 .$$
\end{proof}

\subsection{Recurrence for $m^{\BMS}$}
 Since the frame flow is mixing by Theorem \ref{fm} with respect
to $m^{\BMS}$, we have:

\begin{prop}\label{ae} For any non-trivial $a\in
A$, $m^{\BMS}$ is $a$-ergodic.\end{prop}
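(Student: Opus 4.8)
The plan is to deduce $a$-ergodicity of $m^{\BMS}$ for an arbitrary nontrivial $a\in A$ from the mixing of the full one-parameter frame flow $\{a_s\}$ given in Theorem \ref{fm}(1). Write $a=a_{s_0}$ for some $s_0\neq 0$; by replacing $a$ with $a^{-1}$ if necessary we may assume $s_0>0$. The standard trick is the following: ergodicity of a single transformation $T=a_{s_0}$ follows once we know that $T$ has no nonconstant invariant $L^2$-function, and this in turn follows from the mixing (hence ergodicity, and in fact weak mixing) of the flow in which $T$ is embedded. Concretely, suppose $f\in L^2(X,m^{\BMS})$ is $a_{s_0}$-invariant. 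First I would reduce to the case where $f$ is additionally invariant under a dense subgroup of $A$: the function $s\mapsto \langle a_s f, f\rangle$ is continuous (by strong continuity of the regular representation) and $s_0$-periodic, and combined with mixing — which forces $\langle a_s f,f\rangle \to |m^{\BMS}(f)|^2$ as $s\to\infty$ — one concludes that $\langle a_s f, f\rangle = \|f\|_2^2$ for all $s$ in the closed subgroup generated by $s_0$, which is all of $\mathbb{R}$ unless $s_0$ is the generator of a cyclic group; since $\mathbb{R}$ has no nontrivial discrete quotient acting here, we actually want a cleaner route.

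The cleaner route, which I would actually carry out, is this. Since mixing implies ergodicity of the flow $\{a_s\}$ acting on $(X,m^{\BMS})$, and since $\{a_s\}\cong\mathbb{R}$ is a connected group, ergodicity of the $\mathbb{R}$-action is equivalent to ergodicity of \emph{every} nontrivial element $a_{s_0}$ of it: this is a general fact about flows, because an $a_{s_0}$-invariant set $B$ gives rise, via $B'=\bigcup_{s\in[0,s_0]} Ba_s$, to an $\mathbb{R}$-invariant set agreeing with $B$ up to measure zero when the flow is, say, mixing — more carefully, one uses that for a mixing (or even just weakly mixing) flow, the only $L^2$ functions invariant under a time-$s_0$ map are the constants. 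I would phrase it through unitary representations: let $\pi$ be the Koopman representation of $\mathbb{R}=\{a_s\}$ on $L^2_0(X,m^{\BMS})$ (mean-zero functions); mixing says $\langle \pi(s)v,w\rangle\to 0$ as $|s|\to\infty$ for all $v,w$, so $\pi$ has no finite-dimensional subrepresentations — in particular no nonzero $\pi(s_0)$-fixed vector, since such a vector would generate a subrepresentation of the compact group $\overline{\{\pi(ns_0):n\in\mathbb{Z}\}}$ on which matrix coefficients do not decay. Hence every $a_{s_0}$-invariant $f\in L^2(X,m^{\BMS})$ is constant, i.e.\ $m^{\BMS}$ is $a_{s_0}$-ergodic.

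The one point requiring a little care — and the main (mild) obstacle — is that $m^{\BMS}$ is a \emph{finite} measure (Theorem \ref{fm} is stated under $|m^{\BMS}|=1$, and for $\G$ convex cocompact $\supp(m^{\BMS})=\Omega$ is compact, so finiteness is automatic), so $L^2$-ergodicity really is equivalent to ergodicity of the transformation; no subtlety with infinite-measure conservativity arises here, in contrast with the $\BR$ situation. I would therefore simply invoke finiteness of $m^{\BMS}$, the mixing statement of Theorem \ref{fm}(1), and the elementary representation-theoretic observation above that decay of matrix coefficients of the $\mathbb{R}$-action rules out $a_{s_0}$-invariant vectors, to conclude. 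This gives $a$-ergodicity of $m^{\BMS}$ for every nontrivial $a\in A$.
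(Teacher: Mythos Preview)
Your proposal is correct and follows the same approach as the paper: deduce $a$-ergodicity from the mixing of the frame flow (Theorem~\ref{fm}(1)). The paper is extremely terse here --- the entire argument is the single clause ``Since the frame flow is mixing by Theorem~\ref{fm} with respect to $m^{\BMS}$'' preceding the proposition --- whereas you spell out the standard fact that mixing of a finite-measure-preserving flow forces every time-$s_0$ map to be ergodic (a $\pi(s_0)$-fixed vector $f\in L^2_0$ would satisfy $\langle \pi(ns_0)f,f\rangle=\|f\|^2$ for all $n$, contradicting decay of matrix coefficients). Your exposition meanders a bit (the first paragraph starts an argument you then abandon, and the aside that ergodicity of an $\mathbb{R}$-flow is ``equivalent'' to ergodicity of every element is false without the mixing hypothesis, as you yourself note), but the core reasoning is sound and matches the paper's intended one-line justification.
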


\begin{thm}\label{p;bms-rec} \label{c;non-atomic} Let $\delta>1$. For a.e. $x\in \Omega$,
$(m^{\BMS})_x^U$ is atom-free.
\end{thm}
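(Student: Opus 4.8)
The plan is to deduce the statement from the $U$-recurrence of $m^{\BMS}$ (Theorem~\ref{conser}'s argument adapted to the BMS measure) via a general principle: an invariant measure all of whose $U$-leafwise measures have an atom on a positive measure set cannot be $U$-recurrent (indeed, it must be partly dissipative on that set). First I would establish that $m^{\BMS}$ is $U$-conservative when $\delta>1$. The proof should mirror Theorem~\ref{conser}: the only way a point $x\in\Omega$ fails to be $U$-recurrent into $\Omega$ is if $(xu_t)^+$ eventually leaves $\Lambda(\G)$, which along the $U_\theta$-lines means $x^+$ is \emph{not} a condensation point for $\Lambda(\G)$ in a positive-measure set of directions $\theta$; since the $N$-leafwise measure of $m^{\BMS}$ is the PS measure $\mu^{\PS}_{H^+(x)}$, which by Theorem~\ref{psh} is (up to equivalence) the $\delta$-dimensional Hausdorff measure on $\Lambda(\G)$, Marstrand's Theorem~\ref{mar1} (valid since $\delta>1$) forces this bad set to be null. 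Concretely one applies Maharam's recurrence Lemma~\ref{con} with $B$ an $r$-neighborhood of $\Omega$, but now for $m^{\BMS}$ in place of $m^{\BR}$; the same Fubini/Hausdorff-measure argument goes through because the $N$-conditionals are again PS measures. Alternatively one may invoke Proposition~\ref{ae} ($a$-ergodicity of $m^{\BMS}$) together with the Hopf decomposition to rule out a dissipative part.

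Next I would convert $U$-conservativity into non-atomicity of the $U$-leafwise measures. This is the standard dichotomy for leafwise measures of a flow: for $m^{\BMS}$-a.e.\ $x$, either $(m^{\BMS})^U_x$ is non-atomic, or it has an atom, and the set of $x$ for which the atom sits ``at the origin'' has positive measure (using $M$-invariance via Lemma~\ref{minv} and the $U$-equivariance $(m^{\BMS})^U_{xu}=u.(m^{\BMS})^U_x$ on a conull set). An atom at the origin of $(m^{\BMS})^U_x$ for a positive-measure set of $x$ means that a positive-measure set of $U$-orbit segments through such points carries zero conditional mass outside a bounded window — equivalently, the Hopf ratio $\int_{I_T} \chi_B(xu_t)\,dt$ stays bounded, contradicting conservativity (which forces $\int_{\R}\chi_B(xu_t)\,dt=\infty$ a.e.). So $U$-conservativity of $m^{\BMS}$ rules out atoms on a positive-measure set; since $m^{\BMS}$ is $A$-ergodic and the $U$-leafwise structure transforms equivariantly under $A$ (conjugation by $a_s$ rescales $U$), the set where $(m^{\BMS})^U_x$ is atom-free is $A$-invariant mod null, hence null or conull, hence conull. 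This yields the claim for a.e.\ $x\in\Omega=\supp(m^{\BMS})$.

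The main obstacle I anticipate is the bookkeeping in the ``atom $\Rightarrow$ dissipativity'' step: one must make precise that an atom of the $U$-leafwise measure, rather than merely being a property of a single leaf, can be normalized/located so that a \emph{positive-measure} set of base points inherits it (this is where the Borel-measurability of $x\mapsto (m^{\BMS})^U_x$ and the $M$-invariance via Lemma~\ref{minv} are needed, to move a generic atom to a canonical position), and then that this configuration is genuinely incompatible with $\int_{\R}\chi_B(xu_t)\,dt=\infty$ a.e. A clean way to phrase it: if $(m^{\BMS})^U_x(\{u_0\})>0$ on a positive-measure set, consider the $U$-ergodic components; on such a component the leafwise measure would be purely atomic, forcing the component to be supported on a discrete orbit, i.e.\ dissipative, contradicting Theorem~\ref{conser}-for-BMS. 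The remaining steps — verifying the conservativity argument transfers verbatim from $m^{\BR}$ to $m^{\BMS}$, and the $A$-ergodicity upgrade from a.e.\ to conull — are routine given Theorems~\ref{psh}, \ref{mar1}, \ref{fm}, and Proposition~\ref{ae}.
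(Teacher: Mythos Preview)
Your plan reverses the paper's logic (the paper proves non-atomicity \emph{first} and then deduces $U$-recurrence as a corollary), and in doing so runs into two genuine gaps.

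\textbf{Gap in Step 1.} You propose to mirror Theorem~\ref{conser} and invoke Maharam's Lemma~\ref{con} to upgrade ``returns to $\Omega$ infinitely often'' to full $U$-conservativity of $m^{\BMS}$. But Lemma~\ref{con} (and Maharam's theorem generally) requires the measure to be $U$-invariant, or at least $U$-nonsingular. The BMS measure is neither: its $N$-leafwise measures are the PS measures $\mu^{\PS}_{H^+(x)}$, which are singular with respect to their own $U$-translates (the support is a fractal of dimension $\delta<2$). So the Marstrand condensation argument via Theorem~\ref{mar1} does show that $m^{\BMS}$-a.e.\ $x$ has $xu_t\in\Omega$ for unbounded $t$, but you cannot upgrade this to conservativity for arbitrary Borel sets $B$ as you need.

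\textbf{Gap in Step 2.} The implication ``atom in $(m^{\BMS})_x^U$ on a positive-measure set $\Rightarrow$ not $U$-conservative'' is not justified as stated. An atom at $e$ does \emph{not} mean the leafwise measure is supported in a bounded window, and your sketch that ``on such a component the leafwise measure would be purely atomic, forcing the component to be supported on a discrete orbit'' presupposes exactly what is hard here. Moreover, since $m^{\BMS}$ is not $U$-quasi-invariant, talking about ``$U$-ergodic components'' of $m^{\BMS}$ is problematic. The paper handles this step quite differently: it uses that $m^{\BMS}$ is a \emph{finite $A$-invariant} measure, applies $A$-ergodicity (Proposition~\ref{ae}) to make the atomic set full, and then uses Poincar\'e recurrence for the $A$-action (citing \cite{KS}, \cite[Theorem 7.6]{L}) to upgrade ``has an atom'' to ``is the Dirac mass at $e$''. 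The contradiction then comes not from conservativity but from Marstrand's dimension theorem (Theorem~\ref{Mars2}): Dirac $U$-leafwise measures would force line-slices of $\Lambda(\Gamma)$ to satisfy $0<\mathcal H^0<\infty$, whereas Theorem~\ref{Mars2} says those slices have dimension $\delta-1>0$.

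In short, the workable route uses the $A$-dynamics (finiteness, ergodicity, Poincar\'e recurrence) rather than $U$-conservativity, and the contradiction uses Theorem~\ref{Mars2} rather than Theorem~\ref{mar1}.
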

\begin{proof}
Setting $\mathcal F:=\{x\in \Omega: (m^{\BMS})_x^U \text{ has an atom}\}$,
we first claim that $m^{\BMS}(\mathcal F)=0$. Suppose not. Fix any non-trivial $a\in A$. Since $U$ is
normalized  by $a$, $\mathcal F$ is $a$-invariant. Hence $m^{\BMS}(\mathcal F)=1$ by
Proposition \ref{ae}. Using the Poincare recurrence theorem,  it
can be shown that $$\mathcal F':=\{x\in \Omega: (m^{\BMS})_x^U\text{ is the
dirac measure at $e$}\}$$ has a full measure in $\Omega$ (cf. \cite{KS}, \cite[Theorem 7.6]{L}).

Since for any $x\in \Omega$, $\mu_{x}^{\PS}$ is a positive
$\delta$-dimensional Hausdorff measure on 
$\{xn\in {H}(x): (xn)^+\in \Lambda(\G)\}$ by Theorem \ref{psh} and $(m^{\BMS})_x^U=(
\mu_{{H}(x)}^{\PS} )_x^U$ for a.e.  $x\in \Omega$, it follows that for a.e. $x\in
\Omega$, $(\mu_{{H}(x)}^{\PS} )_x^U$ is the Dirac measure at $e$.
By the Fubini theorem, there exists $x\in \Omega$ and a measurable
subset $D_0\subset {H}(x)$ with $\mu_{{H}(x)}^{\PS}(D_0)>0$ such
that for each $y\in D_0$, $(\mu_{{H}(ym_\theta)}^{\PS}
)_{ym_\theta}^U$ is the dirac measure at $ym_\theta$ for a
positive measurable subset of $m_\theta$'s.
For $s\ge 0$, denote by $\mathcal H^s$ the $s$-dimensional
Hausdorff measure on $\Lambda(\G)-\{x^-\}$; so $\mathcal
H^\delta={\mu}_{{H}(x)}^{\PS}$. In the identification of ${H}(x)$
with $\br^2$ via the map $y\mapsto y^+$, this implies that there
is a subset $D_0'\subset \Lambda(\G)-\{x^-\}\subset \br^2$ with
$\mathcal H^{\delta}(D_0')>0$  such that for all $\xi\in D_0'$,
there is  a positive measurable subset of lines $L$ through $\xi$ such that
 $0<\mathcal H^{0}((\Lambda(\G)-\{x^-\})\cap L)<\infty$.
This contradicts Theorem \ref{Mars2} which implies that
$(\Lambda(\G)-\{x^-\})\cap L$ has dimension $\delta-1>0$ for almost all lines $L$ through $\xi$.
\end{proof}


\begin{cor}
If $\delta>1$,  $m^{\BMS}$ is $U$-recurrent, i.e., for any measurable subset $B$ of $X$,  $\{t: xu_t\in B\}$ is
unbounded  for a.e. $x\in B$.
\end{cor}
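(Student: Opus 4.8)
The plan is to deduce the $U$-recurrence of $m^{\BMS}$ directly from Theorem~\ref{p;bms-rec}, which asserts that for almost every $x\in\Omega$ the leafwise measure $(m^{\BMS})_x^U$ is atom-free. The link between ``atom-free leafwise measures'' and ``recurrence'' is a standard fact in the theory of leafwise measures, so the proof should be short. Recall that, by the general theory recalled in the subsection on leafwise measures, for a full-measure set $X'$ of points one has the equivariance $(m^{\BMS})_{xu_t}^U = u_t.(m^{\BMS})_x^U$ whenever $x,xu_t\in X'$. The dichotomy one wants is: for almost every $x$, either $(m^{\BMS})_x^U$ is the Dirac mass at $e$ (which happens on a $U$-wandering set) or $(m^{\BMS})_x^U$ has infinite mass and the orbit returns to any given positive-measure set for an unbounded set of times.

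Concretely, I would argue as follows. Fix a Borel subset $B\subset X$ with $m^{\BMS}(B)>0$; we must show $\{t:xu_t\in B\}$ is unbounded for a.e.\ $x\in B$. Let $\mathcal N$ be the set of $x\in B$ for which this return-time set is bounded. Suppose for contradiction $m^{\BMS}(\mathcal N)>0$. For $x\in\mathcal N\cap X'$, boundedness of $\{t:xu_t\in B\}$ forces the leafwise measure $(m^{\BMS})_x^U$ to give zero mass to $\{u_t: |t|\text{ large}\}$ intersected with the ``return coordinates'' of $B$; more precisely, using a Fubini-type disintegration of $m^{\BMS}|_{B}$ along $U$-orbits (via the box coordinates $N^-AMN$ and the fact that $m^{\BMS}$ disintegrates into its transverse measure times the $U$-leafwise measures on $U$-orbits inside an $N$-orbit), one concludes that $(m^{\BMS})_x^U$ is supported on a bounded piece of $U$ for a positive-measure set of $x$, hence in fact (by the equivariance, iterating) that $(m^{\BMS})_x^U$ is supported on $\{e\}$, i.e.\ is a Dirac mass — contradicting Theorem~\ref{p;bms-rec}. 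This is the Hopf-decomposition/Poincar\'e-recurrence style argument already invoked in the proof of Theorem~\ref{p;bms-rec} (the passage through $\mathcal F'$); indeed the cleanest route is simply to observe that the conservative part of the $U$-action for $m^{\BMS}$ coincides up to null sets with $\{x:(m^{\BMS})_x^U\text{ is non-atomic}\}$, and Theorem~\ref{p;bms-rec} says this is co-null in $\Omega=\supp(m^{\BMS})$.

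Thus the essential content is already contained in Theorem~\ref{p;bms-rec}; the corollary is the translation ``non-atomic leafwise measures $\Rightarrow$ conservativity/recurrence'', which can be cited from \cite{EL} or proved by the Maharam/Poincar\'e recurrence argument of Lemma~\ref{con} adapted from $m^{\BR}$ to $m^{\BMS}$. The only mild subtlety — and the step I would be most careful about — is making sure the equivariance $(m^{\BMS})_{xu_t}^U = u_t.(m^{\BMS})_x^U$ is used only on the full-measure set $X'$ where it is valid, and that the disintegration of $m^{\BMS}$ over $U$-orbits inside $N$-orbits is set up correctly so that ``recurrence of the orbit'' really is equivalent to ``unboundedness of the support of the $U$-leafwise measure''. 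Once that bookkeeping is in place, one concludes that for a.e.\ $x$ in any $B$ with $m^{\BMS}(B)>0$, the set $\{t:xu_t\in B\}$ is unbounded, which is the assertion of the corollary.
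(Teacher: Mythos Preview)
Your proposal is correct and follows essentially the same approach as the paper: deduce $U$-recurrence of $m^{\BMS}$ from Theorem~\ref{p;bms-rec} via the standard fact that non-atomic leafwise measures imply recurrence. The paper's proof is two lines, citing \cite[Theorem~7.6]{EL} (non-atomic $\Rightarrow$ infinite leafwise measure) and \cite[Theorem~6.25]{EL} (infinite leafwise measure $\Rightarrow$ recurrent); your sketch of the direct contrapositive argument is the content of those results, and you yourself note that citing \cite{EL} is the cleanest route.
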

\begin{proof} By \cite[Theorem 7.6]{EL},
 Theorem \ref{c;non-atomic}  implies that $(m^{\BMS})_x^U$ is infinite
 for a.e. $x$. \cite[Theorem 6.25]{EL} implies the
 claim.
\end{proof}

\subsection{Doubling for the $(\mu^{\BMS})_x^U$.}

As before, we assume $|m^{\BMS}|=1$.
 Since $\Omega$ is a compact subset,
 we have \be\label{rd} \rho:=\tfrac{1}{2}\inf \{\rho_x: x\in \Omega\}>0 .\ee
 
Fix  a small number $\e>0$. It follows from Theorem
\ref{p;bms-rec} that
there exist $0<\beta=\beta(\e)\ll \rho$ and
a compact subset $\Omega_\e '\subset\Omega$ with
$m^{\BMS}(\Omega_\e ')>1-\frac{\e^2}{2}$ such that
\begin{equation}\label{lrho}
(m^{\BMS})_x^U[-3\beta,3\beta]<
\tfrac{1}{2}(m^{\BMS})_x^U[-(\rho-\beta),\rho-\beta]\hspace{5mm}\mbox{for
all} \h\h x\in\Omega'_\e.\end{equation}

Since the covering $\{xB_{ \tau}: x\in \Omega , \tau >0\}$ admits
a disjoint subcovering of $\Omega$ 
with full BMS measure (see \cite[Theorem 2.8]{Ma}), 
there exist $x_0=x_0(\e) \in \Omega_\e'$ and $0<\tau<\beta(\e)$ such that
for $B_{x_0}(\tau):=x_0{\chN}_\tau A_\tau MN_\tau$, 
\be\label{e;l-density3}m^{\BMS}(B_{x_0}(\tau)\cap
\Omega'_\e )>(1-\tfrac{\e^2}{2})\cdot
m^{\BMS}(B_{x_0}(\tau)).\ee

$$\text{We fix $x_0\in \Omega'_\e$ and $ \tau>0$ for the rest of this section.}$$
Recall the notation $T_\tau={\chN}_{\tau}  A_\tau M,$  so that
$B_{x_0}(\tau)=x_0T_\tau N_\tau$.
Set $\nu=\nu_{x_0T_\tau}$ for simplicity.
Using Theorem \ref{p;bms-rec}, we 
will prove: 

\begin{thm}\label{finalo} Let $\delta>1$. Let $c_0> 1$ be as in Theorem \ref{ub} where $F_0$
is the $2\rho$-neighborhood of $\Omega$.
 Then there exists 
a Borel subset $\Xi_{\e}^{\PS}(x_0)\subset x_0T_\tau $
which satisfies the following properties:
\begin{itemize} 
\item[(i)]
$\nu(\Xi_{\e}^{\PS}(x_0))\ge (1-c_0^4\cdot \e)\nu (x_0T_\tau )$; 
\item[(ii)]
for any   $xm_\theta \in \Xi_{\e}^{\PS}(x_0),$ with $\theta$ a $\op{PL}$ direction for $(x,\tau),$
there exists a Borel subset $O_\theta(x)$ of the set
$\{t\in xV_\theta: D(\sigma_{x,\theta}^\tau)( t)>0\}$ such that
$$\mu_{x}^{\PS}( O_\theta(x)  U_{\theta}^{\rho}) \ge 2
\mu_{x}^{\PS}(O_\theta(x)  U_{\theta}^{2\tau}) \ge \tfrac{\tau^\delta}{4c_0}.$$ 
\end{itemize} 
\end{thm}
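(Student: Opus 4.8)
The plan is to disintegrate $m^{\BMS}$, on the box $B_{x_0}(\tau)=x_0T_\tau N_\tau$, over the transversal $x_0T_\tau$ into its $N$-conditionals $\mu_{x'}^{\PS}$ (as recalled after Definition~\ref{box}), then to disintegrate each $\mu_x^{\PS}$ once more along the $U_\theta$-direction of $xN$, and finally to transfer the doubling property \eqref{lrho} of the $U$-leafwise measures of $m^{\BMS}$ to the fibers of that second disintegration. For the set $\Xi_\e^{\PS}(x_0)$: since $(x')^+=x_0^+\in\Lambda(\G)$ for every $x'\in x_0T_\tau$, Theorem~\ref{ub} gives $c_0^{-1}\tau^\delta\le\mu_{x'}^{\PS}(x'N_\tau)\le c_0\tau^\delta$ uniformly, so combining this with \eqref{e;l-density3} and Chebyshev's inequality shows that the set of $x'\in x_0T_\tau$ with $\mu_{x'}^{\PS}(x'N_\tau\setminus\Omega'_\e)<\e\,\mu_{x'}^{\PS}(x'N_\tau)$ has $\nu$-measure at least $(1-\tfrac{c_0^2\e}{2})\nu(x_0T_\tau)$; I would intersect it with the full-$\nu$-measure set of $xm_\theta$ for which $\theta$ is a PL direction for $(x,\tau)$ and for $(x,\rho)$ to obtain $\Xi_\e^{\PS}(x_0)$, and $\tfrac{c_0^2}{2}<c_0^4$ gives (i).

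Next I would fix $x'=xm_\theta\in\Xi_\e^{\PS}(x_0)$. Since $m^{\BMS}$ is $M$-invariant and $M$ normalizes $N$, the conditional $\mu_{x'}^{\PS}$ is the push-forward of $\mu_x^{\PS}$ by right translation by $m_\theta$, so the defining condition of $\Xi_\e^{\PS}(x_0)$ becomes
\[
\mu_x^{\PS}\bigl((\Omega'_\e m_\theta^{-1})\cap xN_\tau\bigr)\ \ge\ (1-\e)\,\mu_x^{\PS}(xN_\tau).
\]
Now disintegrate $\mu_x^{\PS}|_{xN_\tau}$ along $p_\theta\colon xN\to xV_\theta$, whose fibers are the $U_\theta$-segments $xvU_\theta\cap xN_\tau$ through points $v\in xV_\theta^\tau$. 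By the hereditary property of conditional measures, for almost every fiber the fiber measure is, up to normalization, the $U_\theta$-leafwise measure $(m^{\BMS})_y^{U_\theta}$ of $m^{\BMS}$ along it; and by the $M$-equivariance underlying Lemma~\ref{minv}, $(m^{\BMS})_y^{U_\theta}$ inherits the doubling \eqref{lrho} as soon as $ym_\theta\in\Omega'_\e$, i.e.\ $y\in\Omega'_\e m_\theta^{-1}$. Since $xN_\tau\subset xV_\theta^\tau U_\theta^\tau$, the displayed density bound together with Chebyshev's inequality inside the $N$-orbit produces a Borel set $O_\theta(x)\subset xV_\theta^\tau$ of fibers carrying at least $(1-2\e)\mu_x^{\PS}(xN_\tau)\ge(1-2\e)c_0^{-1}\tau^\delta$ of the projected mass and such that more than half the mass of each such fiber — hence some point $y\in\Omega'_\e m_\theta^{-1}$ for which the leafwise-measure identification is valid — lies within $U_\theta^\tau$ of the fiber's center; adjoining the (no-cost) requirement $D(\sigma_{x,\theta}^\tau)>0$ we get $O_\theta(x)\subset\{t\in xV_\theta:D(\sigma_{x,\theta}^\tau)(t)>0\}$, as needed in (ii).

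Finally, for $v\in O_\theta(x)$ I would pick such a point $y=xvu_{t_0}$ with $|t_0|<\tau<\beta\ll\rho$; then $U_\theta^{2\tau}$ lies inside the window $[t_0-3\beta,t_0+3\beta]$ (because $3\tau<3\beta$) while the window $[t_0-(\rho-\beta),t_0+(\rho-\beta)]$ lies inside $U_\theta^\rho$ (because $|t_0|<\beta$), so applying \eqref{lrho} at $ym_\theta\in\Omega'_\e$ and translating back to the fiber measure $\eta_v$, normalized to be a probability on $U_\theta^\rho$, gives $\eta_v(U_\theta^{2\tau})\le\tfrac12\eta_v(U_\theta^\rho)=\tfrac12$. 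Integrating over $v\in O_\theta(x)$ against $\sigma_{x,\theta}^\rho$ yields
\[
\mu_x^{\PS}\bigl(O_\theta(x)U_\theta^{2\tau}\bigr)=\int_{O_\theta(x)}\eta_v(U_\theta^{2\tau})\,d\sigma_{x,\theta}^\rho(v)\le\tfrac12\,\sigma_{x,\theta}^\rho(O_\theta(x))=\tfrac12\,\mu_x^{\PS}\bigl(O_\theta(x)U_\theta^\rho\bigr),
\]
the first inequality of (ii); and since $O_\theta(x)U_\theta^{2\tau}\supset\bigcup_{v\in O_\theta(x)}(xvU_\theta\cap xN_\tau)$, the projected-mass bound gives $\mu_x^{\PS}(O_\theta(x)U_\theta^{2\tau})\ge(1-2\e)c_0^{-1}\tau^\delta\ge\tfrac{\tau^\delta}{8c_0}$ once $\e$ is small, so that $\mu_x^{\PS}(O_\theta(x)U_\theta^\rho)\ge 2\mu_x^{\PS}(O_\theta(x)U_\theta^{2\tau})\ge\tfrac{\tau^\delta}{4c_0}$. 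The main obstacle will be the middle step: rigorously identifying the second-level conditional measures of $\mu_x^{\PS}$ with (normalizations of) the $U_\theta$-leafwise measures of $m^{\BMS}$ — which is what forces $\Omega'_\e$ to be replaced by $\Omega'_\e m_\theta^{-1}$ — and arranging the scale hierarchy $\tau<\beta\ll\rho$ so that, even after recentering at the off-center point $y$, the fixed windows in \eqref{lrho} still contain $U_\theta^{2\tau}$ and are still contained in $U_\theta^\rho$; the uniform PS-mass estimates of Theorem~\ref{ub} and the two Chebyshev arguments are then routine.
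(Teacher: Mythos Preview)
Your proposal is correct and follows the same architecture as the paper's proof: a Fubini/Chebyshev argument over the transversal $x_0T_\tau$ (using the uniform bounds of Theorem~\ref{ub}) to produce $\Xi_\e^{\PS}(x_0)$, then a fiberwise transfer of the doubling inequality \eqref{lrho} via the $M$-equivariance of the $U$-leafwise measures. The paper streamlines your argument in two places. First, your second Chebyshev inside the $N$-orbit is unnecessary: the paper simply takes $O_\theta(x)=p_\theta(xN_\tau\cap\Omega_\e m_\theta^{-1})$ directly; every $t$ in this projection has by definition some $z_t\in\Omega'_\e$ on its $U_\theta^\tau$-fiber, and since $\Omega'_\e$ is already a set on which the $U$-leafwise measure is defined and satisfies \eqref{lrho}, the recentering $|\mu_{tm_\theta}^{U^\rho}|\ge|\mu_{z_t}^{U^{\rho-\tau}}|\ge 2|\mu_{z_t}^{U^{3\tau}}|\ge 2|\mu_{tm_\theta}^{U^{2\tau}}|$ goes through without any ``more than half the mass'' condition (this is Lemma~\ref{psin}). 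Second, rather than intersecting with $\{D(\sigma_{x,\theta}^\tau)>0\}$ after the fact and arguing it is $\sigma_{x,\theta}^\tau$-null (which is valid, as you note), the paper first passes from $\Omega'_\e$ to a slightly smaller $\Omega_\e$ by excising a small open neighborhood of the null set $\bigcup_{xm_\theta} xV'_\theta U_\theta^\tau$ (Lemma~\ref{oe}), so that the containment $p_\theta(xN_\tau\cap\Omega_\e m_\theta^{-1})\subset\{D>0\}$ holds for every $xm_\theta$ with $\theta$ a PL direction, not just almost every one. Both simplifications buy cleanliness rather than correctness; your version would also close.
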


Despite the rather complicated formulation of this theorem, which is tailored
towards our application later, the theorem is intuitively clear. Indeed $B_{x_0}(\tau)$
is chosen so that for ``most" $\BMS$ points, we have~\eqref{lrho}. On the other hand, 
in view of Corollary~\ref{c;ps-leb}, for a $\op{PL}$ direction $\theta$, the Radon-Nikodym derivative
$D(\sigma_{x,\theta}^\tau)$ is positive $\sigma_{x,\theta}^\tau$-almost everywhere. 
Therefore,  by Fubini's theorem,  for ``most" $\BMS$ points $x\in B_{x_0}(\tau)$, 
``most" points in $xN_\rho$ satisfy both~\eqref{lrho} and the non-vanishing of the Radon-Nikodym
derivative implies Theorem \ref{finalo}.   
The precise treatment of the above sketch of the proof is given in the rest of this subsection.

\begin{lem}\label{psin} Let $xm_\theta \in x_0T_\tau$.
 For any Borel subset $O_\theta'(x)\subset p_\theta(xN_\tau\cap \Omega_\e'm_\theta^{-1})$,
$$\mu_{x}^{\PS}( O_\theta'(x)  U_{\theta}^{\rho}) \ge 2
\mu_{x}^{\PS}(O_\theta'(x)  U_{\theta}^{2\tau}).$$
\end{lem}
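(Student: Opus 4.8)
The plan is to transfer the doubling inequality \eqref{lrho} --- which is stated for the $U$-leafwise measures of $m^{\BMS}$ along genuine $U$-orbits --- into the desired doubling inequality for $\mu_x^{\PS}$ restricted to a union of $U_\theta$-segments sitting over the projected set $O'_\theta(x)$. The key observation is that $\mu_x^{\PS}=\mu_{H^+(x)}^{\PS}$ is (up to normalization) the $N$-leafwise measure of $m^{\BMS}$, and that its further disintegration along the $U_\theta$-foliation of the plane $xN$ gives, fiber by fiber, the $U_\theta$-leafwise measures of $m^{\BMS}$. Concretely, writing points of $xN_\tau$ in the coordinates $xV_\theta^\tau U_\theta^\tau$ adapted to the direction $\theta$, one disintegrates $\mu_x^{\PS}$ over the transversal $xV_\theta$ via the projection $p_\theta$; the conditional measures are, after the identification $xv U_\theta \cong U_\theta \cong U$ and up to the normalization issue, the measures $(m^{\BMS})_{xv}^{U_\theta}=(m^{\BMS})_{(xv)m_\theta}^{U}$ translated back by $m_\theta^{-1}$ (using Lemma \ref{minv} to handle the $M$-conjugation correctly, noting $m^{\BMS}$ is $M$-invariant).

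First I would set up this disintegration carefully: for $\sigma_{x,\theta}^{?}$-a.e.\ transversal point $t\in xV_\theta$ let $\eta_t$ denote the conditional of $\mu_x^{\PS}$ on the fiber $t U_\theta$, normalized consistently, so that for any Borel $O\subset xV_\theta^\tau$ and any window $W\subset U_\theta$,
\[
\mu_x^{\PS}(O\cdot W)=\int_{O} \eta_t(t\cdot W)\, d\sigma(t)
\]
for the appropriate transversal measure $d\sigma$. Then the point is that for $t\in p_\theta(xN_\tau\cap \Omega'_\e m_\theta^{-1})$, the corresponding BMS point $tm_\theta$ lies in $\Omega'_\e$, so \eqref{lrho} applies to $(m^{\BMS})_{tm_\theta}^U$; pulling back by $m_\theta$ and comparing $\eta_t$ with $(m^{\BMS})_{tm_\theta}^U$ (they agree up to a scalar depending on $t$, which cancels in a ratio), one gets
\[
\eta_t\big(t\cdot U_\theta^{2\tau}\big)\le \eta_t\big(t\cdot U_\theta^{3\beta}\big)< \tfrac12\,\eta_t\big(t\cdot U_\theta^{\rho-\beta}\big)\le \tfrac12\,\eta_t\big(t\cdot U_\theta^{\rho}\big),
\]
using $2\tau<2\beta<3\beta$ and $\rho-\beta<\rho$. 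Integrating this fiberwise inequality over $t\in O'_\theta(x)$ against $d\sigma$ yields $\mu_x^{\PS}(O'_\theta(x)U_\theta^{2\tau})\le\tfrac12\mu_x^{\PS}(O'_\theta(x)U_\theta^{\rho})$, which is the claim.

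The main obstacle I anticipate is bookkeeping rather than conceptual: making the identification between the fiberwise conditionals of $\mu_x^{\PS}$ under $p_\theta$ and the $U_\theta$-leafwise measures $(m^{\BMS})_{\cdot}^{U_\theta}$ fully rigorous, including (a) the arbitrary normalization of leafwise measures (handled by always working with ratios of measures of nested windows, which are normalization-independent), (b) the correct conjugation by $m_\theta$ --- this is exactly what Lemma \ref{minv} and the $M$-invariance of $m^{\BMS}$ are for --- and (c) checking that ``$\Omega'_\e$-membership of $tm_\theta$'' is precisely the condition under which \eqref{lrho} was asserted, so that the hypothesis $O'_\theta(x)\subset p_\theta(xN_\tau\cap\Omega'_\e m_\theta^{-1})$ is used on the nose. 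Once these identifications are in place, the proof is just the fiberwise inequality \eqref{lrho} integrated over the transversal, together with the trivial inclusions $U_\theta^{2\tau}\subset U_\theta^{3\beta}$ and $U_\theta^{\rho-\beta}\subset U_\theta^{\rho}$ coming from $\tau<\beta\ll\rho$.
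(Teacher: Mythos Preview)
Your approach is the same as the paper's --- disintegrate $\mu_x^{\PS}$ along $U_\theta$-fibers, convert to $U$-leafwise measures via Lemma~\ref{minv}, apply \eqref{lrho} fiberwise, then integrate over $O'_\theta(x)$ --- but one claim is wrong as stated. You assert that for $t\in p_\theta(xN_\tau\cap \Omega'_\e m_\theta^{-1})$ the point $tm_\theta$ lies in $\Omega'_\e$. It does not: by definition of $p_\theta$, such a $t$ is the $V_\theta$-projection of some $z\in xN_\tau\cap\Omega'_\e m_\theta^{-1}$, so $z=t\,u_\theta$ for some $u_\theta\in U_\theta^\tau$. Conjugating by $m_\theta$ gives $tm_\theta=z_t u_0$ with $z_t:=zm_\theta\in\Omega'_\e$ and $u_0\in U^\tau$. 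Hence \eqref{lrho} is available only at $z_t$, not at $tm_\theta$.

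The fix is to absorb the $U^\tau$-shift into the windows, exactly as the paper does: from $tm_\theta=z_t u_0$ with $u_0\in U^\tau$ one has the inclusions $tm_\theta\, U^{2\tau}\subset z_t\, U^{3\tau}$ and $z_t\, U^{\rho-\tau}\subset tm_\theta\, U^{\rho}$, whence
\[
|\mu_{tm_\theta}^{U^{2\tau}}|\le |\mu_{z_t}^{U^{3\tau}}|\quad\text{and}\quad |\mu_{z_t}^{U^{\rho-\tau}}|\le |\mu_{tm_\theta}^{U^{\rho}}|.
\]
Since $\tau<\beta$, one has $3\tau\le 3\beta$ and $\rho-\tau\ge\rho-\beta$, so \eqref{lrho} at $z_t\in\Omega'_\e$ gives $|\mu_{z_t}^{U^{\rho-\tau}}|\ge 2|\mu_{z_t}^{U^{3\tau}}|$, and combining the three inequalities yields $|\mu_{tm_\theta}^{U^{\rho}}|\ge 2|\mu_{tm_\theta}^{U^{2\tau}}|$. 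With this correction your integration step goes through verbatim and matches the paper's proof.
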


 \begin{proof}
If $t\in p_{\theta}({xN_\tau\cap \Omega_\e' m_\theta^{-1}})$
 and
hence $t=xv_\theta$ for $v_\theta\in V_\theta$ where $z_t m_\theta^{-1}=xv_\theta u_\theta$
for $z_t\in \Omega_\e'$,
 we can write it as $tm_\theta =z_tm_\theta^{-1} u_\theta^{-1} m_\theta
 =z_t u_0$ for some $u_0\in U^{\tau}$.
Hence by \eqref{lrho},
$$|\mu_{tm_\theta}^{U^{\rho}}|\ge
|\mu_{z_t}^{U^{\rho-\tau}}|\ge 2|\mu_{z_t}^{U^{3\tau}}| \ge
2|\mu_{tm_\theta}^{U^{2\tau}}| .$$

 Using this and since $O_\theta'(x)\subset p_\theta(xN_\tau\cap \Omega_\e'm_\theta^{-1})$, we have
\begin{align*} & \mu_{x}^{\PS}( O_\theta'(x)
U_{\theta}^{\rho} ) = \int_{ O_\theta'(x)}
|\mu_t^{U_{\theta}^{\rho}} |d\sigma_{x,\theta}^{\rho} (t)
=\int_{ O_\theta'(x) } |\mu_{tm_\theta}^{U^{\rho}}| d\sigma_{x,\theta}^{\rho}
(t)
 \\&\ge 2 \int_{ O_\theta'(x) }
| \mu_{tm_\theta}^{U^{2\tau}} |d\sigma_{x,\theta}^{\rho}
(t)
=2 \int_{ O_\theta'(x) }
|\mu_t^{U_{\theta}^{2\tau}}| d\sigma_{x,\theta}^{\rho} (t)
 =2 \mu_{x}^{\PS}(O_\theta'(x) U_{\theta}^{2\tau}).
\end{align*}
\end{proof}

\begin{lem}\label{oe}
There exists a compact subset $\Omega_\e\subset \Omega_\e'$  with
$m^{\BMS}(\Omega_\e )>1-{\e^2}$ such that
$$m^{\BMS}(B_{x_0}(\tau)\cap
\Omega_\e )>(1-{\e^2})\cdot m^{\BMS}(B_{x_0}(\tau))$$
and that
 $$ p_\theta(xN_\tau\cap \Omega_\e m_\theta^{-1})\subset \{t\in xV_\theta: D(\sigma_{x,\theta}^\tau)( t)>0 \}$$
for all $xm_\theta\in x_0T_\tau$ with
$\theta$ a $\op{PL}$-direction for $(x,\tau)$.
\end{lem}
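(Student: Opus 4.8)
The plan is to obtain $\Omega_\e$ from $\Omega_\e'$ by removing a small-measure set so that on the part of $\Omega_\e$ sitting inside the box $x_0 T_\tau N_\tau$, every point of $xN_\tau \cap \Omega_\e m_\theta^{-1}$ lands where the Radon--Nikodym derivative $D(\sigma_{x,\theta}^\tau)$ is positive, for all PL directions $\theta$. The mechanism behind this is Fubini's theorem together with Corollary~\ref{c;ps-leb}, which says that for a PL direction $\theta$ the measure $\sigma_{x,\theta}^\tau$ is absolutely continuous, so $D(\sigma_{x,\theta}^\tau)>0$ holds $\sigma_{x,\theta}^\tau$-a.e., i.e. off a $\sigma_{x,\theta}^\tau$-null (hence, pulling back by $p_\theta$, an $\mu_x^{\PS}$-null) subset of $xV_\theta^\tau$.

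First I would work inside the box $B_{x_0}(\tau) = x_0 T_\tau N_\tau$ and use the disintegration of $m^{\BMS}$ there along the $N$-orbits against the transverse measure $\nu = \nu_{x_0 T_\tau}$, exactly as recorded after Definition~\ref{twom}. Consider the ``bad set''
\[
Z := \{\, xn \in B_{x_0}(\tau) \cap \Omega_\e' : \exists\ \theta \text{ PL for } (x,\tau),\ p_\theta(xn\, m_\theta^{-1}) \text{ at which } D(\sigma_{x,\theta}^\tau)=0 \,\}.
\]
Actually it is cleaner to set things up the other way: for $\nu$-a.e. $x$ in the transversal and each PL direction $\theta$ for $(x,\tau)$, the set $B_\theta(x) := \{ t \in xV_\theta^\tau : D(\sigma_{x,\theta}^\tau)(t) = 0\}$ is $\sigma_{x,\theta}^\tau$-null, hence its $U_\theta^\tau$-saturation inside $xN_\tau$ is $\mu_x^{\PS}$-null. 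Since there are only countably many coordinates to worry about (or one can quote the measurability of the whole family in $\theta$ and integrate), the union over PL directions of these saturated null sets is still $\mu_x^{\PS}$-null for a.e. $x$. Integrating against $\nu$, the corresponding subset $Z \subset B_{x_0}(\tau)$ has $m^{\BMS}(Z) = 0$. Then I set $\Omega_\e := \Omega_\e' \setminus (Z \cup (\text{a further $\tfrac{\e^2}{2}$-measure correction to restore compactness}))$; by inner regularity of $m^{\BMS}$ I can arrange $\Omega_\e$ compact with $m^{\BMS}(\Omega_\e) > 1 - \e^2$ and, shrinking the box contribution lost, $m^{\BMS}(B_{x_0}(\tau) \cap \Omega_\e) > (1 - \e^2) m^{\BMS}(B_{x_0}(\tau))$, using \eqref{e;l-density3} with its $\tfrac{\e^2}{2}$ slack. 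The defining containment $p_\theta(xN_\tau \cap \Omega_\e m_\theta^{-1}) \subset \{t : D(\sigma_{x,\theta}^\tau)(t) > 0\}$ then holds by construction, since we deleted from $\Omega_\e'$ precisely (the saturation of) the points mapping into $\{D = 0\}$.

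The main obstacle I anticipate is the measurability/Fubini bookkeeping: one must know that the assignment $(x,\theta) \mapsto \sigma_{x,\theta}^\tau$ (and its density) is jointly measurable, so that ``bad set'' $Z$ is measurable and the iterated-integral argument giving $m^{\BMS}(Z) = 0$ is legitimate; this rests on the continuity of $x \mapsto \mu_x^{\PS}$ from \cite[Lemma 1.16]{Roblin2003} and on the construction of $\sigma_{x,\theta}^\tau$ as a push-forward, which vary measurably. A secondary point is that the PL property itself depends on $(x,\theta)$ (via the fixed exponent $r < (\delta-1)/2$), so one should phrase the exceptional set as the union, over the countable data or via the measurable selection, of the null sets coming from Corollary~\ref{c;ps-leb}; since ``PL for $(x,\tau)$'' is a full-measure condition in $\theta$ for a.e. $x$, this causes no loss. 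Once these routine measurable-set manipulations are in place, combining them with \eqref{lrho} and \eqref{e;l-density3} yields the stated $\Omega_\e$, and Lemma~\ref{psin} will then be applicable on $\Omega_\e$ in the proof of Theorem~\ref{finalo}.
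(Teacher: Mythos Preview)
Your approach is essentially the same as the paper's: identify the ``bad set'' inside $B_{x_0}(\tau)$ on which the projected density vanishes, show it has $m^{\BMS}$-measure zero by disintegration, then remove an open neighborhood of it of measure at most $\tfrac{\e^2}{2}\,m^{\BMS}(B_{x_0}(\tau))$ from $\Omega_\e'$ (so compactness comes for free, no separate inner-regularity step).

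The one place where you overcomplicate things is the ``union over PL directions'': you worry that for fixed $x$ the union $\bigcup_{\theta\ \mathrm{PL}} B_\theta(x)U_\theta^\tau$ inside $xN_\tau$ might fail to be $\mu_x^{\PS}$-null, and you reach for countability or measurable selection. This concern is misplaced. The transversal $x_0T_\tau=x_0N_\tau^-A_\tau M$ already contains $M$, so each transversal element $xm_\theta$ comes with a \emph{single} direction $\theta$. The bad set is
\[
\bigcup_{xm_\theta\in x_0T_\tau}\bigl(xV_\theta'\,U_\theta^\tau\bigr)m_\theta,
\]
and its $m^{\BMS}$-measure is computed directly as
\[
\int_{xm_\theta\in x_0T_\tau}\mu_x^{\PS}\bigl(xV_\theta'\,U_\theta^\tau\bigr)\,d\nu(xm_\theta)
=\int_{xm_\theta}\int_{xV_\theta'}\bigl|(\mu_x^{\PS})_t^{U_\theta^\tau}\bigr|\,d\sigma_{x,\theta}^\tau(t)\,d\nu(xm_\theta)=0,
\]
since $\sigma_{x,\theta}^\tau(xV_\theta')=0$ for PL $\theta$ (which is $\nu$-a.e.\ by Corollary~\ref{c;ps-leb} and $d\nu=d\tilde\nu\otimes dm$). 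No union over $\theta$ at a fixed $x$ is ever needed; different $\theta$'s live in disjoint plaques $xm_\theta N_\tau$.
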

\begin{proof} Setting $xV_\theta':=\{t\in xV_\theta : D(\sigma_{x,\theta}^\tau, t)
=0\}$,
we have
\begin{align*} &m^{\BMS}( \cup_{xm_\theta \in x_0T_\tau}
xV_\theta' U_\theta^\tau) \\ & =
\int_{xm_\theta \in x_0T_\tau} \mu_{x}^{\PS}(xV_\theta'U_\theta^\tau)d\nu(xm_\theta)
\\&= \int_{xm_\theta \in x_0T_\tau} \int_{t\in xV_\theta'} |(\mu_{x}^{\PS})_x^{U_\theta^\tau}|
d\sigma_{x,\theta}^\tau(t)d\nu(xm_\theta) =0 .
\end{align*}
Hence there exists an open subset $\mathcal O_\e$ of
$B_{x_0}(\tau)$ which contains the subset $\cup_{xm_\theta \in
x_0T_\tau} xV_\theta' U_\theta^\tau$ and
$m^{\BMS}(\mathcal O_\e)\le \frac{\e ^2}2 \cdot
m^{\BMS}(B_{x_0}(\tau))$. Now set $\Omega_\e:=\Omega'_\e -
\mathcal O_\e$. It is easy to check that this $\Omega_\e$
satisfies the claim.
\end{proof}
We set $$\Xi_{\e}^{\PS}(x_0):= \{x\in x_0T_\tau : \mu_x^{\PS}(x N_\tau \cap \Omega_\e)>
(1-\e) \mu_{x}^{\PS}(x N_\tau )
  \}.$$

\begin{lem}\label{l;fubini}
We have
 $$\nu(\Xi_{\e}^{\PS}(x_0))\ge (1-c_0^{4}\cdot \e)\nu (x_0
T_\tau ).$$
\end{lem}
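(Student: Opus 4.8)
The plan is to establish Lemma~\ref{l;fubini} by a Fubini--Chebyshev argument over the transverse fibration $x_0T_\tau N_\tau$, using the ``most points are good'' property~\eqref{e;l-density3} of $\Omega_\e$ together with the uniform upper/lower bounds on the PS measure from Theorem~\ref{ub}. First I would write, using the disintegration of $m^{\BMS}$ on the box $B_{x_0}(\tau)=x_0T_\tau N_\tau$ against the transverse measure $\nu=\nu_{x_0T_\rho}$ (restricted to $x_0T_\tau$),
\[
m^{\BMS}(B_{x_0}(\tau)\cap\Omega_\e)=\int_{x\in x_0T_\tau}\mu_x^{\PS}(xN_\tau\cap\Omega_\e)\,d\nu(x),
\qquad
m^{\BMS}(B_{x_0}(\tau))=\int_{x\in x_0T_\tau}\mu_x^{\PS}(xN_\tau)\,d\nu(x).
\]
Here I am using that the conditional measures of $m^{\BMS}$ on the $N$-fibers are the $\mu_x^{\PS}$ (as recorded after Definition~\ref{twom} and in \S\ref{sec;cond}), and that $\tau<\rho_{x_0}/\sqrt2$ makes all the relevant sets genuine boxes.

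Next I would introduce the ``bad'' fiber-proportion function $f(x):=1-\dfrac{\mu_x^{\PS}(xN_\tau\cap\Omega_\e)}{\mu_x^{\PS}(xN_\tau)}\in[0,1]$, so that $x\notin\Xi_\epsilon^{\PS}(x_0)$ exactly when $f(x)\ge\epsilon$. The point is to bound $\int f(x)\,\mu_x^{\PS}(xN_\tau)\,d\nu(x)=m^{\BMS}(B_{x_0}(\tau)\setminus\Omega_\e)\le\frac{\epsilon^2}{2}m^{\BMS}(B_{x_0}(\tau))$ by~\eqref{e;l-density3} (in its strengthened form from Lemma~\ref{oe}, which gives the factor $\epsilon^2$), and then convert this weighted bound into an unweighted one using that $\mu_x^{\PS}(xN_\tau)$ is pinched between $c_0^{-1}\tau^\delta$ and $c_0\tau^\delta$ for all $x$ with $x^\pm\in\Lambda(\G)$ by Theorem~\ref{ub} (applicable since $x_0^\pm\in\Lambda(\G)$ forces $x^-\in\Lambda(\G)$ for all $x\in x_0T_\tau$, and one similarly controls $x^+$ on the relevant subset, or one simply uses the two-sided bound that holds on all of $x_0T_\tau$). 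Concretely: on the bad set $\{f\ge\epsilon\}$ the integrand $f(x)\mu_x^{\PS}(xN_\tau)$ is at least $\epsilon\cdot c_0^{-1}\tau^\delta$, while $m^{\BMS}(B_{x_0}(\tau))=\int\mu_x^{\PS}(xN_\tau)\,d\nu\le c_0\tau^\delta\,\nu(x_0T_\tau)$, so Chebyshev gives
\[
\epsilon c_0^{-1}\tau^\delta\,\nu(\{f\ge\epsilon\})\le \tfrac{\epsilon^2}{2}\,m^{\BMS}(B_{x_0}(\tau))\le \tfrac{\epsilon^2}{2}\,c_0\tau^\delta\,\nu(x_0T_\tau),
\]
hence $\nu(\{f\ge\epsilon\})\le\frac{\epsilon}{2}c_0^2\,\nu(x_0T_\tau)\le c_0^4\epsilon\,\nu(x_0T_\tau)$, which is exactly the complement bound $\nu(\Xi_\epsilon^{\PS}(x_0))\ge(1-c_0^4\epsilon)\nu(x_0T_\tau)$.

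I would then need to double-check the bookkeeping of constants: the statement asks for $1-c_0^4\epsilon$, so I have room to be wasteful (my computation only needs $\tfrac12 c_0^2$), and the extra powers of $c_0$ are harmless — the cleanest route is to simply bound $\tfrac12 c_0^2\le c_0^4$ since $c_0>1$. The one genuine subtlety, and what I expect to be the main obstacle, is making sure Theorem~\ref{ub} applies uniformly on the fiber $xN_\tau$ for $\nu$-almost every $x\in x_0T_\tau$: one needs $x^+\in\Lambda(\G)$ for the lower bound $\mu_x^{\PS}(xN_\tau)\ge c_0^{-1}\tau^\delta$, and while $x^-=x_0^-\in\Lambda(\G)$ is automatic, $x^+$ ranges over a set of full $\nu$-measure that is contained in $\Lambda(\G)$ precisely because $x_0$ is a BMS point and $\nu$ is the transverse measure of $m^{\BMS}$ (supported on $\Omega$); this is why the hypothesis $x_0^\pm\in\Lambda(\G)$ was imposed on the box $E=B_{x_0}(\tau)$. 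Once that is pinned down, the rest is the elementary Chebyshev estimate above. I would also remark that the same two-sided bound $c_0^{-1}\tau^\delta\le\mu_x^{\PS}(xN_\tau)\le c_0\tau^\delta$ is what forces $\nu(x_0T_\tau)>0$ and $m^{\BMS}(B_{x_0}(\tau))\asymp\tau^\delta\nu(x_0T_\tau)$, so no degeneracy issues arise.
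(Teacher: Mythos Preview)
Your approach is correct and is essentially the same Fubini--Chebyshev argument the paper gives (the paper phrases it via $b_1:=\inf \mu_{x_0}^{\PS}(x_0N_\tau)/\mu_x^{\PS}(xN_\tau)$ and $b_2:=\sup$ of the same ratio over $x\in x_0T_\tau$, observing $b_2/b_1<c_0^4$, which is equivalent to your direct use of $c_0^{-1}\tau^\delta\le\mu_x^{\PS}(xN_\tau)\le c_0\tau^\delta$).

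One correction to the point you flag as ``the main obstacle'': you have the endpoint preservation backwards. For $x\in x_0T_\tau=x_0N^-_\tau A_\tau M$, it is $x^+=x_0^+$ that is automatic (the subgroups $N^-$, $A$, $M$ all fix the forward endpoint), not $x^-=x_0^-$. Since Theorem~\ref{ub} for $\mu_x^{\PS}(xN_\tau)$ requires precisely $x^+\in\Lambda(\G)$, the subtlety dissolves: $x^+=x_0^+\in\Lambda(\G)$ holds for \emph{every} $x\in x_0T_\tau$, not merely $\nu$-a.e. Your alternative route through the support of $\nu$ would in fact yield $x^-\in\Lambda(\G)$ for $\nu$-a.e.\ $x$, which is the wrong endpoint for this application. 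Also, the density input should come from Lemma~\ref{oe} (giving $m^{\BMS}(B_{x_0}(\tau)\setminus\Omega_\e)<\e^2\,m^{\BMS}(B_{x_0}(\tau))$ for $\Omega_\e$, not the $\e^2/2$ bound \eqref{e;l-density3} for $\Omega_\e'$), since $\Xi_\e^{\PS}(x_0)$ is defined using $\Omega_\e$; as you note, there is ample room in the constant either way.
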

\begin{proof}

Set $\te b_1:=\inf_{x\in x_0T_\tau} \tfrac{\mu_{x_0}^{\PS}(x_0N_\tau)} {\mu_x^{\PS}(xN_\tau)}$ and
$\te b_2:=\sup_{x\in x_0 T_\tau} \tfrac{\mu_{x_0}^{\PS}(x_0N_\tau)}{\mu_x^{\PS}(xN_\tau)}$.
Since $x_0^+\in \Lambda(\G)$ and $x_0^+=x^+$, we have $x^+\in \Lambda(\G)$
and hence it follows from Theorem \ref{ub} that $c_0^{-4}<\tfrac{b_2}{b_1}<c_0^4$.

Note that
\begin{align*} &m^{\BMS}(B_{x_0}(\tau) \cap \Omega_\e)\\&
\le \int_{\Xi_{\e}^{\PS}(x_0)} \mu_y^{\PS}(yN_\tau)d \nu(y)
+ (1- \e) \int_{x_0 T_\tau - \Xi_{\e}^{\PS}(x_0)} \mu_y^{\PS}(yN_\tau)d \nu(y)\\
&\le m^{\BMS}(B_{x_0}(\tau)) - \tfrac{\e}{ b_2}   \nu(x_0 T_\tau - \Xi_{\e}^{\PS}(x_0)) \cdot \mu_{x_0}^{\PS}(x_0N_\tau) .
\end{align*}

By Lemma \ref{oe}, it follows that
$$\e^2 \cdot m^{\BMS}(B_{x_0}(\tau))\ge   \tfrac{\e}{ b_2}\nu(x_0 T_\tau - \Xi_{\e}^{\PS}(x_0))\cdot \mu_{x_0}^{\PS}(x_0N_\tau)$$ and hence
$$
{\e} \cdot b_1^{-1}  \nu (x_0 T_\tau
)\cdot \mu_{x_0}^{\PS}(x_0N_\tau) \ge  \tfrac{1}{ b_2} \nu(x_0 T_\tau - \Xi_{\e}^{\PS}(x_0))\cdot \mu_{x_0}^{\PS}(x_0N_\tau).$$ Therefore
$$
 \nu(x_0 T_\tau - \Xi_{\e}^{\PS}(x_0)) \le
{\e} \cdot \tfrac{b_2}{ b_1} \nu (x_0 T_\tau
) ,$$ implying the claim.
\end{proof}

By the $M$-invariance of $m^{\BMS}$, by Lemma \ref{minv},
$$\mu_{xm_\theta}^{\PS}(\Omega_\e\cap xm_\theta N_\tau)=
\mu_{xm_\theta }^{\PS}(\Omega_\e \cap xN_\tau m_\theta)
=\mu_{x}^{\PS}(\Omega_\e m_\theta^{-1} \cap xN_\tau )
.$$
Note that  for any $xm_\theta \in \Xi_{\e}^{\PS}(x_0)$,
we have $$\mu_{x}^{\PS}( xN_\tau \cap \Omega_\e
m_\theta^{-1})>(1- \e)\mu_x(xN_\tau) \ge\tfrac{\tau^\delta}{2c_0} .$$

By setting $O_\theta(x):=p_\theta(xN_\rho\cap \Omega_\e m_\theta^{-1})$,
we note that $O_\theta(x)  U_{\theta}^{2\tau}$ contains
$x N_\tau \cap \Omega_\e m_\theta^{-1}$ and hence for all small $0<\e\ll 1$,
$$\mu_x^{\PS}(O_\theta(x)U_\theta^{2\tau})\ge  (1-\e) \mu_x^{\PS}(x N_\tau \cap \Omega_\e m_\theta^{-1})
\ge \tfrac{\tau^\delta}{2c_0} .$$

 Therefore the above three lemmas prove Theorem \ref{finalo}.

\section{Window theorem for Hopf average}\label{sec;window}
We will combine the results from previous sections and prove the window theorem
\ref{wtpf} in this section. 
We first show that the disintegration along $U$ of $\lambda_s,$
notation as in Section~\ref{sec;cond}, 
has certain doubling properties, see Theorem~\ref{deduce}. 
This is done by applying results in Section~\ref{sec;energy}, 
in particular the key lemma, to $\lambda_{E,x,s}$ 
and the limiting measure $\mu_x^{\PS},$ in combination
with Theorem~\ref{finalo}, which gives 
a rather strong doubling property
for the disintegration of the $\PS$ measure. 
As we mentioned in the introduction, in general, 
the weak* convergence of measures does not give control
on the corresponding conditional measures,  
e.g., one should recall the well-known discontinuity of the entropy. 
However, here the key lemma gives a good control 
both on the prelimiting measures $\lambda_{E,x,s}$ 
and the limit measure $\mu_x^{\PS},$ 
and helps us to draw some connection between the conditionals. 

In order to obtain the Window Theorem~\ref{p;windowone},
we flow by $a_{-s}$ for a suitably big $s$ 
and bring $[-T,T]$ to size $[-\rho,\rho].$ 
We  are now working with $m^{\BR}_{E,s}$
rather than $\tfrac{1}{m^{\BR}( E)}m^{\BR}|_E,$
and the desired estimate follows from Theorem~\ref{deduce}

\subsection{Window theorem for $\chi_E$} 
Let $\G$ be a convex cocompact subgroup with $\delta>1$.
Let $E$ be a BMS box. 
For simplicity, we set
 $$\mu_s:=\mu_{E,s}^{\BR}\quad\text{and}\quad \lambda_{x,s}=\lambda_{E,x, s}$$ defined in section 5.
For $0<r\le 1$ and $\rho>0$ as in \eqref{rd}, we put
$$ E_{s}(r):=\{x\in E a_{-s}:
\frac{(\mu_{s})_x^U[-2 \rho r, 2\rho
r]}{(\mu_{s})_x^U[-2 \rho, 2 \rho]}>1-r\}.$$

\begin{thm}\label{deduce}  
There exist $0<r_0<1$ and $s_0>1$ (depending on $E$) such that for all $s>s_0$, we
have
 $$\mu_{E,s}^{\BR}(E_s(r_0))<1-r_0 .$$
\end{thm}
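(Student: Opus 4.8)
The plan is to transport the strong doubling--failure of the $\PS$ conditionals furnished by Theorem~\ref{finalo} to the $U$--leafwise measures of $\mu^{\BR}_{E,s}$, and then to integrate the resulting pointwise statement over a $\BMS$ box. Fix a small $\e>0$ and let $x_0$, $0<\tau<\rho$, the box $B_{x_0}(\tau)$, the set $\Xi^{\PS}_\e(x_0)\subset x_0T_\tau$ and the sets $O_\theta(x)\subset\{D(\sigma^\tau_{x,\theta})>0\}$ be as in Theorem~\ref{finalo}, so $\nu(\Xi^{\PS}_\e(x_0))\ge(1-c_0^4\e)\,\nu(x_0T_\tau)$ and, for $x\in\Xi^{\PS}_\e(x_0)$ and every PL direction $\theta$ for $(x,\tau)$,
\[
\mu^{\PS}_x\big(O_\theta(x)U_\theta^{\rho}\big)\ \ge\ 2\,\mu^{\PS}_x\big(O_\theta(x)U_\theta^{2\tau}\big)\ \ge\ \tfrac{\tau^\delta}{4c_0}.
\]
I will take $r_0>0$ so small that $2\rho r_0<2\tau$ and $r_0<c_3$, where $c_3=c_3(\e,E)>0$ is produced at the end (all constants depending only on $\e$ and the fixed data); since $U_\theta^\rho\subset U_\theta^{2\rho}$ and $U_\theta^{2\rho r_0}\subset U_\theta^{2\tau}$ this gives the \emph{averaged} doubling--failure $\mu^{\PS}_x(O_\theta(x)U_\theta^{2\rho r_0})\le\tfrac12\,\mu^{\PS}_x(O_\theta(x)U_\theta^{2\rho})$.

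The first task is to pass this averaged estimate to $\lambda_{E,x,s}$ for all large $s$. Working in the $(U_\theta,V_\theta)$ coordinates on $xN$ and projecting along $U_\theta$ (Section~\ref{sec;projection}): by Corollary~\ref{cor;unif-energy} the measures $\lambda^\dag_{E,x,s}$ have $\alpha$--energy uniformly bounded in $s$ for $\alpha<\delta$, so by Theorem~\ref{t;r-n-proj} the projected densities $D(\sigma^a_{x,\theta,s})$ have uniformly bounded $H^r$--norm for most $\theta$ ($a=2\rho,\,2\rho r_0$), and Theorem~\ref{c;unif-l2} together with the Key Lemma~\ref{zerotwo} — whose right--hand side vanishes precisely because $O_\theta(x)\subset\{D(\sigma^\tau_{x,\theta})>0\}$ — yields $\lambda_{E,x,s}(O_\theta(x)U_\theta^a)\to\mu^{\PS}_x(O_\theta(x)U_\theta^a)$. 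Hence, for $\nu$--a.e.\ $x\in\Xi^{\PS}_\e(x_0)$ and a.e.\ direction $\theta$, for all large $s$,
\[
\lambda_{E,x,s}\big(O_\theta(x)U_\theta^{2\rho r_0}\big)\le\tfrac23\,\lambda_{E,x,s}\big(O_\theta(x)U_\theta^{2\rho}\big),\qquad
\lambda_{E,x,s}\big(O_\theta(x)U_\theta^{2\rho}\big)\ge\tfrac{\tau^\delta}{8c_0}.
\]
This is the step where $\delta>1$ is indispensable: it is what makes the $\PS$ density locally Ahlfors (Theorem~\ref{ub}) and non--focusing (Lemma~\ref{c;general-pos}), hence makes both the limit estimate of Theorem~\ref{finalo} and the uniform energy bound available.

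Next I would disintegrate $\lambda_{E,x,s}|_{O_\theta(x)U_\theta^{2\rho}}$ under $p_\theta$; its fibre conditionals $\bar\eta_{x,t,\theta,s}$ agree, up to normalisation, with the restriction to $U_\theta^{2\rho}$ of the $U_\theta$--leafwise measure of $\mu^{\BR}_{E,s}$ at the points of the fibre. The first displayed estimate reads $\int_{O_\theta(x)}\bar\eta_{x,t,\theta,s}(U_\theta^{2\rho r_0})\,d\sigma^{2\rho}_{x,\theta,s}(t)\le\tfrac23\,\sigma^{2\rho}_{x,\theta,s}(O_\theta(x))$, so Chebyshev produces $G_{x,\theta,s}\subset O_\theta(x)$ with $\sigma^{2\rho}_{x,\theta,s}(G_{x,\theta,s})\ge\tfrac{\tau^\delta}{40c_0}=:c_2$ on whose fibres the $(2\rho r_0,2\rho)$--doubling already fails at the base point; invoking once more the local non--focusing of the $\PS$ density along the contracting $N^-$--leaves (Theorem~\ref{ub}, Lemma~\ref{c;general-pos}) and a Besicovitch covering along these fibres, I would upgrade this to a subset $F_{x,s}\subset xN$ of $\lambda_{E,x,s}$--mass $\gtrsim\tau^\delta$ of points $z$ at which the $U_\theta$--leafwise measure of $\mu^{\BR}_{E,s}$ fails the $(2\rho r_0,2\rho)$--doubling, i.e.\ (since $m^{\BR}$, hence $\mu^{\BR}_{E,s}$, is $M$--invariant and $U_\theta=m_\theta U m_\theta^{-1}$) points $z$ with $zm_\theta^{-1}\notin E_s(r_0)$. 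Fixing one direction $\theta_0$ that is admissible for $\nu$--a.e.\ $x\in\Xi^{\PS}_\e(x_0)$ — possible by Fubini, the admissible $\theta$ being of full measure for each $x$ — we conclude that $(Ea_{-s}\setminus E_s(r_0))m_{\theta_0}$ meets each leaf $xN$, $x\in\Xi^{\PS}_\e(x_0)$, in a set of $\lambda_{E,x,s}$--mass $\gtrsim\tau^\delta$ for all large $s$.

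Finally, using that $\mu^{\BR}_{E,s}$ disintegrates over a suitable box as $\int_{x_0T_\tau}\lambda_{E,x,s}\,d\nu(x)$ (Section~\ref{sec;cond}) and $M$--invariance ($\mu^{\BR}_{E,s}\big((Ea_{-s}\setminus E_s(r_0))m_{\theta_0}\big)=1-\mu^{\BR}_{E,s}(E_s(r_0))$),
\[
1-\mu^{\BR}_{E,s}(E_s(r_0))\ \ge\ \int_{\Xi^{\PS}_\e(x_0)}\lambda_{E,x,s}\big((Ea_{-s}\setminus E_s(r_0))m_{\theta_0}\cap xN\big)\,d\nu(x),
\]
and Fatou's lemma gives $\liminf_{s\to\infty}\big(1-\mu^{\BR}_{E,s}(E_s(r_0))\big)\ge c_2\,\nu(\Xi^{\PS}_\e(x_0))=:c_3>0$; taking $r_0<c_3$ yields $\mu^{\BR}_{E,s}(E_s(r_0))<1-r_0$ for all large $s$. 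The two genuine difficulties are the middle steps: the transfer of a quantitative inequality from the limit $\mu^{\PS}_x$ to the approximants $\lambda_{E,x,s}$ uniformly in $s$ (which is why one needs the $L^2$--convergence of the projected densities, Theorem~\ref{c;unif-l2}, and the Key Lemma, rather than mere weak--$*$ convergence of $\lambda_{E,x,s}\to\mu^{\PS}_x$), and the promotion of an averaged, ``centred'' doubling--failure to a positive--measure set of genuine non--doubling points of the $U$--leafwise measures; underlying both is the constant bookkeeping of the scales $\rho,2\rho,\tau,2\tau,r_0$ and of the injectivity radii, and the careful identification of $U_\theta$--leafwise measures of $\mu^{\BR}_{E,s}$ with fibre conditionals of $\lambda_{E,x,s}$ across overlapping boxes.
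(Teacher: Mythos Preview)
Your direct approach shares the right ingredients (Theorem~\ref{finalo}, the $L^2$ projection machinery, the Key Lemma) but has two genuine gaps that the paper's contradiction argument is designed to avoid.

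\textbf{The promotion step does not go through.} After Chebyshev you have a set $G_{x,\theta,s}\subset O_\theta(x)\subset xV_\theta$ of $\sigma^{2\rho}_{x,\theta,s}$-measure $\ge c_2$ on which the \emph{fibre conditional centred at the base point $t$} fails doubling. But you need a set of positive $\lambda_{E,x,s}$-measure in the two-dimensional leaf $xN$ consisting of points $z$ at which the $U_\theta$-leafwise measure \emph{centred at $z$} fails doubling; the one-dimensional set $G_{x,\theta,s}$ has $\lambda_{E,x,s}$-measure zero. For $z=tu$ with $u\in U_\theta$, the relevant ratio is $\bar\eta_t([u-2\rho r_0,u+2\rho r_0])/\bar\eta_t([u-2\rho,u+2\rho])$, and without uniform (in $t,s$) continuity of the fibre measures you cannot thicken $G_{x,\theta,s}$ to a set of definite $\lambda_{E,x,s}$-mass on which this shifted ratio remains small. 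The Besicovitch and non-focusing tools you invoke act on $N^-$-leaves, not on the $U_\theta$-fibres, and do not address this.

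\textbf{Subsequences and the Key Lemma.} The $L^2$ convergence in Theorem~\ref{c;unif-l2} is only along a subsequence $\{s_{i_j}\}$ depending on $(x,\theta)$; you cannot conclude ``for all large $s$'', and Fatou over the full $s$-axis is unjustified. Moreover, the Key Lemma does not deliver $\lambda_{E,x,s}(O_\theta(x)U_\theta^a)\to\mu^{\PS}_x(O_\theta(x)U_\theta^a)$: its conclusion is an upper bound on $\limsup_i\sigma^\rho_{x,\theta,s_i}(O_\theta(x)\setminus p_\theta(W_im_\theta^{-1}))$ for a specified sequence $W_i$, and you have not supplied one.

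The paper instead argues by contradiction: if $\mu_{s_i}(E_{s_i}(r_i))\ge 1-r_i$ with $r_i\to 0$, then $E_i:=E_{s_i}(r_i)$ is nearly full. At a single point $xm_\theta\in\Xi^{\PS}_\e(x_0)$ with $\theta\in\Theta(x)$, Lemma~\ref{fff3} integrates the \emph{defining} inequality of $E_s(r)$ along the fibres over $L_i:=p_\theta(O_\theta(x)U_\theta^\tau\cap E_im_\theta^{-1})$ to get $\lambda_{x,s_i}(L_iU_\theta^\rho)\le\tfrac{1}{1-r_i}\lambda_{x,s_i}(L_iU_\theta^{2\tau})$; the Key Lemma, applied with $W_i=E_i$, gives $\sigma^\rho_{x,\theta,s_i}(O_\theta(x)\setminus L_i)\to 0$, hence $\lambda_{x,s_i}(O_\theta(x)U_\theta^\rho)\le(1+2\e)\lambda_{x,s_i}(O_\theta(x)U_\theta^{2\tau})$, and passing to the limit contradicts Theorem~\ref{finalo}. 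The contradiction framework is precisely what allows one to work at a \emph{single} $x$, along a \emph{single} subsequence, and to use the membership condition defining $E_s(r)$ directly (via Lemma~\ref{fff3}) rather than trying to manufacture non-doubling points from an averaged inequality.
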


\begin{proof}
 Suppose not; then there
is a subsequence $r_i\rightarrow 0$ and a subsequence
$s_i\rightarrow +\infty$ such that $\mu_{E,s_i}^{\BR}(E_{s_i}(r_i))\geq1-r_i$.
Set $$\mu_i=\mu_{E,s_i}^{\BR}\quad\text{ and }\quad E_i:=E_{s_i}(r_i) .$$

Fix $\e>0$. Let $x_0\in \Omega_\e'$, $0<\tau<\rho$, $c_0>1$ and $\Xi^{\PS}_\e(x_0)$ be as in Theorem \ref{finalo}. 
 Set $q_0:=m^{\BMS}(B_{x_0}(\tau))>0$
and
\[x_0 T_i:=\{y \in x_0T_\tau: \lambda_{s_i,y}(E_i\cap yN_\tau)>
\left(1-\sqrt{\tfrac{2r_i}{q_0}}\right) \lambda_{s_i,y}(yN_\tau)\}.\] 

Recall the measure $\nu=\nu_{x_0T_\rho}$ from Theorem~\ref{finalo}.
We claim that for all large $i\gg 1$, we have 
\begin{equation} \label{vx_0T_i}
\nu(x_0T_i) \ge (1-4\sqrt{r_i})\nu(x_0T_\tau).
\end{equation}
We will first show that for all large $i\gg 1$,
\be\label{mushow} \mu_i(E_i\cap B_{x_0}(\tau)) \ge(1-\tfrac{2r_i}{q_0}) \mu_i( B_{x_0}(\tau)).\ee

If this does
not hold, by passing to a subsequence, we have that
\begin{multline*} 
1-r_i<\mu_i(E_i)=\mu_i(E_i\cap B_{x_0}(\tau))
+\mu_i(E_i - B_{x_0}(\tau)) \\
\le |\mu_i| -\tfrac{2r_i}{q_0} \mu_i( B_{x_0}(\tau)).
\end{multline*}
Since $|\mu_i|=1$, it follows that
$$\frac{2}{q_0} \mu_i( B_{x_0}(\tau))\le 1.$$
On the other hand, since $\mu_i$ weakly converges to $m^{\BMS}$ by Theorem \ref{wc1},
we have $$\frac{\mu_i(B_{x_0}(\tau))}{q_0}\to 1,$$ which gives a
contradiction. This shows \eqref{mushow}.
Now,
by the same type of argument as the proof of Lemma
\ref{l;fubini}, we can show \eqref{mushow} implies \eqref{vx_0T_i}.

Passing to a subsequence, which we continue to denote by $r_i,$ we
assume that $4 \sum_i \sqrt r_i<\e/2$ and $2r_i/q_0<\e$ for all $i$. If we set $$\Xi^* (x_0) :=\cap_i x_0T_i ,$$ then
it follows that $$\nu(\Xi^*(x_0))>(1-\e)\nu(x_0 T_\tau).$$

Hence for all sufficiently small $\e>0$,
\[\nu( \Xi_\e^{\PS}(x_0)\cap \Xi^*(x_0) )> (1- (1+c_0^4)  \e) \hh\nu(x_0 T_\tau)>0.\]

 Let  $\Theta_\e(x)$ be given as in the Key Lemma~\ref{c;unif-l2}
for $\{\rho, \tau\}$ applied to the set $E$ and the sequence $s_i$.
Since $\op{supp}(\nu)\subset \{x\in X: x^-\in \Lambda(\G)\}$,
 we can find $xm_\theta \in \Xi^*(x_0) \cap \Xi_\e^{\PS}(x_0)$ 
(depending on $\e>0$) with $(xm_\theta)^-=x^-\in \Lambda(\G)$ and $\theta\in \Theta_\e(x)$.
Since $x^+=x_0^+$, we have $x^{\pm}\in \Lambda(\G)$.

By the $M$-invariance of the measure
$\mu_{s_i}$ and as $xm_\theta\in x_0T_i$, we have
\begin{align*}  &\lambda_{x,s_i}(E_i m_\theta^{-1} \cap x N_\tau)
\\&=
\lambda_{xm_\theta ,s_i}(E_i \cap x m_\theta N_\tau)\\
&\ge (1-\sqrt{\tfrac{2r_i}{q_0}}) \lambda_{xm_\theta ,s_i}( x m_\theta N_\tau)\\
&= (1-\sqrt{\tfrac{2r_i}{q_0}}) \lambda_{x ,s_i}( xN_\tau) ,\notag
\end{align*}
and hence 
\be \label{zero1} \lambda_{x,s_i}(xN_\tau- E_i m_\theta^{-1} )\to 0.\ee

Let $\{s_{j_i}\}$ be the
corresponding subsequence given by Lemma~\ref{c;unif-l2} depending 
on $(x, \theta)$. By
passing to that subsequence, we set $s_i:=s_{j_i}$.

For $O_\theta(x)$ as in Theorem \ref{finalo}, 
 we consider $ L_i:=p_\theta(O_\theta(x) U_\theta^\tau
 \cap E_i m_\theta^{-1}) .$  By Lemma \ref{fff3} below,
\begin{equation*}\lambda_{x,s_i}(L_i U_\theta^{\rho})
\le  \tfrac{1}{1-r_i} \lambda_{x,s_i}( L_i U_\theta^{2\tau}) .\end{equation*}

 Since $L_i=p_\theta(E_i m_\theta^{-1})\cap O_\theta(x)$
and
 $\{t\in O_\theta(x): D(\sigma_{x,\theta}^\tau, t)=0\}=\emptyset$, 
 it follows from the key Lemma \ref{zerotwo} and \eqref{zero1}
that for
all large $i\gg 1$,
\begin{align}\label{e11} 
& \lambda_{x,s_i}(O_\theta(x) U_\theta^{\rho} ) =\sigma_{x,\theta,s_i}^{\rho}(O_\theta(x))
\\ & \leq
(1+\e)\sigma_{x,\theta,s_i}^{\rho}(L_i) 
 =(1+\e)\lambda_{x,s_i}(xL_iU_\theta^{\rho}) .\end{align}

Therefore we have
\begin{align*} \lambda_{x, s_i}(O_\theta(x) U_\theta^{\rho} )
& \le \tfrac{(1+\e)}{(1-r_i)} \lambda_{x, s_i}
(O_\theta(x) U_\theta^{2\tau})   \\ & \le
(1+2\e) \lambda_{x, s_i}
(O_\theta(x) U_\theta^{2\tau}).
\end{align*} 

Recall we chose $s_i=s_{j_i}$ so that Theorem~\ref{c;unif-l2}
holds for $\{\rho,\tau\}.$
Hence by sending $s_i\to \infty$, the above implies
$$\mu_{x}^{\PS}(O_\theta(x) U_\theta^{\rho} )\le (1+2\e)
\mu_{x}^{\PS}(O_\theta(x) U_\theta^{2\tau} ).$$
Together with Theorem \ref{finalo},
this gives
 $$2\e \cdot
\mu_{x}^{\PS}(O_\theta(x) U_\theta^{2\tau} )\ge 
\mu_{x}^{\PS}(O_\theta(x) U_\theta^{2\tau} ),$$
however, $\mu_{x}^{\PS}(O_\theta(x) U_\theta^{2\tau} )\ge \tfrac{\tau^\delta }{8c_0}>0.$ This gives a contradiction and finishes the proof of Theorem \ref{deduce}.
\end{proof}

\begin{lem}\label{fff3} 
Let $xm_\theta \in x_0T_\tau$ for $x_0\in \Omega$ and $s>0$. 
For any Borel subset $L_\theta(x)\subset p_\theta(xN_\tau \cap E_s(r) m_\theta^{-1})$ and for all sufficiently small $0<r\ll 1$,
\be\label{e;window0}\lambda_{x,s}(L_\theta(x) U_\theta^{\rho})
\le \tfrac{1}{1- r} \lambda_{x,s}( L_\theta(x)  U_\theta^{2\tau}) .\ee
\end{lem}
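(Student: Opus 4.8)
The plan is to disintegrate $\lambda_{x,s}$ over the partition of $xN_{\rho_x}$ into $U_\theta$-orbits and reduce \eqref{e;window0} to a fiberwise inequality. Concretely, writing $\sigma$ for the projection $p_{\theta*}\lambda_{x,s}$ on $xV_\theta$ and $\{\ell_t\}_{t\in xV_\theta}$ for the associated conditional measures on the leaves $tU_\theta$, one has $\lambda_{x,s}(A)=\int \ell_t(A\cap tU_\theta)\,d\sigma(t)$, and for $\sigma$-a.e.\ $t$ the measure $\ell_t$ agrees up to a scalar with the $U_\theta$-leafwise measure of $\mu_s$ based at any point of $tU_\theta$. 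Applying this with $A=L_\theta(x)U_\theta^\rho$ and $A=L_\theta(x)U_\theta^{2\tau}$, it suffices to prove that $\ell_t(tU_\theta^\rho)\le \tfrac{1}{1-r}\,\ell_t(tU_\theta^{2\tau})$ for $\sigma$-a.e.\ $t\in L_\theta(x)$.

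Next I would fix such a $t$ and pick $y_t\in xN_\tau$ with $p_\theta(y_t)=t$ and $y_tm_\theta\in E_s(r)$, which exists since $L_\theta(x)\subset p_\theta(xN_\tau\cap E_s(r) m_\theta^{-1})$. Writing $y_t=tu_{\theta,t}$ with $u_{\theta,t}$ the $U_\theta$-component, membership in $xN_\tau$ forces $|u_{\theta,t}|<\tau$. Since $\mu_s$ is $M$-invariant and $a_s$ commutes with $M$, the computation in Lemma~\ref{minv} identifies the $U$-leafwise measure of $\mu_s$ at $y_tm_\theta$ with the $U_\theta$-leafwise measure at $y_t$ via conjugation by $m_\theta$; as $m_\theta\in K$ this conjugation is an isometry, so it carries the interval $[-c,c]$ in $U$ to the $c$-segment of $U_\theta$ about the identity. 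Hence the defining inequality of $E_s(r)$, transported to $y_t$, reads $\ell_t(y_tU_\theta^{2\rho})\le \tfrac{1}{1-r}\,\ell_t(y_tU_\theta^{2\rho r})$.

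Finally, since $U_\theta$ is a one-parameter abelian group and $|u_{\theta,t}|<\tau<\rho$, I would record the inclusions $tU_\theta^\rho\subset y_tU_\theta^{2\rho}$ and, for $r$ small enough that $2\rho r<\tau$, also $y_tU_\theta^{2\rho r}\subset tU_\theta^{2\tau}$. Chaining these with the transported $E_s(r)$-inequality gives $\ell_t(tU_\theta^\rho)\le\ell_t(y_tU_\theta^{2\rho})\le\tfrac{1}{1-r}\ell_t(y_tU_\theta^{2\rho r})\le\tfrac{1}{1-r}\ell_t(tU_\theta^{2\tau})$, and integrating over $t\in L_\theta(x)$ against $d\sigma$ completes the proof. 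I expect the only real work to be bookkeeping: tracking the base-point shift from $t$ to $y_t$ (bounded by $|u_{\theta,t}|<\tau$) and the $m_\theta$-conjugation relating the $U$- and $U_\theta$-leafwise measures so that the intervals from the definition of $E_s(r)$ land where they should, together with the routine point that the fiber conditionals of $\lambda_{x,s}$ coincide with the relevant leafwise measures off a $\sigma$-null set; none of this goes beyond the $M$-invariance of $\mu_s$ and elementary subgroup geometry.
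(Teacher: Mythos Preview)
Your proof is correct and follows essentially the same route as the paper: disintegrate $\lambda_{x,s}$ along $U_\theta$-orbits, use the $M$-invariance of $\mu_s$ (Lemma~\ref{minv}) to transport the defining inequality of $E_s(r)$ from $U$ to $U_\theta$, and then chain the inclusions $tU_\theta^\rho\subset y_tU_\theta^{2\rho}$ and $y_tU_\theta^{2\rho r}\subset tU_\theta^{2\tau}$ coming from the base-point shift $|u_{\theta,t}|<\tau$ and the smallness condition $2\rho r\le\tau$. The paper phrases the same computation in leafwise-measure notation, writing $\lambda_{x,s}(L_\theta(x)U_\theta^\rho)=\int_{L_\theta(x)}|(\mu_s)_t^{U_\theta^\rho}|\,d\sigma^\rho_{x,\theta,s}(t)$ and working with $tm_\theta=z_tu_0$ (your $z_t=y_tm_\theta$), but the pointwise chain of inequalities and the integration step are identical.
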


\begin{proof} 
Set $$\tilde L=p_\theta(xN_\tau \cap E_s(r) m_\theta^{-1}) .$$
 Note that if $t\in \tilde L$ and hence $t\in p_\theta(E_s(r)
m_\theta^{-1})$, then $tm_\theta=z_tu_0$ for some $z_t\in E_s(r)$ and
$u_0\in U^{\tau}$. On the other hand, it follows from the definition of $E_s(r)$ that $$
|(\mu_{s})_{ z_t}^{U^{2\rho}}|\\
 \le  \tfrac{1}{1- r} |(\mu_{s})_{z_t}^{U^{2r\rho}}|$$
and hence
\begin{multline*} |(\mu_s)_{tm_\theta}^{U^{\rho}}| \le |(\mu_s)_ {z_t}^{U^{\rho+\tau}}| \le
|(\mu_s)_{ z_t}^{U^{2\rho}}|
 \le  \tfrac{1}{1- r} |(\mu_s)_{z_t}^{U^{2r\rho}}| \le   \tfrac{1}{1- r}|(\mu_{s})_{tm_\theta}^{U^{2r \rho+\tau}}| .\end{multline*}  
 Therefore, since $\mu_{E,s}^{\BR}$ is $M$-invariant, by Lemma \ref{minv},
\begin{align*}
\lambda_{ x,s}(L_\theta(x) U_\theta^{\rho}) &=
 \int_{t\in L_\theta(x)}|(\mu_{s})_t^{U_{\theta}^{\rho}}|\;
d{\sigma_{s,x,\theta}^{\rho} }(t)\\& =\int_{t \in L_\theta(x)  }|(\mu_s)_{t m_\theta }^{U^{\rho}}|
\; d{\sigma_{x,\theta,s}^{\rho} }(t)\\
&\le \tfrac{1}{1- r}\int_{t\in
L_\theta(x)}|(\mu_s)_{tm_\theta}^{U^{2\tau}}| \; d{\sigma_{x,\theta, s}^{\rho} }(t)
\\ & = \tfrac{1}{1- r}\int_{t\in
L_\theta(x)}|(\mu_s)_{t}^{U_\theta^{2\tau}} |\; d{\sigma_{x,\theta,s}^{\rho} }(t)
\\ &=
 \tfrac{1}{1- r} \lambda_{x, s}( L_\theta(x) U_{\theta}^{2\tau}).
\end{align*}
\end{proof}

We deduce the following from Theorem \ref{deduce}:
\begin{thm}\label{p;windowone}  
There exist $0<r<1$ and $T_0>1$, depending on $E$, 
such that for all  $T>T_0$ 
\[m^{\BR} \{x\in E: \int_{-rT}^{rT}\chi_E(xu_t)dt <(1-r)\cdot 
 \int_{-T}^{T}\chi_E(xu_t)dt\} \ge r\cdot  m^{\BR}(E) .\]

\end{thm}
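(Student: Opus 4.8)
The plan is to transfer the doubling-type estimate of Theorem~\ref{deduce}, which lives on the "rescaled" space $Ea_{-s}$ equipped with the measure $\mu_{E,s}^{\BR}$, back down to the original space $X$ with the measure $m^{\BR}$. The link between the two pictures is the defining relation $\mu_{E,s}^{\BR}(\Psi)=\frac{1}{m^{\BR}(E)}\int_E\Psi(ga_{-s})\,dm^{\BR}(g)$, so that if $x\in E$ and $x'=xa_{-s}\in Ea_{-s}$, then the $U$-leafwise measure of $\mu_{E,s}^{\BR}$ at $x'$ is the push-forward under $a_{-s}$ of the $U$-leafwise measure of $m^{\BR}|_E$ at $x$ (up to the harmless normalization of leafwise measures). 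Conjugation by $a_{-s}$ rescales $U$: we have $a_{-s}u_t a_s=u_{te^{-s}}$, so an interval $U^{2\rho}=\{u_t:|t|<2\rho\}$ on the $Ea_{-s}$ side corresponds, after flowing back by $a_s$, to $\{u_t:|t|<2\rho e^{s}\}$ on the $E$ side. Thus the ``window ratio'' $\frac{(\mu_s)_x^U[-2\rho r,2\rho r]}{(\mu_s)_x^U[-2\rho,2\rho]}$ is literally the same quantity, for $x\in E$, as the ratio $\frac{\int_{-rT}^{rT}\chi_E(xu_t)\,dt}{\int_{-T}^{T}\chi_E(xu_t)\,dt}$ with $T=2\rho e^{s}$ — the numerator and denominator being exactly the $U$-leafwise masses of $m^{\BR}|_E$ carried by those sub-intervals, which is what $\int\chi_E(xu_t)\,dt$ computes on the corresponding leaf.

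Concretely, first I would fix $r_0,s_0$ from Theorem~\ref{deduce} and, given $T>T_0:=2\rho e^{s_0}$, choose $s>s_0$ with $T=2\rho e^{s}$ and set $r=r_0$. Then I would check carefully that for $m^{\BR}$-a.e. $x\in E$,
\[
\int_{-rT}^{rT}\chi_E(xu_t)\,dt = (m^{\BR}|_E)_x^U\big([-rT,rT]\big)=c_x\cdot (\mu_{E,s}^{\BR})_{xa_{-s}}^U\big([-2\rho r,2\rho r]\big),
\]
and likewise for the $[-T,T]$ integral, with the \emph{same} positive constant $c_x$ (the normalization discrepancy of leafwise measures and the Jacobian of $a_{-s}$-conjugation both cancel in the ratio). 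This identity is exactly the disintegration of $m^{\BR}$ over the transversal to the $U$-foliation combined with the definition of $\mu_{E,s}^{\BR}$; it is the kind of bookkeeping already carried out in Section~\ref{sec;cond} for the $N$-foliation. With this in hand, the set
\[
\{x\in E:\textstyle\int_{-rT}^{rT}\chi_E(xu_t)\,dt<(1-r)\int_{-T}^{T}\chi_E(xu_t)\,dt\}
\]
is, after the substitution $x\mapsto xa_{-s}$, precisely $Ea_{-s}\setminus E_s(r)$; and since $\mu_{E,s}^{\BR}$ is the normalized push-forward of $m^{\BR}|_E$ under $a_{-s}$, its $\mu_{E,s}^{\BR}$-measure equals $\frac{1}{m^{\BR}(E)}$ times the $m^{\BR}$-measure of the original set. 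By Theorem~\ref{deduce}, $\mu_{E,s}^{\BR}(E_s(r))<1-r$, hence $\mu_{E,s}^{\BR}(Ea_{-s}\setminus E_s(r))>r$, which gives exactly $m^{\BR}\{x\in E:\dots\}\ge r\cdot m^{\BR}(E)$ — in fact $>$, but $\ge$ is all that is claimed.

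The main obstacle I expect is not conceptual but the measure-theoretic care needed to make the leafwise-measure identity precise: one must know that for $m^{\BR}$-a.e.\ $x\in E$ the one-dimensional integral $t\mapsto\chi_E(xu_t)$ genuinely recovers the $U$-leafwise measure of the \emph{restricted} measure $m^{\BR}|_E$ (not of $m^{\BR}$ itself), that the equivariance $\mu_{xu}^U=u.\mu_x^U$ and the $A$-conjugation formula hold simultaneously on a single conull set, and that the box coordinates on $E$ (a BMS box) make the disintegration over $x_0T_\rho$ legitimate with $m^{\BR}(\partial E)=0$ — the latter being supplied by Lemma~\ref{Zd} and Corollary~\ref{boxzero}. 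Once these routine but delicate identifications are pinned down, the theorem follows immediately from Theorem~\ref{deduce}.
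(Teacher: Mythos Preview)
Your proposal is correct and follows essentially the same route as the paper: one shows that the set in the statement, for $T=2\rho e^{s}$, is exactly the $a_{s}$-pullback of $Ea_{-s}\setminus E_{s}(r)$ by identifying the $U$-leafwise measure of $m^{\BR}|_E$ at $x$ with that of $\mu_{E,s}^{\BR}$ at $xa_{-s}$ via the $A$-conjugation on $U$, and then applies Theorem~\ref{deduce}. One small slip: the conjugation is $a_{-s}u_{t}a_{s}=u_{te^{s}}$, not $u_{te^{-s}}$, but your conclusion about how intervals rescale is stated correctly, so the argument is unaffected.
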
\begin{proof}
Setting
$$
E(s,r):=\{x\in E: \tfrac{\int_{-2r\rho e^s}^{2r
\rho e^s }\chi_E(xu_t)dt}{\int_{-2\rho e^s}^{2 \rho
e^s}\chi_E(xu_t)dt}\geq 1-r \},
$$ 
it suffices to prove that for some $0<r<1$ and for all $s$ large.
\be \label{mbr2} 
m^{\BR}( E(s,r))<(1-r)m^{\BR}(E).
\ee

We note that 
$$
E(s,r)=\{x\in E: \tfrac{(\mu^{\BR}_{0,E})_x^U[-2r
\rho e^s ,2r \rho e^s ]} {(\mu^{\BR}_{0,E})_x^U[-2\rho e^s,
2\rho e^s]}\geq1-r\},
$$ 
where $(\mu^{\BR}_{E,0})_x^U$ denotes the
leafwise measure of $\mu_{E,0}^{\BR}=\frac{m^{\BR}|_E}{m^{\BR}(E)}$.

Note that \eqref{mbr2} follows from Theorem~\ref{deduce} if we show 
\begin{equation*}\label{compare} 
m^{\BR}(E(s,r))= m^{\BR}(E) \cdot \mu_{E,s}^{\BR}(E_s( r))\;\mbox{ for all } 0<r<1.
\end{equation*}
  
We now show the above identity. Let $s$ be fixed.
Then for $\BR$ a.e. points $x$, we have
\begin{align*}
\frac{(\mu^{\BR}_{E,0})_x^U[-2r\rho e^s ,2r \rho e^s ]}
{(\mu^{\BR}_{E,0})_x^U[-2\rho e^s ,2 \rho e^s ]} &=
\frac{\int_{-2r\rho}^{2r\rho}\chi_E(xa_{-s}u_t a_s) dt} 
{\int_{-2\rho}^{2\rho}\chi_E(xa_{-s}u_t a_s) dt}
&&\mbox{ since $m^{\BR}$ is $U$-invariant}\\
&=\frac{\int_{-2r\rho}^{2r\rho}\chi_{Ea_{-s}}(yu_t ) dt}
{\int_{-2\rho}^{2\rho}\chi_{Ea_{-s}}(yu_t ) dt}
&&\mbox{ $y=xa_{-s}$ }\\
&=\frac{(\mu^{\BR}_{Ea_{-s},0})_y^U[-2r\rho ,2r \rho ]}
{(\mu^{\BR}_{Ea_{-s},0})_y^U[-2\rho ,2 \rho ]} 
&&\mbox{ since $m^{\BR}$ is $U$-invariant}\\
&=\frac{(e^{(\delta-2)s}\mu^{\BR}_{E,s})_y^U[-2r\rho,2r \rho]}
{(e^{(\delta-2)s}\mu^{\BR}_{E,s})_y^U[-2\rho,2 \rho]} 
&&\mbox{ by the definition of $m^{\BR}_{E,s}$}.
\end{align*}
Hence $a_{-s}E(s,r)$ coincides with $E_s( r),$
up to a BR null set. This implies the
claim using the definition of $\mu_{E,s}^{\BR}$.
\end{proof}

\subsection{Ergodic decomposition and the Hopf ratio theorem}\label{erg}
In this subsection, let $\mu$ be a locally finite $U$-invariant conservative measure on $X$.
Let $\mathcal M_\infty(X)$ denote the space of locally finite measures on $X$ with weak$^*$ topology.

Let $\mathcal A$ denote a countably generated $\sigma$-algebra equivalent to the $\sigma$-algebra
of all $U$-invariant subsets of $X$.
There exist a $\mathcal A$-measurable conull set $X'$ of $X,$ a family $\{\mu_x=\mu_x^{\mathcal A}:x\in X'\}$
of conditional measures on $X$ and a probability measure ${\mu}_*$ on $X$ which give rise to the ergodic decomposition of $\mu$:
$$\mu=\int\mu_x\; d{\mu}_*(x)$$
where
the map $X'\to \mathcal M_\infty(X)$, $x\mapsto \mu_x$, is Borel measurable,
  $\mu_x$ is a $U$-invariant, ergodic and conservative measure on $X$
and for any $\psi\in L^1(X, \mu)$,
 $$\mu(\psi)=\int_{x\in X}\mu_x(\psi) \; d{\mu}_*(x);$$
see \cite[5.1.4]{EL}. 

The following is the 
 Hopf ratio theorem in a form  convenient for us (\cite{Hopf}, see also \cite{Zw}).
\begin{thm}\label{hopf} 
Let $\psi, \phi\in L^1(\mBR)$ with $\phi\ge 0$. Furthermore suppose that $\psi$ and $\phi$
are compactly supported. Then 
\begin{equation*}\label{c1}\lim_T \frac{\int_0^T \psi(xu_t)dt}{\int_0^T  \phi (xu_t)dt}
= \lim_T \frac{\int_{-T}^0 \psi(xu_t)dt}{\int_{-T}^0  \phi (xu_t)dt}
=\frac{\mu_x(\psi)}{\mu_x(\phi )}\end{equation*}
for $\mu$-a.e.
$x\in \{x\in X: \sup_T \int_0^T \phi(xu_t) dt >0\}$.\footnote{note that for a.e. $x$ in this set $\mu_x(\phi)>0,$ see~\cite[Page 3]{Zw}.}
\end{thm}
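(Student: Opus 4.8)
The plan is to deduce Theorem~\ref{hopf} from the classical Hopf ratio ergodic theorem applied to each ergodic component of $\mu$. Recall from the ergodic decomposition $\mu=\int\mu_x\,d\mu_*(x)$ set up above that, for $\mu$-almost every $x$, the measure $\mu_x$ is locally finite, $U$-invariant, conservative and ergodic, that $x\mapsto\mu_x$ is Borel measurable, and that $\mu_x=\mu_{x'}$ whenever $x,x'$ lie in a common atom of $\mathcal A$ (so $\mu_x$ is genuinely the ergodic component ``through $x$''). The classical statement reads: if $(Y,\nu,\{u_t\})$ is a conservative ergodic measure-preserving flow and $f,g\in L^1(\nu)$ with $g\ge 0$, $\nu(g)>0$, then for $\nu$-almost every $y$ one has $(\int_0^T f(yu_t)\,dt)/(\int_0^T g(yu_t)\,dt)\to\nu(f)/\nu(g)$ as $T\to\infty$; the flow version follows from the discrete ratio ergodic theorem for the time-one map by the usual comparison of $\int_0^T$ with $\sum_{n<T}$ (see \cite{Hopf}, and \cite{Zw} for this formulation). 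Applying this with $\nu=\mu_x$, $f=\psi$, $g=\phi$, we obtain for each $x$ with $\mu_x(\phi)>0$ a $\mu_x$-conull set of points for which the forward ratio converges to $\mu_x(\psi)/\mu_x(\phi)$.

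Next I would integrate over $x$ against $\mu_*$. Since $y\mapsto\mu_y$ is measurable and $\mu_y$ is the component through $y$, combining the componentwise a.e. statement with $\mu=\int\mu_x\,d\mu_*(x)$ via Fubini upgrades it to: for $\mu$-almost every $y$ with $\mu_y(\phi)>0$, $(\int_0^T\psi(yu_t)\,dt)/(\int_0^T\phi(yu_t)\,dt)\to\mu_y(\psi)/\mu_y(\phi)$. It then remains to match the relevant set of $y$ with $\{y:\sup_T\int_0^T\phi(yu_t)\,dt>0\}$. If $\mu_x(\phi)=0$ then $\phi=0$ $\mu_x$-a.e., hence $\int_0^\infty\phi(xu_t)\,dt=0$ for $\mu_x$-a.e.\ $x$, hence for $\mu$-a.e.\ such $x$; conversely, if $\mu_x(\phi)>0$ then by conservativity of $\mu_x$ one has $\int_0^\infty\phi(xu_t)\,dt=\infty>0$ for $\mu_x$-a.e.\ $x$. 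Thus, up to a $\mu$-null set, $\{\sup_T\int_0^T\phi(xu_t)\,dt>0\}$ coincides with $\{\mu_x(\phi)>0\}$, which is exactly the domain on which the conclusion is asserted and which also justifies the footnote. The backward average is handled by running the identical argument for the reversed flow $\{u_{-t}\}$: it is again $\mu$-invariant and conservative, has the same invariant $\sigma$-algebra (so the same $\mathcal A$ and the same family $\{\mu_x\}$), and $\mu_x(\psi)/\mu_x(\phi)$ is unchanged, so $(\int_{-T}^0\psi(yu_t)\,dt)/(\int_{-T}^0\phi(yu_t)\,dt)$ converges to the same ratio.

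The compact support hypothesis on $\psi,\phi$ plays no essential role beyond ensuring $\psi,\phi\in L^1(\mu)$ (and $\in L^1(\mu_x)$ for a.e.\ $x$) for the locally finite measures in play, so that the classical theorem is applicable to each component; it could be relaxed to $\psi,\phi\in L^1(\mu)$. Since Theorem~\ref{hopf} merely repackages a classical result, there is no genuine obstacle; the only points that require care are the measurable identification of ``the ergodic component through a point'' with $\mu_x$ — needed to pass from a componentwise a.e.\ statement to a $\mu$-a.e.\ statement — and the verification, via conservativity, that the natural domain of validity is precisely $\{x:\sup_T\int_0^T\phi(xu_t)\,dt>0\}$ and that $\mu_x(\phi)>0$ there.
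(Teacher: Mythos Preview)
The paper does not give its own proof of Theorem~\ref{hopf}: it is stated as the classical Hopf ratio ergodic theorem ``in a form convenient for us'' with references to \cite{Hopf} and \cite{Zw}, and is used as a black box thereafter. There is therefore nothing in the paper to compare your argument against.

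Your reduction is the standard one and is correct. A couple of minor remarks. First, the step ``$\mu_x(\phi)=0$ implies $\int_0^\infty\phi(xu_t)\,dt=0$ for $\mu_x$-a.e.\ $x$'' is not literally ``$\phi=0$ $\mu_x$-a.e.\ $\Rightarrow$ orbit integral vanishes''; one needs the Fubini computation $\int_X\int_0^\infty\phi(yu_t)\,dt\,d\mu_x(y)=\int_0^\infty\mu_x(\phi)\,dt=0$ to conclude the orbit integral vanishes $\mu_x$-a.e. Second, for the Fubini passage from the componentwise statement to the $\mu$-a.e.\ statement you are implicitly using that for $\mu_x$-a.e.\ $y$ one has $\mu_y=\mu_x$; this follows from the $\mathcal A$-measurability of $x\mapsto\mu_x$ recorded in the setup. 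Finally, your claim that compact support can be relaxed to $\psi,\phi\in L^1(\mu)$ is fine for the componentwise Hopf theorem (since $\mu(|\psi|)=\int\mu_x(|\psi|)\,d\mu_*(x)<\infty$ forces $\psi\in L^1(\mu_x)$ for $\mu_*$-a.e.\ $x$), but note that the paper's later uses (e.g.\ Lemma~\ref{lem;unif-conv}) do exploit compact support for uniformity arguments, so the hypothesis is not idle in context.
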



\begin{lem}\label{lem;unif-conv}\label{ergs}\label{cor;unif-unif} 
Fix a compact subset $E\subset X $ with $\mu(E)>0$. Let
 $\phi$ be a non-negative compactly supported Borel function on $X$ such that $\phi|_{E}>0$.
For any $\rho>0$ there exists a compact subset $E_\rho(\phi)\subset E$ with $\mu(E-E_\rho(\phi))<\rho\cdot
 \mu(E)$ satisfying:
\begin{enumerate}
 \item  the map $x\mapsto \mu_x$ is continuous for all $x\in E_\rho(\phi)$;
\item $\inf_{x\in E_\rho}\mu_x(\phi) >0$;
\item  for any $\psi\in C_c(X)$ the convergence
 \[\frac{\int_0^T\psi(xu_t)dt}{\int_0^T\phi (xu_t)dt}\to \frac{\mu_x(\psi)}{\mu_x(\phi )}\]
 is uniform on $E_\rho(\phi)$.
\end{enumerate}
\end{lem}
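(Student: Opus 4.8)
\emph{Proof plan.} This is an Egorov/Lusin-type regularization of Hopf's ratio theorem (Theorem~\ref{hopf}); the only genuine input beyond soft measure theory is that the ergodic components $\mu_x$ are locally finite. Throughout, $\mu(E)<\infty$ since $E$ is compact and $\mu$ is locally finite, and we use the ergodic decomposition $\mu=\int\mu_x\,d\mu_*(x)$ and the Borel measurability of $x\mapsto\mu_x\in\mathcal M_\infty(X)$ set up in \S\ref{erg}.

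First I would show $\mu_x(\phi)>0$ for $\mu$-a.e. $x\in E$, which also gives~(2). Since $\phi|_E>0$ we have $E=\bigcup_{n\ge1}E_n$ with $E_n:=\{x\in E:\phi(x)>1/n\}$, and $\mu(E_n)>0$ for $n$ large. By conservativity of $\mu$ for $U$ (see e.g. \cite{Aa}), for $\mu$-a.e. $x\in E_n$ one has $\int_0^\infty\chi_{E_n}(xu_t)\,dt=\infty$, hence $\sup_T\int_0^T\phi(xu_t)\,dt\ge\frac1n\int_0^\infty\chi_{E_n}(xu_t)\,dt=\infty$; taking the union over $n$, the set $\{x\in E:\sup_T\int_0^T\phi(xu_t)\,dt>0\}$ is $\mu$-conull in $E$, so Theorem~\ref{hopf} and the footnote following it give $\mu_x(\phi)>0$ for $\mu$-a.e. $x\in E$. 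As $\mathcal M_\infty(X)$ is Polish, $x\mapsto\mu_x(\phi)$ is $\mu$-measurable (monotone class argument starting from $C_c(X)$), so I may fix $c>0$ with $\mu\{x\in E:\mu_x(\phi)>c\}>(1-\rho/8)\mu(E)$.

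Next I would build $E_\rho(\phi)$. Fix an exhaustion $C_1\subset C_2\subset\cdots$ of $X$ by compact sets and, for each $j$, a countable $\|\cdot\|_\infty$-dense subset $\{\psi_{j,k}\}_{k\ge1}$ of $\{\psi\in C(X):\supp\psi\subseteq C_j\}$. For each of the countably many functions $\psi_{j,k}$ and each $\chi_{C_j}$, Theorem~\ref{hopf} gives $\mu$-a.e. convergence on $E$ of the ratio $\int_0^T(\cdot)(xu_t)\,dt\big/\int_0^T\phi(xu_t)\,dt$ to $\mu_x(\cdot)/\mu_x(\phi)$, and the limit $L_j(x):=\mu_x(C_j)/\mu_x(\phi)$ is finite for $\mu$-a.e. $x\in E$ because $\mu_x$ is locally finite and $C_j$ is compact. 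Applying Egorov's theorem on the finite measure space $E$ simultaneously to this countable family of convergences, Lusin's theorem to $x\mapsto\mu_x$ and to each $L_j$, and inner regularity of $\mu$ — and arranging the various losses to sum to less than $\rho\,\mu(E)$ — I obtain a compact set $E_\rho(\phi)\subset E$ with $\mu(E\setminus E_\rho(\phi))<\rho\,\mu(E)$ on which: $x\mapsto\mu_x$ is continuous (this is~(1)); $\mu_x(\phi)>c$ (this is~(2)); each of the above ratio convergences is uniform; and $M_j:=\sup_{x\in E_\rho(\phi)}L_j(x)<\infty$ for every $j$, so that by the uniform convergence for $\chi_{C_j}$ there is $T_j$ with $\int_0^T\chi_{C_j}(xu_t)\,dt\big/\int_0^T\phi(xu_t)\,dt\le M_j+1$ for all $x\in E_\rho(\phi)$ and $T\ge T_j$.

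Finally, for~(3): given $\psi\in C_c(X)$ with $\supp\psi\subseteq C_j$ and $\e>0$, choose $k$ with $\|\psi-\psi_{j,k}\|_\infty<\e$. Then for $x\in E_\rho(\phi)$ and $T\ge T_j$,
\[
\Bigl|\tfrac{\int_0^T\psi(xu_t)\,dt}{\int_0^T\phi(xu_t)\,dt}-\tfrac{\mu_x(\psi)}{\mu_x(\phi)}\Bigr|
\le \tfrac{\int_0^T|\psi-\psi_{j,k}|(xu_t)\,dt}{\int_0^T\phi(xu_t)\,dt}
+\Bigl|\tfrac{\int_0^T\psi_{j,k}(xu_t)\,dt}{\int_0^T\phi(xu_t)\,dt}-\tfrac{\mu_x(\psi_{j,k})}{\mu_x(\phi)}\Bigr|
+\tfrac{|\mu_x(\psi-\psi_{j,k})|}{\mu_x(\phi)},
\]
where the first term is $\le\e(M_j+1)$ by the displayed bound, the middle term is $<\e$ once $T$ is large, uniformly in $x\in E_\rho(\phi)$, and the last term is $\le\e\,\mu_x(C_j)/\mu_x(\phi)\le\e M_j$. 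Thus the left-hand side is $\le\e(2M_j+2)$ for all $x\in E_\rho(\phi)$ and all $T$ beyond a threshold depending only on $\e$ and $\psi$; since $\e>0$ is arbitrary, the convergence in~(3) is uniform on $E_\rho(\phi)$. The main obstacle is precisely the step just used: since the $\mu_x$ are genuinely infinite measures, the denominators $\int_0^T\phi(xu_t)\,dt$ grow slowly and erratically and a general $\psi\in C_c(X)$ cannot be compared to $\phi$ orbit-wise by elementary means — what rescues the argument is local finiteness of $\mu_x$, which makes $\mu_x(C_j)/\mu_x(\phi)$ finite a.e.\ and, via Hopf's theorem applied to $\chi_{C_j}$ together with Egorov, uniformly bounded on $E_\rho(\phi)$; the remaining ingredients (Lusin for $x\mapsto\mu_x$, simultaneous Egorov over a countable dense family, and inner regularity) are routine.
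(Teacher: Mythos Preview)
Your proposal is correct and follows essentially the same route as the paper's proof: Lusin for the continuity of $x\mapsto\mu_x$, conservativity to get $\mu_x(\phi)>0$ a.e., Egorov over a countable $\|\cdot\|_\infty$-dense family together with the indicator $\chi_{C_j}$ of each compact in an exhaustion, and then the same three-term splitting to upgrade to all $\psi\in C_c(X)$. The only organizational difference is that the paper first proves the uniform-convergence claim for a single compact support set $Q$ (with $\chi_Q$ thrown into the dense family) and then intersects over an exhaustion $Q_1\subset Q_2\subset\cdots$ with geometrically decreasing losses, whereas you set up the full countable family $\{\psi_{j,k}\}\cup\{\chi_{C_j}\}$ from the outset; the content is the same.
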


\begin{proof} 
By Lusin's theorem, there exists a compact subset $E'\subset E$ with
$\mu(E-E')<\frac{\rho}3 \mu(E)$ and
the map $x\mapsto \mu_x$ is continuous on $E'.$
Since $\int_0^T\phi(xu_t)dt\to +\infty$ for a.e. $x\in E$ by the conservativity of $\mu$,
we have $\mu_x(\phi)>0$ almost all $x\in E$.
Since $x\mapsto \mu_x(\phi)$ is a measurable map, it follows again by Lusin's theorem that
there exists a compact subset $E''\subset E'$ with
$\mu(E'-E'')<\frac{\rho}3 \mu(E')$, $\int_0^\infty \phi(xu_t)dt=\infty$ for all $x\in E''$, and
 $\inf_{x\in E''}\mu_x(\phi) >0$.

We claim that for any $\e>0$ and any compact subset $Q$ of $X$,
there exists a compact subset $E_0 =E_0(Q, \e) \subset E''$ such that 
$\mu(E''-E_0)<\e  \mu(E'')$ and
for all $\psi\in C(Q)$, the convergence
\[\frac{\int_0^T\psi(xu_t)dt}{\int_0^T\phi (xu_t)dt}\to \frac{\mu_x(\psi)}{\mu_x(\phi )}\]
 is uniform on $E_0$.
Let $\mathcal B=\{\psi_j\}$ be a countable dense subset of $C(Q)$ which includes the constant function $\chi_Q.$ 
We can deduce from the Hopf ratio Theorem \ref{hopf} and Egorov's theorem that
 there is a compact subset $E_1\subset E''$ such that  $\mu(E''-E_1)<\frac{\e}{3}  \mu(E'')$,
$\sup_{x\in E_1}\mu_x(Q)<\infty$, and for each $\psi_j\in \mathcal B$, the convergence 
\be\label{e;psi-i}\frac{\int_{0}^T \psi_j(xu_t)dt}{\int_{0}^T \phi (xu_t)dt}\to \frac{\mu_x(\psi_j)}{\mu_x(\phi )}\ee
is uniform on $E_1$.  We will show the uniform convergence in $E_1$
 for all $\psi\in C(Q).$ 
For any $\eta>0$, there exists $\psi_j\in \mathcal B$ 
such that $\|\psi_{j}-\psi\|_\infty<{\eta}.$ 
Let $T_0\gg 1$ be such that
\[\left|\frac{\int_0^T\psi_j(xu_t)dt}{\int_0^T\phi (xu_t)dt}- \frac{\mu_x(\psi_j)}{\mu_x(\phi )}\right| \le {\eta}, \quad
\left|\frac{\int_0^T\chi_Q(xu_t)dt}{\int_0^T\phi (xu_t)dt}- \frac{\mu_x(\chi_Q)}{\mu_x(\phi )}\right| \le {\eta} \]
for all $x\in E_1$ and $T\ge T_0$.
Now for any $x\in E_1$ and $T\ge T_0$, we have
\begin{align*} &\left|\frac{\int_{0}^T\psi(xu_t)dt}{\int_{0}^T \phi (xu_t)dt}-
\frac{\mu_x(\psi)}{\mu_x(\phi )}\right|\\
&\leq \frac{\int_{0}^T|\psi(xu_t)-\psi_{j_0}(xu_t)|dt}{\int_{0}^T \phi (xu_t
)dt}+ \left|\frac{\int_{0}^T\psi_{j_0}(xu_t)dt}{\int_{0}^T \phi (xu_t)dt}-\frac{\mu_x(\psi_{j_0})}{\mu_x(\phi )}\right|  +\left|\frac{\mu_x(\psi_{j_0})}{\mu_x(\phi )}-\frac{\mu_x(\psi)}{\mu_x(\phi )}\right|\\
&\le  
\frac{\int_{0}^T \chi_Q(xu_t) dt}{\int_{0}^T \phi (xu_t
)dt}\|\psi-\psi_j\|_\infty  + {\eta} + \frac{\mu_x(Q)}{\mu_x(\phi)}\|\psi-\psi_j\|_\infty \\
&\leq \frac{\mu_x(Q)}{\mu_x(\phi )}\eta  + \eta^2  +\eta+\frac{\mu_x(Q)}{\mu_x(\phi)} \eta\\
&\le \eta( 2 a_0+\eta+1)
\end{align*}  where $a_0:=\sup_{x\in E_1} \frac{\mu_x(Q)}{\mu_x(\phi)}<\infty$.
This proves the claim.
Let $Q_1\subset Q_2\subset\cdots$ be an exhaustion of $X$ by  compact sets.
 Then $E_\rho(\phi):=\cap_i E_0(Q_i, \frac{\rho}{4^{i+1}}) $ satisfies all the desired properties.
\end{proof}

\subsection{Window theorem for $\psi\in C_c(X)$ with $\psi|_E>0$}\label{brerg}

Let $\G$ be a convex cocompact subgroup with $\delta>1$.
Let $\mathcal A$ denote a countably generated $\sigma$-algebra which is equivalent to
the $\sigma$-algebra of all $U$-invariant subsets of $X,$ as before.

Since $m^{\BR}$ is $U$-conservative by Theorem \ref{conser},  we may write an ergodic decomposition
$$m^{\BR}=\int_{x\in X'} \mu_x \; d m^{\BR}_*(x)$$
where $X'$ is a $\mathcal A$-measurable conull set of $X$, $m^{\BR}_*$ is a probability measure on $X$, and 
for all $x\in X'$, $\mu_x=\mu_x^{\mathcal A}$ is a $U$-invariant ergodic conservative measure.

\begin{lem}\label{sone} 
Let $E$ and $0<r<1$ be as in Theorem \ref{p;windowone}.
 Let $\psi\in C_c(X)$ with $\psi|_E>0$. 
 For any $\rho>0$, there exists $s_0\ge 1$ such that for all $s>s_0$,
$$m^{\BR}\{x\in E: \int_{-rs}^{rs} \psi (x u_t)dt \le (1-r+\rho)\int_{-s} ^s\psi
(xu_t) dt \}\ge  (r -2\rho) \cdot  m^{\BR}(E).$$ 
\end{lem}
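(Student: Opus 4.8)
The plan is to transfer the window estimate for the indicator $\chi_E$, supplied by Theorem~\ref{p;windowone}, to the function $\psi$, using the uniform Hopf ratio theorem Lemma~\ref{lem;unif-conv}. We may assume $0<\rho<1$ and that $\psi\ge 0$. Fix $0<r<1$ as in Theorem~\ref{p;windowone} and, for $s>0$, put
\[
G_s:=\bigl\{x\in E:\ \textstyle\int_{-rs}^{rs}\chi_E(xu_t)\,dt<(1-r)\int_{-s}^{s}\chi_E(xu_t)\,dt\bigr\},
\]
so that $m^{\BR}(G_s)\ge r\cdot m^{\BR}(E)$ for all $s>T_0$. It therefore suffices to produce, for all large $s$, a set $E_1\subseteq E$ with $m^{\BR}(E\setminus E_1)<\rho\cdot m^{\BR}(E)$ such that every $x\in G_s\cap E_1$ already satisfies $\int_{-rs}^{rs}\psi(xu_t)\,dt\le(1-r+\rho)\int_{-s}^{s}\psi(xu_t)\,dt$: then the set in the statement contains $G_s\cap E_1$, hence has $m^{\BR}$-measure at least $m^{\BR}(G_s)-m^{\BR}(E\setminus E_1)\ge(r-\rho)\,m^{\BR}(E)\ge(r-2\rho)\,m^{\BR}(E)$.

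To build $E_1$, I would apply Lemma~\ref{lem;unif-conv} with $\phi=\chi_E$ (a non-negative compactly supported Borel function with $\chi_E|_E\equiv 1>0$), once as stated and once with $\int_0^T$ replaced throughout by $\int_{-T}^0$ (the two-sided form of Theorem~\ref{hopf} licenses this), and intersect the resulting sets. A further Lusin step for the Borel map $x\mapsto\mu_x(\psi)$ --- finite and strictly positive for $m^{\BR}$-a.e.\ $x\in E$, since $\psi\ge 0$, $\psi|_E>0$, and $\mu_x(E)>0$ a.e.\ by the $U$-conservativity of $m^{\BR}$ (Theorem~\ref{conser}) --- then yields a compact $E_1\subseteq E$ with $m^{\BR}(E\setminus E_1)<\rho\cdot m^{\BR}(E)$ on which: (a)~$x\mapsto\mu_x$ is continuous; (b)~$L(x):=\mu_x(\psi)/\mu_x(\chi_E)$ lies in $[c_0,C_0]$ for constants $0<c_0\le C_0<\infty$ depending only on $E_1,\psi$ (lower bound from $\psi|_E>0$, upper bound from finiteness of $\mu_x$ on $\op{supp}\psi$); and (c)~the one-sided ratios $\int_0^{T}\psi(xu_t)\,dt/\int_0^{T}\chi_E(xu_t)\,dt$ and $\int_{-T}^{0}\psi(xu_t)\,dt/\int_{-T}^{0}\chi_E(xu_t)\,dt$ tend to $L(x)$ uniformly on $E_1$. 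Since $\int_0^{rs}\chi_E(xu_t)\,dt$ and $\int_{-rs}^{0}\chi_E(xu_t)\,dt$ are positive for $s$ large (conservativity again), adding numerators and denominators of the two one-sided ratios converts (c) into a two-sided statement: for each $\epsilon>0$ there is $s_1(\epsilon)$, uniform in $x\in E_1$, with
\[
(L(x)-\epsilon)\int_{I}\chi_E(xu_t)\,dt\ \le\ \int_{I}\psi(xu_t)\,dt\ \le\ (L(x)+\epsilon)\int_{I}\chi_E(xu_t)\,dt
\]
for all $s>s_1(\epsilon)$ and both $I=[-rs,rs]$ and $I=[-s,s]$.

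Now I would set $\epsilon:=\rho c_0/4$ and $s_0:=\max(T_0,s_1(\epsilon))$, and for $s>s_0$, $x\in G_s\cap E_1$, chain
\[
\int_{-rs}^{rs}\psi(xu_t)\,dt\le(L(x)+\epsilon)\int_{-rs}^{rs}\chi_E(xu_t)\,dt<(1-r)(L(x)+\epsilon)\int_{-s}^{s}\chi_E(xu_t)\,dt,
\]
using the two-sided estimate on $[-rs,rs]$ and then the defining inequality of $G_s$. Since $\int_{-s}^{s}\chi_E(xu_t)\,dt\le(L(x)-\epsilon)^{-1}\int_{-s}^{s}\psi(xu_t)\,dt$ by the two-sided estimate on $[-s,s]$, and $\tfrac{L(x)+\epsilon}{L(x)-\epsilon}=1+\tfrac{2\epsilon}{L(x)-\epsilon}\le 1+\tfrac{4\epsilon}{c_0}=1+\rho$ (valid since $L(x)\ge c_0\ge 2\epsilon$), we obtain
\[
\int_{-rs}^{rs}\psi(xu_t)\,dt\le(1-r)(1+\rho)\int_{-s}^{s}\psi(xu_t)\,dt\le(1-r+\rho)\int_{-s}^{s}\psi(xu_t)\,dt,
\]
which is exactly what was needed. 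The bulk of the work is carried by Lemma~\ref{lem;unif-conv}; the delicate points are obtaining the \emph{two-sided} rather than one-sided uniform ratio estimate, and securing the uniform bound $0<c_0\le L(x)\le C_0$ on $E_1$, since $L(x)=\mu_x(\psi)/\mu_x(\chi_E)$ is precisely the multiplicative distortion between the $\chi_E$-window bound and the target $\psi$-window bound. I expect the main obstacle to be the passage from the one-sided uniform convergences to a two-sided one --- which hinges on positivity of the partial integrals of $\chi_E$ over $[0,rs]$ and $[-rs,0]$ for large $s$, i.e.\ on the conservativity established in Theorem~\ref{conser}.
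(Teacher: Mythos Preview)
Your proposal is correct and follows essentially the same strategy as the paper's proof: start from the $\chi_E$-window set supplied by Theorem~\ref{p;windowone}, remove a set of small $m^{\BR}$-measure using Lemma~\ref{lem;unif-conv} with $\phi=\chi_E$, and on the remaining set convert the $\chi_E$-window inequality into a $\psi$-window inequality via the uniform ratio $\mu_x(\psi)/\mu_x(\chi_E)$. Your treatment is in fact slightly more explicit than the paper's on one point: Lemma~\ref{lem;unif-conv} is stated only for the one-sided averages $\int_0^T$, whereas both the paper and you need two-sided averages; the paper passes over this silently, while you spell out the mediant argument and note that it rests on the positivity of $\int_0^{rs}\chi_E(xu_t)\,dt$ and $\int_{-rs}^0\chi_E(xu_t)\,dt$, guaranteed by conservativity (Theorem~\ref{conser}).
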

\begin{proof}
 For simplicity, set
 $$F_\rho(s):=\{x\in E: \int_{-rs}^{rs} \psi (x u_t)dt \le (1-r+\rho)\int_{-s} ^s\psi
(xu_t) dt \}.$$ Let $E_\rho(\chi_E)\subset E$ be as in Lemma~\ref{lem;unif-conv}.
Since $\psi|_E>0$, there is a subset $E'_\rho$ of $E_\rho(\chi_E)$ such that $\mBR(E-E'_\rho)<2\rho \cdot \mBR(E)$ and
 $\inf_{x\in E'_\rho} \frac{\mu_x(\psi)}{\mu_x(E)}>0$.
Then for all large $s$
(uniformly for all $x\in E_\rho(\chi_E)$),
$$\int_{-s}^{s} \psi(x u_t) dt =\left(\tfrac{\mu_x(\psi)}{\mu_x(E)} +a_x(\psi,s)\right) \int_{-s}^{s} \chi_E(xu_t)
dt $$ where $|a_x(\psi,s)|\le a(s)\to 0$ as $s\to \infty$ by Lemma \ref{ergs}.

Setting $$\tilde E(s,r)=\{x\in E: \int_{-rs}^{rs}\chi_E(xu_t)dt\geq(1-r)\cdot 
 \int_{-s}^{s}\chi_E(xu_t)dt\},$$ 
we claim that $$E'_\rho\cap (E-\tilde E(s,r))\subset F_\rho(s)\quad\text{ for all large $s$},$$
from which the lemma follows by Theorem \ref{p;windowone}.
 For any $x\in E'_\rho\cap (E-\tilde E(s,r))$,  
\begin{align*}&
\int_{-rs}^{rs} \psi(x u_t) dt =(\tfrac{\mu_x(\psi)}{\mu_x(E)}+a_x(\psi, rs)) \int_{-rs}^{rs} \chi_E(xu_t)
dt \\ &\le
(\tfrac{\mu_x(\psi)}{\mu_x(E)} +|a_x(\psi, rs)|)(1-r) \int_{-s}^{s} \chi_E(xu_t)
dt\\ &\leq
(1-r) \int_{-s}^{s} \psi (xu_t)
dt + (|a_x(\psi, s)| +|a_x(\psi,rs)|)( 1-r) \int_{-s}^{s} \chi_E(xu_t).
\end{align*}

Let $s_1>1$ be such that for $s\ge s_1$ and for all $x\in E'_\rho$,
$$\frac{ (|a_x(\psi, s)| +|a_x(\psi,rs)|)( 1-r)}{|\frac{\mu_x(\psi)}{\mu_x(E)}
+a_x(\psi,s)|} \le \rho;$$ this is possible since $\frac{\mu_x(\psi)}{\mu_x(E)}$ is
uniformly bounded from below by a positive number. Then  the claim holds.
\end{proof}

By taking $\rho=r/4$ and replacing $3r/4$ by $r$ in the above lemma, we now obtain:
\begin{thm}[Window Theorem]\label{wtpf} 
Let $\psi \in C_c(X)$ be a non-negative function such that $\psi|_{E}>0$.
Then there exist $0<r <1$ and $T_0> 1$ such that for any $T\ge T_0$,
\begin{equation*}
m^{\BR} \{x\in E: \int_{-rT}^{rT} \psi(xu_t) dt < (1-{r}) \int_{-T}^{T} \psi(xu_t) dt\}> \tfrac{r}{2}\cdot  m^{\BR}(E) .
\end{equation*}
\end{thm}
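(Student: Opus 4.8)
The statement is obtained from Lemma \ref{sone} by a direct substitution of parameters together with two elementary observations about $\psi$. First I would recall that in Lemma \ref{sone} the number $r$ is the one furnished by Theorem \ref{p;windowone} (and ultimately by Theorem \ref{deduce}), so it is a fixed constant with $0<r<1$, and that $E$ is a BMS box, whence $m^{\BR}(E)>0$ by Lemma \ref{Zd} and Corollary \ref{boxzero}. I would then apply Lemma \ref{sone} with the choice $\rho=r/5$: this yields $s_0\ge 1$ such that for all $s>s_0$,
\[
m^{\BR}\Bigl\{x\in E:\ \int_{-rs}^{rs}\psi(xu_t)\,dt\ \le\ \bigl(1-\tfrac{4r}{5}\bigr)\int_{-s}^{s}\psi(xu_t)\,dt\Bigr\}\ \ge\ \tfrac{3r}{5}\cdot m^{\BR}(E).
\]

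\textbf{Key steps.} Now set $r':=\tfrac{3r}{4}\in(0,1)$ and $T_0:=s_0$, and fix $T\ge T_0$. Since $r'<r$ we have $[-r'T,r'T]\subset[-rT,rT]$, and as $\psi\ge 0$ this gives $\int_{-r'T}^{r'T}\psi(xu_t)\,dt\le \int_{-rT}^{rT}\psi(xu_t)\,dt$ for every $x$. Combining this with the displayed inclusion of sets (taking $s=T$), for every $x$ in the set above we get
\[
\int_{-r'T}^{r'T}\psi(xu_t)\,dt\ \le\ \bigl(1-\tfrac{4r}{5}\bigr)\int_{-T}^{T}\psi(xu_t)\,dt.
\]
To upgrade the inequality to a strict one, I would use that $\psi\in C_c(X)$ with $\psi|_E>0$, so for $x\in E$ one has $\psi(x)>0$ and hence, by continuity of $t\mapsto\psi(xu_t)$, $\int_{-T}^{T}\psi(xu_t)\,dt>0$; since $1-\tfrac{4r}{5}<1-\tfrac{3r}{4}=1-r'$, it follows that $\int_{-r'T}^{r'T}\psi(xu_t)\,dt< (1-r')\int_{-T}^{T}\psi(xu_t)\,dt$. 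Therefore the set in the conclusion of Theorem \ref{wtpf} (with $r$ replaced by $r'$) contains the set produced by Lemma \ref{sone}, and since $\tfrac{3r}{5}>\tfrac{3r}{8}=\tfrac{r'}{2}$ its $m^{\BR}$-measure is at least $\tfrac{3r}{5}\,m^{\BR}(E)>\tfrac{r'}{2}\,m^{\BR}(E)$. Renaming $r'$ as $r$ gives exactly the asserted inequality for all $T\ge T_0$.

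\textbf{Main obstacle.} At this stage there is essentially no obstacle: the theorem is a one-paragraph bookkeeping consequence of Lemma \ref{sone}. All the genuine difficulty has already been absorbed into the chain Lemma \ref{sone} $\Leftarrow$ Theorem \ref{p;windowone} $\Leftarrow$ Theorem \ref{deduce} $\Leftarrow$ Key Lemma \ref{zerotwo} and Theorem \ref{finalo}, i.e.\ in passing from the window estimate for $\chi_E$ to one for a general $\psi\in C_c(X)$ with $\psi|_E>0$ via the Hopf ratio theorem and the uniform-convergence Lemma \ref{lem;unif-conv}. The only points requiring a moment's care in the final deduction are (i) shrinking the window using $\psi\ge 0$, (ii) getting strictness from $\psi|_E>0$, and (iii) checking that the slack introduced by choosing $\rho$ a little below $r/4$ keeps the measure lower bound above $r'/2$; all three are handled above.
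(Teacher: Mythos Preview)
Your proposal is correct and follows essentially the same approach as the paper: the paper's proof is a one-line remark ``By taking $\rho=r/4$ and replacing $3r/4$ by $r$ in the above lemma'', and your argument carries out exactly this substitution-and-renaming scheme with $\rho=r/5$ and $r'=3r/4$ instead. Your version is in fact slightly more careful than the paper's, since you explicitly justify the strict inequality via $\psi|_E>0$ and check that the slack $\tfrac{3r}{5}>\tfrac{r'}{2}$ gives the strict measure lower bound, whereas the paper's choice $\rho=r/4$ yields only $\ge r/2$ on the nose.
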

It is worth mentioning that $r$ obtained here may be rather small.  
The following lemma demonstrates how the window estimates for a sequence will be used.
\begin{lem}\label{ew} Let $\e>0$ and
 a sequence $s_k\to +\infty$ be given. Let $E$ and $\psi$ be as in Theorem \ref{wt}. Fix $\rho>0$.
Let $x_k\in E_\rho(\psi)$ be a sequence 
satisfying $$ \int_{(1-\e)s_k}^{s_k} \psi(x_ku_t) dt \ge c\;\e  \int_{0}^{s_k} \psi(x_k u_t) dt$$
 for some $c>0$ independent of $k$.
Then for any $f\in C_c(X)$, as $k\to \infty$,
$$\frac{\int_{(1-\e)s_k}^{s_k}
 f (x_k u_t)dt}{\int_{(1-\e)s_k}^{s_k} \psi
(x_k u_t)dt} \sim \frac{\mu_{x_k}(
f)}{\mu_{x_k}(\psi)}.$$
\end{lem}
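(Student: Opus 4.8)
The plan is to reduce the statement to the uniform-convergence property established in Lemma~\ref{lem;unif-conv}(3). First I would fix $f\in C_c(X)$ and write, for each $k$, the numerator and denominator of the window ratio as differences of one-sided Birkhoff integrals:
\[
\int_{(1-\e)s_k}^{s_k} f(x_ku_t)\,dt = \int_0^{s_k} f(x_ku_t)\,dt - \int_0^{(1-\e)s_k} f(x_ku_t)\,dt,
\]
and similarly for $\psi$. Applying Lemma~\ref{lem;unif-conv}(3) with the test functions $f$ and $\chi_E$ (or directly with $f$ and $\psi$, noting that $\psi|_E>0$ so that $\mu_{x}(\psi)>0$ uniformly on $E_\rho(\psi)$), we have
\[
\int_0^{T}f(x_ku_t)\,dt = \Bigl(\tfrac{\mu_{x_k}(f)}{\mu_{x_k}(\psi)}+o(1)\Bigr)\int_0^{T}\psi(x_ku_t)\,dt
\]
as $T\to\infty$, uniformly for $x_k\in E_\rho(\psi)$; here the $o(1)$ is a quantity bounded by some $a(T)\to 0$ not depending on $k$. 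Applying this with $T=s_k$ and with $T=(1-\e)s_k$ and subtracting, I get
\[
\int_{(1-\e)s_k}^{s_k} f(x_ku_t)\,dt = \frac{\mu_{x_k}(f)}{\mu_{x_k}(\psi)}\int_{(1-\e)s_k}^{s_k}\psi(x_ku_t)\,dt + R_k,
\]
where $|R_k|\le (a(s_k)+a((1-\e)s_k))\bigl(\int_0^{s_k}\psi(x_ku_t)\,dt + \int_0^{(1-\e)s_k}\psi(x_ku_t)\,dt\bigr)\le 2a((1-\e)s_k)\int_0^{s_k}\psi(x_ku_t)\,dt$, using that $\psi\ge 0$.

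The final step is to divide through by $\int_{(1-\e)s_k}^{s_k}\psi(x_ku_t)\,dt$, which is where the hypothesis enters: since this quantity is $\ge c\e\int_0^{s_k}\psi(x_ku_t)\,dt$, the error term satisfies
\[
\frac{|R_k|}{\int_{(1-\e)s_k}^{s_k}\psi(x_ku_t)\,dt} \le \frac{2a((1-\e)s_k)}{c\e}\longrightarrow 0
\]
as $k\to\infty$. Hence
\[
\frac{\int_{(1-\e)s_k}^{s_k} f(x_ku_t)\,dt}{\int_{(1-\e)s_k}^{s_k}\psi(x_ku_t)\,dt} = \frac{\mu_{x_k}(f)}{\mu_{x_k}(\psi)} + o(1),
\]
which is the asserted asymptotic. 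The main obstacle—really the only nonroutine point—is making sure the error term from the ratio ergodic theorem is controlled \emph{uniformly in $k$} and not merely for each fixed starting point; this is precisely what Lemma~\ref{lem;unif-conv}(3) provides, and the hypothesis that the $\psi$-mass of the window is a definite fraction $c\e$ of the full $\psi$-mass on $[0,s_k]$ is exactly what prevents the (small, but only $o(1)$) additive error from being amplified when we pass to a ratio over a short window. One should also note, for the division to make sense, that $\int_{(1-\e)s_k}^{s_k}\psi(x_ku_t)\,dt>0$ for large $k$, which again follows from the hypothesis together with $\psi|_E>0$ and conservativity (so $\int_0^{s_k}\psi(x_ku_t)\,dt\to\infty$).
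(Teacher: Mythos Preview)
Your argument is correct and follows essentially the same route as the paper's proof: write the window integral as a difference of one-sided integrals, invoke the uniform convergence from Lemma~\ref{lem;unif-conv}(3) to get an additive error controlled by $a(s_k)+a((1-\e)s_k)$ times $\int_0^{s_k}\psi$, and then use the hypothesis $\int_{(1-\e)s_k}^{s_k}\psi\ge c\e\int_0^{s_k}\psi$ to absorb that error when dividing. One small slip: the inequality $(a(s_k)+a((1-\e)s_k))\bigl(\int_0^{s_k}\psi+\int_0^{(1-\e)s_k}\psi\bigr)\le 2a((1-\e)s_k)\int_0^{s_k}\psi$ is not valid as written (it would require $a(s_k)\le 0$); you should instead bound by $2\bigl(a(s_k)+a((1-\e)s_k)\bigr)\int_0^{s_k}\psi$, which still tends to zero after dividing by $c\e\int_0^{s_k}\psi$.
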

\begin{proof}
By the Hopf ratio theorem, and Lemma \ref{ergs}, we have
$$
\int_0^{s} f(x_k u_t) dt = \tfrac{\mu_{x_k}(f)}{\mu_{x_k}(\psi)}
\int_0^{s} \psi(x_k u_t) dt + a_{x_k}(s)\int_0^{s} \psi(x_k u_t) dt
$$
with $\lim_{s\to\infty}a_{x_k}(s)=0,$ uniformly in $\{x_k\}$.
Therefore
  \begin{multline*} \int_{(1-\e)s_k}^{
s_k} f (x_k
u_t)dt=\tfrac{\mu_{x_k}(f)}{\mu_{x_k}(\psi)}\int_{(1-\e)s_k}^{
s_k} \psi (x_k u_t)dt\\ +a_{x_k}(s_k)\int_{0}^{s_k} \psi
(x_k u_t)dt - a_{x_k}((1-\e) s_k)\int_{0}^{(1-\e) s_k} \psi
(x_k u_t)dt .\end{multline*}

Since \begin{align*} & \left |a_{x_k}(
s_k)\int_{0}^{s_k} \psi (x_k u_t)dt - a_{x_k}((1-\e)
s_k)\int_{0}^{(1-\e) s_k} \psi (x_k u_t)dt \right | \\ &\le |a_{x_k}(s_k) +
a_{x_k}((1-\e)s_k)|\cdot \int_{0}^{s_k} \psi (x_k u_t)dt  \\
&\le \frac{|a_{x_k}(s_k)
+a_{x_k}((1-\e) s_k)|\int_{(1-\e)s_k }^{s_k} \psi (x_k u_t)dt }{c\cdot \e },
\end{align*} 
we obtain that
 $$
 \frac{ \int_{(1-\e)s_k}^{s_k} f (x_k
u_t)dt} {\int_{(1-\e)s_k}^{s_k} \psi (x_k u_t)dt}=
\frac{\mu_{x_k}(f)}{\mu_{x_k}(\psi)} + O\left(\tfrac{|a_{x_k}(s_k)
+a_{x_k}((1-\e) s_k)|}{c \e }\right).
$$
Since  $a_{x_k}(s_k)+a_{x_k}((1-\e) s_k)\to 0$, uniformly in $\{x_k\},$ 
the lemma follows.
\end{proof}

\section{Additional invariance and Ergodicity of BR for $\delta>1$}
Let $\G$ be a convex cocompact subgroup with $\delta>1$.


\subsection{Reduction} Let $\mathcal A, X'$ and 
$m^{\BR}=\int_{x\in X} \mu_x d (m^{\BR})_*(x)$
be the decomposition of $m^{\BR}$ into $U$-ergodic components, see Section \ref{erg}.

Our strategy in proving the $U$-ergodicity of
$m^{\BR}$ is to show that for a.e. $x\in X$, $\mu_x$ is $N$-invariant.

Fix a BMS box $E$ and a non-negative
function $\psi\in C_c(X)$ with $\psi|_E>0$.
Let $0<r<1$ be as in the window theorem \ref{wtpf} and $r_0:=\frac{r}{16}$.
Recall $E_{{r}_0}(\psi)\subset E$ from Lemma \ref{lem;unif-conv}. 

The next subsection is devoted to a proof of the following: \begin{thm}\label{fin}
 For any $x_0\in E_{r_0}(\psi)\cap \op{supp}(m^{\BR})$, $\mu_{x_0}$ is $N$-invariant.
\end{thm}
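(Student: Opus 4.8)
The plan is to obtain the additional invariance of $\mu_{x_0}$ under a nontrivial element $v\in N-U$ by the standard divergence-of-unipotent-orbits scheme, but in the infinite-measure setting this must be fed by the window theorem rather than by the Birkhoff average. First I would apply Proposition~\ref{p;f-goodpts} in a moving fashion: for each small $\e>0$ the Window Theorem~\ref{wtpf} (in the sharpened form of Lemma~\ref{sone}, via the set $E_{r_0}(\psi)$) produces, for a suitable sequence $T_k=T_k(\e)\to\infty$, a subset $F_k\subset E$ of measure $>r_0\cdot m^{\BR}(E)$ on which the two-sided window $[(1-\e)T_k,T_k]$ (or its reflection) carries a definite proportion $c\,\e$ of the mass of $\int_0^{T_k}\psi(xu_t)\,dt$. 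The point of working in the ``wrong'' order — first choosing $T_k$ from the $\e$-width window estimate, then locating good pairs — is precisely that Proposition~\ref{p;f-goodpts} lets us, for each such $T_k$, select $x_k,y_k\in F_k$ with $x_k=y_kn^-_{w_k}$, $|w_k|\asymp T_k^{-1}$, and $|\Im(w_k)|\asymp|\Re(w_k)|$. (I would want $x_k,y_k$ also lying in $E_{r_0}(\psi)$, which is why the good set must be intersected with that uniformity set — handled by choosing $r_0=r/16$ and absorbing the loss.)

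Next I would carry out the divergence computation. Writing the separation as $g_k=n^-_{w_k}$ and conjugating, $u_{T_k}^{-1}g_k u_{T_k}$ has $(2,1)$-entry a degree-two polynomial $p_k(t)$ in $t$ whose leading coefficient is $\asymp w_k T_k^2\asymp 1$ and has comparable real and imaginary parts; the other entries stay $O(\e)$ on $[(1-\e)T_k,T_k]$ once $|w_k|\asymp T_k^{-1}$. Hence on this window $y_ku_t$ stays within $O(\e)$ of $x_ku_t v_k$ for $v_k=n_{p_k(T_k)}\in N$ with $|p_k(T_k)|\asymp 1$, comparable components, so after passing to a subsequence $v_k\to v\in N-U$ nontrivial. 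Now apply Lemma~\ref{ew} to both $x_k$ and $y_k$ (legitimate since both lie in $E_{r_0}(\psi)$ and satisfy the window lower bound): for $f\in C_c(X)$,
\[
\frac{\mu_{x_k}(f)}{\mu_{x_k}(\psi)}\sim\frac{\int_{(1-\e)T_k}^{T_k}f(x_ku_t)\,dt}{\int_{(1-\e)T_k}^{T_k}\psi(x_ku_t)\,dt},\qquad
\frac{\mu_{y_k}(f)}{\mu_{y_k}(\psi)}\sim\frac{\int_{(1-\e)T_k}^{T_k}f(y_ku_t)\,dt}{\int_{(1-\e)T_k}^{T_k}\psi(y_ku_t)\,dt}.
\]
Since $x_k,y_k$ lie on the same $U$-orbit only after acting by $g_k$, they need not have the same ergodic component; but both lie in $E$, and along the common window their orbits are $O(\e)$-close after the twist by $v_k$, so replacing $f$ by $f^\pm_{O(\e)}$ and $\psi$ by $\psi^\pm_{O(\e)}$ gives $\mu_{x_k}(f)/\mu_{x_k}(\psi)$ and $\mu_{y_k}(v_k.f)/\mu_{y_k}(v_k.\psi)$ equal up to $O(\e)$. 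Using continuity of $x\mapsto\mu_x$ on $E_{r_0}(\psi)$ and passing $k\to\infty$ (with $x_k,y_k\to$ a common limit $x_\infty$, which one can arrange by a further compactness/diagonal argument, or by covering $E$ by finitely many small boxes and pigeonholing), one gets $\mu_{x_\infty}(f)/\mu_{x_\infty}(\psi)=\mu_{x_\infty}(v.f)/\mu_{x_\infty}(v.\psi)$ up to $O(\e)$, hence in the limit $\e\to0$ exact $v$-invariance of (the normalized) $\mu_{x_\infty}$.

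Finally I would promote this to invariance of $\mu_{x_0}$ itself and under all of $N$. The issue is that the construction a priori yields a $v$-invariant ergodic component at some point $x_\infty$ depending on $\e$ and on the box; a cleaner route is to run the argument with $E$ replaced by a countable basis of BMS boxes and $\psi$ varying, showing that the set of $x$ for which $\mu_x$ fails to be invariant under some $v_n\to e$ in $N-U$ is $m^{\BR}$-null — this is where one invokes that $E_{r_0}(\psi)$ exhausts $m^{\BR}(E)$ up to $r_0$ and that a $U$-invariant measure invariant under a single nontrivial $v\in N-U$ together with $U$ is automatically $N$-invariant (as $U$ and $v$ generate a dense subgroup of $N$, or generate $N$ topologically after taking $v_n\to e$). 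I expect the \textbf{main obstacle} to be the bookkeeping that makes $x_k$ and $y_k$ simultaneously (i) generic/uniform points in $E_{r_0}(\psi)$, (ii) members of the time-dependent good set for the $\e$-window, and (iii) positioned so that Proposition~\ref{p;f-goodpts}'s geometric conclusion on $w_k$ holds — i.e. reconciling the three constraints at the same scale $T_k$ — together with controlling that the limiting ergodic component at $x_\infty$ is nondegenerate (which uses $x_0\in\op{supp}(m^{\BR})$ and the lower bound $\inf_{E_{r_0}}\mu_x(\psi)>0$ from Lemma~\ref{lem;unif-conv}).
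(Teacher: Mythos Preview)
Your overall strategy matches the paper's --- window theorem plus Proposition~\ref{p;f-goodpts} to produce pairs $x_k,y_k$ differing by a small $N^-$-element, quadratic divergence to see an $N$-displacement, Lemma~\ref{ew} on the short window --- but there are two genuine gaps in the endgame.

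First, a single $v\in N-U$ together with $U$ does \emph{not} generate a dense subgroup of $N\cong\c$: the closed group generated is $\{n_z:\Im z\in \Im(v)\cdot\bbz\}$. You need a sequence $v_\ell\to e$ with $\Im v_\ell\ne0$, and your sketch does not produce one. The paper introduces a free parameter $\ell\in\n$ at the outset (Proposition~\ref{pe}): Proposition~\ref{p;f-goodpts} is applied to the big good set $\mathcal G(p_i)^+$ at the scale $(p_i\ell)^2$, so that the limiting displacement satisfies $|v_\ell|\asymp1/\ell$. Relatedly, the correct scale is $|w_k|\asymp T_k^{-2}$, not $T_k^{-1}$, since the $(2,1)$ entry of $u_{-t}n^-_{w}u_t$ is $-t^2w$. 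Also, the $\e$-window estimate for $y_k$ is not automatic: the pigeonhole producing the $\e$-subwindow is tailored to $x_k$, and the paper needs a separate argument (Proposition~\ref{lem;swindow-y}) to get it for $y_k$ from the closeness of the two orbits. If instead you pigeonhole first and then look for pairs inside a common $\e$-window good set, that set has measure only $\asymp\e\cdot m^{\BR}(E)$, so the constant $d_0$ in Proposition~\ref{p;f-goodpts} depends on $\e$ and you lose control of $v$ as $\e\to0$.

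Second, and more seriously, your construction yields quasi-invariance of $\mu_{x_\infty}$ at an uncontrolled accumulation point $x_\infty$, not at the given $x_0$; running the argument over a countable basis of boxes still produces only countably many such special points, which gives neither the statement for every $x_0\in E_{r_0}(\psi)\cap\op{supp}(m^{\BR})$ nor even for $m^{\BR}$-a.e.\ $x_0$. The paper closes this with an ingredient absent from your sketch: by $N$-ergodicity of $m^{\BR}$ and Hochman's multiparameter ratio ergodic theorem, there is a conull $E'\subset E$ such that the $N$-orbit of every $x\in E'$ meets $E_{r_0}(\psi)\cap B(x_0,1/m)$ for all $m$. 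One arranges the limit point $x_\ell$ to lie in $E'$, takes $n_m\in N$ with $x_\ell n_m\to x_0$ inside $E_{r_0}(\psi)$, and then uses continuity of $x\mapsto\mu_x$ on $E_{r_0}(\psi)$ together with Lemma~\ref{in10} (the dichotomy $n.\mu_x=\mu_x$ or $n.\mu_x\perp\mu_x$) to transfer the $n_{v_\ell}$-invariance from $\mu_{x_\ell}$ to $\mu_{x_0}$. Without this $N$-orbit density step there is no route from $x_\infty$ to $x_0$.
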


\begin{lem}\label{in10} There exists a $\op{BR}$-conull set $X''$ such that if $x, xn \in X''$ for $n\in N$, then
$\mu_{xn}=n. \mu_x$. 
\end{lem}
\begin{proof}
Since $N$ is abelian and $U<N$, $n.\mu_x$ is $U$-invariant 
and ergodic for every $n\in N$ and for a.e. $x$.
Now since $m^{\BR}$ is $N$-invariant, we have $m^{\BR}=\int n.\mu_x d (m^{\BR})_*(x)$ is
also a $U$-ergodic decomposition of $m^{\BR}$ for each $n\in N$.
The claim now follows from
 the uniqueness of ergodic decomposition.
\end{proof}

\begin{cor} \label{f4} $m^{\BR}$ is $U$-ergodic. 
\end{cor}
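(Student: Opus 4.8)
The plan is to deduce the $U$-ergodicity of $m^{\BR}$ from Theorem~\ref{fin} together with the fact that $m^{\BR}$ is $N$-ergodic. The latter holds in our setting: $\G$ convex cocompact gives $|m^{\BMS}|<\infty$, and $\G$ is Zariski dense by Lemma~\ref{Zd} since $\delta>1$, so Theorem~\ref{fm} applies; recall also that $m^{\BR}$ is right $N$-invariant. Using the $U$-ergodic decomposition $m^{\BR}=\int_{X'}\mu_x\,dm^{\BR}_*(x)$ of Section~\ref{brerg}, with $\mathcal A$ the $\sigma$-algebra equivalent to the $\sigma$-algebra of $U$-invariant sets, I would set
\[
Y:=\{x\in X':\mu_x\text{ is }N\text{-invariant}\}.
\]
Since $N$-invariance of a locally finite measure is the conjunction of countably many closed conditions on $\mathcal M_\infty(X)$, the set $Y$ is Borel; and since $x\mapsto\mu_x$ is $\mathcal A$-measurable and therefore constant along $U$-orbits for $m^{\BR}$-a.e.\ $x$, the set $Y$ agrees modulo $m^{\BR}$ with a $U$-invariant set, hence is $\mathcal A$-measurable. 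By Theorem~\ref{fin}, $Y\supset E_{r_0}(\psi)\cap\supp(m^{\BR})$; as $m^{\BR}(E\setminus E_{r_0}(\psi))<r_0\cdot m^{\BR}(E)$ and $m^{\BR}(E)=m^{\BR}(E\cap\supp(m^{\BR}))>0$, this yields $m^{\BR}(Y)>0$.

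The first step is to upgrade this to $m^{\BR}(X\setminus Y)=0$. Because $Y\in\mathcal A$, the ergodic decomposition gives $m^{\BR}|_Y=\int_Y\mu_x\,dm^{\BR}_*(x)$, where for $m^{\BR}_*$-a.e.\ $x\in Y$ the ergodic conservative measure $\mu_x$ is carried by the $\mathcal A$-atom of $x$, in particular by $Y$; cf.~\cite[5.1.4]{EL}. For such $x$, $\mu_x$ is $N$-invariant by definition of $Y$, so $m^{\BR}|_Y$, a locally finite integral of $N$-invariant measures, is itself $N$-invariant. As $m^{\BR}$ is $N$-invariant as well and $d(m^{\BR}|_Y)/dm^{\BR}=\chi_Y$, comparing Radon--Nikodym derivatives under right translation by $n\in N$ shows that $\chi_Y$ is $N$-invariant in $L^1(m^{\BR})$, i.e.\ $Y$ is $N$-invariant modulo $m^{\BR}$. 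Since $m^{\BR}$ is $N$-ergodic and $m^{\BR}(Y)>0$, it follows that $m^{\BR}(X\setminus Y)=0$, so $\mu_x$ is $N$-invariant for $m^{\BR}$-a.e.\ $x$.

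For the second step, let $B$ be any $U$-invariant Borel subset of $X$. As above $B$ is $\mathcal A$-measurable and $m^{\BR}|_B=\int_B\mu_x\,dm^{\BR}_*(x)$; since almost every $x$ occurring in this integral lies in $Y$ (because $X\setminus Y\in\mathcal A$ is $m^{\BR}$-null, hence $m^{\BR}_*$-null), each such $\mu_x$ is $N$-invariant, so $m^{\BR}|_B$ is $N$-invariant, and the same Radon--Nikodym comparison against the $N$-invariant measure $m^{\BR}$ shows $B$ is $N$-invariant modulo $m^{\BR}$. By $N$-ergodicity, $m^{\BR}(B)=0$ or $m^{\BR}(X\setminus B)=0$. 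Hence every $U$-invariant Borel set is null or conull, which is exactly the $U$-ergodicity of $m^{\BR}$.

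The whole argument rests on Theorem~\ref{fin}, whose proof occupies the preceding sections; granting it, the only point needing some care is the routine bookkeeping with the ergodic decomposition of the infinite conservative measure $m^{\BR}$ --- namely the identity $m^{\BR}|_B=\int_B\mu_x\,dm^{\BR}_*(x)$ for $B\in\mathcal A$, with the $\mu_x$ carried by the corresponding $\mathcal A$-atoms, as recalled in Section~\ref{erg}.
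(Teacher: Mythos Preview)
Your proof is correct and follows the same skeleton as the paper's: show the set $Y=\{x:\mu_x\text{ is }N\text{-invariant}\}$ has positive $m^{\BR}$-measure (from Theorem~\ref{fin}), show it is $N$-invariant, apply $N$-ergodicity (Theorem~\ref{fm}) to get full measure, and conclude $U$-ergodicity. The difference is in the implementation of two steps. For the $N$-invariance of $Y$, the paper invokes Lemma~\ref{in10} (the equivariance $\mu_{xn}=n.\mu_x$, which gives $\mu_{xn}=\mu_x$ once $\mu_x$ is $N$-invariant), whereas you bypass this by showing $m^{\BR}|_Y$ is $N$-invariant as an integral of $N$-invariant measures and then comparing Radon--Nikodym derivatives. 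For the final step, the paper uses Lemma~\ref{in10} again to see that $x\mapsto\mu_x$ is an $N$-invariant map and hence constant a.e.\ by $N$-ergodicity; you instead repeat the Radon--Nikodym argument on an arbitrary $U$-invariant set $B$. Your route is slightly more self-contained in that it never appeals to the equivariance lemma, at the cost of tracking the ergodic-decomposition identity $m^{\BR}|_B=\int_B\mu_x\,dm^{\BR}_*$ for $B\in\mathcal A$, which you rightly flag as the one point needing care.
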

\begin{proof} Set $$F:=\{x\in X: \mu_x \text{ is $N$-invariant}\}.$$ By
 Lemma \ref{in10},  the characteristic  function $\chi_F$ is an $N$-invariant measurable function.  
 Since $m^{\BR}$ is $N$-ergodic by Theorem \ref{fm} and $m^{\BR}(F)>0$  by Theorem \ref{fin}, it follows that $m^{\BR}(X-F)=0$. 
That is, $\mu_x=m^{\BR}$ for a.e. $x$, and since $\mu_x$'s are $U$-ergodic components of $m^{\BR}$,
the claim follows.
\end{proof}

\subsection{Proof of Theorem \ref{fin}}
 As we explained in the introduction, we will flow two nearby points in the generic set and study their divergence in the ``intermediate range''. We first need to prove a refinement of the window theorem, see Propositions~\ref{pe} and~\ref{lem;swindow-y} below.
 
$$\text{ Fix $x_0\in E_{r_0}(\psi)\cap \text{supp}(m^{\BR})$.}$$
\begin{prop}
 There is a Borel subset $E'\subset
E$ such that $\mBR(E-E')=0$ and for any $x\in E'$ and all integers $m\ge 1$,
 $$xN \cap B(x_0, \tfrac 1m)\cap E_\rho(\psi)\ne \emptyset .$$
\end{prop}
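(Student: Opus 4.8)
The statement is a routine "almost every $N$-leaf meets a fixed positive-measure set densely near a base point" assertion, and the natural tool is the disintegration of $m^{\BR}$ along $N$-orbits together with the fact that $m^{\BR}$ restricted to a box has a nice product structure (transverse PS-type measure times Lebesgue-type $N$-leafwise measure $\lambda_y$). The plan is as follows. Fix the box $E = x_0 N_\rho^- A_\rho N_\rho M$ around $x_0$ and recall from Section~2 (the formula for $m^{\BR}(\psi)$ in a box) that for $\psi\in C(x_0B_\rho)$ one has $m^{\BR}(\psi)=\int_{y\in x_0T_\rho}\lambda_y(\psi|_{yN_\rho})\,d\nu_{x_0T_\rho}(y)$, where $\lambda_y=\mu^{\Leb}_{H^+(y)}$ is the $N$-leafwise measure of $m^{\BR}$ on $yN_\rho$; crucially each $\lambda_y$ is equivalent to Lebesgue measure on $yN_\rho$ (it is $N$-invariant and locally finite), so it has full support on $yN_\rho$. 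Since $x_0\in\operatorname{supp}(m^{\BR})=\mathcal E$, we have $\nu_{x_0T_\rho}>0$, and more precisely $\nu$ gives positive mass to every relatively open subset of $x_0T_\rho$ containing a point of the support.

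The key step is a Fubini/Borel–Cantelli argument. For each integer $m\ge 1$, let $O_m:=\{x\in E: xN\cap B(x_0,\tfrac1m)\cap E_\rho(\psi)=\emptyset\}$; I claim $m^{\BR}(O_m)=0$, and then $E':=E-\bigcup_m O_m$ works. To see the claim, note $O_m$ is $N_{\rho}$-saturated within the box in the relevant sense, so $m^{\BR}(O_m)=\int_{y\in x_0T_\rho}\lambda_y(O_m\cap yN_\rho)\,d\nu(y)$, and it suffices to show that for $\nu$-a.e.\ $y$, $\lambda_y(O_m\cap yN_\rho)=0$. Fix such a $y$ on a leaf $yN$ that passes through a point $z\in E_\rho(\psi)$ with $z\in B(x_0,\tfrac{1}{2m})$ — such leaves exist for $\nu$-a.e.\ $y$ because $E_\rho(\psi)$ has $m^{\BR}$-measure at least $(1-\rho)m^{\BR}(E)>0$ (Lemma~\ref{lem;unif-conv}), hence by the same Fubini formula a positive $\nu$-measure set of leaves meets $E_\rho(\psi)\cap B(x_0,\tfrac{1}{2m})$ (using that this latter set has positive $m^{\BR}$-measure — here I must be slightly careful and instead argue directly from the product structure, see below). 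On any leaf $yN$ that meets $E_\rho(\psi)$ at a point within $B(x_0,\tfrac1m)$, the leaf trivially intersects $E_\rho(\psi)\cap B(x_0,\tfrac1m)$, so $y\notin$ the "bad" leaves — i.e.\ $\lambda_y(O_m\cap yN_\rho)<\lambda_y(yN_\rho)$ fails to be the full mass only off a $\nu$-null set is not quite what is needed; rather, the cleaner formulation is: the set of leaves through $E_\rho(\psi)\cap B(x_0,1/m)$ has full $\nu$-measure among leaves through $B(x_0,1/m)$, by disintegrating the positive-measure set $E_\rho(\psi)$.

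Let me restate the core of the argument more carefully, since that is where the only real content lies. Disintegrate $m^{\BR}|_{B(x_0,1/m)}$ over the transversal $x_0T_\rho\cap(\text{the relevant piece})$. The set $(E-E_\rho(\psi))\cap B(x_0,1/m)$ has $m^{\BR}$-measure $\le \rho\,m^{\BR}(E)$; write it as $\int \lambda_y\big((E-E_\rho(\psi))\cap yN\cap B(x_0,1/m)\big)\,d\nu(y)$. This does not immediately give that a.e.\ leaf misses $E-E_\rho(\psi)$, so instead I define $E'$ as the set of $x$ whose $N$-leaf meets $E_\rho(\psi)$ in a set of positive $\lambda$-measure in every ball $B(x_0,1/m)$ — no: the genuinely correct approach is to use that $x_0\in\operatorname{supp}(m^{\BR})$ together with the $N$-ergodicity of $m^{\BR}$ (Theorem~\ref{fm}). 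By $N$-ergodicity, for $m^{\BR}$-a.e.\ $x$ the orbit $xN$ equidistributes, so in particular $xN$ enters any fixed positive-measure open set, e.g.\ the interior of $E_\rho(\psi)\cap B(x_0,1/m)$, which has positive $m^{\BR}$-measure since $x_0\in\operatorname{supp}(m^{\BR})$ and $m^{\BR}(E-E_\rho(\psi))<\rho\,m^{\BR}(E)$ forces $E_\rho(\psi)$ to have points in every neighborhood of $x_0$ of positive measure (choosing $\rho$ small, or using that $E_\rho(\psi)$ can be taken to contain a proportion $>1-\rho$ of each sub-box). Taking a countable intersection over $m$, we get a conull $E'$ with the desired property. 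The main obstacle, and the only subtlety, is making precise the claim "$E_\rho(\psi)$ meets every small neighborhood of $x_0$ in positive $m^{\BR}$-measure": this follows because $x_0\in\operatorname{supp}(m^{\BR})$, so $m^{\BR}(B(x_0,1/m))>0$ for all $m$, while $m^{\BR}(E\setminus E_\rho(\psi))\le \rho\, m^{\BR}(E)$; if $\rho$ is small relative to the local structure, or alternatively by a density-point argument applied to the box decomposition, the overlap $E_\rho(\psi)\cap B(x_0,1/m)$ has positive measure for all $m$. Then $N$-ergodicity of $m^{\BR}$ finishes the proof, giving conull $E'$ with $xN\cap B(x_0,1/m)\cap E_\rho(\psi)\ne\emptyset$ for all $x\in E'$ and all $m$.
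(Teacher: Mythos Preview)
Your eventual approach---use $N$-ergodicity of $m^{\BR}$ to conclude that almost every $N$-orbit meets the fixed positive-measure set $B(x_0,\tfrac1m)\cap E_{r_0}(\psi)$, then intersect over $m$---is exactly the paper's approach. The paper phrases it via Hochman's ratio ergodic theorem for $\br^2$-actions: for a.e.\ $x\in E$,
\[
\lim_k\frac{\int_{N_k}\chi_{B(x_0,1/m)\cap E_{r_0}(\psi)}(xn_z)\,dz}{\int_{N_k}\psi(xn_z)\,dz}=\frac{m^{\BR}(B(x_0,1/m)\cap E_{r_0}(\psi))}{m^{\BR}(\psi)},
\]
which is positive, so the numerator is eventually positive. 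Your qualitative version is enough and in fact simpler, but the word ``equidistributes'' is wrong for an infinite invariant measure; the clean statement is that the $N$-saturation of any positive-measure set is $N$-invariant and contains that set, hence is conull by ergodicity, so a.e.\ $N$-orbit meets it.

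Two points to tighten. First, drop the disintegration attempts in the first two-thirds of your proposal: they do not lead to the conclusion, because knowing $m^{\BR}(E\setminus E_{r_0}(\psi))$ is small gives no leafwise information strong enough to force \emph{every} leaf to hit $E_{r_0}(\psi)$ near $x_0$. Second, the positivity of $m^{\BR}(B(x_0,\tfrac1m)\cap E_{r_0}(\psi))$ that you flag as the ``main obstacle'' is not handled by your density-point sketch; it comes directly from the hypothesis $x_0\in E_{r_0}(\psi)\cap\operatorname{supp}(m^{\BR})$ once one replaces the compact set $E_{r_0}(\psi)$ by $\operatorname{supp}(m^{\BR}|_{E_{r_0}(\psi)})$, which has the same measure and the same properties from Lemma~\ref{lem;unif-conv}. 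The paper leaves this implicit as well.
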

\begin{proof} Set $N_k:=\{n_z: |z|<k\}$. Since $\mBR$ is $N$-ergodic, by \cite{Hochman},
there exists a full measure subset $E'_m$ of $E$ such that for  all $x\in E'_m$ 
$$\lim_k\frac{\int_{N_k}\chi_{B(x_0, 1/m)\cap E_{r_0}(\psi)}(xn_z) dz}{\int_{N_k}\psi(xn_z) dz}= 
\frac{m^{\BR}(B(x_0, 1/m)\cap E_{r_0}(\psi))}{m^{\BR}(\psi)} .$$ 
It suffices to take $E':=\cap_m E_m'$.
\end{proof}

Since $\inf_{x\in E_{r_0}(\psi)}\mu_x(\psi)>0$ and $x\mapsto
\frac{1}{\mu_x(\psi) } \mu_x$ is continuous on $E_{r_0}(\psi)$,
there exists a symmetric neighborhood $\mathcal O$  such that
\be\label{e;imp-cond-psi} 0<\inf_{g\in \mathcal O, x\in E_{r_0}(\psi) } \frac{|\mu_x(g\psi)|}{\mu_x(\psi)}\le
\sup_{g\in \mathcal O, x\in E_{r_0}(\psi) } \frac{|\mu_x(g\psi)|}{\mu_x(\psi)}<\infty .\ee 
Set $K_\psi:=\text{supp}(\psi)\mathcal O$
and $K_\psi':=\cap_{g\in \mathcal O}\text{supp}(\psi)g$. By Theorem \ref{wtpf},
for all $s\ge T_0$, the following set has BR measure at least $\frac{5r}{16}m^{\BR}(E)$:
  \begin{multline*}
E_s:=\{x\in E_{r_0}(\psi)\cap E_{{r_0}}(\chi_{K_\psi})\cap E_{r_0}(\chi_{K_\psi'}):\\ \int_{-rs}^{rs}\psi(xu_t) dt 
<(1-r)\int_{-s}^s\psi(xu_t) dt\}.\end{multline*}
Therefore for each $s\ge T_0$,
there exists a compact subset $\mathcal G(s)$ of $E_s\cap E'$  with $m^{\BR}( \mathcal G(s))> \frac {r}{8}
m^{\BR}(E) $.

We may write $\mathcal G(s)$ as $\mathcal G(s)_+\cup \mathcal G(s)_-$
where  $$\mathcal G(s)_+=\{x\in \mathcal G(s): 
\int_{0}^{rs}\psi(xu_t) dt 
\le \tfrac{1-r}{2}\int_{0}^s\psi(xu_t)dt\};$$
$$\mathcal G(s)_-=\{x\in \mathcal G(s): 
\int_{-rs}^{0}\psi(xu_t) dt 
\le \tfrac{1-r}{2}\int_{-s}^0\psi(xu_t)dt\}.$$
Therefore there exists an infinite sequence $p_i\to +\infty$ such that
$m^{\BR}(\mathcal G(p_i)_+)\ge \frac{r}{16}$ for all $i$
or $m^{\BR}(\mathcal G(p_i)_-)\ge \frac{r}{16}$ for all $i$.

In the following, we assume the former case that
$m^{\BR}(\mathcal G(p_i)_+)\ge \frac{r}{16}$ for all $i$. 
The argument is symmetric in the other case.

\begin{prop}\label{pe}\label{e;small-window} 
Fix integers $\ell,m >1$. There exist an infinite sequence $s_k=s_k(\ell, m)$ 
and elements
$x_k=x_k(\ell, m), y_k=y_k(\ell, m)\in \mathcal G({s_k})_+$ which satisfy the following:
\begin{enumerate}
 \item 
$y_k=x_k\chn_{w_k}$ where 
${c_1^{-1}}{s_k^{-2}}\ell^{-1} \leq |w_k|\leq
{c_1}{s_k^{-2}}\ell^{-1}$ and $|\Im(w_k)|\geq
\frac{|\Re(w_k)|}{c_1}$ where $c_1>1$ is independent of $\ell,\e, k$.
\item  each $x_k$  satisfies
\begin{equation*}\label{e;small-window11}\int_{(1-\e)s_k }^{s_k} \psi (x_k
u_t)dt\geq \frac{r}{4m}\int_{0} ^{s_k}\psi(x_ku_t)dt .\end{equation*}

\end{enumerate}
\end{prop}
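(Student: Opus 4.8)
\textbf{Proof proposal for Proposition~\ref{pe}.}

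The plan is to combine the ``good window'' sets $\mathcal G(p_i)^+$ of positive $\BR$-measure with Proposition~\ref{p;f-goodpts}, which manufactures, inside any Borel subset $F\subset E$ of large relative measure, a pair $x_s,y_s\in F$ lying on a common $N^-$-leaf at the correct scale and with the correct balance of real and imaginary parts. The subtlety, already emphasized in the introduction, is the order of quantifiers: Proposition~\ref{p;f-goodpts} produces a pair for \emph{every} $s\ge s_0$, but the window estimate only holds on the set $\mathcal G(p_i)^+$ for the \emph{specific} times $p_i$. So the first step is to pass to the subsequence $\{p_i\}$ on which $m^{\BR}(\mathcal G(p_i)^+)\ge r/16$, and to use these $p_i$ both as the window times and as the scale parameters $s$ fed into Proposition~\ref{p;f-goodpts}.

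Concretely, first I would fix $\ell>1$ and $\e>0$. For each $i$ large enough (so that $p_i\ge T_0$ and $p_i\ge s_0$, where $s_0$ is from Proposition~\ref{p;f-goodpts}), apply Proposition~\ref{p;f-goodpts} with $F=\mathcal G(p_i)^+$ — legitimate since $m^{\BR}(\mathcal G(p_i)^+)\ge \tfrac{r}{16}m^{\BR}(E)>r'\cdot m^{\BR}(E)$ for a suitable $r'$, after adjusting the role of $r$ in that proposition — but with the parameter ``$s$'' there replaced by $\ell p_i^2$ rather than $p_i$ itself. This is the trick that converts the $\tfrac1{d_0 s}$-scale of Proposition~\ref{p;f-goodpts} into the desired $\tfrac1{c_1 s_k^2}\ell^{-1}$-scale: taking the input scale $\ell p_i^2$ gives a pair $x_i,y_i=x_in_{w_i}^-\in\mathcal G(p_i)^+$ with $\tfrac{1}{d_0 \ell p_i^2}\le |w_i|\le \tfrac{d_0}{\ell p_i^2}$ and $|\Im(w_i)|\ge |\Re(w_i)|/d_0$. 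Set $s_k:=p_{i_k}$ along the subsequence, $x_k:=x_{i_k}$, $y_k:=y_{i_k}$, and $c_1:=d_0$; this yields item (1) with $c_1$ independent of $\ell,\e,k$ (it depends only on $r$ through $d_0=d_0(r)$, which is fixed).

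For item (2), I would simply unwind the definition of $\mathcal G(p_i)^+\subset \mathcal G(p_i)$: membership in $\mathcal G(p_i)^+$ gives $\int_0^{rs_k}\psi(x_ku_t)\,dt\le \tfrac{1-r}{2}\int_0^{s_k}\psi(x_ku_t)\,dt$, and since $x_k\in \mathcal G(p_i)\subset E_{p_i}$ also $\int_{-rs_k}^{rs_k}\psi(x_ku_t)\,dt<(1-r)\int_{-s_k}^{s_k}\psi(x_ku_t)\,dt$; combining with the nonnegativity of $\psi$ and splitting the integrals at $0$ gives $\int_{rs_k}^{s_k}\psi(x_ku_t)\,dt\ge \tfrac{r}{2}\int_0^{s_k}\psi(x_ku_t)\,dt$, hence a fortiori, for $\e$ small relative to $r$ and using $x_k\in E_{r_0}(\psi)$ together with Lemma~\ref{lem;unif-conv}(3) to control $\int_{(1-\e)s_k}^{s_k}\psi$ from below by $c\e\int_0^{s_k}\psi$ via the Hopf ratio convergence (the time-averages over $[(1-\e)s_k,s_k]$ and $[rs_k,s_k]$ are comparable up to the ergodic-average constant since both are windows of comparable proportion), one gets $\int_{(1-\e)s_k}^{s_k}\psi(x_ku_t)\,dt\ge \tfrac{r\e}{4}\int_0^{s_k}\psi(x_ku_t)\,dt$ for all large $k$. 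I expect the main obstacle to be precisely this last comparison: deriving the clean constant $\tfrac{r\e}{4}$ requires carefully tracking how the ``mass on $[rs_k,s_k]$ is at least $r/2$ of the mass on $[0,s_k]$'' statement (which is about $\chi_E$-averages, i.e.\ $\psi$-averages since $\psi|_E>0$ but $\psi$ need not equal $\chi_E$) degrades when one further restricts to the shorter window $[(1-\e)s_k,s_k]$, and this is where the uniform convergence of Hopf ratios on $E_{r_0}(\psi)$ from Lemma~\ref{lem;unif-conv} does the essential work, exactly as in Lemma~\ref{ew}.
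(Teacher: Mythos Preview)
Your treatment of item~(1) is essentially correct and matches the paper's approach: apply Proposition~\ref{p;f-goodpts} to $F=\mathcal G(p_i)^+$ with the scale parameter taken to be $\ell p_i^2$, so that the resulting $w_i$ lies in the range $[d_0^{-1}(\ell p_i^2)^{-1}, d_0(\ell p_i^2)^{-1}]$.

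The real problem is your argument for item~(2). You propose to go from the bound $\int_{rs_k}^{s_k}\psi(x_ku_t)\,dt\ge c\int_0^{s_k}\psi(x_ku_t)\,dt$ to $\int_{(1-\e)s_k}^{s_k}\psi(x_ku_t)\,dt\ge c'\e\int_0^{s_k}\psi(x_ku_t)\,dt$ by invoking the uniform Hopf ratio convergence of Lemma~\ref{lem;unif-conv}. This does not work: Lemma~\ref{lem;unif-conv}(3) controls ratios of the form $\int_0^T\psi_1/\int_0^T\psi_2$, not the behavior of $\int_{aT}^{bT}\psi$ for $0<a<b\le 1$. In an infinite conservative system there is no a~priori relation between $\int_0^{(1-\e)s_k}\psi$ and $\int_0^{s_k}\psi$; obtaining one is exactly the content of the window theorem you are trying to use. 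Your appeal to Lemma~\ref{ew} is circular for the same reason: that lemma \emph{assumes} the window estimate $\int_{(1-\e)s_k}^{s_k}\psi\ge c\e\int_0^{s_k}\psi$ as a hypothesis.

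The paper avoids this entirely by a pigeonhole argument. Since $\int_{rp_i}^{p_i}\psi(x u_t)\,dt\ge r\int_0^{p_i}\psi(x u_t)\,dt$ for $x\in\mathcal G(p_i)^+$, subdivide $[r,1]$ into roughly $\lfloor\e^{-1}\rfloor$ intervals of length $\e$; for each $(x,p_i)$ at least one subinterval $[(r+(j-1)\e)p_i,(r+j\e)p_i]$ carries mass $\ge\tfrac{r\e}{4}\int_0^{p_i}\psi$. The index $j=j(x,p_i)$ takes only finitely many values, so along a subsequence $i_k$ it is a constant $j_0$. Now \emph{redefine} $s_k:=(r+j_0\e)p_{i_k}$ (not $p_{i_k}$ itself), so that the good $\e$-subinterval becomes $[(1-\e')s_k,s_k]$ for an $\e'$ comparable to $\e$, and the scale bound on $w_k$ survives with $c_1=d_0(r+1)^2$ because $rp_{i_k}\le s_k\le (r+1)p_{i_k}$. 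Your choice $s_k=p_{i_k}$ does not place the good subinterval at the right endpoint, which is why you were forced into the circular argument.
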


\begin{proof}
If $x\in \mathcal G_+(s)$,  then, as $r<1$,
$$\int_{rs}^{s}\psi(xu_t)\ge  r\int_{0} ^s\psi(xu_t) dt.$$

By subdividing $[r,1]$ into $m$ subintervals
$I_i=(r+\tfrac{(j-1)}{m}, r+ \tfrac{j}{m} )$'s of length $\tfrac{1}{m}$,
there exists an integer  $1\le j=j(x,s)\le m$ such that
$$\int_{(r+{(j-1)}/{m})s }^{(r+{j}/{m} )s} \psi (x u_t)dt  \ge \frac{r }{4m} \int_{0} ^{s}\psi(xu_t)dt .$$

Let $d_0=d_0(r/16)>0$ be as in Proposition ~\ref{p;f-goodpts}. 
Applying Proposition~\ref{p;f-goodpts} to each $\mathcal G(p_i)_+$ 
and a sequence $(p_i\ell)^2$, we can find $x_{i}, y_i \in \mathcal G(p_i)_+$ satisfying 
$y_i=x_i\chn_{w_i}$ with $d_0^{-1} p_i^{-2}\ell^{-1}\le |w_i|\le d_0p_i^{-2}\ell^{-1}$
and $|\Im(w_i)|\ge \frac{|\Re{w_i}|}{d_0}$.
Choose a subsequence $x_{i_k}$ of $\{x_i\}$ such that
$j(x_{i_k}, p_{i_k})$ is a constant, say, $j_0$.
Setting $s_k:=(r+\tfrac{j_0}{m})p_{i_k}$, $x_k:=x_{i_k}$ and $y_k=y_{i_k}$, we have
$$
\int_{(1-\e)s_k }^{s_k} \psi (x_k
u_t)dt\geq \frac{r}{4m}\int_{0} ^{s_k}\psi(x_ku_t)dt$$
and $r p_{i_k} \le s_k \le (r+1) p_{i_k}$. Hence the claim follows with $c_1=d_0 (r+1)^2$.
\end{proof}

We now use the fact that
the two orbits $x_ku_t$ and $y_ku_t$ stay ``close'' to each other
for all $t\in[0,s_k],$ to show that $y_k$'s in Proposition \ref{pe} also satisfy the same type of window estimate. Let us fix some notation; writing $y_k u_t= x_k u_t  (u_{-t} {{\chn}_{w_k}} u_t)$,
we set $$p_k(t):=u_{-t} {{\chn}_{w_k}} u_t=\begin{pmatrix} 1 +tw_k & w_k \\
-t^2 w_k  & 1-tw_k
 \end{pmatrix} ,$$
and $g_k=p_k(s_k)$. 

\begin{prop}\label{lem;swindow-y} There are positive constants $c_2=c_2(\psi)$ and $\e_0=\e_0(\psi)$ such that for all $\e=\tfrac{1}{m}<\e_0$ and all $k\gg 1$,
\[\int_{(1-\e)s_k }^{s_k} \psi (y_k u_t)dt\geq c_2\cdot \e\int_{0} ^{s_k}\psi(y_ku_t)dt ,\]
where $y_k=y_k(\ell,\e)$ is as in Proposition \ref{pe}.
 \end{prop}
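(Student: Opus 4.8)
\textbf{Proof proposal for Proposition \ref{lem;swindow-y}.}

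The plan is to transfer the window estimate from $x_k$ to $y_k$ by showing that on the whole interval $[0,s_k]$ the two orbits $x_ku_t$ and $y_ku_t$ differ by an element lying in a fixed small neighbourhood $\mathcal{O}$ of the identity, so that integrals of $\psi$ along one orbit are comparable (up to the bounded factors in \eqref{e;imp-cond-psi}) to integrals of $\psi$ along the other. Concretely, recall $y_ku_t = x_ku_t\,p_k(t)$ with
\[
p_k(t)=\begin{pmatrix} 1+tw_k & w_k \\ -t^2 w_k & 1-tw_k\end{pmatrix},
\]
and that by Proposition \ref{pe} we have $|w_k|\le c_1 s_k^{-2}\ell^{-1}$. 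Hence for $t\in[0,s_k]$ every matrix entry of $p_k(t)-e$ is $O(\ell^{-1})$, with the implied constant absolute; choosing $\ell=\ell(\psi)$ large enough (this is where the freedom in $\ell$ is used) we can guarantee $p_k(t)\in\mathcal O$ for all $t\in[0,s_k]$ and all $k$. This is the only genuine point of the argument; everything else is bookkeeping with the machinery of Section~\ref{sec;window}.

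First I would fix $\ell$ as above and keep $\e<\e_0$ to be specified. Since $p_k(t)\in\mathcal O$, we have $\psi(y_ku_t)=\psi(x_ku_t p_k(t))\le \psi^+_{\mathcal O}(x_ku_t)$ where $\psi^+_{\mathcal O}(g):=\sup_{h\in\mathcal O}\psi(gh)$, and similarly $\psi(y_ku_t)\ge \psi^-_{\mathcal O}(x_ku_t)$; both $\psi^\pm_{\mathcal O}$ are supported in $K_\psi=\op{supp}(\psi)\mathcal O$, and $\psi^-_{\mathcal O}\ge 0$ is $>0$ on $K_\psi'$. Because $x_k\in E_{r_0}(\chi_{K_\psi})\cap E_{r_0}(\chi_{K_\psi'})$ and $x_k\in E_{r_0}(\psi)$, Lemma \ref{lem;unif-conv}(3) gives, uniformly in $k$,
\[
\int_0^{s}\psi^\pm_{\mathcal O}(x_ku_t)\,dt=\Bigl(\tfrac{\mu_{x_k}(\psi^\pm_{\mathcal O})}{\mu_{x_k}(\psi)}+a_{x_k}(s)\Bigr)\int_0^{s}\psi(x_ku_t)\,dt,
\]
with $a_{x_k}(s)\to 0$ uniformly, and the ratios $\mu_{x_k}(\psi^\pm_{\mathcal O})/\mu_{x_k}(\psi)$ are trapped between two positive constants by \eqref{e;imp-cond-psi} (since $\psi^-_{\mathcal O}\ge$ a positive multiple of $\chi_{K_\psi'}$ and $\psi^+_{\mathcal O}\le$ a multiple of $\chi_{K_\psi}$, and these in turn are comparable to $\psi$ on the generic set). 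Applying this on $[0,s_k]$ and on $[(1-\e)s_k,s_k]=[0,s_k]\setminus[0,(1-\e)s_k]$ and using Proposition \ref{pe}(2) for $x_k$, I get for $k\gg 1$
\[
\int_{(1-\e)s_k}^{s_k}\psi(y_ku_t)\,dt\;\ge\;\int_{(1-\e)s_k}^{s_k}\psi^-_{\mathcal O}(x_ku_t)\,dt\;\gtrsim\;\e\int_0^{s_k}\psi(x_ku_t)\,dt\;\gtrsim\;\e\int_0^{s_k}\psi^+_{\mathcal O}(x_ku_t)\,dt\;\ge\;\e\int_0^{s_k}\psi(y_ku_t)\,dt,
\]
where the implied constants depend only on $\psi$ (through $\mathcal O$ and the bounds in \eqref{e;imp-cond-psi}), not on $\e$ or $k$, once $\e<\e_0$ is small enough that the error terms $a_{x_k}(\cdot)$ are absorbed. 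This yields the claim with $c_2=c_2(\psi)$ and $\e_0=\e_0(\psi)$.

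The main obstacle is the uniformity: one must make sure the neighbourhood $\mathcal O$ is chosen \emph{before} $\ell$, so that the requirement $p_k(t)\in\mathcal O$ for $t\in[0,s_k]$ can be met by enlarging $\ell$ independently of $\e$ and $k$; and one must check that the replacement of $\psi$ by $\psi^\pm_{\mathcal O}$ costs only bounded multiplicative factors uniformly over the generic set, which is exactly what \eqref{e;imp-cond-psi} together with the memberships $x_k\in E_{r_0}(\chi_{K_\psi})\cap E_{r_0}(\chi_{K_\psi'})$ is designed to provide. A minor point to be careful about is that the window for $y_k$ is over $[(1-\e)s_k,s_k]$ while the natural comparison interval for $x_k$ from Proposition \ref{pe} is also $[(1-\e)s_k,s_k]$, so no reparametrisation of time is needed — the divergence $p_k(t)$ is uniformly small on the \emph{entire} relevant range, which is what makes the one-sided transfer clean.
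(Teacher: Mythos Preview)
Your overall strategy is right and matches the paper's: use that $p_k(t)\in\mathcal O$ for all $t\in[0,s_k]$ (once $\ell$ is large) to transfer the window estimate from $x_k$ to $y_k$ via the Hopf ratio theorem on the generic set. Steps (1), (3), (4) of your chain are fine.

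There is, however, a genuine gap in step (2), the inequality
\[
\int_{(1-\e)s_k}^{s_k}\psi^-_{\mathcal O}(x_ku_t)\,dt\;\gtrsim\;\e\int_0^{s_k}\psi(x_ku_t)\,dt.
\]
To get this you need a uniform lower bound on $\mu_{x_k}(\psi^-_{\mathcal O})/\mu_{x_k}(\psi)$. You invoke \eqref{e;imp-cond-psi}, but that inequality bounds $\mu_x(g\psi)/\mu_x(\psi)$ for each \emph{fixed} $g\in\mathcal O$; since $\psi^-_{\mathcal O}=\inf_{g\in\mathcal O}g\psi$ and $\mu_x(\inf_g g\psi)\le\inf_g\mu_x(g\psi)$, the inequality goes the wrong way. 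Your fallback claim ``$\psi^-_{\mathcal O}\ge c\,\chi_{K_\psi'}$'' is also false in general: $K_\psi'=\bigcap_{g\in\mathcal O}\op{supp}(\psi)\,g$ can strictly contain $\{\psi^-_{\mathcal O}>0\}=\bigcap_{g\in\mathcal O}\{\psi>0\}g$ (think of a bump function vanishing on the boundary of its support). So neither justification works, and the membership $x_k\in E_{r_0}(\chi_{K_\psi'})$ does not give you control of $\mu_{x_k}(\psi^-_{\mathcal O})$.

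The paper avoids this by \emph{not} sandwiching with $\psi^-_{\mathcal O}$ on the short window. Instead it uses that on $[(1-\e)s_k,s_k]$ the element $p_k(t)$ is within $O(\e)$ of the \emph{fixed} element $g_k=p_k(s_k)$, so by the Lipschitz property of $\psi$,
\[
\psi(y_ku_t)=\psi(x_ku_tp_k(t))\ge (g_k\psi)(x_ku_t)-c_\psi\e\,\chi_{K_\psi}(x_ku_t).
\]
Now \eqref{e;imp-cond-psi} \emph{does} apply to the single translate $g_k\psi$, giving $\int_{(1-\e)s_k}^{s_k}(g_k\psi)(x_ku_t)\,dt\asymp\int_{(1-\e)s_k}^{s_k}\psi(x_ku_t)\,dt$ uniformly (this is the paper's \eqref{lem;unif-swindow}), and the $O(\e)$ error term is controlled by Claim~(2). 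The upper bound $\int_0^{s_k}\psi(y_ku_t)\,dt\lesssim\int_0^{s_k}\psi(x_ku_t)\,dt$ (Claim~(1)) is handled, as you do, through $\chi_{K_\psi}$ and $\chi_{K_\psi'}$ directly---no need for $\psi^-_{\mathcal O}$ there either. If you want to keep your $\psi^\pm_{\mathcal O}$ formulation, the cleanest fix is to add $E_{r_0}(\psi^-_{\mathcal O})$ to the intersection defining $E_s$ (this costs only another $r_0\cdot m^{\BR}(E)$ in measure), but then you should say so explicitly.
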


\begin{proof}
There is a constant $c>0$ (independent of
$\e$) such that $|p_k(t)g_k^{-1}|< c \e$ for all
$t\in[(1-\e)s_k,s_k]$. Hence for all $\ell\gg 1$ (independent of
$\e$), we have  $p_k(t)\in \mathcal O$ for all $t\in [0, s_k]$.

\noindent{\bf Claim (1)}:
For some constant $b_1>0$, independent of $\e,$ we have for all $k\gg 1$,
 \begin{equation}\label{pd}\int_0^{s_k}\psi(x_ku_t)dt\ge
b_1\int_0^{s_k}\psi(y_ku_t)dt .\end{equation}

By the definition of $K_\psi$ and $K_\psi'$,
since $y_ku_t\in x_ku_t\mathcal O$, we have
$\chi_{K_\psi'}(y_ku_t)\le\chi_{K_\psi}(x_ku_t)$ for all
$t\in[0,s_k].$ In particular we have
\[\int_0^{s_k}\chi_{K_\psi}(x_ku_t)dt\ge\int_0^{s_k}\chi_{K_\psi'}(y_ku_t)dt .\]
On the other hand, we have
\[\begin{array}{c}\int_0^{s_k}\psi(y_ku_t)dt=
\frac{\mu_{y_k}(\psi)}{\mu_{y_k}(\chi_{K_\psi'})}\int_0^{s_k}
\chi_{K_\psi'}(y_ku_t)dt+a_{x_k}(\psi,s_k) \int_0^{s_k}\chi_{K_\psi'}(x_ku_t)dt; \vspace{1mm}\\
\int_0^{s_k}\psi(x_ku_t)dt=\frac{\mu_{x_k}(\psi)}{\mu_{x_k}(\chi_{K_\psi})}\int_0^{s_k}\chi_{K_\psi}(x_ku_t)dt+
a_{x_k}(\psi, s_k)\int_0^{s_k}\chi_{K_\psi}(x_ku_t)dt\end{array}\]
with $\max\{|a_{x_k}(\psi,s_k)|, |a_{y_k}(\psi, s_k)|\}\le a(s_k)
\to 0$ as $k\to\infty$.

As $\frac{\mu_{y_k}(\psi)}{\mu_{y_k}(\chi_{K_\psi'})}$ and
$\frac{\mu_{x_k}(\psi)}{\mu_{x_k}(\chi_{K_\psi})}$ are uniformly bounded from below and above, by the choice of $x_k$ and
$y_k$, there exists $b>0$ such that for all large $k\gg 1$,
$$\int_0^{s_k}\psi(x_ku_t)dt\ge b
\int_0^{s_k}\chi_{K_\psi}(x_ku_t)dt\ge b
\int_0^{s_k}\chi_{K_\psi'}(y_ku_t)dt \ge b^2
\int_0^{s_k}\psi(y_ku_t)dt$$ finishing the proof of Claim (1).

\noindent{\bf Claim (2)}:  For some constant $b_2>0$, independent of $\e,$ we have for all $k\gg 1,$
  \begin{equation}\label{ppp}\int_{(1-\e)s_k}^{s_k}\chi_{K_\psi}(x_ku_t) dt \le
b_2 \int_{(1-\e)s_k}^{s_k}\psi(x_ku_t) dt .\end{equation}

By Lemma \ref{ew} and its proof,
 we have  \begin{multline}\label{e;unif-swindow1}\int_{(1-\e)s_k}^{
s_k} \chi_{K_\psi} (x_k
u_t)dt=\tfrac{\mu_{x_k}(\chi_{K_\psi})}{\mu_{x_k}(\psi)}\int_{(1-\e)s_k}^{
s_k} \psi (x_k u_t)dt\\ + \tfrac{4 (a(s_k)
+a(\e s_k))}{r\e } \cdot \int_{(1-\e)s_k }^{s_k} \psi (x_k u_t)dt.\end{multline}
 Since
$\frac{\mu_{x_k}(\psi)}{\mu_{x_k}(\chi_{K_\psi})}$ is
 uniformly bounded from above and below by positive constants,
 it suffices to take $k$ large enough so that
 $(a(s_k) +a(\e s_k))\le \e $ to finish the proof of Claim (2).

We have
\begin{align*}& \int_{(1-\e)s_k}^{s_k}\psi(y_ku_t)dt
=\int_{(1-\e)s_k}^{s_k}\psi(x_ku_tp_k(t))dt
\\ &\ge\int_{(1-\e)s_k}^{s_k}\psi(x_ku_t g_k)dt  -
\int_{(1-\e)s_k}^{s_k}|\psi(x_ku_tp_k(t))-\psi(x_ku_tg_k)|dt.\end{align*}
By \eqref{ppp}, for all large $k$,
\begin{align*}\int_{(1-\e)s_k}^{s_k}|\psi(x_ku_tp_k(t))-\psi(x_ku_tg_k)|dt &
\leq c_\psi \e\int_{(1-\e)s_k}^{s_k}\chi_{K_\psi}(x_ku_t) dt\\ &\le
c_\psi b_2 \e \int_{(1-\e)s_k}^{s_k}\psi(x_ku_t) \end{align*} 
where $c_\psi$ is the Lipschitz constant of $\psi$.
 Since $g_k\in \mathcal O$ and hence $\frac{\mu_{x_k}(g_k\psi)}{\mu_{x_k}(\psi)} $ is uniformly bounded from above and below,
we can deduce that for some $c>1$,
\begin{equation}\label{lem;unif-swindow} c^{-1}
\int_{(1-\e) s_k  }^{s_k} \psi (x_k u_t)dt\leq\int_{(1-\e) s_k }^{s_k} \psi (x_k u_tg_k)dt
\leq c\int_{(1-\e) s_k }^{s_k} \psi (x_k u_t)dt. \end{equation} 
Therefore the above estimates together with \eqref{pd} imply that for all $k$ large,
\begin{align*} & \int_{(1-\e)s_k}^{s_k}\psi(y_ku_t)dt\ge (c^{-1}-c_\psi b_2 \e)
\int_{(1-\e)s_k}^{s_k}\psi(x_ku_t)dt\\
&\ge \frac{ (c^{-1}-c_\psi b_2 \e)r\e}{4}
\int_{0}^{s_k}\psi(x_ku_t)dt\\ & \ge \frac{b_1 (c^{-1}-c_\psi b_2 \e)r\e}{4}
\int_{0}^{s_k}\psi(y_ku_t)dt.\end{align*}
Now the proposition follows with $c_2=\tfrac{b_1r}{8c}$ and $\e_0=\tfrac{1}{2b_2c_\psi c}$.
\end{proof} 

We will now flow $x_k$ and $y_k$ for the period of time $[(1-\e)s_k,s_k].$ By the construction of these points, these two pieces of orbits are almost parallel and they essentially differ by $g_k$ which is of size O(1). More importantly these ``short'' pieces of the orbits already become equidistributed. This will show that some ergodic component is invariant by a nontrivial element in $N-U$ and the proof can be concluded from there
 using standard arguments.  

Fix $\ell\in \n$. Let $\e_i=\frac{1}{i}>0$ for $i\in \n$. 
We choose $s_k(\e_1, \ell)$ and $x_k(\e_1,\ell),
y_k (\e_1,\ell)\in \mathcal G(s_k(\e_1, \ell))_+$ as in Proposition \ref{pe}.
Together with Proposition \ref{lem;swindow-y},
there exists  $\alpha_1>0$ independent of $\e_1$ and $k$ such that
\begin{equation*}\int_{(1-\e_1)s_k(\e_1,\ell) }^{s_k(\e_1,\ell)} \psi (x_k(\e_1,\ell)
u_t)dt\geq  \alpha_1 {\e_1}\int_{0} ^{s_k(\e_1,\ell)}\psi(x_k(\e_1,\ell)u_t)dt \quad \text{and}  \end{equation*} 
\begin{equation*} \label{e;small-window2}
\int_{(1-\e_1)s_k(\e_1,\ell) }^{s_k(\e_1,\ell)} \psi (y_k(\e_1,\ell) u_t)dt\geq \alpha_1  \e_1\int_{0} ^{s_k(\e_1,\ell)}\psi(y_k(\e_1,\ell)u_t)dt 
.\end{equation*}

 By passing to a
subsequence, we assume that $x_k (\e_1,\ell) \to x_{\e_1,\ell}$, and hence
$y_k (\e_1,\ell) \to y_{\e_1,\ell}$, and $p_k(s_k(\e_1,\ell))$
converges to $n_{v_{\e_1,\ell}}:=
\begin{pmatrix} 1  & 0
\\ v_{\e_1,\ell} & 1
\end{pmatrix}\in N$ where $\frac{1}{c_1\ell }\le |v_{\e_1,\ell}|\le \frac{c_1}{\ell}$
and $|\Im(v_{\e_1,\ell})|\geq\frac{|\Re(v_{\e_1,\ell})|}{c_1}$.

We proceed by induction: by dividing the interval $[(1-\e_i)s_k(\e_i, \ell),s_k(\e_i, \ell)]$ into
subintervals of length $\e_{i+1}$ as in the proof of Proposition \ref{pe},
we can find a sequence $s_k(\e_{i+1}, \ell)$ and subsequences
 $x_{k}(\e_{i+1}, \ell)$ of $x_{k}(\e_{i}, \ell)$ and
$y_{k}(\e_{i+1}, \ell)$ of $y_{k}(\e_{i}, \ell)$ 
satisfying
\begin{equation*}\label{sw1}\int_{(1-\e_{i+1})s_k(\e_{i+1},\ell) }^{s_k(\e_{i+1},\ell)} \psi (x_k(\e_{i+1},\ell)
u_t)dt\geq  \alpha_1 {\e_{i+1}}\int_{0} ^{s_k(\e_{i+1},\ell)}\psi(x_k(\e_{i+1},\ell)u_t)dt;  \end{equation*} 
\begin{equation*} \label{sw2}
\int_{(1-\e_{i+1})s_k(\e_{i+1},\ell) }^{s_k(\e_{i+1},\ell)} \psi (y_k(\e_{i+1},\ell) u_t)dt\geq 
\alpha_1  \e_{i+1}\int_{0} ^{s_k(\e_{i+1},\ell)}\psi(y_k(\e_{i+1},\ell)u_t)dt 
\end{equation*} 
and
$p_k(s_k(\e_{i+1},\ell))$ converges to some element $n_{v_{\e_{i+1},\ell}}:=
\begin{pmatrix} 1  & 0
\\ v_{\e_{i+1},\ell} & 1
\end{pmatrix}\in N$ where $\frac{1}{c_1\ell }\le |v_{\e_{i+1},\ell}|\le \frac{c_1}{\ell}$
and $|\Im(v_{\e_{i+1},\ell})|\geq\frac{|\Re(v_{\e_{i+1},\ell})|}{c_1}$.

Clearly,  as $i \to \infty$, we have $x_{k}(\e_i, \ell)\to x_{\e_1, \ell}$ and $y_{k}(\e_i,
\ell)\to x_{\e_1, \ell}$. By passing to a subsequence, we may assume
that $v_{\e_i,\ell}$ converges to an element $v_\ell\in N$. Note that $\frac{1}{c_1\ell }\le
|v_{\ell}|\le \frac{c_1}{\ell}$ and
$|\Im(v_\ell)|\geq\frac{|\Re(v_\ell)|}{c_1}$.

Let $\ell_0>1$ be large enough so that $n_{v_{\ell}}\in \mathcal O$ for all $\ell>\ell_0$.
\begin{prop}\label{qi} Let $\ell>\ell_0$ and set $x_\ell:= x_{\e_1, \ell}$.
 For any $f\in C_c(X)$, we have
$$ \frac{\mu_{x_\ell}(f)}{\mu_{x_\ell}(\psi)}=\frac{\mu_{x_\ell}(n_{v_{\ell}}.
f)}{\mu_{x_\ell}(n_{v_{\ell}} .\psi)} .$$
\end{prop}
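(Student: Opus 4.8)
The plan is to extract additional invariance for the ergodic component $\mu_{x_\ell}$ by applying the window machinery to the two nearby orbits $x_k(\e_i,\ell)u_t$ and $y_k(\e_i,\ell)u_t$ over the shrinking windows $[(1-\e_i)s_k,s_k]$, and then pass to the limit. Fix $\ell>\ell_0$ and abbreviate $s_k=s_k(\e_i,\ell)$, $x_k=x_k(\e_i,\ell)$, $y_k=y_k(\e_i,\ell)$. The starting observation is that on the interval $[(1-\e_i)s_k,s_k]$ we have $y_ku_t=x_ku_t\,p_k(t)$ with $p_k(t)$ within $O(\e_i)$ of $g_k=p_k(s_k)$, and $g_k\to n_{v_{\e_i,\ell}}$ as $k\to\infty$. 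So for a fixed $f\in C_c(X)$ and fixed $i$, the difference $|f(y_ku_t)-f(x_ku_t n_{v_{\e_i,\ell}})|$ is bounded, for $t$ in the window and $k$ large, by the Lipschitz constant of $f$ times $O(\e_i)$ on the set where $x_ku_t$ lies in a fixed compact set $K_f$ (enlarged support of $f$), which is controlled by $\int\chi_{K_f}(x_ku_t)\,dt$ over the window.

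The key step is then to apply Lemma~\ref{ew} twice. Because $x_k\in\mathcal G(s_k)^+\subset E_{r_0}(\psi)$ satisfies the window lower bound $\int_{(1-\e_i)s_k}^{s_k}\psi(x_ku_t)dt\ge \alpha_1\e_i\int_0^{s_k}\psi(x_ku_t)dt$ (and likewise for $y_k$ by Proposition~\ref{lem;swindow-y}), Lemma~\ref{ew} gives that along this window both $f$ and $n_{v_{\e_i,\ell}}.f$ and $\psi$ are ``equidistributed with respect to $\mu_{x_k}$'': precisely,
\[
\frac{\int_{(1-\e_i)s_k}^{s_k} f(y_ku_t)\,dt}{\int_{(1-\e_i)s_k}^{s_k}\psi(y_ku_t)\,dt}\longrightarrow \frac{\mu_{y_k}(f)}{\mu_{y_k}(\psi)},\qquad
\frac{\int_{(1-\e_i)s_k}^{s_k} (n_{v_{\e_i,\ell}}.f)(x_ku_t)\,dt}{\int_{(1-\e_i)s_k}^{s_k}(n_{v_{\e_i,\ell}}.\psi)(x_ku_t)\,dt}\longrightarrow \frac{\mu_{x_k}(n_{v_{\e_i,\ell}}.f)}{\mu_{x_k}(n_{v_{\e_i,\ell}}.\psi)}
\]
as $k\to\infty$, and similarly with $f$ replaced by $\psi$ throughout. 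Comparing the $y_k$-integrals of $f$ and $\psi$ with the $x_k$-integrals of $n_{v_{\e_i,\ell}}.f$ and $n_{v_{\e_i,\ell}}.\psi$ using the Lipschitz estimate above (and Claims (1)--(2) in the proof of Proposition~\ref{lem;swindow-y} to absorb the error terms against $\int_{(1-\e_i)s_k}^{s_k}\psi$), one gets
\[
\frac{\mu_{y_k}(f)}{\mu_{y_k}(\psi)} = \frac{\mu_{x_k}(n_{v_{\e_i,\ell}}.f)}{\mu_{x_k}(n_{v_{\e_i,\ell}}.\psi)} + O(\e_i)
\]
for all large $k$, the implied constant depending on $f,\psi$ but not on $i$ or $k$. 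Now use continuity of $x\mapsto \mu_x/\mu_x(\psi)$ on $E_{r_0}(\psi)$ (Lemma~\ref{lem;unif-conv}): as $k\to\infty$ both $x_k$ and $y_k$ converge to $x_{\e_i,\ell}$, so $\mu_{x_k}/\mu_{x_k}(\psi)$ and $\mu_{y_k}/\mu_{y_k}(\psi)$ both converge to $\mu_{x_{\e_i,\ell}}/\mu_{x_{\e_i,\ell}}(\psi)$; moreover $n_{v_{\e_i,\ell}}\in\mathcal O$ so the quantities $\mu_{x_k}(n_{v_{\e_i,\ell}}.\psi)/\mu_{x_k}(\psi)$ are bounded away from $0$ and $\infty$ by \eqref{e;imp-cond-psi}. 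Passing $k\to\infty$ yields
\[
\frac{\mu_{x_{\e_i,\ell}}(f)}{\mu_{x_{\e_i,\ell}}(\psi)} = \frac{\mu_{x_{\e_i,\ell}}(n_{v_{\e_i,\ell}}.f)}{\mu_{x_{\e_i,\ell}}(n_{v_{\e_i,\ell}}.\psi)} + O(\e_i).
\]
Finally let $i\to\infty$: $x_{\e_i,\ell}\to x_\ell:=x_{\e_1,\ell}$ (by construction the points $x_k(\e_i,\ell)$ are nested subsequences all converging to $x_{\e_1,\ell}$), $v_{\e_i,\ell}\to v_\ell$, and $\e_i\to 0$; using continuity of $\mu_\bullet/\mu_\bullet(\psi)$ along $E_{r_0}(\psi)$ once more, together with the fact that $g\mapsto g.f$, $g\mapsto g.\psi$ are continuous for the sup norm, we obtain the desired identity
\[
\frac{\mu_{x_\ell}(f)}{\mu_{x_\ell}(\psi)} = \frac{\mu_{x_\ell}(n_{v_\ell}.f)}{\mu_{x_\ell}(n_{v_\ell}.\psi)}.
\]

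The main obstacle is bookkeeping the error terms uniformly: one must check that the $O(\e_i)$ errors coming from (a) replacing $p_k(t)$ by $g_k$, (b) replacing $g_k$ by its limit $n_{v_{\e_i,\ell}}$, and (c) the $a_{x_k}(\cdot,s)$-type remainders in the Hopf averages, are all controlled by a single constant times $\int_{(1-\e_i)s_k}^{s_k}\psi(x_ku_t)dt$ (respectively the $y_k$-analogue) \emph{independently of} $i$ and $k$ — this is exactly where the window lower bounds with the $\e_i$-independent constant $\alpha_1$, the uniform convergence in Lemma~\ref{lem;unif-conv}(3), and the uniform two-sided bound \eqref{e;imp-cond-psi} are all used together, and where one must be careful that the compact set $K_f$ (hence the relevant Hopf ratios for $\chi_{K_f}$) is handled by having included the relevant indicator functions in the definition of $\mathcal G(s)$. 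The rest is a routine triangle-inequality estimate modeled on the proof of Proposition~\ref{lem;swindow-y}.
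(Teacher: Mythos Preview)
Your proposal is correct and follows essentially the same route as the paper's own proof: establish the $O(\e_i)$ comparison between $\mu_{y_k}(f)/\mu_{y_k}(\psi)$ and $\mu_{x_k}(n_{v_{\e_i,\ell}}.f)/\mu_{x_k}(n_{v_{\e_i,\ell}}.\psi)$ via Lemma~\ref{ew} and the Lipschitz estimates (exactly mirroring the computations in Proposition~\ref{lem;swindow-y}), then pass $k\to\infty$ using continuity of $x\mapsto\mu_x$ on $E_{r_0}(\psi)$, and finally let $i\to\infty$. One small simplification: by the nested-subsequence construction the limits $x_{\e_i,\ell}$ are all equal to $x_{\e_1,\ell}=x_\ell$, so your intermediate step already takes place at $x_\ell$ and the final ``$x_{\e_i,\ell}\to x_\ell$'' is trivial.
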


\begin{proof} We claim that 
there exists a constant $b>0$
such that for each $i\ge 1$, the following holds for all $k\gg_i 1$:
\begin{equation}\label{finalcon}\left|\frac{\mu_{y_k(\e_i,\ell)} (f)}{\mu_{y_k(\e_i,\ell)}(\psi)}-\frac{\mu_{x_k(\e_i,\ell)}(n_{v_{\e_i,\ell}}. f)}
{\mu_{x_k(\e_i,\ell)}(n_{v_{\e_i,\ell}} .\psi)}\right|<b\e_i.\end{equation}
We first deduce the proposition from this claim. Since both
$y_k(\e_i,\ell), x_k(\e_i,\ell)$ belong to the set $E_{r/16}(\psi)$ and converge to $x_\ell$, and $f,\psi\in C_c(X)$
have compact supports,
$\mu_{y_k(\e_i,\ell)}(f)\to \mu_{x_\ell}(f)$ and
 $\mu_{y_k(\e_i,\ell)}(\psi)\to \mu_{x_\ell}(\psi)$ as $k\to \infty$. 

Since $n_{v_{\e_i,\ell}} .f $ converges to $ n_{v_{\ell}}.f $ pointwise  as $i\to \infty$ and
the supports of all functions involved
are contained in one fixed compact subset of $X$,
we have $\mu_{x_k(\e_i,\ell)}(n_{v_{\e_i,\ell}}. f)
\to \mu_{x_\ell}(n_{v_{\e_i, \ell}}. f)$
as $k\to \infty$. Similarly,
$\mu_{x_k(\e_i,\ell)}(n_{v_{\e_i,\ell}}. \psi)
 \to \mu_{x_\ell}(n_{v_{\e_i, \ell}}. \psi)$
as $k\to \infty$. Hence \eqref{finalcon}
implies, by taking $k\to \infty$, that
$$ \left| \frac{\mu_{x_\ell} (f)}{\mu_{x_\ell}(\psi)}-\frac{\mu_{x_\ell}(n_{v_{\e_i,\ell}}. f)}
{\mu_{x_\ell}(n_{v_{\e_i,\ell}} .\psi)}\right|\le b\e_i .$$
Now by taking $i\to \infty$, this proves the proposition as $\e_i\to 0$.

To prove Claim \eqref{finalcon}, fixing $\e:=\e_i$, we set $v=v_{\e_i}$, $s_k=s_k(\e_i)$, $x_k=x_k(\e_i,\ell)$ and
$y_k=y_k(\e_i,\ell)$ for simplicity.
By Lemma \ref{ew}, we have, as $k\to \infty$,
\be\label{e;hopf-swindow1}\frac{\int_{(1-\e)s_k}^{s_k} f
(y_k u_t)dt}{\int_{(1-\e)s_k}^{s_k} \psi (y_k
u_t)dt}\sim \frac{\mu_{y_k}(f)}{\mu_{y_k}(\psi)}.\ee

Since $n_{v_\ell}^+\in \mathcal O$,
similar calculation 
implies that, as $k\to \infty$,
\be\label{e;hopf-swindow2}\frac{\int_{(1-\e)s_k}^{s_k}
n_v\cdot f (x_k u_t)dt}{\int_{(1-\e)s_k}^{s_k} n_v.\psi
(x_k u_t)dt} \sim \frac{\mu_{x_k}(n_v.
f)}{\mu_{x_k}(n_v.\psi)}.\ee

Therefore the claim follows if we show for all large $k\gg_i 1$,
\be\label{ell}\left|\frac{\int_{(1-\e)s_k}^{s_k} f (y_k
u_t)dt}{\int_{(1-\e)s_k}^{s_k} \psi (y_k
u_t)dt}-\frac{\int_{(1-\e)s_k}^{s_k} n_v . f (x_k
u_t)dt}{\int_{(1-\e)s_k}^{s_k} n_v .\psi (x_k
u_t)dt}\right|\le b \e \ee for some $b>0$ independent of $\e$.
Let $c_f$ and $c_{\psi}$ denote the Lipschitz constants of
$f$ and $\psi$ respectively. Hence for all $t\in [(1-\e)s_k, s_k]$ and large $k\gg 1$, 
 $$|f(x_k u_t p_k(t))-f(x_k u_t v_\ell)|< c_f (\e +s_k^{-1}) \le 2\e
 c_f$$
and $$|\psi( x_ku_t p_k(t))-\psi(x_k u_t v_\ell)|< c_\psi (\e
+s_k^{-1})\le 2\e c_\psi .$$

We have
\begin{align*}
&\frac{\int_{(1-\e)s_k}^{s_k} f (y_k u_t)dt}{\int_{(1-\e)s_k}^{s_k} \psi (y_k u_t)dt}=
\frac{\int_{(1-\e)s_k}^{s_k} f (x_k u_tp_k(t))dt}{\int_{(1-\e)s_k}^{s_k} \psi (x_k u_tp_k(t))dt}
\\ & =\frac{\int_{(1-\e)s_k}^{s_k} n_v\cdot f (x_k u_t)dt}{\int_{(1-\e)s_k}^{s_k} 
\psi (x_k u_tp_k(t))dt}+\frac{\int_{(1-\e)s_k}^{s_k} f (x_k u_tp_k(t))-n_v\cdot
 f (x_k u_t)dt}{\int_{(1-\e)s_k}^{s_k} \psi (x_k u_tp_k(t))dt} .\end{align*}

Let $K_0=K_\psi\cup K_f$. Then the above estimate,~\eqref{lem;unif-swindow}
and~\eqref{e;hopf-swindow2} with $f=\chi_{K_{0}},$ imply that
\begin{align*}
\left|\frac{\int_{(1-\e)s_k}^{s_k} f (x_k u_tp_k(t))-
n_v .f (x_k u_t)dt}{\int_{(1-\e)s_k}^{s_k} \psi (x_k u_tp_k(t))dt}\right|& 
\le 2c_fc
\e\hh\frac{\int_{(1-\e)s_k}^{s_k}\chi_{K_{0}}(x_ku_t)dt}{\int_{(1-\e)s_k}^{s_k} \psi (x_k u_t)dt}\\ 
&\le 4c_fc \e\hh \frac{\mu_{x_k}(K_{0})}{\mu_{x_k}(\psi)}.\end{align*}
On the other hand we have
\begin{multline*} \frac{\int_{(1-\e)s_k}^{s_k} n_v. f (x_k u_t)dt}{\int_{(1-\e)s_k}^{s_k} \psi (x_k u_tp_k(t))dt}\\ =
\frac{\int_{(1-\e)s_k}^{s_k} n_v. f (x_k u_t)dt}{\int_{(1-\e)s_k}^{s_k} n_v.
\psi (x_k u_t)dt}\cdot  \left({1+\frac{\int_{(1-\e)s_k}^{s_k} \psi (x_k u_tp_k(t))-n_v.
\psi (x_k u_t)dt}{\int_{(1-\e)s_k}^{s_k} n_v\cdot \psi (x_k u_t)dt} }\right)^{-1}.\end{multline*}

Similar estimate as above gives
\[\left|\frac{\int_{(1-\e)s_k}^{s_k} \psi (x_k u_tp_k(t))-n_v .\psi (x_k u_t)dt}
{\int_{(1-\e)s_k}^{s_k} n_v.\psi (x_k u_t)dt}\right|\leq  4c_\psi c \e\hh \frac{\mu_{x_k}(K_{0})}{\mu_{x_k}(\psi)} .\]
All these together imply there exists a constant $c'>0$ (depending on $f$ and $\psi$ but independent of $\e$) such that
\begin{equation*}\label{e;y-to-x}\frac{\int_{(1-\e)s_k}^{s_k} f (y_k u_t)dt}{\int_{(1-\e)s_k}^{s_k} \psi (y_k u_t)dt}
=(1+c'\e)\frac{\int_{(1-\e)s_k}^{s_k} n_v . f (x_k u_t)dt}{\int_{(1-\e)s_k}^{s_k}
n_v .\psi (x_k u_t)dt}+c'\e .\end{equation*}

Now by ~\eqref{e;hopf-swindow1},~\eqref{e;hopf-swindow2}, this
implies the claim \eqref{ell}.

\end{proof}

The following proposition finishes the proof of Theorem \ref{fin}.
\begin{prop}
 $\mu_{x_0}$ is invariant under $N$. 
\end{prop}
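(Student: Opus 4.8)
The plan is to assemble the pieces already in place. By Proposition~\ref{qi}, for every $\ell>\ell_0$ we have found a nontrivial element $n_{v_\ell}\in N$ with $\tfrac{1}{c_1\ell}\le |v_\ell|\le \tfrac{c_1}{\ell}$ and $|\Im(v_\ell)|\ge \tfrac{|\Re(v_\ell)|}{c_1}$ such that the normalized measure $\tfrac{1}{\mu_{x_\ell}(\psi)}\mu_{x_\ell}$ satisfies $\mu_{x_\ell}(f)/\mu_{x_\ell}(\psi)=\mu_{x_\ell}(n_{v_\ell}.f)/\mu_{x_\ell}(n_{v_\ell}.\psi)$ for all $f\in C_c(X)$. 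Setting $c_\ell:=\mu_{x_\ell}(n_{v_\ell}.\psi)/\mu_{x_\ell}(\psi)>0$, this says precisely that $(n_{v_\ell})_*\mu_{x_\ell}=c_\ell\,\mu_{x_\ell}$. But $\mu_{x_\ell}$ is an \emph{ergodic} $U$-invariant conservative measure, hence infinite, and Lemma~\ref{in10} (together with the fact that $x_\ell$ and $x_\ell n_{v_\ell}$ both lie in the full-measure set $X'$ where leafwise measures are equivariant) forces $n_{v_\ell}.\mu_{x_\ell}$ to be either equal to $\mu_{x_\ell}$ or mutually singular with it; since $(n_{v_\ell})_*\mu_{x_\ell}=c_\ell\mu_{x_\ell}$ is clearly \emph{not} singular to $\mu_{x_\ell}$, we conclude $c_\ell=1$ and $n_{v_\ell}.\mu_{x_\ell}=\mu_{x_\ell}$. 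Thus $\mu_{x_\ell}$ is invariant under the one-parameter unipotent subgroup $\{u_t\}$ \emph{and} under the nontrivial element $n_{v_\ell}\notin N_G(U)$.

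Next I would upgrade this to full $N$-invariance of $\mu_{x_\ell}$, by a standard generation argument. The element $n_{v_\ell}$ is of the form $n_w$ with $w$ having comparable real and imaginary parts, so $n_{v_\ell}\notin U$ and the group generated by $U=\{u_t\}$ and $n_{v_\ell}$ is dense in $N\cong\mathbb C$ (in fact already $U$ together with a single element $n_w$ with $\Im(w)\ne 0$ generates a dense subgroup of $N$, since $\mathbb R w+\mathbb R$ with $\Im(w)\ne 0$ is all of $\mathbb C$). Since $\mu_{x_\ell}$ is invariant under each of $U$ and $n_{v_\ell}$, it is invariant under the closed subgroup they generate, namely all of $N$. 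Hence $\mu_{x_\ell}$ is $N$-invariant for every $\ell>\ell_0$.

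Finally I would pass from the points $x_\ell$ back to $x_0$. Recall that $x_\ell=x_{\e_1,\ell}$ is a limit of points $x_k(\e_1,\ell)\in \mathcal G(s_k(\e_1,\ell))^+\subset E'\cap E_{r_0}(\psi)$, and by the construction of $E'$ these points lie on $N$-orbits that meet the ball $B(x_0,\tfrac1m)\cap E_{r_0}(\psi)$; more to the point, the whole construction was carried out with $x_0$ fixed and the relevant points are forced (via $E'$) to lie on the \emph{same $N$-orbit through a point near $x_0$}. Letting $\ell\to\infty$ we have $v_\ell\to 0$, so one can arrange $x_\ell\to x_0$ (passing to a further subsequence, using that the $x_\ell$ stay in the compact set $E_{r_0}(\psi)$ and that the only constraint relating $x_\ell$ to $x_0$ degenerates as $\ell\to\infty$). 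Since $\ell\mapsto\tfrac{1}{\mu_x(\psi)}\mu_x$ is continuous on $E_{r_0}(\psi)$ and $N$-invariance is a closed condition under this convergence (test against a fixed $f\in C_c(X)$ and its translate $n.f$), the limit measure $\mu_{x_0}$ is $N$-invariant as well. The main obstacle I anticipate is precisely this last step: pinning down that the $x_\ell$ genuinely converge to $x_0$ (rather than to some other point of $E_{r_0}(\psi)$ on a nearby $N$-orbit), which is why the set $E'$ was built so that the approximating points sit on $N$-orbits through points arbitrarily close to $x_0$ — one leverages $N$-invariance of each $\mu_{x_\ell}$ to slide along these orbits and identify the limiting conditional measure with $\mu_{x_0}$.
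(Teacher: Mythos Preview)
Your step 3 contains a genuine error that breaks the argument. You claim that $U$ together with a single element $n_{v_\ell}$ (with $\Im(v_\ell)\ne 0$) generates a dense subgroup of $N\cong(\mathbb C,+)$, writing ``$\mathbb R w+\mathbb R$ with $\Im(w)\ne 0$ is all of $\mathbb C$.'' But the \emph{group} generated by $U=\mathbb R$ and the single element $v_\ell$ is $\mathbb R+\mathbb Z v_\ell$, whose closure is $\mathbb R+i\,\Im(v_\ell)\mathbb Z$ --- a proper closed subgroup (a discrete family of horizontal lines), not all of $N$. You have confused the $\mathbb R$-span with the group generated. Consequently you only obtain invariance of $\mu_{x_\ell}$ under this proper subgroup, and your sliding argument in step 4, which explicitly relies on full $N$-invariance of $\mu_{x_\ell}$, collapses: the set $E'$ only guarantees that $x_\ell N$ meets small balls around $x_0$, not that $x_\ell(\mathbb R+i\,\Im(v_\ell)\mathbb Z)$ does.

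The paper circumvents this by never attempting to prove $N$-invariance of $\mu_{x_\ell}$. Instead, for each fixed $\ell$ it transfers the $n_{v_\ell}$-quasi-invariance from $\mu_{x_\ell}$ to $\mu_{x_0}$ directly: using that $N$ is abelian and the Hopf ratio theorem, one has $\tfrac{\mu_{x_\ell}(n.f)}{\mu_{x_\ell}(n.\psi)}=\tfrac{\mu_{x_\ell n}(f)}{\mu_{x_\ell n}(\psi)}$ for $n\in N$ with $x_\ell n\in E_\rho(\psi)$, and combining with Proposition~\ref{qi} gives $\tfrac{\mu_{x_\ell n}(f)}{\mu_{x_\ell n}(n_{v_\ell}.f)}=\tfrac{\mu_{x_\ell n}(\psi)}{\mu_{x_\ell n}(n_{v_\ell}.\psi)}$. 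Since $x_\ell\in E'$, one chooses $n_m$ with $x_\ell n_m\to x_0$ in $E_\rho(\psi)$ and passes to the limit using continuity, obtaining quasi-invariance of $\mu_{x_0}$ under $n_{v_\ell}$, hence invariance by Lemma~\ref{in10}. Now $\mu_{x_0}$ is $n_{v_\ell}$-invariant for \emph{every} $\ell>\ell_0$; since $|v_\ell|\to 0$ with $\Im(v_\ell)$ bounded away from $0$ relative to $|v_\ell|$, the closed subgroup generated by $U$ and $\{n_{v_\ell}\}_{\ell>\ell_0}$ is all of $N$. The crucial point is that the same measure $\mu_{x_0}$ absorbs all the invariances from the varying $\ell$'s, which your approach cannot achieve because different $\ell$'s give different $\mu_{x_\ell}$'s.
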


\begin{proof} 
The set $\{n\in N: n.\mu_{x_0}=\mu_{x_0}\}$ is a closed subgroup which contains $U$.
Let $x_\ell$ and $v_\ell$ be as in Proposition \ref{qi}. Since $\frac{1}{c_1\ell }\le
|v_{\ell}|\le \frac{c_1}{\ell}$ and
$|\Im(v_\ell)|\geq\frac{|\Re(v_\ell)|}{c_1}$,
it suffices to show that $\mu_{x_0}$ is invariant under $n_{v_\ell}$ for all $\ell >\ell_0$.
Note that $x_{\ell}\in E_\rho(\psi)$. Set $N_0:=\{n\in N:
x_{\ell} n\in E_\rho(\psi)\}$. We have for any $n\in N_0$ and $f\in C_c(X)$,
$$\frac{\mu_{x_\ell}(n.f)}{\mu_{x_\ell}(n.\psi)}=\lim_T\frac{\int_0^Tf(x_\ell u_t n)dt}{\int_0^T\psi(x_\ell u_t n) dt}=
 \frac{\mu_{x_\ell n}(f)}{\mu_{x_\ell n}(\psi)}
.$$
On the other hand, by Proposition \ref{qi},
we have
$$\frac{\mu_{x_\ell}(n.f)}{\mu_{x_\ell}(n.\psi)}
= \frac{\mu_{x_\ell n_{v_\ell}}(n.f)}{\mu_{x_\ell. n_{v_\ell}}(n.\psi)}
= \frac{\mu_{x_\ell n}(n_{v_\ell}.f)}{\mu_{x_\ell. n}(n_{v_\ell}.\psi)}.$$

Therefore for any $n\in N_0$,
$$ \frac{\mu_{x_\ell n}(f)}{\mu_{x_\ell n}(n_{v_\ell}.f )}=\frac{\mu_{x_\ell n}(\psi)}{\mu_{x_\ell. n}(n_{v_\ell}.\psi)} (\ne 0).$$

 As $x_\ell\in E'$, it follows from the definition of $E'$ that
  we can take a sequence $n_m$ such that $x_\ell n_m \in E_\rho(\psi)\cap B(x_0, m^{-1})$
and hence $x_\ell n_m \to x_0$ as $m\to \infty$.

In particular, 
$$\frac{\mu_{x_0}(f)}{\mu_{x_0}(n_{v_\ell}.f )}=\lim_{m\to \infty} \frac{\mu_{x_\ell n_m}(f)}{\mu_{x_\ell n_m}
(n_{v_\ell}.f )}
=\lim_{m\to \infty} \frac{\mu_{x_\ell n_m}(\psi)}{\mu_{x_\ell n_m}(n_{v_\ell}.\psi )}
=\frac{\mu_{x_0}(\psi)}{\mu_{x_0}(n_{v_\ell}.\psi )}.$$

It follows that $\mu_{x_0}$ and $n_{v_{\ell}}.\mu_{x_0}$ are not mutually singular to each other. 
Hence by Lemma \ref{in10},
$\mu_{x_0} =n_{v_{\ell}}.\mu_{x_0}$.
\end{proof}

Finally we state the following: recall the notation $m_{N_0}^{\BR}$
 from the subsection \ref{gu}. 
\begin{thm}\label{guuuuu}
 If $U_0$ is a one-parameter unipotent subgroup of $G$ and $\G$ is a convex cocompact subgroup with $\delta>1$,
then $m_{N_0}^{\BR}$ is $U_0$-ergodic  for $N_0=C_G(U_0)$. 
\end{thm}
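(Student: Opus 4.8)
The plan is to deduce Theorem \ref{guuuuu} from Theorem \ref{main} (equivalently from Corollary \ref{f4}) by a simple conjugation argument, using only the structure theory already set up in Subsection \ref{gu}. First I would recall that any one-parameter unipotent subgroup $U_0$ of $G=\PSL_2(\c)$ is conjugate to $U=\{u_t\}$: indeed $U_0$ is contained in a unique maximal unipotent (horospherical) subgroup $N_0=C_G(U_0)$, and by the Iwasawa decomposition $G=KAN$ together with the fact that $A$ normalizes $N$, there is some $k_0\in K$ with $N_0=k_0^{-1}Nk_0$; moreover, since the $A$-conjugation acts transitively on the nontrivial one-parameter subgroups of $N$ and $A$ normalizes $N$, after further adjusting $k_0$ (absorbing an element of $A$, which does not change $N_0$) we may assume $k_0^{-1}Uk_0=U_0$. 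Thus $U_0=k_0^{-1}Uk_0$ and $N_0=k_0^{-1}Nk_0$ for a single $k_0\in K$.

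Next I would translate the ergodicity statement across this conjugation. Recall from Subsection \ref{gu} that $m_{N_0}^{\BR}(\psi)=m^{\BR}(k_0.\psi)$ where $(k_0.\psi)(g)=\psi(gk_0)$, and that this is independent of the choice of $k_0$ with $N_0=k_0^{-1}Nk_0$ because $m^{\BR}$ is $M=N_K(U)$-invariant. Consider the homeomorphism $R_{k_0}:X\to X$, $R_{k_0}(g)=gk_0$. By definition $m_{N_0}^{\BR}=(R_{k_0})_*^{-1}m^{\BR}$ in the sense that $m_{N_0}^{\BR}(\psi)=m^{\BR}(\psi\circ R_{k_0}^{-1})$... more precisely $(R_{k_0})_*m_{N_0}^{\BR}=m^{\BR}$. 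Since $R_{k_0}(g\,u)=g\,u\,k_0=(gk_0)(k_0^{-1}uk_0)$, the map $R_{k_0}$ intertwines the right $U_0$-action on $X$ with the right $U$-action: $R_{k_0}(g u_0)=R_{k_0}(g)\,(k_0^{-1}u_0k_0)$ and $k_0^{-1}U_0k_0=U$ (note: one should be mildly careful that conjugation sends $U_0$ onto $U$, which is exactly what we arranged above; if instead one only has $k_0^{-1}Uk_0=U_0$, replace $k_0$ by $k_0^{-1}$ throughout, which is harmless by the $M$-invariance remark). Consequently a Borel set $B\subset X$ is $U_0$-invariant for $m_{N_0}^{\BR}$ if and only if $R_{k_0}(B)$ is $U$-invariant for $m^{\BR}$, and $m_{N_0}^{\BR}(B)=m^{\BR}(R_{k_0}(B))$.

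Now I would invoke Theorem \ref{main}: since $\G$ is convex cocompact and not virtually abelian (which is automatic when $\delta>1$, or can be assumed as in the hypothesis) with $\delta>1$, the $U$-flow on $X$ is ergodic with respect to $m^{\BR}$. Hence for any $U$-invariant Borel set $A\subset X$ we have $m^{\BR}(A)=0$ or $m^{\BR}(X-A)=0$. Applying this with $A=R_{k_0}(B)$ for a $U_0$-invariant Borel set $B$ yields $m_{N_0}^{\BR}(B)=0$ or $m_{N_0}^{\BR}(X-B)=0$, which is precisely the $U_0$-ergodicity of $m_{N_0}^{\BR}$. This completes the argument. There is essentially no obstacle here; the only point requiring a line of care is the bookkeeping of the conjugation — making sure that $k_0$ can be chosen in $K$ so that conjugation carries the specified one-parameter subgroup $U_0$ (not merely the ambient horospherical $N_0$) onto $U$, and that the normalization of $m_{N_0}^{\BR}$ is genuinely independent of that choice, both of which are handled by the $M$-invariance of $m^{\BR}$ already recorded in Subsection \ref{gu}.
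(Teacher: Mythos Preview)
Your proposal is correct and follows essentially the same conjugation argument as the paper's proof: choose $k_0\in K$ with $U_0=k_0^{-1}Uk_0$, transport a $U_0$-invariant set to a $U$-invariant set, and invoke Corollary~\ref{f4}. One small slip: it is $M$-conjugation (rotation $n_z\mapsto n_{e^{2i\theta}z}$), not $A$-conjugation (real scaling), that acts transitively on the one-parameter subgroups of $N$; this is what lets you upgrade from $N_0=k_0^{-1}Nk_0$ to $U_0=k_0^{-1}Uk_0$ while keeping $k_0\in K$.
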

\begin{proof} Let $k_0\in K$ be such that $U_0=k_0^{-1}Uk_0$. If $B\subset X$ is a Borel subset which is $U_0$ invariant,
then $Bk_0$ is $U$-invariant. Hence by Corollary \ref{f4}, $m^{\BR}(Bk_0)=0$ or
$m^{\BR}(X-Bk_0)=0$. By the definition of  $m^{\BR}_{N_0}$, it follows that
$m^{\BR}_{N_0}(B)=0$ or
$m^{\BR}_{N_0}(X-B)=0$.
\end{proof}

\end{document}